\documentclass{article}
\usepackage{geometry}
\usepackage{graphicx,float}
\usepackage{amsmath,amssymb,amsthm,amsfonts}
\usepackage{mathrsfs,cleveref}
\usepackage{color}

\title{Stochastic Cahn--Hilliard Equations from One-Dimensional Ising--Kac--Kawasaki Dynamics}
\author{ Qi Zhang\thanks{ Beijing institute of mathematical sciences and applications, Beijing, 101408 China. Email: \texttt{qzhang@bimsa.cn}}}

\date{\today}

\newtheorem{theorem}{Theorem}
\newtheorem{lemma}{Lemma}
\newtheorem{proposition}[theorem]{Proposition}

\newtheorem{corollary}[theorem]{Corollary}

\newtheorem{definition}{Definition}[section]

\begin{document}

\maketitle

\begin{abstract}
This paper investigates the scaling limit of the one-dimensional lattice Ising--Kac--Kawasaki dynamics near the critical temperature. 
Starting from a martingale formulation for the Kac coarse-grained field, we decompose the microscopic dynamics into a discrete conservative drift and a pure-jump Dynkin martingale. 
Under a critical scaling regime, the nonlinear drift is identified via a conservative multiscale replacement scheme based on the second-order Boltzmann--Gibbs Principle, yielding the cubic conservative term. 
For the fluctuation component, a martingale central limit theorem characterizes the predictable quadratic variation as a divergence-form Gaussian noise. 
By establishing uniform $H^{-1}$ energy estimates and utilizing a compactness argument, we prove that the Kac coarse-grained field converges to the unique solution of a one-dimensional conservative stochastic Cahn--Hilliard equation. 
Furthermore, we demonstrate that the associated canonical equilibrium measure induced by the microscopic dynamics converges weakly to the standard $\phi^4_1$ measure on the conserved-mass hyperplane.
\end{abstract}

% Suggested in the preamble:
% \usepackage{hyperref}

\vskip 0.2truein

\noindent {\it Keywords:} Ising-Kac model; Kawasaki dynamics; exclusion process; stochastic Cahn–Hilliard equation;  hydrodynamic limit.

\vskip 0.2truein

\section{Introduction}

Phase separation with a conserved order parameter is classically described at the continuum level
by the Cahn--Hilliard equation, or equivalently by Model~B in the
Hohenberg--Halperin classification \cite{CahnHilliard1958,HohenbergHalperin1977,Onuki2002}.
Its stochastic counterpart is expected to govern mesoscopic fluctuations around phase-separating
profiles under a conservation law, and therefore naturally takes the form of a conservative SPDE, namely a stochastic Cahn--Hilliard equation driven by divergence-type noise.
In its deterministic form, it can be interpreted as an $H^{-1}$-gradient flow of a
Ginzburg--Landau-type free energy, which naturally enforces mass conservation.
Random perturbations---motivated by thermal fluctuations and mesoscopic coarse-graining---lead to the stochastic Cahn--Hilliard equation, typically driven by a divergence-form noise so that the conservation law is preserved at the stochastic level \cite{GiacominLebowitz1998I}.
A central problem in nonequilibrium statistical mechanics is to derive such a macroscopic conservative SPDE directly from a microscopic conservative interacting particle system \cite{F89, Spohn1991}.

The Ising–Kac model is a mean-field model with long-range ferromagnetic interactions. 
It was introduced in statistical mechanics for its simplicity and because it provides a framework to rigorously recover the van der Waals theory \cite{kac1963van} of phase transitions.
It is well-known that for the standard one-dimensional Ising model with nearest-neighbor interactions, no phase transition occurs at any $T > 0$ \cite{Ising25}. However, in the Kac-limit $\gamma \to 0$, the Ising-Kac model approaches the mean-field regime, where a sharp second-order phase transition emerges at the critical inverse temperature $\beta_c = 1$ \cite{kac1963van, LP1966}.
In this regime, the microscopic details of the underlying lattice become increasingly irrelevant as long-range fluctuations dominate the spatial structure.
This emergence of scale invariance justifies the transition from a discrete lattice gas to a macroscopic description. 
Under the framework of the Universality Class hypothesis, disparate microscopic Ising-Kac models—provided they share the same spatial dimensions, symmetries, and conservation laws—converge toward the same macroscopic evolution equations, serving as a bridge between discrete particle systems and continuous field theories. 

In this work we consider the one-dimensional dynamical Ising--Kac model under Kawasaki exchange \cite{Kawasaki1966I} on $\Lambda_N := \mathbb{Z}/(2N+1)\mathbb{Z}$.
Let \(\Sigma_N := \{-1,1\}^{\Lambda_N}.\)
We consider a kernel $\kappa_\gamma(z)$ of the form $\kappa_\gamma(z) = \gamma\,\mathfrak K(\gamma z)$ with a Kac scaling parameter $\gamma\in(0,\frac{1}{4})$, 
\[
\kappa_\gamma(z) = \gamma\,\mathfrak K(\gamma z), \quad \kappa_\gamma(z)\ge 0,
\quad 
\sum_{z\in\Lambda_N}\kappa_\gamma(z)=1,
\]
and the Ising--Kac Hamiltonian
\[
H_\gamma(\sigma)
=
-\frac12 \sum_{i,j\in\Lambda_N} \kappa_\gamma(i-j)\,\sigma_i\sigma_j,
\quad \sigma\in\Sigma_N.
\]
For each bond $(i,i+1)$, let $\sigma^{i,i+1}$ denote the configuration obtained from
$\sigma$ by exchanging the spins at sites $i$ and $i+1$, namely
\[
(\sigma^{i,i+1})_j
:=
\begin{cases}
\sigma_{i+1}, & j=i,\\
\sigma_i, & j=i+1,\\
\sigma_j, & j\neq i,i+1.
\end{cases}
\]
We also write
\(
d_i(\sigma):=\sigma_i-\sigma_{i+1}\in\{0,\pm 2\}.
\)
Then the exchange energy difference is defined by
\[
\Delta_i H_\gamma(\sigma)
:=
H_\gamma(\sigma^{i,i+1})-H_\gamma(\sigma).
\]
The Ising--Kac--Kawasaki dynamics $\sigma^N(t)$ is a continuous-time Markov chain on $\Sigma_N$ generated by the energy difference from nearest-neighbour exchanges $\Delta_i H_\gamma(\sigma)$.
The exchange rate is defined as
\[
c_{i,i+1}(\sigma) := \frac{1}{1+\exp\!\bigl(\beta\,\Delta_i H_\gamma(\sigma)\bigr)}.
\]
Then the Markov generator is
\[
(\mathscr L_\gamma^{N} f)(\sigma)
=
\sum_{i \in\Lambda_N} c_{i,i+1}(\sigma)\,\bigl[f(\sigma^{i,i+1})-f(\sigma)\bigr].
\]
A key structural feature is magnetization conservation: we define the oriented microscopic current associated with bond $(i,i+1)$ as
\(
j_i(\sigma):=(\sigma_i-\sigma_{i+1})\,c_{i,i+1}(\sigma)\),
then a direct computation yields the discrete continuity equation
\[
(\mathscr L_\gamma^{N}\sigma_i)(\sigma)=j_{i-1}(\sigma)-j_i(\sigma).
\]
Hence the spin variation at site \(i\) is given by the incoming current through the left bond minus the outgoing current through the right bond, which is the natural lattice analogue of a divergence form.

For a fixed total magnetization $M(N):=\sum_{i \in \Lambda_N} \sigma(\cdot,i)$, we define the canonical sector
\[
\Sigma_{N,M}
:=
\left\{
\sigma\in\Sigma_N:\ \sum_{i\in\Lambda_N}\sigma^N(i)=M(N)
\right\}.
\]
The corresponding canonical Gibbs measure is
\[
\mu_{N,\gamma,\beta}(d\sigma)
:=
\frac{1}{Z_{N,\gamma,\beta}^{M}}
\exp\!\bigl(-\beta H_\gamma(\sigma)\bigr)
\mathbf 1_{\Sigma_{N,M}}(\sigma),
\]
where $Z_{N,\gamma,\beta}^{M}
:=
\sum_{\sigma\in\Sigma_{N,M}}\exp\!\bigl(-\beta H_\gamma(\sigma)\bigr)
\mathbf 1_{\Sigma_{N,M}}(\sigma)$ is the normalizing constant.

Since the Kawasaki exchange \(\sigma\mapsto \sigma^{i,i+1}\) preserves the total magnetization, the dynamics leaves each sector \(\Sigma_{N,M}\) invariant. Moreover, the exchange rates $c_{i,i+1}(\sigma)$ satisfy detailed balance with respect to \(\mu_{N,\gamma,\beta}\). 
Hence, the Ising--Kac--Kawasaki dynamics is reversible on each canonical sector. Furthermore, the total magnetization is strictly conserved for all $t\ge0$:
\[
\sum_{i\in\Lambda_N}\sigma^N(t,i)=\sum_{i\in\Lambda_N}\sigma^N(0,i) = M(N)
\qquad\text{for all }t\ge0.
\]

To connect to a macroscopic field, we introduce the Kac-smoothed spin field
\[
h_\gamma(\sigma;i):=\sum_{j\in\Lambda_N}\kappa_\gamma(i-j)\sigma^N(j), \quad i\in \Lambda_N,
\]
and define the Kac coarse-grained field on $\Lambda_{\varepsilon_{\gamma}}$
\[
X_\gamma(t,\varepsilon_\gamma i) = \delta_\gamma^{-1}
h_\gamma\!\left(\sigma^N \left(\frac{t}{\alpha_\gamma}\right);i\right),
\]
where the space scale $\varepsilon_\gamma := \frac{1}{2N+1}$ so that $\varepsilon_\gamma i\in \mathbb T$, and \((\alpha_\gamma,\delta_\gamma)\) are the time, and amplitude scales.
Since the Kac kernel is normalized by
\(
\sum_{z\in\Lambda_N}\kappa_\gamma(z)=1,
\)
we have $\sum_{i\in\Lambda_N}h_\gamma(\sigma;i)
=
\sum_{j\in\Lambda_N} \sigma^N(j) = M$.
Thus, the Kac coarse-grained field $X_\gamma$ also satisfies a conservation law:
\[
\langle X_\gamma(t),1\rangle_\gamma
=
\langle X_\gamma(0),1\rangle_\gamma = \frac{\varepsilon_{\gamma}}{\delta_{\gamma}} M(N)
\qquad\text{for all }t\in[0,T].
\]

For each smooth test function \(\phi\in C^\infty(\mathbb T)\), set
\[
\langle X_\gamma(t),\phi\rangle_\gamma
:=
\varepsilon_\gamma\sum_{i\in\Lambda_N}
X_\gamma(t,\varepsilon_\gamma i)\phi(\varepsilon_\gamma i).
\]
Using the Dynkin martingale decomposition, we have the canonical drift–martingale splitting:
\begin{equation}\label{eq:dynkin}
    \langle X_\gamma(t),\phi\rangle_\gamma
=
\langle X_\gamma(0),\phi\rangle_\gamma
+
\int_0^t
\Bigl(\alpha_\gamma^{-1}\mathscr L_\gamma^{N}\langle X_\gamma,\phi\rangle_\gamma\Bigr)
\Big(\sigma^N \Big(\frac{s}{\alpha_{\gamma}}\Big)\Big)\,ds
+ M_t^\gamma(\phi),
\end{equation}
where \(M_t^\gamma(\phi)\) is a Dynkin martingale.

This formula already contains the full structure of the limit problem:
the first integral term is the discrete conservative drift, while \(M^\gamma\) is the microscopic noise source.
Under the scaling considered in this work, one expects the limiting dynamics is a one-dimensional stochastic Cahn--Hilliard equation of the form
\begin{equation}
\partial_t X
=
-\nu \Delta^2 X
- A \Delta X
+ \chi \Delta(X^3)
+ \sigma_{*}\nabla\cdot\xi, \quad \int_{\mathbb T} X(\cdot,x) dx \equiv M.
\end{equation}
Here $\xi$ is space--time white noise and $\partial_x\xi$ is the conservative noise forcing.
The coefficients $(\nu,A,\chi,\sigma_*)$ are determined by the Kac kernel, the inverse temperature, and the scaling relations between $(\alpha_\gamma,\varepsilon_\gamma,\delta_\gamma)$.
Here, the fourth-order term is generated by the Kac interaction after expanding the coarse-grained field, while the second-order linear term stems from critical tuning. Furthermore, the cubic conservative drift arises from the nonlinear closure of the microscopic current, and the conservative noise $\nabla \cdot\xi$ originates from the martingale part of the microscopic dynamics.

In one space dimension, the equilibrium Gibbs measure of stochastic Cahn--Hilliard equation is the canonical \(\phi^4_1\) measure on the conserved-mass hyperplane \(V_M := \{ \phi \in L^2(\mathbb T): \langle \phi, 1 \rangle = M\}\):
\begin{equation}\label{phi41law}
    \mu (d\phi) :=
\frac{1}{Z'} \mathbf{1}_{\phi \in V_M} \exp\!\left(
-\frac{2}{\sigma_*^2} \int_{\mathbb T}
\Bigl( -\frac{A}{2}\phi(x)^2+\frac{\chi}{4}\phi(x)^4 \Bigr)dx
\right)
\mathcal{N}(M, \frac{\sigma_*^2}{2 \nu}(-\Delta)^{-1}),
\end{equation}
where $\mathcal{N}(M, \frac{\sigma_*^2}{2 \nu}(-\Delta)^{-1})$ is the Gaussian measure on $L^2(\mathbb T)$ with mean \(M\) and covariance \(\frac{\sigma_*^2}{2\nu}(-\Delta)^{-1}\) on $V_M$, and $Z'$ the normalization constant. 
Equivalently, in formal terms,
\[
\mu(d\phi)\propto
\exp\!\left(
-\frac{2}{\sigma_*^2}\mathcal F(\phi) \right)\mathbf 1_{\{ \langle \phi,1\rangle=M\}}\,\mathcal D \phi,
\]
with free energy
\[
\mathcal F(\phi)
=
\int_{\mathbb T}
\left(
\frac12| \nabla \phi(x)|^2 -\frac{A}{2}\phi(x)^2 +\frac{\chi}{4}\phi(x)^4 \right)\,dx.
\]
In one dimension, the Wick renormalization is unnecessary, and canonical \(\phi^4_1\) measure \(\mu\) is the ergodic and time-reversible measure on \(V_M \subset L^2(\mathbb T)\), see e.g. \cite{Fu91}. Moreover, it is supported on functions of spatial regularity strictly below \(C^{1/2}\), has finite polynomial moments, and is the natural invariant (indeed reversible) Gibbs measure associated with the conservative stochastic Cahn--Hilliard/Model B dynamics.

From the PDE/SPDE viewpoint, stochastic Cahn--Hilliard equations provide a rich testing ground where high-order dissipation, conservation constraints, and nonlinear potentials interact.
Early foundational work developed a semigroup/variational framework and energy estimates for
stochastic Cahn--Hilliard-type dynamics \cite{DaPratoZabczyk1992,DaPratoDebussche1995}.
More recent advances address physically relevant features such as conservative (divergence-type)
noise, singular/logarithmic potentials enforcing physical constraints, and less restrictive
growth assumptions on noise coefficients; see, for instance,
\cite{CuiHong2023,DG2024,GX2020}.
On the long-time side, ergodicity under mass conservation and singular potentials has been
investigated via probabilistic tools such as (log-)Harnack inequalities \cite{GX2020}, while
random dynamical systems methods provide an abstract and robust route to random attractors for broad classes of (locally monotone) SPDEs encompassing Cahn--Hilliard-type models \cite{GS2020}.
In particular, conservative stochastic Cahn--Hilliard equations
in two dimensions and related invariant measures are considered in
\cite{RocknerYangZhu2021}.
These results provide a robust macroscopic analytic framework, but they do not by themselves
explain how the conservative SPDE emerges from a microscopic lattice dynamics.

On the interacting-particle side, hydrodynamic and fluctuation theories for conservative systems have developed a powerful multiscale toolbox based on local equilibrium, the Boltzmann--Gibbs
principle, spectral-gap estimates, and one-block/two-block replacements
\cite{FunakiHandaUchiyama1991,KipnisLandim1999,Funaki2018HydrodynamicLimitExclusion,Spohn1991}.
These methods explain how microscopic conservative exchanges generate macroscopic conservation laws,
and they strongly suggest that the right route to stochastic Cahn–Hilliard/Model~B equation is through a careful closure of the microscopic current and of the local mobility.
Beyond weakly interacting settings, quantitative hydrodynamic limits for conservative dynamics
with strong, finite-range interactions have also become accessible \cite{KwonMenzNam2023HydrodynamicKawasakiStrong},
highlighting the reach of modern two-scale and functional inequality approaches.
However, applying this philosophy to the critical Ising--Kac--Kawasaki regime is nontrivial
because both the nonlinear drift and the noise must be extracted from conservative bond observables.

It is also useful to compare the present conservative setting with recent discrete-to-SPDE results for non-conservative (Ising--Kac--Glauber / Model~A) dynamics.
Bertini, Presutti, Rüdiger, and Saada \cite{bertini1994}, and Fritz and  Rüdiger \cite{fritz1995} have shown that the one-dimensional dynamical Ising–Kac–Glauber model converges to the stochastic Ginzburg–Landau equation.
Mourrat and Weber proved that the two-dimensional dynamical Ising--Kac--Glauber model converges to the
renormalized \(\phi^4_2\) equation \cite{MW0217}, and Grazieschi, Matetski and Weber obtained the
corresponding three-dimensional convergence to \(\phi^4_3\) \cite{GMW2025}.
Hairer and Iberti shows that the sequence of Gibbs measures for a 2D Kac-Ising model converges to the $\phi^4_2$ measure on $\mathbb T^2$ near criticality  \cite{Hairer2018}.
Zhu and Zhu established lattice approximation results for the dynamical \(\phi^4_3\) model
\cite{ZZ18}, while Gubinelli and Hofmanov{\'a} constructed the Euclidean \(\phi^4_3\) field by PDE/Dirichlet-form methods \cite{GH21}.
These works show that renormalized singular SPDEs can indeed arise as scaling limits of discrete systems, and that lattice-level counterterms reflect the structure of the continuum limit.
However, for the rescaling limit of the Ising–Kac–Kawasaki dynamics, even in the one‑dimensional case, a complete result is still lacking. We note that valuable progress has been made in \cite{Ibetri2018}, but the problem remains open.
By contrast, the conservative case requires an additional current-level analysis:
one must close a divergence-form nonlinear drift and a divergence-form noise simultaneously. 

Motivated by these developments, a central challenge remains to rigorously connect lattice Ising--Kac--Kawasaki dynamics near criticality with macroscopic SPDEs, in a way that simultaneously identifies the nonlinear conservative drift and the
divergence-form noise from the microscopic generator and its martingale part.
In this paper we aim to advance the lattice-to-SPDE direction and, building on the same multi-scale replacement philosophy, to push beyond the dynamical limit toward the associated
equilibrium and long-time theory: convergence of discrete (canonical) Gibbs measures to the macroscopic constrained $\phi^4$-type measures, and the emergence of reversibility and ergodic behavior in the scaling limit.

However, deriving this stochastic Cahn--Hilliard equation rigorously from an underlying conservative lattice dynamics remains a delicate problem: the nonlinearity in the
microscopic bond current must be closed in terms of the macroscopic field, and the martingale fluctuations must be identified as a divergence-form Gaussian noise.
Both the nonlinear drift term and the noise term must be derived from microscopic currents.

The drift term in \eqref{eq:dynkin} can be rewritten in conservative form using the microscopic bond current.  
A direct computation yields an explicit expression for the energy difference
$\Delta_i H_\gamma$ across a swap, which in turn permits a Taylor expansion of the heat-bath rate.
This produces a leading nonlinear current term containing the fast local observable
$d^2_i := (\sigma_{i+1} - \sigma_i)^2$, which is not itself a function of the coarse-grained field and must be closed by a conservative multiscale replacement argument.
This yields a closed macroscopic current of the form $X^2\nabla X$, and after taking the outer discrete divergence one obtains the cubic conservative drift $\Delta(X^3)$.
Moreover, the quadratic variation of the discrete martingales $M^\gamma(\phi)$ involves the local mobility observable
\(
q_i:= d^2_i c_{i,i+1}(\sigma) =(\sigma_{i+1} - \sigma_i)^2c_{i,i+1}(\sigma),
\)
and this must be closed by the same conservative replacement mechanism in order to produce a deterministic limiting covariance of divergence type.
A martingale central limit theorem then yields convergence to a Gaussian limit with covariance
$\int_{\mathbb T} |\nabla \phi|^2 dx$, which identifies the noise as the conservative forcing $\nabla\xi$.

In this paper we address these issues for the one-dimensional Ising-Kac model evolving under Kawasaki exchange dynamics.  
Under a suitable scaling, our goal is to show that a Kac coarse-grained field converges to a one-dimensional stochastic Cahn–Hilliard equation.

\paragraph{Assumptions and main result.}

Throughout this paper, we make the following assumptions:

\begin{enumerate}
    \item[({\bf A})](Kernel assumptions) The Kac kernel $\kappa_\gamma(z)$ is of the form
\(
\kappa_\gamma(z)=\gamma\,\mathfrak K(\gamma z),
\quad
\kappa_\gamma(z)=\kappa_\gamma(-z), 
\)
where the nonnegative function
\(
\mathfrak K\in W^{1,1}(\mathbb R)\cap W^{1,\infty}(\mathbb R), \mathfrak K(0) = 0,
\)
and
\[
\sum_{z\in\Lambda_N}\kappa_\gamma(z)=1,\quad  \mathfrak m_2:=\int_{\mathbb R}u^2\,\mathfrak K(u)\,du < \infty.
\]
\item[({\bf B})] (Scaling assumptions) Let $\alpha_\gamma$, $\delta_\gamma$, and $\varepsilon_\gamma := \frac{1}{2N+1}$ be the time, amplitude, and microscopic space scales respectively. As $\gamma \downarrow 0$, we assume the macroscopic Kac limit holds:
\[
\alpha_\gamma \sim \varepsilon_\gamma^{5/2},\qquad
\delta_\gamma \sim \varepsilon_\gamma^{1/4},\qquad
\gamma \sim \varepsilon_\gamma^{3/4}.
\]
We also assume the following precise balance limits hold for some constants $\nu, \chi, \sigma_* > 0$:
\[
\nu_\gamma := \frac{\mathfrak m_{2,\gamma}}{4} \frac{\varepsilon_\gamma^4}{\alpha_\gamma\gamma^2} \to \nu, \quad 
\chi_\gamma := \frac{\beta}{6}\frac{\varepsilon_\gamma^2 \delta_\gamma^2}{\alpha_\gamma} \to \chi, \quad 
\widetilde\sigma_\gamma^2 := \frac{\varepsilon_\gamma^3}{\alpha_\gamma\delta_\gamma^2} \to \sigma_*^2.
\]
Furthermore, to prevent the 2nd-order linear drift from blowing up, we assume the inverse temperature $\beta$ is critically tuned near the critical temperature ($\beta_c = 1$) such that:
\[
 A_\gamma := \frac{\varepsilon_\gamma^2}{\alpha_\gamma} \left( \frac12-\beta\kappa_\gamma(1) - \frac{\beta}{2} \right) \to A \in (0,\infty).
\]
This positive limit is the stable-side non-degeneracy used in the static coercivity estimates below.
\item[({\bf C})] (Initial law and entropy bound) 
Let \(\nu_{N,0} := f_{N,0}\mu_{N,\gamma,\beta}\) be the initial law of $\sigma^N(t)$, where \(\mu_{N,\gamma,\beta}\) is the Gibbs measure associated with \(H_\gamma\).
We assume that for each $N \ge 1$,
\[
H_N( \nu_{N,0}\mid \mu_{N,\gamma,\beta})\le C N.
\]
\end{enumerate}

The following main result of the present paper is that the Kac-smoothed field \(X_\gamma\) converges, in the sense of the associated martingale problem, to the one-dimensional stochastic Cahn--Hilliard / Model~B equation with conservative noise. This theorem is a unified conservative replacement scheme that closes the nonlinear current and
the mobility in parallel, leading to a complete martingale-problem identification of the 1D stochastic Cahn--Hilliard limit equations.

\begin{theorem}\label{thm:main-informal}
Assume that assumptions \(({\bf A})\)-\(({\bf C})\) hold, and after the Fourier extension, the initial data $\widetilde X_\gamma(0)\longrightarrow X_0$ in $H^{-1}(\mathbb T)$, 
and
\(
    \langle X_\gamma(0),1\rangle_\gamma \to \langle X(0),1\rangle := M
\)
as $\gamma \to 0$.
Then after Fourier extension and letting \(\gamma\to0\), the Kac coarse-grained field $X_{\gamma}$ converges in law to $X$ in $L^2(0,T;L^2(\mathbb T))\cap C([0,T],H^{-3}(\mathbb T))$, where $X \in V_M$ is the unique solution of the one-dimensional stochastic Cahn–Hilliard equation
\begin{equation}\label{eq:modelB-intro}
    \partial_t X = -\nu \Delta^2 X - A \Delta X + \chi \Delta(X^3) + \sigma_* \nabla \cdot\xi, \quad \int_{\mathbb T} X(\cdot,x)dx \equiv M.
\end{equation}
Moreover, the induced equilibrium measure $\mu_\gamma := (\widetilde X_\gamma)_\#\mu_{N,\gamma,\beta}$ of the Kac coarse-grained field converges weakly to the canonical \(\phi^4_1\) measure \(\mu\) on the conserved-mass hyperplane \( V_M \).
\end{theorem}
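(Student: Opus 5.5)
The proof follows the standard three-stage route for scaling limits of conservative lattice dynamics: tightness, identification of the limit through a martingale problem, and uniqueness. The equilibrium statement is then handled separately by a static argument.

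\textbf{Step 1: expansion of the generator.} Starting from the Dynkin decomposition \eqref{eq:dynkin}, write $\mathscr L_\gamma^N\langle X_\gamma,\phi\rangle_\gamma$ as a summation by parts against the bond currents $j_i$. Expand $\Delta_i H_\gamma(\sigma)$ explicitly as $d_i$ times a discrete gradient of $h_\gamma$ (plus the $\kappa_\gamma(1)$ self-interaction correction). Then Taylor-expand the heat-bath rate $c_{i,i+1}=\tfrac12-\tfrac14\beta\Delta_iH_\gamma+\tfrac1{48}(\beta\Delta_iH_\gamma)^3+\dots$.

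Inserting the scalings (\textbf{B}), the drift splits as follows.
\begin{itemize}
\item The symmetric exclusion part combined with the linear Kac term gives, after expanding $\kappa_\gamma*\cdot$ to second order in $\gamma^{-1}\varepsilon_\gamma$, the operators $-\nu_\gamma\Delta^2-A_\gamma\Delta$ acting on $X_\gamma$.
\item The cubic term of the rate gives $\chi_\gamma$ times a discrete divergence of $d_i^2\,(\nabla h_\gamma)^3$-type expressions.
\item All remaining terms are errors of order $o(1)$ in $L^1(\mathbb P)$.
\end{itemize}
The predictable quadratic variation of $M^\gamma(\phi)$ equals $\widetilde\sigma_\gamma^2\,\varepsilon_\gamma\sum_i q_i(\nabla_\gamma\phi)^2$ up to negligible errors.

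\textbf{Step 2: a priori bounds and tightness.} Combine the entropy bound (\textbf{C}) with the entropy inequality and the fact that entropy relative to the reversible measure $\mu_{N,\gamma,\beta}$ is nonincreasing. This reduces moment bounds to static ones under $\mu_{N,\gamma,\beta}$.

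For the static bounds, use the coercivity from $A>0$. Concretely, a Gaussian/Hubbard--Stratonovich representation of $\mu_{N,\gamma,\beta}$ in terms of $h_\gamma$ should give uniform bounds on $\mathbb E\|X_\gamma\|_{L^2}^2$, on $\|X_\gamma\|_{L^4}^4$, and on an $H^{1-}$-type seminorm.

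Next, apply Itô's formula to $\|X_\gamma\|_{H^{-1}}^2$ (the mean is fixed by conservation). This yields $\mathbb E\int_0^T\|X_\gamma\|_{H^1}^2\,dt\lesssim1$, together with control of the cubic term. With these bounds:
\begin{itemize}
\item Aldous' criterion and Burkholder--Davis--Gundy inequalities applied to the martingale give tightness in $C([0,T],H^{-3})$.
\item An Aubin--Lions--Simon compactness argument gives tightness in $L^2(0,T;L^2)$.
\end{itemize}

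\textbf{Step 3: closure and identification (main obstacle).} The key difficulty is the replacement of the fast observables $d_i^2$ and $q_i$ by their local equilibrium values, namely $d_i^2\to 2(1-h^2)$ to leading order and $q_i\to \tfrac12 d_i^2$. The deviations must be shown to produce only $o(1)$ contributions after time integration.

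The plan is a second-order Boltzmann--Gibbs principle, proved in two stages:
\begin{itemize}
\item a one-block estimate over boxes of size $\ell\ll\gamma^{-1}$, using the spectral gap of the Kawasaki dynamics (order $\ell^{-2}$) and the Kipnis--Varadhan $H_{-1}$ inequality in the equilibrium $\mu_{N,\gamma,\beta}$;
\item a multiscale doubling of box sizes up to the Kac scale $\gamma^{-1}$, where $h_\gamma$ becomes the local average.
\end{itemize}
Because the relevant prefactor $\chi_\gamma$ involves $\delta_\gamma^2$, the term $1-h^2$ contributes exactly the $X^2$ factor, which yields $X^2\nabla X\to\Delta(X^3)/3$. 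The constant part combines into $A$ and $\nu$.

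After the replacement, every limit point solves the martingale problem: $\langle X_t,\phi\rangle-\int_0^t\langle X,-\nu\Delta^2\phi-A\Delta\phi+\chi X^3\Delta\phi\rangle\,ds$ is a continuous martingale with quadratic variation $\sigma_*^2t\|\nabla\phi\|_{L^2}^2$. The identification of the quadratic variation relies on the martingale central limit theorem, since the jumps are of size $O(\delta_\gamma^{-1}\varepsilon_\gamma\gamma\|\phi'\|_\infty)\to0$.

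\textbf{Step 4: uniqueness and the equilibrium measure.} Pathwise uniqueness for \eqref{eq:modelB-intro} in 1D follows from the $H^{-1}$ monotonicity of $-\Delta(X^3)$ combined with the bi-Laplacian dissipation. Yamada--Watanabe then gives uniqueness in law, so the whole sequence converges.

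For the measures, $\mu_\gamma$ is tight by the static bounds of Step 2. Each $\mu_\gamma$ is invariant for the $X_\gamma$ dynamics, so by the convergence just proved every limit point is invariant for \eqref{eq:modelB-intro} on $V_M$. The $\phi^4_1$ measure $\mu$ is the unique invariant measure there, by strong Feller and irreducibility properties of the 1D equation. Hence $\mu_\gamma\Rightarrow\mu$.
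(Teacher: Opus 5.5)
Your overall architecture matches the paper's: Dynkin decomposition, Taylor expansion of the heat-bath rate, and a second-order Boltzmann--Gibbs replacement $d_i^2\to 2-2h_\gamma^2$ producing both the cubic drift and the $-\beta/2$ shift in $A_\gamma$. It then uses mobility replacement with a martingale CLT, compactness plus uniqueness, and stationarity to pass from $\mu_\gamma$ to an invariant measure. The genuine gap is in Step~2. It\^o's formula for $\|X_\gamma\|_{H^{-1}}^2$ cannot give $\mathbb E\int_0^T\|X_\gamma\|_{H^1}^2\,dt\lesssim 1$, and that bound is in fact false. The It\^o correction is
\[
\sum_{k\in\mathcal B_\gamma\setminus\{0\}}|k|^{-2}\langle \widehat M_\gamma(k)\rangle_t\approx \sigma_*^2(2\pi)^2\,t\sum_{k\in\mathcal B_\gamma}|\vartheta_\gamma(k)|^2\simeq t\,\frac{\gamma}{\varepsilon_\gamma}\longrightarrow\infty ,
\]
because the conservative noise $\nabla\xi$ has infinite trace in $H^{-1}$. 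Equivalently, in the limit $X=Y+Z$ the stochastic convolution $Z$ is only in $\mathcal C^{1/2-}$, and $\int_0^T\|Z\|_{H^1}^2\,dt=\infty$ a.s. A uniform $L^2(0,T;H^1)$ bound on $X_\gamma$ would therefore contradict lower semicontinuity under convergence in law. Likewise, the static covariance bound gives only $\mathbb E|\widehat X_\gamma(k)|^2\lesssim (1+k^2)^{-1}$, i.e.\ $H^{s}$ for $s<\frac12$, not $H^{1-}$. Without these bounds, your Aubin--Lions step in $L^2(0,T;L^2)$ is unsupported. So is the strong $L^3$ convergence needed to pass to the limit in $\langle X_\gamma^3,\Delta\phi\rangle$. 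The paper states explicitly that the direct energy method on $X_\gamma$ is not the right route.

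The missing idea is the Da Prato--Debussche splitting $X_\gamma=Y_\gamma+Z_\gamma$. The discrete stochastic convolution $Z_\gamma=\int_0^tS_\gamma(t-r)\,dM_\gamma(r)$ is controlled in $C([0,T],\mathcal C^\alpha)$, $\alpha<\frac12$, using only the uniform mode bound $\langle\widehat M_\gamma(k)\rangle_T\lesssim|k|^2$, BDG and Kolmogorov, and its limit is identified mode by mode. The $H^{-1}$ energy estimate is then run for $Y_\gamma$, whose equation contains no martingale and hence no It\^o correction. The inequality $(Y+Z)^3Y\ge c|Y|^4-C|Z|^4$ supplies $L^2H^1\cap L^4$ bounds, so Aubin--Lions is applied to $Y_\gamma$ alone.

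Two smaller points. First, monotonicity of entropy plus the entropy inequality at level $H\le CN$ does not transfer fluctuation-scale moments of $X_\gamma$ from equilibrium, since that would require exponential moments at speed $N$. The paper instead uses entropy decay at rate $\varepsilon_\gamma^2/\alpha_\gamma$ after a short initial layer, together with Pinsker, and a Feynman--Kac/Kipnis--Varadhan bound for time-integrated errors. Second, in Step~1 the cubic drift does not come from the cubic term of the rate. Those terms are $O(\gamma^3)$ per bond and negligible since $\gamma^4/(\alpha_\gamma\delta_\gamma)\to0$. It comes from the first-order term $\frac\beta4 d_i^2(h_{\gamma,i+1}-h_{\gamma,i})$ after $d_i^2\to2-2h_\gamma^2$, as your Step~3 correctly says. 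Your static route to tightness of $\mu_\gamma$ is fine, and simpler than the paper's dynamic one, once the correct $H^{1/2-}$ bound is used.
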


The convergence result in Theorem \ref{thm:main-informal} relies on a particular balance among the time, amplitude, and Kac interaction scales.
In the nonlinear drift generated by the local fluctuation observable \(d_i^2\), the effective size is proportional to \(\frac{\varepsilon_\gamma^3}{\alpha_\gamma}|\Lambda_N|\delta_\gamma^2\). Since \(|\Lambda_N|\sim\varepsilon_\gamma^{-1}\), this gives the order \(\frac{\varepsilon_\gamma^2\delta_\gamma^2}{\alpha_\gamma}\), which is \(O(1)\) under Assumption \(({\bf B})\).
Thus the chosen scaling is one consistent regime in which the cubic conservative drift remains visible in the limit.
At the same time, the Kac range \(\gamma\) is chosen so that the residual terms from the second-order Boltzmann--Gibbs replacement vanish. In particular, the integrated residual error estimated below is of order
\(\frac{\varepsilon_\gamma\gamma^3}{\alpha_\gamma\delta_\gamma}\), which tends to zero under the present assumptions.

The critical \(5/2\) scaling limits \( \alpha_\gamma \sim \varepsilon_\gamma^{5/2}, \delta_\gamma \sim \varepsilon_\gamma^{1/4}, \gamma \sim \varepsilon_\gamma^{3/4}\) in Assumption \(({\bf B})\) should therefore be read as a convenient representative scaling regime satisfying
\[
\varepsilon_\gamma\to0,\qquad
\frac{\varepsilon_\gamma}{\gamma}\to0,\qquad
\frac{\alpha_\gamma}{\gamma}\to0, \quad \text{as} \quad \gamma\to 0.
\]
Heuristically, the temporal scaling \(\alpha_\gamma\sim\varepsilon_\gamma^{5/2}\) corresponds to a dynamical exponent \(z=5/2\). This is slower than the diffusive \(z=2\) scale and is consistent with the additional constraint imposed by Kawasaki conservation near criticality.

It is instructive to compare this choice with the classical results for Ising--Kac--Glauber dynamics by Bertini, Presutti, Rüdiger, and Saada \cite{bertini1994}, and Fritz and Rüdiger \cite{fritz1995}. While the spatial scaling \(\gamma \sim \varepsilon^{3/4}\) and the fluctuation amplitude \(\delta\sim\varepsilon^{1/4}\) are the same as in that non-conservative setting, the time scale changes from \(\alpha\sim\varepsilon^{1/2}\) for Glauber dynamics to \(\alpha\sim\varepsilon^{5/2}\) here. This difference reflects the slower relaxation caused by the $H^{-1}$ type gradient flow structure in the Cahn-Hilliard equitation.

Throughout the paper, all asymptotic notation is understood as \(\gamma\downarrow0\), equivalently along \(\varepsilon_\gamma\downarrow0\).
We write \(a_\gamma\sim b_\gamma\) if
\(a_\gamma/b_\gamma\to1\); for instance \(\alpha_\gamma\sim\varepsilon_\gamma^{5/2}\) means \(\alpha_\gamma/\varepsilon_\gamma^{5/2}\to1\). 
We use the notation $a\lesssim b$ if there exists a constant $C>0$, independent of the variables under consideration, so that $a \leq C\cdot b$, and we denote $a\simeq b$ if $a\lesssim b$ and $b\lesssim a$. 
We write \(a_\gamma=o(b_\gamma)\) if
\(a_\gamma/b_\gamma\to0\), and in particular \(o(1)\) denotes a quantity which vanishes as \(\gamma\downarrow0\).
We write \(\mathbb T=\mathbb R/\mathbb Z\), \(x_i=\varepsilon_\gamma i\), and, for lattice functions on \(\Lambda_{\varepsilon_\gamma}\),
\[
\langle f,g\rangle_\gamma:=\varepsilon_\gamma\sum_{i\in\Lambda_N}f(x_i)g(x_i),
\qquad
\|f\|_{p,\gamma}:=\bigl(\varepsilon_\gamma\sum_{i\in\Lambda_N}|f(x_i)|^p\bigr)^{1/p}
\]
with the usual sup-norm convention when \(p=\infty\).  We use \(\nabla_{\varepsilon_\gamma}\) and \(\Delta_{\varepsilon_\gamma}:=\nabla_{\varepsilon_\gamma}^*\nabla_{\varepsilon_\gamma}\) for the discrete gradient and Laplacian, and \(\widetilde f_\gamma:=\mathrm{Ext}_\gamma f\) for the Fourier extension to \(\mathbb T\).
We denote $\Delta = \partial_x^2$ and $\nabla = \partial_x$.
The Fourier transform on the torus $\mathbb{T}$ is defined with $\hat{f}(k):=(\mathscr{F}_{\mathbb{T}}f)(k)= \int_{\mathbb{T}}e^{-2\pi i k x}f(x)dx$, and the inverse Fourier transform on the torus $\mathbb{T}$ is given by $(\mathscr{F}^{-1}_{\mathbb{T}}\hat{f})(x) = \sum_{k \in \mathbb{Z}}e^{2\pi i kx}\hat{f}(k)$. 
For any $u \in \mathcal{S}'(\mathbb{T})$ and $j\geq -1$, we denote $\Delta_{j}u$ as the $j$th Littlewood-Paley block of $u$. For Besov spaces and Sobolev spaces with $\alpha \in \mathbb R$, we write $C^{\alpha}:=B^{\alpha}_{\infty,\infty}(\mathbb{T})$ as the H\"{o}lder-Besov space, and $H^{\alpha} := B^{\alpha}_{2,2}(\mathbb{T})$ as the Sobolev space. We also use the convention
\(
d\eta
:=
\prod_{u\in\Lambda_\ell}
\bigl(\delta_{-1}(d\eta_u)+\delta_1(d\eta_u)\bigr)
\)
for the counting measure on \(\{-1,1\}^{\Lambda_\ell}\).

The rest of this paper is organized as follows: 
Section~\ref{sec:prel} introduces the Ising--Kac--Kawasaki dynamics, the Dirichlet forms, and basic well-posedness results of one-dimensional stochastic Cahn–Hilliard equation.
Section~\ref{sec:current} rewrites the drift in current form and performs the Taylor expansion leading to the nonlinear current term.
Section~\ref{sec:clean-second-order-bg} establish the second-order Boltzmann--Gibbs Principle and derive the cubic conservative term.
Section~\ref{sec:noise} computes the quadratic variation of the Dynkin martingale, proves the mobility replacement, and identifies the conservative Gaussian noise. We establish the $H^{-1}$ energy estimates for $Y_{\gamma}$ in Section~\ref{subsec:energy-estimates}.
Finally, Section~\ref{sec:limit} combines tightness and identification to conclude the proof of the main theorem.

\section{Preliminary}\label{sec:prel}

\subsection{Fourier extension from the lattice to the torus}
\label{subsec:fourier-extension}

To place the whole argument in a fixed function space over \(\mathbb T\), we need to identify lattice fields
with trigonometric polynomials via discrete Fourier interpolation, following the Fourier-model viewpoint
used in weakly nonlinear lattice-to-SPDE arguments.

Let
\(
\Lambda_{\varepsilon_\gamma}:=\varepsilon_\gamma \mathbb Z/\mathbb Z
=
\Bigl\{-\frac{1}{2}, -\frac{1}{2} + \varepsilon_\gamma, \dots, -\varepsilon_\gamma, 0,\varepsilon_\gamma,\dots, \frac{1}{2}\Bigr\}.
\)
We also define the finite-dimensional trigonometric polynomial space
\[
\mathcal T_\gamma
:=
\mathrm{span}\{e_k(x):=e^{2\pi i kx},\ k\in\mathbb Z_\gamma\}
\subset C^\infty(\mathbb T).
\]
For a lattice function \(f:\Lambda_{\varepsilon_\gamma}\to\mathbb C\), define its discrete Fourier coefficients by
\[
\widehat f_\gamma(k):=
\varepsilon_\gamma\sum_{j\in\Lambda_{\varepsilon_\gamma}}f(j)e^{-2\pi i kj},
\quad k\in\mathcal B_\gamma.
\]
where
\(
\mathcal B_\gamma:=\{k\in\mathbb Z:\ |k|\le c\,\gamma^{-1}\}
\)
is the Brillouin zone.
Its Fourier extension to \(\mathbb T\) is
\[
(\mathrm{Ext}_\gamma f)(x)
:=
\sum_{k\in\mathcal B_\gamma}\widehat f_\gamma(k)e_k(x),\quad
e_k(x):=e^{2\pi i kx}.
\]

Let \(x_i:=\varepsilon_\gamma i \in \mathbb T \), and define the discrete gradient and Laplacian as
\[
\nabla_{\varepsilon_\gamma} f(x_i)
:=
\frac{f(x_{i+1})-f(x_i)}{\varepsilon_\gamma},
\quad
\Delta_{\varepsilon_\gamma}:=\nabla_{\varepsilon_\gamma}^*\nabla_{\varepsilon_\gamma}, \quad f \in \mathcal T_\gamma.
\]
We also set
\(
A_\gamma:=-\Delta_{\varepsilon_\gamma},
\)
Define the extended discrete Laplacian \(\Delta_{\varepsilon_\gamma}\) on \(\mathcal T_\gamma\) by Fourier multiplier:
\[
 \Delta_{\varepsilon_\gamma} e_k =  - \frac{4}{\varepsilon_\gamma^2}\sin^2(\pi k\varepsilon_\gamma)e_k,
\qquad k\in \mathcal{B}_\gamma.
\]
This is the torus realization of \(A_\gamma=-\Delta_{\varepsilon_\gamma}\).

Now we show the symbol comparison of \(A_\gamma=-\Delta_{\varepsilon_\gamma}\) and second-order consistency.

\begin{lemma}\label{lem:symbol_bilaplacian}
Let
\(
\lambda(k):=(2\pi k)^4,
\)
and \(\lambda_\gamma(k):=\frac{16}{\varepsilon_\gamma^4}\sin^4(\pi k\varepsilon_\gamma)\).
Then for every $k \in \mathcal{B}_{\gamma}$ one has
\[
c\,|k|^4\le \lambda_\gamma(k)\le C\,|k|^4, \quad \text{and} \quad |\lambda_\gamma(k)-\lambda(k)|
\le
C\,\varepsilon_{\gamma}^2 |k|^6.
\]
where the constants $c,C>0$ are independent of $\varepsilon_{\gamma}$ and $k$.
\end{lemma}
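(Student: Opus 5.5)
The plan is to reduce everything to elementary estimates on the scalar function $\theta\mapsto\sin\theta$, with $\theta:=\pi k\varepsilon_\gamma$. The key observation is that on the Brillouin zone we have $|k|\le c\gamma^{-1}$. Since $\gamma\sim\varepsilon_\gamma^{3/4}$ by Assumption (\textbf{B}), this gives $|\theta|\le \pi c\,\varepsilon_\gamma/\gamma\to0$. In particular, for $\gamma$ small enough we may assume $|\theta|\le \pi/2$. For the finitely many remaining $\gamma$ one can simply shrink the constant $c$ in the definition of $\mathcal B_\gamma$, or use the fact that $|k\varepsilon_\gamma|\le 1/2$ always holds for $k$ in a fundamental domain of $\mathbb Z/(2N+1)\mathbb Z$. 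In either case we work on $|\theta|\le\pi/2$ throughout.

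\textbf{Two-sided bound.} On $|\theta|\le \pi/2$, Jordan's inequality gives
\[
\frac{2}{\pi}|\theta|\le|\sin\theta|\le|\theta|.
\]
Raising this to the fourth power and multiplying by $16/\varepsilon_\gamma^4$ yields
\[
\frac{16\cdot 16}{\pi^4}\,\frac{\pi^4k^4\varepsilon_\gamma^4}{\varepsilon_\gamma^4}\le\lambda_\gamma(k)\le 16\pi^4k^4.
\]
This is exactly $c|k|^4\le\lambda_\gamma(k)\le C|k|^4$ with $c=256$ and $C=16\pi^4=\lambda(k)/k^4$. The constants do not depend on $\varepsilon_\gamma$ or $k$.

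\textbf{Consistency estimate.} Write $\lambda(k)=16\theta^4/\varepsilon_\gamma^4$, so that
\[
\lambda_\gamma(k)-\lambda(k)=\frac{16}{\varepsilon_\gamma^4}\bigl(\sin^4\theta-\theta^4\bigr).
\]
Factor the difference as
\[
\sin^4\theta-\theta^4=(\sin\theta-\theta)(\sin\theta+\theta)(\sin^2\theta+\theta^2).
\]
Taylor's theorem with remainder gives $|\sin\theta-\theta|\le|\theta|^3/6$ for all real $\theta$, while the other two factors are bounded by $2|\theta|$ and $2\theta^2$. Hence
\[
|\sin^4\theta-\theta^4|\le\tfrac{2}{3}|\theta|^6,
\]
and therefore
\[
|\lambda_\gamma(k)-\lambda(k)|\le\frac{32}{3}\,\frac{\pi^6|k|^6\varepsilon_\gamma^6}{\varepsilon_\gamma^4}=C\varepsilon_\gamma^2|k|^6.
\]
This second estimate holds for every $k$; it does not even need the restriction to $\mathcal B_\gamma$.

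\textbf{Main point of care.} The only real issue is ensuring $|\theta|$ stays bounded away from $\pi$, so that $\sin\theta$ does not vanish and the lower bound survives. This is where membership in $\mathcal B_\gamma$ enters, together with either $\varepsilon_\gamma/\gamma\to0$ or the choice of $k$ as representatives with $|k|\le N$. I would state this explicitly at the start of the proof. The rest is routine one-variable calculus.
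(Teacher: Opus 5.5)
Your proof is correct and follows the same route as the paper: Jordan's inequality $\frac{2}{\pi}|y|\le|\sin y|\le|y|$ on $|y|\le\pi/2$ gives the two-sided bound, and a Taylor expansion of $\sin$ gives the consistency estimate. Your version is slightly more careful on two points. You justify $|\pi k\varepsilon_\gamma|\le\pi/2$ via $\varepsilon_\gamma/\gamma\to0$ or $|k|\le N$, whereas the paper's appeal to $|k|\le\varepsilon_\gamma^{-1}$ only gives $|\pi k\varepsilon_\gamma|\le\pi$. Your factorization $\sin^4\theta-\theta^4=(\sin\theta-\theta)(\sin\theta+\theta)(\sin^2\theta+\theta^2)$ also yields the uniform bound $\frac23|\theta|^6$ rather than only an asymptotic $O(\theta^6)$.
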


\begin{proof}
For $|k|\le 2N+1 =(\varepsilon_{\gamma})^{-1}$ we have
$|\pi\varepsilon k|\le \pi/2$. Hence
\(
\frac{2}{\pi}|y|\le |\sin y|\le |y|, |y|\le \frac{\pi}{2}.
\)

With $y=\pi\varepsilon_{\gamma} k$, this yields
\[
16\varepsilon_{\gamma}^{-4}\Bigl(\frac{2}{\pi}\pi\varepsilon_{\gamma} |k|\Bigr)^4
\le
\lambda_{\varepsilon_{\gamma}}(k)
\le
16\varepsilon^{-4}(\pi \varepsilon_{\gamma} |k|)^4.
\]
For the consistency estimate, note that
\( \sin y=y-\frac{y^3}{6}+O(y^5)\), as \( y\to 0\).
Therefore
\[
\lambda_\gamma(k)
=
16\varepsilon_{\gamma}^{-4}\bigl((\pi\varepsilon_{\gamma} k)^4+O((\pi\varepsilon_{\gamma} k)^6)\bigr)
=
(2\pi k)^4 + O(\varepsilon_{\gamma}^2 |k|^6).
\]
This proves the claim.
\end{proof}

Now we show the compatibility of the extension with the discrete Laplacian.

\begin{lemma}
\label{lem:fourier-extension-operator}
For every lattice field \(u\) and every polynomial \(P\),
\(
 \mathrm{Ext}_\gamma(P(A_\gamma)u)=P( A_\gamma) \mathrm{Ext}_\gamma u.
\)
\end{lemma}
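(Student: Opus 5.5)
The identity follows by reduction to the case $P(A_\gamma)=A_\gamma$, then induction on the degree and linearity. The key observation is that both the lattice operator $A_\gamma=-\Delta_{\varepsilon_\gamma}$ and its torus realization on $\mathcal T_\gamma$ are diagonal in the Fourier basis $e_k$, $k\in\mathcal B_\gamma$, with the same symbol $\frac{4}{\varepsilon_\gamma^2}\sin^2(\pi k\varepsilon_\gamma)$. The lemma therefore amounts to checking that the discrete Fourier coefficients of $A_\gamma u$ are this symbol times $\widehat u_\gamma(k)$.

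\emph{First step: the lattice Laplacian in Fourier variables.} Here $\nabla_{\varepsilon_\gamma}^*$ is the adjoint with respect to $\langle\cdot,\cdot\rangle_\gamma$, namely $\nabla^*_{\varepsilon_\gamma}f(x_i)=(f(x_{i-1})-f(x_i))/\varepsilon_\gamma$. I will compute $\widehat{(\nabla_{\varepsilon_\gamma}u)}_\gamma(k)$ by the summation shift $j\mapsto j-\varepsilon_\gamma$, which is legitimate because $\Lambda_{\varepsilon_\gamma}$ is a discrete torus and $e^{-2\pi i k j}$ is $1$-periodic in $j$ for integer $k$. This gives
\[
\widehat{(\nabla_{\varepsilon_\gamma}u)}_\gamma(k)=\varepsilon_\gamma^{-1}\bigl(e^{2\pi i k\varepsilon_\gamma}-1\bigr)\widehat u_\gamma(k).
\]
In the same way, $\nabla^*_{\varepsilon_\gamma}$ contributes the factor $\varepsilon_\gamma^{-1}(e^{-2\pi i k\varepsilon_\gamma}-1)$. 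Taking the product yields
\[
\widehat{(A_\gamma u)}_\gamma(k)=\frac{4}{\varepsilon_\gamma^2}\sin^2(\pi k\varepsilon_\gamma)\,\widehat u_\gamma(k),
\]
since $|e^{i\theta}-1|^2=4\sin^2(\theta/2)$. The sign convention is consistent, because $A_\gamma=\nabla^*\nabla\ge0$ matches $-\Delta_{\varepsilon_\gamma}e_k=+\frac4{\varepsilon_\gamma^2}\sin^2(\cdot)e_k$.

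\emph{Second step: the extension.} By definition,
\[
\mathrm{Ext}_\gamma(A_\gamma u)=\sum_{k\in\mathcal B_\gamma}\frac{4}{\varepsilon_\gamma^2}\sin^2(\pi k\varepsilon_\gamma)\widehat u_\gamma(k)e_k .
\]
Applying the multiplier definition of $A_\gamma$ on $\mathcal T_\gamma$ termwise to $\mathrm{Ext}_\gamma u=\sum_k\widehat u_\gamma(k)e_k$ gives exactly the same sum. Hence $\mathrm{Ext}_\gamma A_\gamma=A_\gamma\mathrm{Ext}_\gamma$. Since $A_\gamma$ maps $\mathcal T_\gamma$ into itself (it is diagonal on the $e_k$), iterating gives $\mathrm{Ext}_\gamma A_\gamma^m u=A_\gamma^m\mathrm{Ext}_\gamma u$ for every $m$. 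The general polynomial case then follows by linearity of $\mathrm{Ext}_\gamma$.

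\emph{Main obstacle.} The only delicate point is consistency of the frequency set. The coefficients $\widehat f_\gamma(k)$ are only $(2N+1)$-periodic in $k$, and $\mathrm{Ext}_\gamma$ truncates to $\mathcal B_\gamma$. The argument, however, never uses inversion or aliasing, only the identity between coefficients at each fixed $k\in\mathcal B_\gamma$. That identity holds for every integer $k$, so the truncation commutes trivially with a diagonal multiplier. I would remark on this explicitly to avoid any appeal to $\mathcal B_\gamma$ being a full period.
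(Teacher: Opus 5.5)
Your proposal is correct and follows the paper's own route: the lattice $A_\gamma$ and its torus realization are both diagonal in $e_k$, $k\in\mathcal B_\gamma$, with the common symbol $\frac{4}{\varepsilon_\gamma^2}\sin^2(\pi k\varepsilon_\gamma)$, so the identity holds mode by mode and extends to polynomials by iteration and linearity. Your summation-shift computation makes explicit what the paper only asserts; its proof even names the symbol $\lambda_\gamma(k)$, which is actually the symbol of $A_\gamma^2$.

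One sign slip: by definition $A_\gamma=-\Delta_{\varepsilon_\gamma}=-\nabla^*_{\varepsilon_\gamma}\nabla_{\varepsilon_\gamma}$, not $+\nabla^*_{\varepsilon_\gamma}\nabla_{\varepsilon_\gamma}$. Your sign check works only because the paper's $\nabla^*_{\varepsilon_\gamma}$ is the backward-difference divergence $(f(x_i)-f(x_{i-1}))/\varepsilon_\gamma$, which is minus the $\langle\cdot,\cdot\rangle_\gamma$-adjoint, as in the Dynkin formula with $-\nabla^*_{\varepsilon_\gamma}\mathcal J_\gamma$. With that reading $\Delta_{\varepsilon_\gamma}$ is the usual second difference with symbol $-\frac{4}{\varepsilon_\gamma^2}\sin^2(\pi k\varepsilon_\gamma)$, consistent with the multiplier definition on $\mathcal T_\gamma$.
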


\begin{proof}
The operator \(A_\gamma\) is diagonal in the discrete Fourier basis, with symbol \(\lambda_\gamma(k)\).
Since \(\mathrm{Ext}_\gamma\) preserves discrete Fourier coefficients, the equality
\(
 \mathrm{Ext}_\gamma(A_\gamma u)=  A_\gamma \mathrm{Ext}_\gamma u
\)
holds modewise. The polynomial functional calculus then follows immediately.
\end{proof}

For \(U=\sum_{k\in\mathbb Z_\gamma}\widehat U(k)e_k\in\mathcal T_\gamma\), we define the Sobolev norms 
\[
\|U\|_{H^s(\mathbb T)}^2
:=
\sum_{k\in\mathbb Z_\gamma}(1+|k|^2)^s|\widehat U(k)|^2.
\]
On the lattice side, define the discrete Sobolev norms by
\[
\|u\|_{H^s_\gamma}^2
:=
\sum_{k\in\mathbb Z_\gamma}(1+\lambda_\gamma(k))^s|\widehat u_\gamma(k)|^2.
\]

Let \((\varrho_q)_{q\ge -1}\) be a standard smooth dyadic partition of unity with support \(\mathcal A_j \subset \mathbb Z\), and let
\(
\Delta_j u
:=
\sum_{k\in\mathbb Z}\varrho_q(k)\widehat u(k)e_k
\)
be the Littlewood--Paley blocks on \(\mathbb T\).
We define the Besov norm on \(\mathbb T\) by
\[
\|u\|_{B^\beta_{p,q}}
:= \left( \sum_{j \ge -1}2^{\beta q j }\|\Delta_j u\|_{L^p(\mathbb T)}^q \right)^{1/q}.
\]
For \(\beta\in\mathbb R\) and \(1\le p,q<\infty\), define the discrete Besov norm by
\[
\|f\|_{B^\beta_{p,q,\gamma}}
:=
\|\mathrm{Ext}_\gamma f\|_{B^\beta_{p,q}(\mathbb T)}
=
\left(
\sum_{j \ge -1}2^{\beta qj}\|\Delta_j \mathrm{Ext}_\gamma f\|_{L^p(\mathbb T)}^q
\right)^{1/q}.
\]

\begin{lemma}[Equivalence of discrete and continuum Sobolev norms]
\label{lem:fourier-norm-equivalence}
For every fixed \(s\in[-2,2]\), there exist constants \(c_s,C_s>0\), independent of \(\gamma\), such that
for every lattice field \(u\),
\[
c_s\,\|u\|_{H^s_\gamma}
\le
\| \mathrm{Ext}_\gamma u\|_{H^s(\mathbb T)}
\le
C_s\,\|u\|_{H^s_\gamma}.
\]
In particular,
\[
\|\mathrm{Ext}_\gamma u\|_{H^{-1}(\mathbb T)}\asymp \|u\|_{-1,\gamma},
\qquad
\|\mathrm{Ext}_\gamma u\|_{H^1(\mathbb T)}\asymp \|u\|_{1,\gamma}.
\]
\end{lemma}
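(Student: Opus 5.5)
The plan is to reduce the claim to a mode-by-mode comparison of Fourier weights, using that $\mathrm{Ext}_\gamma$ preserves discrete Fourier coefficients. By construction, $\widehat{(\mathrm{Ext}_\gamma u)}(k)=\widehat u_\gamma(k)$ for $k\in\mathcal B_\gamma$, and the extension has no other modes. Hence
\[
\|\mathrm{Ext}_\gamma u\|_{H^s(\mathbb T)}^2=\sum_{k\in\mathcal B_\gamma}(1+|k|^2)^s|\widehat u_\gamma(k)|^2,
\qquad
\|u\|_{H^s_\gamma}^2=\sum_{k\in\mathcal B_\gamma}(1+\lambda_\gamma(k))^s|\widehat u_\gamma(k)|^2 .
\]
It therefore suffices to prove the pointwise weight equivalence
\[
c\,(1+|k|^2)^s\le (1+\lambda_\gamma(k))^s\le C\,(1+|k|^2)^s,\qquad k\in\mathcal B_\gamma,\ s\in[-2,2],
\]
with constants independent of $\gamma$. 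I read the index sets $\mathbb Z_\gamma$ and $\mathcal B_\gamma$ as the same set, and I note that $\lambda_\gamma$ in the discrete norm must be the symbol of the operator whose power is taken. The norm is meaningful only if $\lambda_\gamma(k)$ behaves like $|k|^2$ (the $A_\gamma$ symbol $\frac{4}{\varepsilon_\gamma^2}\sin^2(\pi k\varepsilon_\gamma)$), or else the weight is understood as $(1+\lambda_\gamma(k)^{1/2})^{s}$ with the bi-Laplacian symbol of Lemma~\ref{lem:symbol_bilaplacian}. I will state this normalization explicitly and work with $\mu_\gamma(k):=\frac{4}{\varepsilon_\gamma^2}\sin^2(\pi k\varepsilon_\gamma)$, since $\mu_\gamma(k)^2=\lambda_\gamma(k)$.

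The key step is the two-sided bound $c|k|^2\le\mu_\gamma(k)\le C|k|^2$ on $\mathcal B_\gamma$. It comes from the argument of Lemma~\ref{lem:symbol_bilaplacian}: since $|k|\le c\gamma^{-1}\le \varepsilon_\gamma^{-1}/2$ for $\gamma$ small (because $\varepsilon_\gamma/\gamma\to0$ by ({\bf B})), we have $|\pi k\varepsilon_\gamma|\le\pi/2$. The Jordan inequality $\frac{2}{\pi}|y|\le|\sin y|\le|y|$ then gives $16|k|^2\le\mu_\gamma(k)\le 4\pi^2|k|^2$. Equivalently, this is the square root of the quartic bound $c|k|^4\le\lambda_\gamma(k)\le C|k|^4$ already proved. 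Adding $1$ yields $\min(1,16)(1+|k|^2)\le 1+\mu_\gamma(k)\le 4\pi^2(1+|k|^2)$.

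Next, raise this to the power $s$. For $s\ge0$ the inequalities are preserved with constants $\min(1,16)^s$ and $(4\pi^2)^s$. For $s<0$ they reverse, giving constants $(4\pi^2)^{s}$ and $\min(1,16)^{s}$. Over $s\in[-2,2]$ all these constants are bounded above and below uniformly, since the bound is really uniform in $|s|\le 2$. Summing against $|\widehat u_\gamma(k)|^2$ and taking square roots gives the claimed equivalence with $c_s,C_s$ independent of $\gamma$. The special cases $s=\pm1$ follow immediately.

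There is no real analytic obstacle. The only delicate point is bookkeeping: the extension's Fourier support must lie in the region where $|\pi k\varepsilon_\gamma|\le\pi/2$ and the sine comparison holds. This is guaranteed by the Brillouin cutoff $|k|\le c\gamma^{-1}$ together with $\varepsilon_\gamma\ll\gamma$. Without the cutoff, with modes up to $|k|\sim\varepsilon_\gamma^{-1}$, one would still have $|\pi k\varepsilon_\gamma|\le\pi/2$ for the symmetric range $|k|\le N$, so the argument is robust.
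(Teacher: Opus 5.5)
The paper states this lemma without proof, and your argument is correct and is exactly the reasoning the paper's surrounding lemmas presuppose: $\mathrm{Ext}_\gamma$ preserves the discrete Fourier coefficients on $\mathcal B_\gamma$, so everything reduces to the mode-wise comparison $16|k|^2\le \frac{4}{\varepsilon_\gamma^2}\sin^2(\pi k\varepsilon_\gamma)\le 4\pi^2|k|^2$, obtained from Jordan's inequality as in Lemma~\ref{lem:symbol_bilaplacian}. Your two readings are not optional conventions but are needed for the statement to hold: with the quartic $\lambda_\gamma$ as literally defined one has $(1+\lambda_\gamma(k))^s\asymp(1+|k|^2)^{2s}$, so $\|u\|_{H^s_\gamma}$ is comparable to $\|\mathrm{Ext}_\gamma u\|_{H^{2s}(\mathbb T)}$ and the stated equivalence fails for every $s\neq0$ on a single mode with $|k|\sim c\gamma^{-1}$; likewise, if the discrete norm summed over all lattice frequencies $|k|\le N$, the lower bound would fail on fields supported in $c\gamma^{-1}<|k|\le N$, which $\mathrm{Ext}_\gamma$ annihilates.
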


Now we show the approximation and block-wise estimates of semigroup.

\begin{lemma}\label{ass:semigroup}
Let $\lim_{\gamma \to 0}\nu_{\gamma} = \nu$, 
\(
S_\gamma(t):=e^{-t\nu_{\gamma}\Delta_{\varepsilon_\gamma}^2}\), and \(S(t):=e^{-t\nu\Delta^2}.\)
Under Assumption \( ({\bf A}) \), the following hold.

\begin{enumerate}
\item[(i)] For every fixed \(K>0\),
\[
\sup_{|k|\le K}\sup_{0\le t\le T}
\bigl|e^{-t\lambda_\gamma(k)}-e^{-t(2\pi k)^4}\bigr|
\longrightarrow0.
\]

\item[(ii)] For every dyadic block \(q\ge -1\),
\[
\sup_{\gamma}\sup_{k\in \mathrm{supp}\,\varrho_q\cap\mathcal B_\gamma}
\int_0^T e^{-2(T-r)\lambda_\gamma(k)}|k|^2\,dr
\lesssim 2^{-q}.
\]

\item[(iii)] There exists \(\vartheta\in(0,\frac14)\) such that for every \(q\ge -1\),
\[
\sup_{\gamma}
\sup_{k\in\mathrm{supp}\,\varrho_q\cap\mathcal B_\gamma}
\int_0^s
\bigl|e^{-(t-r)\lambda_\gamma(k)}-e^{-(s-r)\lambda_\gamma(k)}\bigr|^2|k|^2\,dr
\le
C|t-s|^\vartheta 2^{-q(2-4\vartheta)},
\]
and
\[
\sup_{\gamma}
\sup_{k\in\mathrm{supp}\,\varrho_q\cap\mathcal B_\gamma}
\int_s^t
e^{-2(t-r)\lambda_\gamma(k)}|k|^2\,dr
\le
C|t-s|^\vartheta 2^{-q(2-4\vartheta)}.
\]
\end{enumerate}
\end{lemma}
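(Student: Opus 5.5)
All three parts are mode-by-mode scalar estimates, so the plan is to work throughout with the two-sided symbol bound of Lemma~\ref{lem:symbol_bilaplacian}, $c|k|^4\le\lambda_\gamma(k)\le C|k|^4$ on $\mathcal B_\gamma$, and its consistency bound $|\lambda_\gamma(k)-(2\pi k)^4|\le C\varepsilon_\gamma^2|k|^6$. The semigroup $S_\gamma$ has symbol $e^{-t\nu_\gamma\lambda_\gamma(k)}$. Since $\nu_\gamma\to\nu>0$, the factor $\nu_\gamma$ is bounded above and below uniformly in small $\gamma$ and can be absorbed into the constants. I therefore write $\lambda_\gamma$ for $\nu_\gamma\lambda_\gamma$ as in the statement, and read the limit symbol in (i) as $\nu(2\pi k)^4$.

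\emph{Part (i).} I would use $|e^{-a}-e^{-b}|\le|a-b|$ for $a,b\ge0$. For $|k|\le K$ and $t\le T$ this gives
\[
\bigl|e^{-t\nu_\gamma\lambda_\gamma(k)}-e^{-t\nu(2\pi k)^4}\bigr|
\le T\bigl(|\nu_\gamma-\nu|\,\lambda_\gamma(k)+\nu|\lambda_\gamma(k)-(2\pi k)^4|\bigr)
\le T\bigl(C|\nu_\gamma-\nu|K^4+C\varepsilon_\gamma^2K^6\bigr).
\]
The right-hand side tends to $0$ uniformly in $t$ and $|k|\le K$. For $k=0$ both exponentials equal $1$.

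\emph{Part (ii).} For $k\neq0$ I would integrate exactly:
\[
\int_0^T e^{-2(T-r)\lambda_\gamma(k)}|k|^2\,dr\le \frac{|k|^2}{2\lambda_\gamma(k)}\le C|k|^{-2}.
\]
On $\mathrm{supp}\,\varrho_q$ with $q\ge0$ we have $|k|\simeq2^q$, so the bound is $\lesssim2^{-2q}\le2^{-q}$. For $q=-1$ the support is a bounded set of $k$, and the integral is trivially bounded by $T\sup|k|^2$, which is $\lesssim 1 \simeq 2^{-q}$.

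\emph{Part (iii).} This is the main step, a time-regularity interpolation. I take $s<t$ and set $h=t-s$. For the second integral, I would use two bounds. The first is $\int_s^t e^{-2(t-r)\lambda}|k|^2\,dr\le |k|^2/(2\lambda)\lesssim|k|^{-2}$. The second is the trivial bound $\le h|k|^2$. Then
\[
\min(h|k|^2,|k|^{-2})\le (h|k|^2)^{\vartheta}(|k|^{-2})^{1-\vartheta}=h^\vartheta|k|^{-2+4\vartheta},
\]
and with $|k|\simeq2^q$ this is $h^\vartheta2^{-q(2-4\vartheta)}$. For the first integral, I write
\[
e^{-(t-r)\lambda}-e^{-(s-r)\lambda}=e^{-(s-r)\lambda}\bigl(e^{-h\lambda}-1\bigr),
\]
and use $|e^{-h\lambda}-1|\le\min(1,h\lambda)\le (h\lambda)^{\vartheta'}$ for any $\vartheta'\in[0,1]$. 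The integral is then at most $(h\lambda)^{2\vartheta'}|k|^2/(2\lambda)\lesssim h^{2\vartheta'}|k|^{8\vartheta'-2}$. Choosing $\vartheta'=\vartheta/2$ gives exactly $h^{\vartheta}|k|^{-2+4\vartheta}$. Any $\vartheta\in(0,\tfrac14)$ works, and the constraint is only needed so that the resulting spatial exponent $2-4\vartheta>1$ remains summable in later Kolmogorov-type arguments. For $k=0$ the integrands vanish, and the low block $q=-1$ is handled as in (ii) using $h\le T$.

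I do not see a genuine obstacle. The only delicate point is uniformity in $\gamma$, and this rests entirely on the lower symbol bound $\lambda_\gamma(k)\ge c|k|^4$ holding on all of $\mathcal B_\gamma$. Lemma~\ref{lem:symbol_bilaplacian} supplies that bound because $|\pi\varepsilon_\gamma k|\le\pi/2$ there.
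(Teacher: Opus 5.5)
Your proposal is correct and follows the same route as the paper. Both arguments rest on the two-sided symbol bound $\lambda_\gamma(k)\simeq|k|^4$ on $\mathcal B_\gamma$. For (ii), exact time integration gives $|k|^2/\lambda_\gamma(k)\lesssim|k|^{-2}$. For (iii), the first integral uses the factorization $e^{-(t-r)\lambda}-e^{-(s-r)\lambda}=e^{-(s-r)\lambda}(e^{-(t-s)\lambda}-1)$, and the second is handled by direct integration.

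Your write-up is more careful than the paper's sketch in three respects. You track $\nu_\gamma$, and you treat the block $q=-1$ and the mode $k=0$ explicitly. You also take the exponent $\vartheta/2$ in $|e^{-h\lambda}-1|\le(h\lambda)^{\vartheta/2}$, which is exactly what yields $|t-s|^{\vartheta}2^{-q(2-4\vartheta)}$. The paper's literal hint, with exponent $\vartheta$, would instead give $|t-s|^{2\vartheta}2^{-q(2-8\vartheta)}$ after squaring.
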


\begin{proof}
Item (i) follows immediately from the convergence of the symbols \(\lambda_\gamma(k)\to (2\pi k)^4\) on compact frequency sets. Item (ii) follows from
\[
\int_0^T e^{-2(T-r)\lambda_\gamma(k)}|k|^2\,dr
\lesssim \frac{|k|^2}{\lambda_\gamma(k)}
\lesssim |k|^{-2},
\]
and the fact that \(|k|\sim 2^q\) on the support of \(\varrho_q\).
Item (iii) is the standard semigroup increment estimate:
for the first integral, one writes
\[
e^{-(t-r)\lambda}-e^{-(s-r)\lambda}
=
e^{-(s-r)\lambda}(e^{-(t-s)\lambda}-1)
\]
and uses \(|e^{-a}-1|\lesssim a^\vartheta\) for \(a\ge0\); the second follows by direct integration.
\end{proof}

\subsection{Well-posedness of the $1$-d stochastic Cahn--Hilliard equation}\label{subsec:1d-sch-background}

We briefly recall the well-posedness result for the $1$d stochastic Cahn--Hilliard equation
\begin{equation}\label{eq:intro-1d-sch}
\partial_t X = -\nu \Delta^2 X - A \Delta X + \chi \Delta(X^3) + \sigma_* \nabla \cdot\xi.
\end{equation}
where \(\xi\) is a space-time white noise on $\mathbb R^{+} \times \mathbb T$, \(\nu>0\), \(\chi>0\), \(\sigma_*>0\), and \(A\in\mathbb R\).

In one space dimension, the stochastic Cahn--Hilliard equation \eqref{eq:intro-1d-sch}driven by additive conservative noise is substantially less singular than its higher-dimensional counterparts: the linear stochastic convolution already has positive spatial regularity, so the cubic term can be interpreted by classical multiplication and no renormalization is needed. 
For our purposes, we also apply the Da Prato--Debussche decomposition $X=Y+Z$.
Given an initial datum \(X_0\), we split the solution as
\(
X=Y+Z,
\)
where \(Z\) is the solution of the linear stochastic equation
\begin{equation}
\label{eq:intro-Z}
dZ= -\nu \Delta^2 Z dt+\sigma_*\nabla\cdot\xi(t)dt,
\qquad
Z(0)=0,
\end{equation}
and \(Y\) solves the random deterministic equation
\begin{equation}
\label{eq:intro-Y}
\partial_t Y
=-\nu \Delta^2  Y - A\Delta (Y+Z)
+\chi\,\partial_x^2\bigl((Y+Z)^3\bigr),
\qquad
Y(0)=X(0).
\end{equation}
In one dimension, the stochastic convolution in \eqref{eq:intro-Z} is spatially regular enough
to be treated pathwise as a function.
More precisely, for every \(\alpha<\frac12\) and every \(p\ge 1\), one expects
\begin{equation}
\label{eq:intro-Z-est}
\mathbb E\Bigl[\sup_{t\in[0,T]}\|Z(t)\|_{C^\alpha(\mathbb T)}^p\Bigr]<\infty,
\end{equation}
and 
\(
Z\in C([0,T], C^\alpha(\mathbb T)), \text{ for every } \alpha<\frac12,
\)
almost surely.
In particular, \(Z^2\) and \(Z^3\) are classical pointwise products, which is the key simplification specific to the one-dimensional case.

The shifted equation for \(Y\) becomes a random but classical semilinear fourth-order parabolic equation, and standard \(H^{-1}\)-energy estimates in \(H^{-1}\), \(H^{1}\), and \(L^4\) become available. 
Testing \eqref{eq:intro-Y} against \((-\Delta)^{-1}Y\) yields the formal coercive identity
\begin{align}
\frac12\frac{d}{dt}\|Y(t)\|_{H^{-1}}^2
+\nu \|Y(t)\|_{H^1}^2
&=
- A\,\|Y(t)\|_{L^2}^2
- \chi\int_{\mathbb T} Y(t,x)\,(Y(t,x)+Z(t,x))^3\,dx.
\label{eq:intro-energy}
\end{align}
Using Young's inequality,
\[
-\int_{\mathbb T}Y(Y+Z)^3\,dx
\le
-c\|Y\|_{L^4}^4
+
C\bigl(1+\|Z\|_{L^\infty}^4\bigr),
\]
so that one obtains the a priori estimate
\begin{equation}
\label{eq:intro-Y-est}
\sup_{t\in[0,T]}\|Y(t)\|_{H^{-1}}^2
+
\int_0^T \|Y(s)\|_{H^1}^2\,ds
+
\int_0^T \|Y(s)\|_{L^4}^4\,ds
\le
C_T\Bigl(1+\|X_0\|_{H^{-1}}^2+\sup_{s\in[0,T]}\|Z(s)\|_{L^\infty}^4\Bigr).
\end{equation}
We summarize the well-posedness of the one-dimensional stochastic Cahn--Hilliard equation as follows.

\begin{proposition}\label{thm:wellposedness-SCH-1d}
Let \(T>0\), $M \in \mathbb R$, \(X_0\in H^{-1}(\mathbb T)\) with $\int_{\mathbb T} X(0) dx = M$. Let $\nu>0$, $A\in\mathbb R$, $\chi >0$, and $\sigma_*\ge0$.
Then the stochastic Cahn--Hilliard equation has a unique solution \(X=Y+Z\) in the class
\[
X\in
C([0,T],H^{-1}(\mathbb T))
\cap
L^4((0,T)\times\mathbb T)
\quad\text{a.s.},
\]
with conservative total magnetization: $\int_{\mathbb T} X(t) dx = M, \forall t >0$.

Moreover, $X$ admits the Da-Prato–Debussche decomposition $X=Y+Z$, where the stochastic convolution
\[
Z\in C([0,T],\mathcal C^\alpha(\mathbb T))\cap C([0,T],H^{-1}(\mathbb T))
\quad\text{a.s.}, \quad\alpha<\frac12,
\]
and for almost every realization of \(Z\), the shifted term
\[
Y\in
C([0,T],H^{-1}(\mathbb T))
\cap
L^2(0,T;H^1(\mathbb T))
\cap
L^4((0,T)\times\mathbb T)
\quad\text{a.s.}.
\]
\end{proposition}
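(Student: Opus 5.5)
The plan is the Da Prato--Debussche decomposition $X=Y+Z$, handled in four steps.

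\emph{Step 1: the stochastic convolution.} Write $Z(t)=\sigma_*\int_0^t S(t-r)\nabla\,dW(r)$, where $S(t)=e^{-t\nu\Delta^2}$, and expand in the Fourier basis $e_k$. The $k=0$ mode is killed by $\nabla$, so $\int_{\mathbb T}Z(t)\,dx=0$. Each mode $k\neq0$ is an independent complex Ornstein--Uhlenbeck process with variance
\[
\sigma_*^2(2\pi k)^2\int_0^t e^{-2\nu(t-r)(2\pi k)^4}\,dr\lesssim |k|^{-2}.
\]
On the Littlewood--Paley block $q$ this gives $\mathbb E\|\Delta_q Z(t)\|_{L^2}^2\lesssim 2^{-q}$, and Gaussian hypercontractivity upgrades this to $\mathbb E\|\Delta_q Z(t)\|_{L^p}^p\lesssim 2^{-qp/2}$. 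The increment bounds in the spirit of Lemma \ref{ass:semigroup}(iii) (with $\gamma$ removed) give $\mathbb E\|\Delta_q(Z(t)-Z(s))\|_{L^p}^p\lesssim |t-s|^{\vartheta p/2}2^{-qp(1-2\vartheta)/2}$. Besov embedding $B^{\alpha+1/p}_{p,p}\hookrightarrow C^\alpha$ together with Kolmogorov's criterion then yields \eqref{eq:intro-Z-est} and $Z\in C([0,T],C^\alpha)$ a.s. for every $\alpha<\frac12$. Continuity in $H^{-1}$ follows from the continuity in $C^\alpha$.

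\emph{Step 2: the pathwise equation for $Y$.} Fix a realization with $Z\in C([0,T],C^\alpha)$ and solve \eqref{eq:intro-Y} on the mean-$M$ hyperplane. Mass conservation is automatic, because every term on the right-hand side is a derivative. Existence would come from a Galerkin scheme on $\mathrm{span}\{e_k:|k|\le n\}$. The a priori bound is the energy identity \eqref{eq:intro-energy}, obtained by testing with $(-\Delta)^{-1}(Y-M)$; the mean part is harmless since $\int\Delta(\cdot)=0$.

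\emph{Step 3: closing the energy estimate.} I expect this to be the main, though standard, obstacle. There are two points to control.
\begin{itemize}
\item The $A$ term: bound $|A|\,|\langle Y+Z,Y\rangle|$ by interpolation, $\|Y\|_{L^2}^2\le\|Y\|_{H^{-1}}\|Y\|_{H^1}$, and absorb it into $\nu\|Y\|_{H^1}^2$ plus $C\|Y\|_{H^{-1}}^2$.
\item The cubic term: with mean corrections it reads $-\chi\int (Y-M)(Y+Z)^3$. Expand and use Young's inequality, giving $-\int Y(Y+Z)^3\le -\tfrac12\|Y\|_{L^4}^4+C\|Z\|_{L^\infty}^4$. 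The terms $M\int(Y+Z)^3$ are lower order and are absorbed into $\tfrac14\|Y\|_{L^4}^4$ plus constants.
\end{itemize}
Gronwall then gives \eqref{eq:intro-Y-est} uniformly in $n$. For compactness, bound $\partial_tY_n$ in $L^2(0,T;H^{-3})+L^{4/3}(0,T;W^{-2,4/3})$ and apply Aubin--Lions. This gives strong convergence in $L^2(0,T;L^2)$, and the cubic term passes to the limit along a subsequence with a.e.\ convergence plus the $L^4$ bound. Continuity $Y\in C([0,T],H^{-1})$ follows from the Lions--Magenes lemma applied in the Gelfand triple $H^1\subset H^{-1}$-dual structure, or simply from the weak formulation together with the bounds.

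\emph{Step 4: uniqueness and measurability.} For two solutions $Y_1,Y_2$ with the same $Z$, set $w=Y_1-Y_2$; it has mean zero. Testing with $(-\Delta)^{-1}w$ gives
\[
\tfrac12\tfrac{d}{dt}\|w\|_{H^{-1}}^2+\nu\|w\|_{H^1}^2=-A\|w\|_{L^2}^2-\chi\int w\big((Y_1+Z)^3-(Y_2+Z)^3\big).
\]
Monotonicity of $u\mapsto u^3$ makes the last term nonpositive, so Gronwall with the interpolation above gives $w=0$. Pathwise uniqueness makes the Galerkin limit a measurable function of $Z$, so $X=Y+Z$ is adapted. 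Uniqueness in the stated class follows, since any solution $X$ yields $Y=X-Z$ solving \eqref{eq:intro-Y}.
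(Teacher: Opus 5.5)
Your proposal is correct and follows the paper's route. The paper only sketches this result: the Da Prato--Debussche splitting $X=Y+Z$, the regularity of $Z$, and the $H^{-1}$ energy estimate for $Y$ closed by Young's inequality on the cubic term. You fill in what it leaves implicit: Galerkin existence with Aubin--Lions compactness, uniqueness from the monotonicity of $u\mapsto u^3$, and testing against $(-\Delta)^{-1}(Y-M)$ so the mean $M$ is handled correctly.

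One point to tighten is uniqueness in the full class $C([0,T],H^{-1})\cap L^4$. There $w=Y_1-Y_2$ is not assumed to lie in $L^2(0,T;H^1)$, so the $H^{-1}$ identity for $w$ needs justification. A spatial mollification of $w$ supplies it and at the same time shows $w\in L^2(0,T;H^1)$.
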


For the discrete Kawasaki--Kac dynamics, the full field \(X_\gamma\) contains a martingale part
whose \(H^{-1}\)-It\^o correction is not uniformly bounded in \(\gamma\).
Accordingly, the direct energy method on \(X_\gamma\) is not the correct route.
Instead, one proves convergence of the linear discrete stochastic part \(Z_\gamma\),
establishes uniform energy estimates and tightness for the deterministic remainder \(Y_\gamma\),
and finally reconstructs the limit through
\[
X_\gamma=Y_\gamma+Z_\gamma \rightarrow Y+Z=X.
\]
This is the conservative analogue of the shifted-equation strategy used throughout the modern analysis of weakly nonlinear stochastic evolution equations.

\subsection{Dirichlet forms, entropy inequality, and the Kipnis–Varadhan bound}

On each invariant sector \(\Sigma_{N,M}:=
\left\{
\sigma\in\Sigma_N:\ \sum_{i\in\Lambda_N}\sigma^N(i)=M(N)
\right\}\), the canonical Gibbs measure \(\mu_{N,\gamma,\beta}\) is the natural reversible reference measure.
In this subsection we formalize the canonical Gibbs measures naturally associated with the conservative Kawasaki dynamics, both on the full torus and on finite blocks, together with the corresponding Dirichlet forms and Fisher information.
Standard references are Guo--Papanicolaou--Varadhan \cite{GPV88}, Yau \cite{Yau91},
Kipnis--Varadhan \cite{KipnisVaradhan1986}, and Kipnis--Landim \cite{KipnisLandim1999}.

\begin{definition}[Dirichlet form on the canonical sector]
\label{def:global-canonical-dirichlet}
For \(f:\Sigma_{N,M}\to\mathbb R\), define the global Dirichlet form
\[
D^{N,\gamma}(f)
:=
-\bigl\langle f, \mathscr L_\gamma^{N} f\bigr\rangle_{\mu_{N,\gamma,\beta}}
= \sum_{i\in\Lambda_N}D_i^{N,\gamma}(f),
\]
where the bond Dirichlet form is
\[
 D_i^{N,\gamma}(f)
:=
\frac12
\int_{\Sigma_{N,M}}
c_{i,i+1}(\sigma)\,
\bigl(f(\sigma^{i,i+1})-f(\sigma)\bigr)^2\,
\mu_{N,\gamma,\beta}(d\sigma).
\]
\end{definition}

Using the Dirichlet form, we can define the Fisher information of the Ising–Kac–Kawasaki dynamics.
\begin{definition}
For a density \(f\) with respect to \(\mu_{N,\gamma,\beta}\), we define the bond Fisher information by
\begin{equation}
    I_{i,i+1}^{N,\gamma}(f) := D_{i}^{N,\gamma}(\sqrt f)
=
\frac12 \int_{\Sigma_{N,M}} c_{i,i+1}(\sigma)
\bigl(\sqrt{f(\sigma^{i,i+1})}-\sqrt{f(\sigma)}\bigr)^2\,
\mu_{N,\gamma,\beta}(d\sigma).
\end{equation}
The corresponding global Fisher information is
\[
I^{N,\gamma}(f):= \sum_{i\in\Lambda_N} I_{i,i+1}^{N,\gamma}(f) = \sum_{i \in \Lambda_N} D_i^{N,\gamma}(\sqrt f),
\]
for every density \(f\ge0\) with \(\int_{\Sigma_{N,M}} f\,d\mu_{N,\gamma,\beta}=1\).
\end{definition}

Fix a block center \(i\in\Lambda_N\) and a block size
\(
\Lambda_\ell:=\{-\ell,\dots,\ell\},
\Lambda_{i,\ell}:=i+\Lambda_\ell.
\)
We use local coordinates
\(
\eta_u:=\sigma_{i+u}, u\in\Lambda_\ell,
\)
and write
\(
\sigma=\eta\oplus_i\xi,
\)
where
\(
\xi\in\{-1,1\}^{\Lambda_N\setminus\Lambda_{i,\ell}}
\) 
is the exterior configuration.
The Ising--Kac Hamiltonian restricted to the block can be written as
\begin{equation}
\label{eq:frozen-exterior-Hamiltonian}
H_{i,\ell}^{\gamma,\xi}(\eta)
=
H_{\gamma,\ell}^{\mathrm{blk}}(\eta)
-
\sum_{u\in\Lambda_\ell}
b_{i,\xi}^{N,\gamma}(u)\eta_u
+
\mathrm{const}(\xi),
\end{equation}
where
\[
H_{\gamma,\ell}^{\mathrm{blk}}(\eta)
:=
-\frac12
\sum_{u,v\in\Lambda_\ell}
\kappa_\gamma(u-v)\eta_u\eta_v, \qquad b_{i,\xi}^{N,\gamma}(u) := \sum_{z\notin\Lambda_{i,\ell}}
\kappa_\gamma(i+u-z)\xi_z,
\]
and the term \(\mathrm{const}(\xi)\) is independent of \(\eta\) and will be absorbed into the normalizing constants.

For
\[
m\in\mathcal M_\ell
:=
\left\{
-1,-1+\frac{2}{2\ell+1},\dots,1
\right\},
\]
define the canonical sector
\[
\Sigma_{\ell,m}^{\mathrm{can}}
:=
\left\{
\eta\in\{-1,1\}^{\Lambda_\ell}:
\frac{1}{2\ell+1}
\sum_{u\in\Lambda_\ell}\eta_u=m
\right\}.
\]
The frozen-exterior local canonical Gibbs measure is
\begin{equation}
\label{eq:frozen-local-canonical}
\mu_{i,\ell,m}^{\mathrm{can},\gamma,\xi}(d\eta)
:=
\frac{1}{Z_{i,\ell,m}^{\gamma,\xi}}
\mathbf 1_{\Sigma_{\ell,m}^{\mathrm{can}}}(\eta)
\exp\left\{
-\beta H_{\gamma,\ell}^{\mathrm{blk}}(\eta)
+
\beta\sum_{u\in\Lambda_\ell}
b_{i,\xi}^{N,\gamma}(u)\eta_u
\right\}
d\eta.
\end{equation}
This is precisely the conditional Gibbs measure on the block after fixing the exterior configuration \(\xi\) and the block magnetization \(m\).

We introduce the simplified block canonical Gibbs measure without the frozen exterior field:
\begin{equation}\label{eq:simplified-block-canonical}
\mu_{\gamma,\ell,m}^{\mathrm{can}}(d\eta)
:=
\frac{1}{Z_{\gamma,\ell,m}^{\mathrm{can}}}
\mathbf 1_{\Sigma_{\ell,m}^{\mathrm{can}}}(\eta)
\exp\left\{
-\beta H_{\gamma,\ell}^{\mathrm{blk}}(\eta)
\right\}
d\eta.
\end{equation}

Now we show that as $\gamma\ell^2 \to 0$, we can remove the frozen exterior field.
\begin{lemma}\label{lem:remove-frozen-field}
For fixed \(\ell\), we have
\[
\sup_{u\in\Lambda_\ell}
\left|
b_{i,\xi}^{N,\gamma}(u)
-
\bar b_{i,\xi}^{N,\gamma}
\right|
\le
C\gamma\ell,
\qquad
\bar b_{i,\xi}^{N,\gamma}
:=
\frac{1}{2\ell+1}
\sum_{u\in\Lambda_\ell}
b_{i,\xi}^{N,\gamma}(u).
\]
Consequently, for every \(m\in\mathcal M_\ell\),
\begin{equation}
\label{eq:local-can-comparison}
e^{-C\gamma\ell^2}
\mu_{\gamma,\ell,m}^{\mathrm{can}}(d\eta)
\le
\mu_{i,\ell,m}^{\mathrm{can},\gamma,\xi}(d\eta)
\le
e^{C\gamma\ell^2}
\mu_{\gamma,\ell,m}^{\mathrm{can}}(d\eta).
\end{equation}
In particular,
\[
\sup_{i,\xi,m}
\left\|
\mu_{i,\ell,m}^{\mathrm{can},\gamma,\xi}
-
\mu_{\gamma,\ell,m}^{\mathrm{can}}
\right\|_{\mathrm{TV}}
\le
C\gamma\ell^2.
\]
\end{lemma}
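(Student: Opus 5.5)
The plan has three steps: a Lipschitz bound on the exterior field $b$, a pointwise comparison of the Boltzmann weights using the canonical constraint, and a short deduction of the total-variation bound.

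\textbf{Lipschitz bound on the exterior field.} For $u,v\in\Lambda_\ell$,
\[
b_{i,\xi}^{N,\gamma}(u)-b_{i,\xi}^{N,\gamma}(v)=\sum_{z\notin\Lambda_{i,\ell}}\bigl(\kappa_\gamma(i+u-z)-\kappa_\gamma(i+v-z)\bigr)\xi_z .
\]
Using $|\xi_z|\le1$ and the kernel form $\kappa_\gamma(z)=\gamma\mathfrak K(\gamma z)$, we can write
\[
\kappa_\gamma(i+u-z)-\kappa_\gamma(i+v-z)=\gamma\int_{\gamma(i+v-z)}^{\gamma(i+u-z)}\mathfrak K'(s)\,ds .
\]
Summing over $z$ and using $\mathfrak K\in W^{1,1}$ (a Riemann-sum bound for $\gamma\sum_z|\mathfrak K'(\gamma\,\cdot)|$, or simply $\|\mathfrak K'\|_{L^\infty}$ summed over the effective support) gives $|b(u)-b(v)|\le C\gamma|u-v|$. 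This is up to harmless changes of constant; in the worst case one gets $C\gamma|u-v|$ times $\|\mathfrak K'\|_{L^1}$-type constants. Taking $|u-v|\le 2\ell$ and averaging over $v$ yields $\sup_u|b(u)-\bar b|\le C\gamma\ell$.

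\textbf{Comparison of the two measures.} The canonical constraint fixes $\sum_u\eta_u=(2\ell+1)m$. Hence the constant part of the field contributes
\[
\beta\bar b\sum_u\eta_u=\beta\bar b(2\ell+1)m,
\]
which does not depend on $\eta$ on $\Sigma^{\mathrm{can}}_{\ell,m}$ and cancels in the normalization $Z^{\gamma,\xi}_{i,\ell,m}$. The remaining exponent satisfies
\[
\Bigl|\beta\sum_u\bigl(b(u)-\bar b\bigr)\eta_u\Bigr|\le\beta(2\ell+1)\,C\gamma\ell\le C'\gamma\ell^2 .
\]
So the unnormalized weights of $\mu^{\mathrm{can},\gamma,\xi}_{i,\ell,m}$ and $\mu^{\mathrm{can}}_{\gamma,\ell,m}$ differ pointwise by a factor in $[e^{-C'\gamma\ell^2},e^{C'\gamma\ell^2}]$, and so do the partition functions. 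Taking ratios gives \eqref{eq:local-can-comparison} with constant $2C'$.

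\textbf{Total-variation bound.} From the two-sided density bound $|d\mu^{\xi}/d\mu-1|\le e^{C\gamma\ell^2}-1$, we get $\|\mu^\xi-\mu\|_{\mathrm{TV}}\le e^{C\gamma\ell^2}-1\le C\gamma\ell^2$ for $\gamma\ell^2\le1$. For $\gamma\ell^2>1$ the bound is trivial, since the total variation is at most $1$.

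The only delicate point is the first step, namely getting a uniform-in-$\xi$, $i$, $N$ constant from the regularity of $\mathfrak K$. The cancellation of $\bar b$ by mass conservation in the second step is the key structural observation.
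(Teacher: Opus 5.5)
Your proposal is correct and follows the paper's route. The three steps are the same: a Lipschitz bound $|b(u)-b(v)|\le C\gamma|u-v|$ from $\mathfrak K\in W^{1,1}$, absorption of $\beta\bar b(2\ell+1)m$ into the normalizing constant on the canonical sector, and the resulting $C\gamma\ell^2$ oscillation bound; your explicit total-variation deduction, including the trivial regime $\gamma\ell^2>1$, fills in a step the paper leaves implicit.

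For the first step, your integral representation suffices on its own. The intervals $[\gamma(i+v-z),\gamma(i+u-z)]$ have length $\gamma|u-v|$ and shift by $\gamma$ as $z$ varies, so they overlap at most $|u-v|$ times, and the sum is bounded by $\gamma|u-v|\,\|\mathfrak K'\|_{L^1}$. The parenthetical Riemann-sum and ``effective support'' alternatives would need extra regularity or compact support of $\mathfrak K$, which Assumption \(({\bf A})\) does not provide.
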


\begin{proof}
For \(u,v\in\Lambda_\ell\),
\[
\left|
b_{i,\xi}^{N,\gamma}(u)
-
b_{i,\xi}^{N,\gamma}(v)
\right|
\le
\sum_{z\notin\Lambda_{i,\ell}}
\left|
\kappa_\gamma(i+u-z)
-
\kappa_\gamma(i+v-z)
\right|.
\]
Since
\(
\kappa_\gamma(z)=\gamma\mathfrak K(\gamma z),
\)
we have
\(
|\kappa_\gamma(a)-\kappa_\gamma(b)|
\le
C\gamma^2|a-b|.
\)
Then summing in \(z\) and using \(\mathfrak K\in W^{1,1}\cap W^{1,\infty}\), we obtain
\[
\sup_{u\in\Lambda_\ell}
\left|
b_{i,\xi}^{N,\gamma}(u)
-
\bar b_{i,\xi}^{N,\gamma}
\right|
\le C\gamma\ell.
\]
On the fixed-magnetization sector \(\Sigma_{\ell,m}^{\mathrm{can}}\), the constant field term satisfies
\[
\bar b_{i,\xi}^{N,\gamma}
\sum_{u\in\Lambda_\ell}\eta_u
=
\bar b_{i,\xi}^{N,\gamma}(2\ell+1)m,
\]
and is therefore absorbed into the normalizing constant. Hence
\[
\left| \sum_{u\in\Lambda_\ell}
\left(
b_{i,\xi}^{N,\gamma}(u)
-
\bar b_{i,\xi}^{N,\gamma}
\right)\eta_u \right| \le C\gamma\ell^2
\]
This gives
\eqref{eq:local-can-comparison}.
\end{proof}

For an internal bond \((j,j+1)\), \(|j|\le\ell-1\), define the exact frozen-exterior local exchange rate by
\[
c_{j,j+1}^{(\ell),\gamma,\xi}(\eta)
:=
c_{i+j,i+j+1}(\eta\oplus_i\xi).
\]
Now we show the projection inequality for the local Fisher information.

\begin{lemma}[One-block projection of Fisher information]
\label{lem:clean-one-block-FI-projection}
Fix \(t\in[0,T]\), and disintegrate
\[
\mu_{N,\gamma,\beta}(d\sigma)
=
\nu_{i,\ell}^{N,\gamma}(d\xi)\,
\mu_{i,\ell,m(\xi)}^{\mathrm{can},\gamma,\xi}(d\eta),
\qquad
\sigma=\eta\oplus_i\xi.
\]
Write
\[
f_{N,t}(\eta\oplus_i\xi)
=
f_{i,\ell,t}^{\mathrm{out}}(\xi)\,
q_{i,\ell,t}^{\gamma,\xi}(\eta),
\]
where \(q_{i,\ell,t}^{\gamma,\xi}\) is a density with respect to
\(\mu_{i,\ell,m(\xi)}^{\mathrm{can},\gamma,\xi}\).  
Define
\[
\mathcal I_{i,\ell}^{\gamma,\xi}(q)
:=
\sum_{|u|\le\ell-1}
\frac12
\int
c_{u,u+1}^{(\ell),\gamma,\xi}(\eta)
\left(
\sqrt{q(\eta^{u,u+1})}-\sqrt{q(\eta)}
\right)^2
\mu_{i,\ell,m(\xi)}^{\mathrm{can},\gamma,\xi}(d\eta).
\]
Then
\begin{equation}\label{eq:projection-ineq-summed}
    \varepsilon_\gamma\sum_i
\int
f_{i,\ell,t}^{\mathrm{out}}(\xi)
\mathcal I_{i,\ell}^{\gamma,\xi}
\bigl(q_{i,\ell,t}^{\gamma,\xi}\bigr)
\nu_{i,\ell}^{N,\gamma}(d\xi) \le C\varepsilon_\gamma\ell\,I^{N,\gamma}(f_{N,t}).
\end{equation}
\end{lemma}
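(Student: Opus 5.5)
The plan is to prove the inequality block by block and then sum over block centres, with a counting argument at the end. The key observation is that in the disintegration \(\sigma=\eta\oplus_i\xi\), each internal bond \((u,u+1)\), \(|u|\le\ell-1\), of the block \(\Lambda_{i,\ell}\) corresponds to the global bond \((i+u,i+u+1)\). The exchange \(\eta\mapsto\eta^{u,u+1}\) leaves \(\xi\) fixed, and it also preserves the block magnetization \(m(\xi)\). The local rate \(c^{(\ell),\gamma,\xi}_{u,u+1}(\eta)\) is by definition the global rate \(c_{i+u,i+u+1}(\eta\oplus_i\xi)\).

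First, for fixed \(i\) I will show the identity
\[
\int f^{\mathrm{out}}_{i,\ell,t}(\xi)\,\mathcal I^{\gamma,\xi}_{i,\ell}\bigl(q^{\gamma,\xi}_{i,\ell,t}\bigr)\,\nu^{N,\gamma}_{i,\ell}(d\xi)
=\sum_{|u|\le \ell-1} I^{N,\gamma}_{i+u,i+u+1}(f_{N,t}).
\]
To get it, write \(\sqrt{f_{N,t}(\sigma)}=\sqrt{f^{\mathrm{out}}(\xi)}\sqrt{q^{\gamma,\xi}(\eta)}\), and note that \(\sqrt{f^{\mathrm{out}}(\xi)}\) is unchanged under the swap. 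Then
\[
\bigl(\sqrt{f_{N,t}(\sigma^{i+u,i+u+1})}-\sqrt{f_{N,t}(\sigma)}\bigr)^2=f^{\mathrm{out}}(\xi)\bigl(\sqrt{q(\eta^{u,u+1})}-\sqrt{q(\eta)}\bigr)^2 .
\]
Integrating against \(\mu_{N,\gamma,\beta}=\nu^{N,\gamma}_{i,\ell}(d\xi)\,\mu^{\mathrm{can},\gamma,\xi}_{i,\ell,m(\xi)}(d\eta)\) gives the identity exactly, with constant \(1\).

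The only points to check here concern the disintegration. The conditional law of \(\eta\) given \(\xi\) under \(\mu_{N,\gamma,\beta}\) must be precisely the frozen-exterior canonical measure \eqref{eq:frozen-local-canonical}. This holds because the global magnetization constraint fixes \(m(\xi)\) and the Hamiltonian splits as in \eqref{eq:frozen-exterior-Hamiltonian}. The factorization of \(f_{N,t}\) must also be well defined: set \(f^{\mathrm{out}}(\xi)=\int f_{N,t}\,d\mu^{\mathrm{can},\gamma,\xi}\), and take \(q=f_{N,t}/f^{\mathrm{out}}\) where \(f^{\mathrm{out}}>0\) and \(q\equiv1\) otherwise. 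On the set where \(f^{\mathrm{out}}=0\), both sides vanish \(\mu\)-a.e. For this step no comparison of measures (such as Lemma~\ref{lem:remove-frozen-field}) is needed, since we work with the exact frozen-exterior measure.

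Second, I will sum over \(i\in\Lambda_N\) and multiply by \(\varepsilon_\gamma\). After the first step the left-hand side of \eqref{eq:projection-ineq-summed} equals
\[
\varepsilon_\gamma\sum_{i}\sum_{|u|\le\ell-1}I^{N,\gamma}_{i+u,i+u+1}(f_{N,t}).
\]
A given global bond \((j,j+1)\) appears for exactly those \(i\) with \(j-i\in\{-\ell+1,\dots,\ell-1\}\), which is \(2\ell-1\) values (for \(2\ell+1\le |\Lambda_N|\)). Hence the double sum equals \((2\ell-1)\sum_j I^{N,\gamma}_{j,j+1}(f_{N,t})=(2\ell-1)I^{N,\gamma}(f_{N,t})\). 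This gives \eqref{eq:projection-ineq-summed} with \(C=2\).

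The main, and mild, obstacle is the rigor of the disintegration step in the first part, namely matching the conditional measure and handling the zero set of \(f^{\mathrm{out}}\). Everything else is exact bookkeeping, so the constant is explicit and independent of \(\gamma\), \(N\) and \(t\).
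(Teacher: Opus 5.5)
Your proof is correct and follows the paper's argument. You identify each internal exchange and frozen rate with the corresponding global ones, use the factorization $f_{N,t}=f^{\mathrm{out}}q$ to match each local bond term with the global bond Fisher information, and then sum over block centres, counting each global bond $O(\ell)$ times. Your version is slightly sharper: you obtain the per-bond relation as an equality and the exact count $2\ell-1$, giving $C=2$, whereas the paper states only an inequality with an unspecified constant.
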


\begin{proof}
For a fixed block center \(i\), exterior configuration \(\xi\), and internal
bond \((u,u+1)\), the local exchange \(\eta\mapsto\eta^{u,u+1}\) is exactly the
global exchange \(\sigma\mapsto\sigma^{i+u,i+u+1}\).  The frozen local exchange
rate agrees with the corresponding global rate:
\[
c_{u,u+1}^{(\ell),\gamma,\xi}(\eta)
=
c_{i+u,i+u+1}(\eta\oplus_i\xi).
\]
Since
\(
f_{N,t}(\eta\oplus_i\xi)
=
f_{i,\ell,t}^{\mathrm{out}}(\xi)
q_{i,\ell,t}^{\gamma,\xi}(\eta),
\)
we get
\[
\int
f_{i,\ell,t}^{\mathrm{out}}(\xi)
I_{u,u+1}^{(\ell),\gamma,\xi}
\bigl(q_{i,\ell,t}^{\gamma,\xi}\bigr)
\nu_{i,\ell}^{N,\gamma}(d\xi)
\le
I_{i+u,i+u+1}^{N,\gamma}(f_{N,t}).
\]
Summing over \(i\) and \(|u|\le\ell-1\), each global bond is counted at most
\(C\ell\) times.  This proves the estimate.
\end{proof}

We define the local Dirichlet form and the local Fisher information associated with the canonical block Gibbs measures $\mu_{\gamma,\ell,m}^{\mathrm{can}}$.

\begin{definition}\label{def:local-dirichlet}
For an internal bond \((j,j+1)\), \(|j|\le \ell-1\), define the local exchange operator
\[
(L_{j,j+1}^{(\ell),\gamma}g)(\eta)
:=
c_{j,j+1}^{(\ell),\gamma}(\eta)\,
\bigl(g(\eta^{j,j+1})-g(\eta)\bigr),
\]
where
\[
c_{j,j+1}^{(\ell),\gamma}(\eta)
:=
\frac{1}{1+\exp\!\bigl(\beta\,\Delta_{j}^{(\ell)}H_{i,\gamma,\ell}^{\mathrm{blk}}(\eta)\bigr)},
\qquad
\Delta_{j}^{(\ell)}H_{i,\gamma,\ell}^{\mathrm{blk}}(\eta)
:=
H_{i,\gamma,\ell}^{\mathrm{blk}}(\eta^{j,j+1})-H_{i,\gamma,\ell}^{\mathrm{blk}}(\eta).
\]
The corresponding local Dirichlet form is given by
\[
D_{j,j+1}^{(\ell),\gamma}(g)
:=
-\bigl\langle g,L_{j,j+1}^{(\ell),\gamma}g\bigr\rangle_{\mu_{\gamma,\ell,m}^{\mathrm{can}}}
=
\frac12 \int_{\Sigma_{\ell,m}^{\mathrm{can}}}
c_{j,j+1}^{(\ell),\gamma}(\eta)\,
\bigl(g(\eta^{j,j+1})-g(\eta)\bigr)^2\, \mu_{\gamma,\ell,m}^{\mathrm{can}}(d\eta).
\]
Accordingly, the local Fisher information on the bond \((j,j+1)\) is
\begin{equation}\label{def:localF}
    I_{j,j+1}^{(\ell),\gamma}(g) := D_{j,j+1}^{(\ell),\gamma}(\sqrt g).
\end{equation}
\end{definition}

Now we show the uniform ellipticity of the local exchange rates.

\begin{lemma}\label{lem:local-rates-elliptic}
For every fixed \(\ell\ge1\) there exist constants
\(
0<c_-(\ell)\le c_+(\ell)<\infty
\)
such that for all \(\gamma\in(0,1)\), all \(|j|\le \ell-1\), and all \(\eta\in\Sigma_{\ell,m}^{\mathrm{can}}\),
\[
c_-(\ell)\le c_{j,j+1}^{(\ell),\gamma}(\eta)\le c_+(\ell).
\]
Similarly, there exist constants \(0<\underline c (\ell) \le \overline c (\ell) <\infty\), independent of \(N\) and \(\gamma\), such that
\[
\underline c (\ell) \le c_{i,i+1}(\sigma)\le \overline c(\ell)
\]
for all \(i\in\Lambda_N\) and all \(\sigma\in\Sigma_{N,M}\).
\end{lemma}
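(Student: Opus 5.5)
The heat-bath rate is $c=(1+e^{\beta\Delta H})^{-1}$, a monotone function of the energy difference. I will therefore bound $|\Delta H|$ uniformly and read off both constants. The map $x\mapsto(1+e^{\beta x})^{-1}$ is decreasing, so $|\Delta H|\le K$ gives
\[
c\in\bigl[(1+e^{\beta K})^{-1},\,(1+e^{-\beta K})^{-1}\bigr]\subset(0,1).
\]
This is uniform as long as $K$ and $\beta$ stay bounded. The inverse temperature $\beta$ is bounded, indeed $\beta\to\beta_c=1$ under Assumption (\textbf{B}), or at least $\beta\le 2$ for small $\gamma$.

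\textbf{Step 1: the energy difference.} I compute the exchange energy explicitly. Swapping $\sigma_i,\sigma_{i+1}$ changes only the terms in $H_\gamma$ that involve sites $i,i+1$. Using the symmetry $\kappa_\gamma(z)=\kappa_\gamma(-z)$ gives
\[
\Delta_iH_\gamma(\sigma)=(\sigma_i-\sigma_{i+1})\sum_{k\neq i,i+1}\bigl(\kappa_\gamma(i-k)-\kappa_\gamma(i+1-k)\bigr)\sigma_k .
\]
The self-interaction and pair terms between $i$ and $i+1$ are invariant under the swap, since $\kappa_\gamma(1)=\kappa_\gamma(-1)$ and the diagonal terms are unchanged. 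Hence
\[
|\Delta_iH_\gamma|\le 2\sum_{z}|\kappa_\gamma(z)-\kappa_\gamma(z+1)|\le 2\gamma\sum_z\bigl|\mathfrak K(\gamma z)-\mathfrak K(\gamma z+\gamma)\bigr|.
\]

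\textbf{Step 2: uniform bound on the sum.} By the mean value theorem,
\[
|\mathfrak K(\gamma z)-\mathfrak K(\gamma z+\gamma)|\le\int_{\gamma z}^{\gamma z+\gamma}|\mathfrak K'|,
\]
so the sum is at most $\|\mathfrak K'\|_{L^1}$ after periodic wrapping on $\Lambda_N$. This gives $|\Delta_iH_\gamma|\le 2\gamma\|\mathfrak K'\|_{L^1}\le C$. The crude bound $|\Delta_iH_\gamma|\le 4\sum_z\kappa_\gamma(z)=4$, from normalization, already suffices as well.

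The only care point is the periodization on the torus $\Lambda_N$: the kernel is summed modulo $2N+1$, so the $W^{1,1}$ bound must be applied to the periodized kernel. The normalization route avoids this entirely. So I take $K=4$ globally, which yields $\underline c,\overline c$ independent of $N,\gamma,\ell$.

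\textbf{Step 3: the local rates.} For the block rates $c^{(\ell),\gamma}_{j,j+1}$, the same computation with $H^{\mathrm{blk}}_{\gamma,\ell}$ gives
\[
|\Delta_j^{(\ell)}H^{\mathrm{blk}}|\le 2\sum_{k\in\Lambda_\ell}|\kappa_\gamma(j-k)-\kappa_\gamma(j+1-k)|\le 4\sum_z\kappa_\gamma(z)\le 4.
\]
Alternatively, it is at most $2(2\ell+1)\cdot 2\gamma\|\mathfrak K\|_\infty$, which is $\ell$-dependent and explains the $\ell$ in the statement. Either way the constants $c_\pm(\ell)$ follow from the monotonicity above.

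No step is a real obstacle. The only point needing attention is checking that the $i$–$(i+1)$ pair term genuinely cancels, which relies on the symmetry of the kernel.
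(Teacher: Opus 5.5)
Your proposal is correct and follows the paper's argument: bound the exchange energy increment uniformly using the $L^1$-normalization of the nonnegative kernel and $|\sigma_k|\le 1$ (giving $|\Delta H|\le 4$), then read off the two-sided bounds from the logistic form of the rate. The explicit swap formula, the remark that these constants are in fact independent of $\ell$, $N$, $\gamma$ (for bounded $\beta$), and the optional $O(\gamma)$ refinement via $\|\mathfrak K'\|_{L^1}$ only spell out what the paper compresses into one line.
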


\begin{proof}
Since the exchange \(\eta\mapsto\eta^{j,j+1}\) only modifies two spins, the corresponding energy increment
\(
\Delta_j^{(\ell)}H_{i,\gamma,\ell}^{\mathrm{blk}}(\eta)
\)
is uniformly bounded for fixed \(\ell\), because \(\sum_{z}\kappa_\gamma(z)=1\) and \(|\eta_u|\le1\). The logistic form of the rate therefore yields uniform upper and lower bounds on \(c_{j,j+1}^{(\ell),\gamma}(\eta)\). The same argument applies to the full-torus rates \(c_i^\gamma(\sigma)\), since \(|\Delta_i H_\gamma(\sigma)|\) is uniformly bounded by the \(L^1\)-normalization of the kernel.
\end{proof}

\begin{lemma}[Entropy inequality]
\label{lem:entropy-inequality}
Let \(\nu\) be a probability measure, and let \(\mu\ll\nu\) with relative entropy
\[
H(\mu\mid \nu):=\int \log\!\left(\frac{d\mu}{d\nu}\right)d\mu<\infty.
\]
Then for every bounded measurable random variable \(F\) and every \(a>0\),
\[
\mathbb E_\mu[F]
\le
\frac{1}{a}H(\mu\mid \nu)
+
\frac{1}{a}\log \mathbb E_\nu[e^{aF}].
\]
In particular, for any random variable \(Y\),
\[
\mathbb E_\mu[|Y|]
\le
\frac{H(\mu\mid \nu)+\log 2}{a}
+
\frac{1}{a}
\max_{\pm}
\log \mathbb E_\nu[e^{\pm aY}].
\]
\end{lemma}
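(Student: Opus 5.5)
The approach is the Donsker--Varadhan variational characterization of relative entropy, equivalently the Gibbs variational principle. We may assume $H(\mu\mid\nu)<\infty$, since otherwise there is nothing to prove. Write $g:=d\mu/d\nu$. For bounded $F$ and $a>0$, define the tilted probability measure
\[
\nu_{aF}(d\omega):=\frac{e^{aF(\omega)}}{\mathbb E_\nu[e^{aF}]}\,\nu(d\omega),
\]
which is well defined because $F$ is bounded, so $0<\mathbb E_\nu[e^{aF}]<\infty$. Since $\nu_{aF}$ and $\nu$ are mutually absolutely continuous, $\mu\ll\nu_{aF}$ with density $g\,e^{-aF}\,\mathbb E_\nu[e^{aF}]$.

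The key identity is
\[
H(\mu\mid\nu_{aF})=\int \log g\,d\mu - a\,\mathbb E_\mu[F]+\log\mathbb E_\nu[e^{aF}]
= H(\mu\mid\nu)-a\,\mathbb E_\mu[F]+\log\mathbb E_\nu[e^{aF}].
\]
All terms are finite: $F$ is bounded, and $H(\mu\mid\nu)<\infty$ by assumption. Relative entropy between probability measures is nonnegative, by Jensen's inequality applied to the convex function $x\log x$ (namely $\int \phi\log\phi\,d\nu_{aF}\ge \phi_{\mathrm{avg}}\log\phi_{\mathrm{avg}}=0$). Hence $H(\mu\mid\nu_{aF})\ge0$. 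Rearranging and dividing by $a$ gives the first inequality.

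The only delicate point is integrability in the splitting of $\int\log(g e^{-aF}\mathbb E_\nu e^{aF})\,d\mu$. The negative part of $g\log g$ is bounded by $e^{-1}$ under $\nu$, so $\int\log g\,d\mu$ is well defined. Together with the boundedness of $F$, this justifies the splitting; I do not expect any real obstacle here.

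For the second statement, first take $Y$ bounded. Write $|Y|=\max(Y,-Y)$ and use $e^{a|Y|}\le e^{aY}+e^{-aY}$. This gives
\[
\log\mathbb E_\nu[e^{a|Y|}]\le\log\bigl(\mathbb E_\nu e^{aY}+\mathbb E_\nu e^{-aY}\bigr)\le \log 2+\max_\pm\log\mathbb E_\nu[e^{\pm aY}].
\]
Now apply the first inequality with $F=|Y|$. For general $Y$, apply the result to the truncations $F_n=|Y|\wedge n$; then $e^{aF_n}\le e^{a|Y|}$, so the same bound holds uniformly in $n$. Monotone convergence on the left-hand side then concludes the proof. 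If $\mathbb E_\nu e^{\pm aY}=\infty$, the bound is trivial.
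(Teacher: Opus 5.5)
Your proof is correct and follows the paper's route. The paper simply cites the standard entropy inequality (Kipnis--Landim, Lemma~6.1.4) for the first part, which you prove directly via the tilted measure $\nu_{aF}$ and nonnegativity of $H(\mu\mid\nu_{aF})$; it then uses the same bound $e^{a|Y|}\le e^{aY}+e^{-aY}$ with $F=|Y|$. Your truncation $F_n=|Y|\wedge n$ followed by monotone convergence is a useful extra step, since the paper applies the bounded-$F$ inequality to a possibly unbounded $|Y|$ without comment.
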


\begin{proof}
The first inequality is the standard entropy inequality; see
\cite[Lemma~6.1.4]{KipnisLandim1999} or \cite[Section~2]{GPV88}.
Applying it to \(F=|Y|\) and using
\(
e^{a|Y|}\le e^{aY}+e^{-aY}
\)
gives
\[
\log \mathbb E_\nu[e^{a|Y|}]
\le
\log 2+\max_{\pm}\log \mathbb E_\nu[e^{\pm aY}],
\]
which yields the second claim.
\end{proof}

Note that the Markov generator \(\mathscr L_\gamma^{N}\) is a self-adjoint non-positive operator on \(L^2(\mu_{N,\gamma,\beta})\).
We recall the standard Kipnis--Varadhan estimate for additive functionals of reversible Markov processes in the following lemma; see \cite{KipnisVaradhan1986} or \cite[Chapter~2, Section~1]{KipnisLandim1999}. 

\begin{lemma}[Kipnis--Varadhan \(H^{-1}\) estimate]
\label{lem:KV-invariant}
Let \(W:\Sigma_{N,M}\to\mathbb R\) satisfy
\(
\int W\,d\mu_{N,\gamma,\beta}=0.
\)
Define the negative Sobolev norm
\begin{equation}\label{VCH-1}
    \|W\|_{\mathcal{H}^{-1}_N}^2 :=
\sup_f
\Bigl\{
2\langle W,f\rangle_{\mu_{N,\gamma,\beta}}
-
\langle f,(-\mathscr L_\gamma^{N})f\rangle_{\mu_{N,\gamma,\beta}}
\Bigr\}.
\end{equation}
Then
\[
\mathbb E_{\mu_{N,\gamma,\beta}}
\left[
\left(\int_0^T W(\sigma(t))\,dt\right)^2
\right]
\le
2T\,\|W\|_{\mathcal{H}^{-1}_N}^2.
\]
\end{lemma}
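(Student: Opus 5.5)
The plan is to reduce the estimate to the variational characterization of the reversible resolvent, in the classical Kipnis--Varadhan manner. Since \(\mathscr L_\gamma^{N}\) is a self-adjoint non-positive operator on the finite-dimensional space \(L^2(\mu_{N,\gamma,\beta})\) restricted to \(\Sigma_{N,M}\), and the process is started from (and hence stationary in) \(\mu_{N,\gamma,\beta}\), the expectation can be expanded as a double time integral of the stationary correlation:
\[
\mathbb E_{\mu_{N,\gamma,\beta}}\Bigl[\Bigl(\int_0^T W(\sigma(t))\,dt\Bigr)^2\Bigr]
=2\int_0^T\!\!\int_0^t \bigl\langle W, e^{(t-s)\mathscr L_\gamma^{N}}W\bigr\rangle_{\mu_{N,\gamma,\beta}}\,ds\,dt,
\]
where we use the Markov property and stationarity, so that \(\mathbb E[W(\sigma(s))W(\sigma(t))]=\langle W,P_{t-s}W\rangle\) with \(P_r=e^{r\mathscr L_\gamma^N}\).

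Next we diagonalize. Let \(E(d\lambda)\) be the spectral measure of \(-\mathscr L_\gamma^{N}\) associated with \(W\); it is a finite sum of point masses because the state space is finite. The centering \(\int W\,d\mu_{N,\gamma,\beta}=0\) means \(W\) is orthogonal to constants, so \(E\) has no mass at \(\lambda=0\). This holds provided the chain is irreducible on the sector, which follows from the strict positivity of the rates in Lemma~\ref{lem:local-rates-elliptic} together with the connectivity of nearest-neighbour exchanges on a fixed-magnetization sector. Then \(\langle W,P_rW\rangle=\int e^{-r\lambda}E(d\lambda)\ge0\), and bounding the inner integral by
\[
\int_0^t e^{-(t-s)\lambda}\,ds\le\lambda^{-1}
\]
gives the upper bound \(2T\int\lambda^{-1}E(d\lambda)=2T\langle W,(-\mathscr L_\gamma^{N})^{-1}W\rangle\).

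Finally, we identify \(\langle W,(-\mathscr L_\gamma^{N})^{-1}W\rangle\) with \(\|W\|^2_{\mathcal H^{-1}_N}\) in \eqref{VCH-1}. The functional \(2\langle W,f\rangle-\langle f,-\mathscr L f\rangle\) is concave, and it is maximized at \(f=(-\mathscr L)^{-1}W\), which is defined on the mean-zero subspace. Adding a constant to \(f\) does not change the value, since \(W\) has mean zero and constants lie in the kernel. The supremum therefore equals \(\langle W,(-\mathscr L)^{-1}W\rangle\), and this yields the claim.

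The only delicate point is the invertibility of \(-\mathscr L_\gamma^{N}\) on mean-zero functions, that is, irreducibility on \(\Sigma_{N,M}\). If one prefers to avoid it, one can replace \((-\mathscr L)^{-1}\) by \((\lambda_0-\mathscr L)^{-1}\), run the same argument, and let \(\lambda_0\downarrow0\) using monotone convergence in the spectral representation. This gives the bound with the supremum formula directly, since \(\sup_f\{2\langle W,f\rangle-\langle f,(\lambda_0-\mathscr L)f\rangle\}\) increases to \(\|W\|^2_{\mathcal H^{-1}_N}\). Everything else is routine spectral calculus in finite dimension.
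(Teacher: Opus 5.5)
Your proof is correct and is the standard spectral proof of the Kipnis--Varadhan bound: the stationary covariance $\langle W,e^{r\mathscr L_\gamma^N}W\rangle\ge0$, the bound $\int_0^t e^{-u\lambda}\,du\le\lambda^{-1}$, and the identification of the variational supremum with $\langle W,(-\mathscr L_\gamma^N)^{-1}W\rangle$. This is exactly the classical result the paper invokes by citation (Kipnis--Varadhan, Kipnis--Landim) without a proof of its own, and the irreducibility you need on $\Sigma_{N,M}$ does hold, since the rates are strictly positive and adjacent exchanges connect any two configurations of equal magnetization; the one loose spot is the optional $\lambda_0$-regularized fallback, where the inner integral must be bounded by $e^{\lambda_0 T}(\lambda+\lambda_0)^{-1}$ rather than $(\lambda+\lambda_0)^{-1}$, but your main route does not need it.
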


Now we consider the fast process \((\sigma(\alpha_\gamma^{-1}t))_{t\ge0}\) and its associated accelerated generator \(
\mathscr A_N:=\alpha_\gamma^{-1}\mathscr L_\gamma^N
\) with total magnetization $M$. 
Note that the canonical Gibbs measure \(\mu_{N,\gamma,\beta}\) is also reversible for \(\mathscr A_N\). 
Let \(\nu_{N,t}\) be the law of the process \(\sigma^N(\alpha^{-1}_{\gamma}t)\), so that
\[
\nu_{N,t}(d\sigma)=f_{N,t}(\sigma)\,\mu_{N,\gamma,\beta}(d\sigma),
\quad
f_{N,t}\ge0,
\quad
\int_{\Sigma_{N,M}} f_{N,t}\,d\mu_{N,\gamma,\beta}=1.
\]
We define the relative entropy with respect to the canonical Gibbs equilibrium by
\[
H_N(f_{N,t}\mid \mu_{N,\gamma,\beta})
:=
\int_{\Sigma_{N,M}}  f_{N,t}(\sigma)\log f_{N,t}(\sigma)\,\mu_{N,\gamma,\beta}(d\sigma).
\]
Now we recall the standard results for entropy dissipation and entropy production rate of the Ising--Kac--Kawasaki dynamics, see e.g. \cite{Funaki2018HydrodynamicLimitExclusion}.

\begin{lemma}\label{lem:entropy-dissipation-kawasaki}
Let \((f_{N,t})_{t\in[0,T]}\) be the density of the Ising--Kac--Kawasaki dynamics $\sigma^N(\alpha^{-1}_{\gamma}t)$ with respect to \(\mu_{N,\gamma,\beta}\). 
Then the relative entropy is absolutely continuous in time and satisfies
\[
\frac{d}{dt}H_N(f_{N,t}\mid \mu_{N,\gamma,\beta})
\le
-4\alpha^{-1}_{\gamma}\sum_{i\in\Lambda_N} I_{i,i+1}^{N,\gamma}(f_{N,t})
=
-4\alpha^{-1}_{\gamma} I^{N,\gamma}(f_{N,t})
\]
for almost every \(t\in[0,T]\). Therefore
\[
4\alpha^{-1}_{\gamma}\int_0^T \sum_{i\in\Lambda_N} I_{i,i+1}^{N,\gamma}(f_{N,t})\,dt
\le
H_N(f_{N,0}\mid \mu_{N,\gamma,\beta}).
\]
\end{lemma}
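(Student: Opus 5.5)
We prove the entropy dissipation inequality by differentiating the relative entropy along the forward Kolmogorov equation and using the reversibility of \(\mu_{N,\gamma,\beta}\) on the finite sector \(\Sigma_{N,M}\). The state space is finite, so \(f_{N,t}\) solves the linear ODE
\[
\partial_t f_{N,t}=\mathscr A_N f_{N,t}=\alpha_\gamma^{-1}\mathscr L_\gamma^N f_{N,t}.
\]
Indeed, the forward equation for the law \(\nu_{N,t}\) is \(\partial_t\nu_{N,t}=\nu_{N,t}\mathscr A_N\), and by detailed balance the adjoint of \(\mathscr A_N\) in \(L^2(\mu_{N,\gamma,\beta})\) is \(\mathscr A_N\) itself. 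Hence \(t\mapsto f_{N,t}\) is smooth, in fact real analytic.

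\textbf{Step 1: positive initial density.} First assume \(f_{N,0}>0\) everywhere. The irreducible sector dynamics keeps \(f_{N,t}>0\) for \(t>0\), and this also holds for \(t\ge0\) in general, since the semigroup \(e^{t\mathscr A_N}\) has strictly positive kernel for \(t>0\). Then \(H_N(t)=\int f\log f\,d\mu\) is \(C^1\) on \((0,T]\) and continuous at \(0\). Differentiating and using \(\int \partial_t f\,d\mu=0\) gives
\[
\frac{d}{dt}H_N=\int(\mathscr A_N f)\log f\,d\mu
=-\alpha_\gamma^{-1}\sum_i\frac12\int c_{i,i+1}(\sigma)\bigl(f(\sigma^{i,i+1})-f(\sigma)\bigr)\bigl(\log f(\sigma^{i,i+1})-\log f(\sigma)\bigr)\,d\mu .
\]
This is the symmetrized (bond Dirichlet form) representation. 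It follows because \(\sigma\mapsto\sigma^{i,i+1}\) is an involution and detailed balance \(c_{i,i+1}(\sigma)\mu(\sigma)=c_{i,i+1}(\sigma^{i,i+1})\mu(\sigma^{i,i+1})\) makes the measure \(c_{i,i+1}\,d\mu\) invariant under the swap.

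\textbf{Step 2: the elementary inequality.} For \(a,b>0\),
\[
(a-b)(\log a-\log b)\ge 4(\sqrt a-\sqrt b)^2 .
\]
To prove it, write \(a=x^2\), \(b=y^2\). The claim becomes \((x+y)(\log x-\log y)\ge 2(x-y)\), i.e. \(\log r\ge 2(r-1)/(r+1)\) for \(r=x/y\). This holds since both sides vanish at \(r=1\) and their derivatives compare as \(1/r\ge 4/(r+1)^2\). Inserting this bondwise yields
\[
\frac{d}{dt}H_N\le -4\alpha_\gamma^{-1}\sum_i I^{N,\gamma}_{i,i+1}(f_{N,t}),
\]
with the factor \(\tfrac12\) already included in the definition of \(I_{i,i+1}^{N,\gamma}\).

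\textbf{Step 3: general initial density.} If \(f_{N,0}\) vanishes somewhere, either run the argument on \((\delta,T]\) and let \(\delta\downarrow0\), or approximate by \(f^{\epsilon}_0=(1-\epsilon)f_{N,0}+\epsilon\). Continuity of entropy and Fisher information in \(f\) on a finite space, together with the positivity-improving semigroup, justifies passing to the limit; absolute continuity of \(H_N\) follows.

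\textbf{Step 4: integrated bound.} Integrate from \(0\) to \(T\) and use \(H_N(f_{N,T}\mid\mu)\ge0\) by Jensen. This gives
\[
4\alpha_\gamma^{-1}\int_0^T I^{N,\gamma}(f_{N,t})\,dt\le H_N(f_{N,0}\mid\mu_{N,\gamma,\beta}).
\]

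The only mildly delicate point is the boundary behaviour at \(t=0\) when \(f_{N,0}\) has zeros, where \(\log f\) is singular; Step 3 handles it. Everything else is finite-dimensional calculus.
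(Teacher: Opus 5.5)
Your argument is correct and is the standard entropy-production computation: the forward equation $\partial_t f_{N,t}=\alpha_\gamma^{-1}\mathscr L_\gamma^N f_{N,t}$, symmetrization of $\int(\mathscr L_\gamma^N f)\log f\,d\mu_{N,\gamma,\beta}$ via detailed balance, and the inequality $(a-b)(\log a-\log b)\ge 4(\sqrt a-\sqrt b)^2$; the paper gives no proof of its own and simply cites Funaki's notes for exactly this argument. One cosmetic fix in Step 2: say first that, since the inequality is symmetric in $a,b$, one may assume $a\ge b$, because your derivative comparison only proves $\log r\ge 2(r-1)/(r+1)$ for $r\ge 1$, and that reduced inequality is false for $r<1$.
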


Since the canonical Gibbs measure \(\mu_{N,\gamma,\beta}\) is also reversible for \(\mathscr A_N\), we have the following relation:
\begin{equation}\label{eq:varH-1}
    \sup_{f \in L^2(\mu_{N,\gamma,\beta})}
\Bigl\{
2\langle W,f\rangle_{\mu_{N,\gamma,\beta}}
-
\langle f,(-\mathscr A_N)f\rangle_{\mu_{N,\gamma,\beta}}
\Bigr\} = \alpha_{\gamma}\|W\|_{\mathcal{H}^{-1}_N}^2.
\end{equation}
Due to $0 <\alpha_{\gamma} \ll 1$, and the ergodicity of $\mu_{N,\gamma,\beta}$, we show the Entropy--Kipnis--Varadhan estimate for additive functionals with the accelerated process \((\sigma^N(\alpha_\gamma^{-1}t))_{t\ge0}\) with non-equilibrium initial laws.

\begin{lemma}\label{lem:entropy-KV}
Let \((\sigma^N(\alpha_\gamma^{-1}t))_{t\ge0}\) be the Markov process generated by \(\mathscr A_N\), started from the initial law \(f_{N,0}\mu_{N,\gamma,\beta}\), and assume
\(
H_N(f_{N,0}\mid \mu_{N,\gamma,\beta})\le C_H N.
\)
For a bounded observable \(W:\Sigma_{N,M}\to\mathbb R\) with
\(
\int W\,d\mu_{N,\gamma,\beta}=0,
\)
assume that there is a constant \(C_W <\infty\) such that
\begin{equation}\label{eq:entropy-KV-density-Hminus1}
    |\langle W,g^2\rangle_{\mu_{N,\gamma,\beta}}|
\le C_W \langle g,(-\mathscr L_\gamma^N)g\rangle_{\mu_{N,\gamma,\beta}}^{1/2}, \quad \|g\|_{L^2(\mu_{N,\gamma,\beta})}=1.
\end{equation}
Then
\begin{equation}\label{E:KLnonq}
    \mathbb E_{f_{N,0}}
\left|
\int_0^T W(\sigma^N(\alpha_\gamma^{-1}t))\,dt
\right| 
\le
C_W\sqrt{\alpha_{\gamma}NT}\,\|W\|_{\mathcal{H}^{-1}_N}. 
\end{equation}
\end{lemma}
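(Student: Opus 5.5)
The plan is the standard entropy--Feynman--Kac route, organized so that the two given quantities, the constant $C_W$ of \eqref{eq:entropy-KV-density-Hminus1} and the norm $\|W\|_{\mathcal H^{-1}_N}$ of \eqref{VCH-1}, enter the final bound multiplicatively. Write $F_T:=\int_0^T W(\sigma^N(\alpha_\gamma^{-1}t))\,dt$. Since $\nu_{N,0}=f_{N,0}\mu_{N,\gamma,\beta}$ and $\mu_{N,\gamma,\beta}$ is invariant, the path law under $\mathbb P_{f_{N,0}}$ has relative entropy $H_N(f_{N,0}\mid\mu_{N,\gamma,\beta})\le C_HN$ with respect to the stationary path law $\mathbb P_{\mu}$. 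By Lemma \ref{lem:entropy-inequality}, for every $a>0$,
\[
\mathbb E_{f_{N,0}}|F_T|\le \frac{C_HN+\log 2}{a}+\frac1a\max_\pm\log\mathbb E_{\mu}\bigl[e^{\pm aF_T}\bigr].
\]

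\textbf{Feynman--Kac step.} For bounded $W$, the Feynman--Kac formula for the reversible generator $\mathscr A_N=\alpha_\gamma^{-1}\mathscr L^N_\gamma$ bounds $\log\mathbb E_\mu[e^{\pm aF_T}]$ by $T$ times the top of the spectrum of $\mathscr A_N\pm aW$ on $L^2(\mu_{N,\gamma,\beta})$. This gives
\[
\frac1T\log\mathbb E_\mu\bigl[e^{\pm aF_T}\bigr]\le\sup_{\|g\|_{L^2(\mu)}=1}\Bigl\{\pm a\langle W,g^2\rangle_\mu-\alpha_\gamma^{-1}\langle g,(-\mathscr L^N_\gamma)g\rangle_\mu\Bigr\}.
\]
Everything therefore reduces to a sufficiently sharp bound on the variational problem $\Gamma(a):=\sup_g\{a|\langle W,g^2\rangle|-\alpha_\gamma^{-1}D(g)\}$, where $D(g)=\langle g,-\mathscr L g\rangle$.

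\textbf{Bounding $\Gamma(a)$.} This is where both constants must enter. Write $g^2=1+h$ with $\int h\,d\mu=0$; the constant part drops out because $\int W\,d\mu=0$. I then have two estimates for $|\langle W,h\rangle|$.
\begin{itemize}
\item The density hypothesis \eqref{eq:entropy-KV-density-Hminus1} gives $|\langle W,h\rangle|\le C_W D(g)^{1/2}$.
\item The definition \eqref{VCH-1}, through the duality $|\langle W,h\rangle|\le\|W\|_{\mathcal H^{-1}_N}D(h)^{1/2}$, combined with the elementary bound $D(g^2)\lesssim D(g)$ for normalized $g$ (using $(g(\sigma')^2-g(\sigma)^2)^2=(g(\sigma')-g(\sigma))^2(g(\sigma')+g(\sigma))^2$), gives a second estimate of the same Dirichlet-form type, with $\|W\|_{\mathcal H^{-1}_N}$ in place of $C_W$.
\end{itemize}
Taking the geometric mean of the two, $|\langle W,g^2\rangle|\le (C_W\|W\|_{\mathcal H^{-1}_N})^{1/2}\,C\,D(g)^{1/2}$. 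Maximizing $aKD^{1/2}-\alpha_\gamma^{-1}D$ over $D\ge0$ then yields $\Gamma(a)\le \tfrac14 a^2\alpha_\gamma K^2$ with $K^2\lesssim C_W\|W\|_{\mathcal H^{-1}_N}$.

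\textbf{Optimization in $a$.} Inserting this into the entropy bound gives
\[
\mathbb E_{f_{N,0}}|F_T|\le\frac{C_HN+\log2}{a}+\frac{aT\alpha_\gamma}{4}C_W\|W\|_{\mathcal H^{-1}_N}.
\]
The optimal choice is $a=\bigl(4(C_HN+\log 2)/(T\alpha_\gamma C_W\|W\|_{\mathcal H^{-1}_N})\bigr)^{1/2}$, which produces a bound of order $\sqrt{\alpha_\gamma NT\,C_W\|W\|_{\mathcal H^{-1}_N}}$. To reach the exact form \eqref{E:KLnonq}, I would work at the scale where $C_W\ge\|W\|_{\mathcal H^{-1}_N}$. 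This holds because taking $g$ near $1$ in \eqref{eq:entropy-KV-density-Hminus1} and comparing with \eqref{VCH-1} shows that $C_W$ dominates the $\mathcal H^{-1}$ norm up to a constant, and then $\sqrt{C_W\|W\|}\le C_W$ after normalization.

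\textbf{Main obstacle.} The delicate step is the second estimate on $|\langle W,h\rangle|$: the inequality $D(g^2)\lesssim D(g)$ is not uniform without an $L^\infty$ control on $g$. My first attempt is to truncate $g$ at level $R$, using that $W$ is bounded so the tail contributes $O(\|W\|_\infty R^{-2})$, and then let $R$ depend on $N$. If that fails, I would fall back on the spectral form of the Kipnis--Varadhan argument (Lemma \ref{lem:KV-invariant}) along the resolvent of $\mathscr A_N$, with the density hypothesis controlling the change of measure.
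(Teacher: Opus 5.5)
Your path-space entropy inequality and the Feynman--Kac/Rayleigh--Ritz reduction to $\Gamma(a)$ match the paper exactly. The proposal breaks down in the bound on $\Gamma(a)$.

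Your second estimate needs $D(g^2)\lesssim D(g)$ for $\|g\|_{L^2(\mu)}=1$, and this is not a removable technicality. Applied with the optimal $C_W$, your geometric-mean bound amounts to $C_W\lesssim\|W\|_{\mathcal H^{-1}_N}$, which fails for general reversible chains. Take the nearest-neighbour walk on $\{0,\dots,n\}$ with uniform $\mu$ and $D(f)=\frac1{n+1}\sum_{k<n}(f(k+1)-f(k))^2$, and let $W(0)=h$, $W(1)=-h$, $W=0$ elsewhere. Then $\|W\|_{\mathcal H^{-1}}=h/\sqrt{n+1}$. On the other hand, $g=\sqrt{n+1}\,\mathbf 1_{\{0\}}$ has $\|g\|_{L^2(\mu)}=1$, $\langle W,g^2\rangle=h$ and $D(g)=1$, so $C_W\ge h$. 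The same $g$ shows your truncation repair cannot work. Chebyshev controls $\mu(g>R)$, not $\int g^2\mathbf 1_{\{g>R\}}\,d\mu$, and here the latter equals $1$ for every $R<\sqrt{n+1}$.

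Even granting the geometric mean, your last step does not reach \eqref{E:KLnonq}. Combining $\sqrt{C_W\|W\|_{\mathcal H^{-1}_N}}$ with $\|W\|_{\mathcal H^{-1}_N}\le C_W/2$ yields $C_W$, not $C_W\|W\|_{\mathcal H^{-1}_N}$. The upgrade would need $C_W\|W\|_{\mathcal H^{-1}_N}\ge 1$, and nothing can supply that. The left side of \eqref{E:KLnonq} is $1$-homogeneous in $W$. The right side, with the admissible constant $C_{\lambda W}=\lambda C_W$, is $2$-homogeneous. So the display as written fails for $\lambda W$ as $\lambda\to0$ whenever the left side is nonzero.

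The paper's proof uses only your first bullet. It bounds $\pm a\langle W,g^2\rangle-\alpha_\gamma^{-1}D(g)\le aC_WD(g)^{1/2}-\alpha_\gamma^{-1}D(g)\le\tfrac14a^2\alpha_\gamma C_W^2$. Optimizing $\frac{C_HN+\log2}{a}+\frac{aT\alpha_\gamma C_W^2}{4}$ in $a$ then gives $\mathbb E_{f_{N,0}}|F_T|\le C_W\sqrt{(C_HN+\log 2)\,T\alpha_\gamma}$. Its closing sentence, ``this becomes \eqref{E:KLnonq}'', does not actually produce the factor $\|W\|_{\mathcal H^{-1}_N}$. The later application in \eqref{eq:aIDE1}, with $C_W=C_\phi\sqrt{\varepsilon_\gamma}$, uses precisely the form $C_W\sqrt{\alpha_\gamma NT}$.

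So discard the second estimate and the normalization step. The density hypothesis alone proves the bound that is true and actually needed downstream.
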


\begin{proof}
Let \(\mathcal P_{f_{N,0}}\) be the law on paths of the accelerated process
started from \(f_{N,0}\mu_{N,\gamma,\beta}\), and let
\(\mathcal P_{\mu}\) be the stationary path law started from
\(\mu_{N,\gamma,\beta}\).  Since both path laws have the same transition
kernel, their relative entropy on \(D([0,T],\Sigma_{N,M})\) is given by
\[
H(\mathcal P_{f_{N,0}}\mid\mathcal P_{\mu})
=
H_N(f_{N,0}\mid\mu_{N,\gamma,\beta}).
\]
Applying Lemma~\ref{lem:entropy-inequality} on path space, with
\(\mu=\mathcal P_{f_{N,0}}\) and \(\nu=\mathcal P_{\mu}\), yields that for every
\(a>0\),
\begin{align}\label{E:entropyB}
    & \mathbb E_{f_{N,0}}
\left| \int_0^T W(\sigma^N(\alpha_\gamma^{-1}t))\,dt \right| \nonumber \\
\le & \frac{H_N(f_{N,0}\mid \mu_{N,\gamma,\beta})+\log 2}{a}
+
\frac{1}{a}
\max_{\pm}
\log \mathbb E_{\mu_{N,\gamma,\beta}}
\left[
\exp\!\left(
\pm a\int_0^T W(\sigma^N(\alpha_\gamma^{-1}t))\,dt
\right)
\right].
\end{align}
For the exponential moment under the invariant law, by Feynman--Kac formula,
\[
\mathbb E_{\mu_{N,\gamma,\beta}}
\exp\left\{
\int_0^T V(\sigma^N(\alpha_\gamma^{-1}t))\,dt
\right\}
=
\left\langle
1,e^{T(\mathscr A_N+V)}1
\right\rangle_{\mu_{N,\gamma,\beta}},
\]
where \(V=\pm aW\) acts by multiplication.
The Rayleigh--Ritz variational
formula for the largest eigenvalue $\lambda_{\max} $ of the self-adjoint operator \(\mathscr A_N+V\) satisfies
\[
\lambda_{\max} = \sup_{\|g\|_2=1}
\left\{ \langle V,g^2\rangle_\mu -
\alpha_\gamma^{-1} \langle g,(-\mathscr L_\gamma^N)g\rangle_\mu \right\}.
\]
Then by \eqref{eq:entropy-KV-density-Hminus1}, we have 
\begin{align}
&\log
\mathbb E_{\mu_{N,\gamma,\beta}} \left[ \exp\!\left( \pm a\int_0^T W(\sigma^N(\alpha_\gamma^{-1}t))\,dt \right) \right]
\\
\le & T \sup_{\substack{g\in L^2(\mu_{N,\gamma,\beta})\\
\|g\|_{L^2(\mu_{N,\gamma,\beta})}=1}}
\left\{ \langle V,g^2\rangle_{\mu_{N,\gamma,\beta}}
-
\alpha_\gamma^{-1} \left\langle g,(-\mathscr L_\gamma^N)g \right\rangle_{\mu_{N,\gamma,\beta}} \right\} \nonumber \\
\le & C\,a^2T\alpha_{\gamma} C_{W}^2.
\end{align}
Substituting this estimate into the entropy bound \eqref{E:entropyB} yields
\begin{align*}
    \mathbb E_{f_{N,0}}|Y|
\le &
\frac{H_N(f_{N,0}\mid \mu_{N,\gamma,\beta})+\log 2}{a}
+
C^2_{W}aT\alpha_{\gamma} \\
\leq & C
\sqrt{
\bigl(H_N(f_{N,0}\mid \mu_{N,\gamma,\beta})+1\bigr)\, T  \alpha_{\gamma}}.
\end{align*}
Under the entropy bound \(H_N(f_{N,0}\mid \mu_{N,\gamma,\beta})\le C_HN\), this becomes estimate \eqref{E:KLnonq}
\end{proof}

\subsection{Centered weak \(L^2\)-energy estimate}
\label{subsec:centered-weak-l2-energy}

This subsection collects the static and dynamic inputs needed for the
centered weak \(L^2\)-control of the Kac field.  The argument is self-contained
within the canonical sector: we first record the conserved mass normalization,
then derive the critical static mass scale from Assumption \(({\bf B})\), and
next isolate the static Kac--Ising inputs used for the infrared estimate, and
finally combine the equilibrium Fourier fluctuation bound with the initial
layer estimate.

On the canonical sector we denote
\[
m_N:=\frac1{|\Lambda_N|}\sum_{i\in\Lambda_N}\sigma_i
=\varepsilon_\gamma M(N),
\]
which is deterministic on \(\Sigma_{N,M}\), and is conserved by the
Kawasaki dynamics.
Since fluctuation field has conserved spatial average
\begin{equation}\label{eq:mass-sector-implies-mN-small}
    \langle X_\gamma(t),1\rangle_\gamma =\delta_\gamma^{-1}m_N \to M,
\end{equation}
we have $|m_N| \le C\delta_\gamma $.
Let
\[
\theta_\gamma
=
\frac{1}{\beta(1-m_N^2)}-1
=
\frac{1-\beta}{\beta}
+\frac{m_N^2}{\beta(1-m_N^2)}.
\]
Then Assumption \(({\bf B})\) implies
\begin{equation}\label{eq:critical-static-mass-rate}
c\delta_\gamma^2
\le
\theta_\gamma
\le
C\delta_\gamma^2 .
\end{equation}

Now we give the Fourier lower bound for the Kac kernel. 
\begin{lemma}\label{lem:kac-kernel-fourier-lower}
Under Assumption \(({\bf A})\) and the macroscopic Kac discretization
\(\varepsilon_\gamma/\gamma\to0\), there exists \(c>0\) such that, for all
sufficiently small \(\gamma\) and every \(k\in\mathcal B_\gamma\setminus\{0\}\),
\begin{equation}
\label{eq:kac-kernel-fourier-lower}
1-\widehat\kappa_\gamma(k)
\ge
c\,\min\left\{1,
\left(\frac{\varepsilon_\gamma |k|}{\gamma}\right)^2
\right\}.
\end{equation}
\end{lemma}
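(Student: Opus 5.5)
We read $\widehat\kappa_\gamma(k):=\sum_{z\in\Lambda_N}\kappa_\gamma(z)e^{-2\pi i k\varepsilon_\gamma z}$; this is the convention consistent with the discrete Fourier transform of \S\ref{subsec:fourier-extension}, since the paper does not spell it out. By the symmetry $\kappa_\gamma(z)=\kappa_\gamma(-z)$ and $\sum_z\kappa_\gamma(z)=1$,
\[
1-\widehat\kappa_\gamma(k)=\sum_{z\in\Lambda_N}\gamma\,\mathfrak K(\gamma z)\bigl(1-\cos(2\pi \xi\,\gamma z)\bigr),
\qquad \xi:=\frac{\varepsilon_\gamma k}{\gamma}.
\]
Every summand is nonnegative, so we may discard any part of the sum. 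We will use three facts. First, $\gamma N=\gamma/(2\varepsilon_\gamma)-\gamma/2\to\infty$ because $\varepsilon_\gamma/\gamma\to0$, so the window $\{|\gamma z|\le R\}$ sits inside $\Lambda_N$ for any fixed $R$. Second, $\mathfrak K\in W^{1,1}$ gives Riemann sum errors $\bigl|\sum_z\gamma F(\gamma z)-\int F\bigr|\le C\gamma\|F'\|_{L^1}$. Third, for $k\in\mathcal B_\gamma$ we have $\gamma|\xi|=\varepsilon_\gamma|k|\le c\,\varepsilon_\gamma/\gamma\to0$. We split into a small-$\xi$ regime and a large-$\xi$ regime at a threshold $\xi_0=1/(2R)$.

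\emph{Small $\xi$, $|\xi|\le 1/(2R)$.} Since $\mathfrak m_2>0$ (as $\mathfrak K\ge0$ has unit mass and is absolutely continuous), choose $R$ with $\int_{|u|\le R}u^2\mathfrak K(u)\,du\ge \mathfrak m_2/2$. On $|\gamma z|\le R$ we have $|2\pi\xi\gamma z|\le\pi$, so $1-\cos x\ge 2x^2/\pi^2$ gives
\[
1-\widehat\kappa_\gamma(k)\ge 8\xi^2\sum_{|\gamma z|\le R}\gamma\,\mathfrak K(\gamma z)(\gamma z)^2 .
\]
The last sum is a Riemann sum of the $W^{1,1}$ function $u^2\mathfrak K(u)\mathbf 1_{|u|\le R}$ (smooth the cutoff if needed). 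It converges to a quantity $\ge\mathfrak m_2/2$, hence is $\ge\mathfrak m_2/4$ for $\gamma$ small. This gives $1-\widehat\kappa_\gamma(k)\ge c\,\xi^2$ with no need for a relative error estimate, which is the point of treating this regime pointwise.

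\emph{Large $\xi$, $|\xi|\ge 1/(2R)$.} Compare with the continuum quantity $1-\widehat{\mathfrak K}(\xi)=\int\mathfrak K(u)(1-\cos 2\pi\xi u)\,du$. Because $\mathfrak K\ge0$ is an $L^1$ density and not a lattice point mass, $1-\widehat{\mathfrak K}(\xi)>0$ for $\xi\ne0$. By continuity on compacts, together with Riemann--Lebesgue ($\widehat{\mathfrak K}(\xi)\to0$), we get $\inf_{|\xi|\ge1/(2R)}(1-\widehat{\mathfrak K}(\xi))=:2c_0>0$. The discretization error is controlled by the Riemann sum bound applied to $F(u)=\mathfrak K(u)(1-\cos2\pi\xi u)$ on $|u|\le\gamma N$, together with the tail $\int_{|u|>\gamma N}\mathfrak K\to0$. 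Since $\|F'\|_{L^1}\lesssim\|\mathfrak K\|_{W^{1,1}}(1+|\xi|)$, the error is at most $C\gamma(1+|\xi|)+o(1)\le C\gamma+C\varepsilon_\gamma/\gamma+o(1)$, uniformly over $k\in\mathcal B_\gamma$. For small $\gamma$ this is below $c_0$, so $1-\widehat\kappa_\gamma(k)\ge c_0$. The same argument covers the deviation of the normalization $\sum_z\kappa_\gamma(z)=1$ from $\int\mathfrak K$; it does not matter anyway, since we only compare the two nonnegative sums directly.

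Combining the two regimes yields \eqref{eq:kac-kernel-fourier-lower}. The main obstacle is that the discretization error $\gamma|\xi|$ is not small relative to $\xi^2$ when $|\xi|\lesssim\gamma$. The pointwise quadratic lower bound in the first regime avoids this, and only the large-$\xi$ regime relies on the approximation, where an $o(1)$ absolute error suffices. That step uses crucially the restriction to the Brillouin zone $|k|\le c\gamma^{-1}$ together with $\varepsilon_\gamma/\gamma\to0$.
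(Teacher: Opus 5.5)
Your proof is correct and follows essentially the same route as the paper. For small $\xi$ you combine a pointwise quadratic lower bound on $1-\cos$ over a fixed window with a Riemann-sum lower bound for $\sum\gamma\,\mathfrak K(\gamma z)(\gamma z)^2$. For large $\xi$ you use the uniform comparison $\bigl|\widehat\kappa_\gamma(k)-\widehat{\mathfrak K}(\xi)\bigr|\le C(\gamma+\varepsilon_\gamma/\gamma)+o(1)$ on $\mathcal B_\gamma$, together with $\inf_{|\xi|\ge\xi_0}\bigl(1-\widehat{\mathfrak K}(\xi)\bigr)>0$ from continuity and Riemann--Lebesgue. Your explicit handling of the torus truncation via $\gamma N\to\infty$ makes precise the $o_\gamma(1)$ that the paper leaves implicit.
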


\begin{proof}
Set
\(
p_{\gamma,k}:=\frac{\varepsilon_\gamma k}{\gamma},
\widehat{\mathfrak K}(p)
:=
\int_{\mathbb R}\mathfrak K(u)e^{-2\pi i p u}\,du .
\)
Since \(\mathfrak K\in W^{1,1}(\mathbb R)\) is even and nonnegative and \(\int_{\mathbb R}\mathfrak K(u)\,du=1\), we have
\[
1-\widehat{\mathfrak K}(p)
=
\int_{\mathbb R}\mathfrak K(u)\bigl(1-\cos(2\pi p u)\bigr)\,du .
\]
As \(p\to0\), the finite second moment gives
\[
\frac{1-\widehat{\mathfrak K}(p)}{p^2}
\longrightarrow
2\pi^2\int_{\mathbb R}u^2\mathfrak K(u)\,du>0.
\]
Thus \(1-\widehat{\mathfrak K}(p)\ge c_0p^2\) for \(|p|\le p_0\).
For \(|p|\ge p_0\), the strict positivity of \(\mathfrak K\) and the
Riemann--Lebesgue lemma imply
\[
\inf_{|p|\ge p_0}\bigl(1-\widehat{\mathfrak K}(p)\bigr)>0.
\]
Indeed, \(|\widehat{\mathfrak K}(p)|<1\) for \(p\ne0\), while
\(\widehat{\mathfrak K}(p)\to0\) as \(|p|\to\infty\).

It remains to transfer this continuum bound to the lattice kernel.  By the
\(W^{1,1}\)-regularity of \(\mathfrak K\), the Riemann-sum comparison is
uniform for \(k\in\mathcal B_\gamma\):
\[
\left|
\widehat\kappa_\gamma(k)-\widehat{\mathfrak K}(p_{\gamma,k})
\right|
\le
C\gamma\bigl(1+|p_{\gamma,k}|\bigr)+o_\gamma(1)
\le
C\left(\gamma+\frac{\varepsilon_\gamma}{\gamma}\right)+o_\gamma(1),
\]
because \(|k|\lesssim\gamma^{-1}\) on \(\mathcal B_\gamma\).  The right-hand
side tends to zero.  On the region \(|p_{\gamma,k}|\ge p_0\), this proves a
uniform positive lower bound for \(1-\widehat\kappa_\gamma(k)\).

On the low-frequency region \(0<|p_{\gamma,k}|\le p_0\), choose a compact
interval \(J\subset\mathbb R\setminus\{0\}\) with
\(\int_J u^2\mathfrak K(u)\,du>0\), and take \(p_0\) small enough so that
\(|2\pi p u|\le\pi/2\) for \(u\in J\), \(|p|\le p_0\).  Then
\[
1-\cos(2\pi p_{\gamma,k}\gamma z)
\ge
c\,p_{\gamma,k}^2(\gamma z)^2
\qquad\text{for }\gamma z\in J.
\]
Consequently,
\[
1-\widehat\kappa_\gamma(k)
\ge
c\,p_{\gamma,k}^2
\sum_{\gamma z\in J}
\gamma\,\mathfrak K(\gamma z)(\gamma z)^2
\ge
c\,p_{\gamma,k}^2,
\]
for all sufficiently small \(\gamma\).  Combining the low- and high-frequency
regions proves \eqref{eq:kac-kernel-fourier-lower}.
\end{proof}

We shall use the following finite-volume form of the Lebowitz--Penrose coarse-graining principle.  
It follows by conditioning on a coarse-grained magnetization profile \(m\), applying Stirling's formula, and then optimizing the resulting non-local free-energy functional
\[
\mathcal F_{N,\gamma}(m) := \varepsilon_\gamma\sum_{i\in\Lambda_N} I(m_i) - \frac{\beta}{2}
\left\langle m,\mathcal K_\gamma m\right\rangle_\gamma =  \varepsilon_\gamma\sum_{i\in\Lambda_N} I(m_i) - \frac{\beta}{2}\varepsilon_\gamma
\sum_{i,j\in\Lambda_N}
\kappa_\gamma(i-j)m_i m_j.
\]
Here, \(\mathcal K_\gamma\) denotes the discrete Kac convolution operator
$(\mathcal K_\gamma m)_i := \sum_{j\in\Lambda_N}\kappa_\gamma(i-j)m_j$,
and $I(m)$ is the Lebowitz--Penrose functional (see \cite{LP1966}), which is given by
\[
I(m) := \frac{1+m}{2}\log\frac{1+m}{2} + \frac{1-m}{2}\log\frac{1-m}{2}, \qquad -1<m<1,
\]
with the usual continuous extension to \(m=\pm1\).
Consider the relevant perturbations $r$ in profiles $\mathcal P_0:=\left\{r:\varepsilon_\gamma\sum_i r_i=0\right\}$, 
and the free-energy cost is measured by the centered increment
\[
\mathcal Q_\gamma(r)
=
\mathcal F_{N,\gamma}(m_N+r)
-
\mathcal F_{N,\gamma}(m_N)
-
D\mathcal F_{N,\gamma}(m_N)[r].
\]
Now we state the finite-volume coarse-grained Laplace principle for Kac-Ising systems.  In the present paper we only use this upper-bound form, together with a quadratic lower bound on
\(\mathcal Q_\gamma\).

\begin{lemma}\label{lem:finite-volume-LP-variational}
For every deterministic field \(G\in\mathcal P_0\), we have the canonical-sector variational upper bound
\begin{equation}\label{eq:finite-volume-LP}
\log \mathbb E_{\mu_{N,\gamma,\beta}}
\exp\left\{
\varepsilon_\gamma^{-1}
\left\langle G,\sigma-m_N\right\rangle_\gamma
\right\}
\le \varepsilon_\gamma^{-1}
\sup_{r\in\mathcal P_0(m_N)}
\left\{
\left\langle G,r\right\rangle_\gamma - \mathcal Q_\gamma(r)
\right\},
\end{equation}
where the admissible canonical profile set
\[
\mathcal P_0(m_N)
:=
\left\{
r\in\mathcal P_0:\ m_N+r_i\in[-1,1]\ \text{for every }i
\right\}.
\]
Moreover, there exists a positive operator \(B_\gamma\) on \(\mathcal P_0\),
so that for every \(u\in\mathcal P_0\) and \(s\in\mathbb R\), and for all sufficiently small $\gamma$
\begin{equation}
\log
\mathbb E_{\mu_{N,\gamma,\beta}}
\exp\left\{
s\left\langle u,\sigma-m_N\right\rangle_\gamma
\right\}
\le
C\varepsilon_\gamma s^2
\left\langle u,B_\gamma^{-1}u\right\rangle_\gamma .
\label{eq:finite-volume-LP-laplace-bound}
\end{equation}
\end{lemma}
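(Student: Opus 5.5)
The plan is to prove \eqref{eq:finite-volume-LP} by a coarse-graining (Lebowitz--Penrose) argument, and then to derive \eqref{eq:finite-volume-LP-laplace-bound} from it by bounding $\mathcal Q_\gamma$ from below by a quadratic form.

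\textbf{Step 1: coarse-graining.} Partition $\Lambda_N$ into mesoscopic blocks of length $\ell_\gamma$ with $1\ll\ell_\gamma\ll\gamma^{-1}$. Condition on the vector of block magnetizations $m=(m_B)$, which lies in the canonical constraint set since $\sum_i\sigma_i=M(N)$ is fixed. Inside a block the Kac interaction is constant up to an error $C\gamma\ell_\gamma$ per site, by the same Lipschitz estimate as in Lemma~\ref{lem:remove-frozen-field}. Hence
\[
-\beta H_\gamma(\sigma)=\tfrac{\beta}{2}\varepsilon_\gamma^{-1}\langle m,\mathcal K_\gamma m\rangle_\gamma+O(N\gamma\ell_\gamma).
\]
Stirling's formula then gives $\#\{\sigma:\text{profile }m\}=\exp\{-\varepsilon_\gamma^{-1}\varepsilon_\gamma\sum_iI(m_i)+O(N\ell_\gamma^{-1}\log\ell_\gamma)\}$. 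Likewise $\langle G,\sigma-m_N\rangle_\gamma$ equals $\langle G,m-m_N\rangle_\gamma$ up to a block-oscillation error of $G$.

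Putting these together, and using that the number of profiles is only $e^{o(N)}$,
\[
\log\mathbb E\exp\{\varepsilon_\gamma^{-1}\langle G,\sigma-m_N\rangle_\gamma\}\le\varepsilon_\gamma^{-1}\sup_r\{\langle G,r\rangle_\gamma-\mathcal F_{N,\gamma}(m_N+r)\}+\varepsilon_\gamma^{-1}\inf_r\mathcal F_{N,\gamma}(m_N+r)+o(N).
\]
The partition function has been lower-bounded by the single constant profile $r=0$.

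\textbf{Step 2: centering.} Replace $\mathcal F$ by $\mathcal Q_\gamma$. Since $r\in\mathcal P_0$ and $m_N$ is constant, we have $D\mathcal F_{N,\gamma}(m_N)[r]=(I'(m_N)-\beta m_N)\varepsilon_\gamma\sum_ir_i=0$. Therefore $\mathcal F(m_N+r)-\mathcal F(m_N)=\mathcal Q_\gamma(r)$, and the constant profile term cancels exactly. The $o(N)$ errors are absorbed because the statement is an upper bound to leading order; if needed they are folded into $\mathcal Q_\gamma$ by a slight shrinkage of its quadratic part.

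\textbf{Step 3: quadratic lower bound.} Write
\[
\mathcal Q_\gamma(r)=\varepsilon_\gamma\sum_i\bigl[I(m_N+r_i)-I(m_N)-I'(m_N)r_i\bigr]-\tfrac\beta2\langle r,\mathcal K_\gamma r\rangle_\gamma .
\]
Since $I''(m)=(1-m^2)^{-1}\ge I''(m_N)$ for $|m|\ge|m_N|$, and $I$ is convex with quartic growth, we get $I(m_N+r)-I(m_N)-I'(m_N)r\ge\frac{1}{2(1-m_N^2)}r^2$ up to an odd cubic term of size $|m_N|r^3\lesssim\delta_\gamma|r|^3$, which is dominated by the quartic term. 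Thus
\[
\mathcal Q_\gamma(r)\ge\tfrac{\beta}{2}\langle r,B_\gamma r\rangle_\gamma,\qquad B_\gamma:=(1+\theta_\gamma)-\mathcal K_\gamma=\theta_\gamma+(1-\mathcal K_\gamma)\quad\text{on }\mathcal P_0,
\]
with a slight loss in constants. In Fourier variables the symbol is $\theta_\gamma+1-\widehat\kappa_\gamma(k)$. By \eqref{eq:critical-static-mass-rate} and Lemma~\ref{lem:kac-kernel-fourier-lower} this is bounded below by $c\delta_\gamma^2+c\min\{1,(\varepsilon_\gamma k/\gamma)^2\}>0$, so $B_\gamma$ is a positive operator.

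\textbf{Step 4: Laplace bound.} Take $G=s\varepsilon_\gamma u$ in \eqref{eq:finite-volume-LP}. Completing the square,
\[
\sup_r\{s\varepsilon_\gamma\langle u,r\rangle_\gamma-\tfrac{\beta}{2}\langle r,B_\gamma r\rangle_\gamma\}=\frac{s^2\varepsilon_\gamma^2}{2\beta}\langle u,B_\gamma^{-1}u\rangle_\gamma .
\]
Multiplying by $\varepsilon_\gamma^{-1}$ gives $C\varepsilon_\gamma s^2\langle u,B_\gamma^{-1}u\rangle_\gamma$.

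\textbf{Main obstacle.} The hardest part is controlling the $o(N)$ errors from Steps 1--2 relative to the target bound, which is of order $\varepsilon_\gamma s^2\langle u,B^{-1}u\rangle$ and may be $O(1)$ rather than $O(N)$. For the Laplace bound I would therefore avoid crude Stirling errors. Instead, I would first try an exact Gaussian (Hubbard--Stratonovich) linearization of $\exp\{\frac\beta2\langle\sigma,\kappa\sigma\rangle\}$, using that $1-\widehat\kappa_\gamma+\theta_\gamma>0$. This reduces the problem to independent spins in a random field, and the canonical constraint is handled via a Fourier integral over a chemical potential. The quadratic bound then follows from the strict convexity of $I$, uniformly in $\gamma$.
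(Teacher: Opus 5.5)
Your outline follows the paper's argument: a Lebowitz--Penrose coarse-grained upper bound, centering at $m_N$, a quadratic lower bound for $\mathcal Q_\gamma$, and completing the square with $G=\varepsilon_\gamma s u$. Two local differences are fine.

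\emph{Centering.} Your observation that $D\mathcal F_{N,\gamma}(m_N)[r]=(I'(m_N)-\beta m_N)\varepsilon_\gamma\sum_i r_i=0$ on $\mathcal P_0$ is cleaner than the paper's ``insert and subtract''.

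\emph{Step 3.} You keep the $m_N$-dependence of $I''$, whereas the paper uses only $I''\ge 1$ together with $\beta<1$ and $\theta_\gamma\le C(1-\beta)/\beta$. As written, Step 3 is imprecise: for $|r|\lesssim|m_N|$ the cubic term is not dominated by the quartic one. It is easily repaired. Since $I''(x)\ge 1+x^2$, one gets $I(m_N+r)-I(m_N)-I'(m_N)r\ge\frac{r^2}{2}\bigl(1+\frac{m_N^2}{3}\bigr)$. The resulting symbol $(1-\beta)+\frac{m_N^2}{3}+\beta\bigl(1-\widehat\kappa_\gamma\bigr)$ then dominates $c\bigl(\theta_\gamma+1-\widehat\kappa_\gamma\bigr)$ for $\beta\le 1$.

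The genuine gap is Step 2. The errors produced in Step 1 are \emph{additive}. They are: Stirling prefactors of order $N\ell_\gamma^{-1}\log\ell_\gamma$; the intra-block Kac oscillation $N\gamma\ell_\gamma$; the logarithm of the number of profiles; the loss from bounding $Z^M_{N,\gamma,\beta}$ below by one profile; and the block oscillation of $G$, which is not small at all for the rough $G\in\mathcal P_0$ the statement allows. These errors do not vanish as $G\to 0$, and for any choice of $\ell_\gamma$ your error bounds are at least of order $N\gamma^{1/2}\to\infty$.

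They cannot be ``folded into $\mathcal Q_\gamma$''. Replacing $\mathcal Q_\gamma$ by $(1-\eta)\mathcal Q_\gamma$ raises the right-hand side only by $O(\eta\,\varepsilon_\gamma^{-1}\langle G,B_\gamma^{-1}G\rangle_\gamma)$, which tends to $0$ with $G$; at $G=0$ both sides of the first inequality are $0$. So Steps 1--2 give at best
$\log\mathbb E_{\mu_{N,\gamma,\beta}}\exp\{s\langle u,\sigma-m_N\rangle_\gamma\}\le C\varepsilon_\gamma s^2\langle u,B_\gamma^{-1}u\rangle_\gamma+E_N$ with $E_N\to\infty$. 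That is useless for the lemma's purpose: Lemma~\ref{lem:equilibrium-centered-kac-fluctuation} reads off the variance by differentiating at $s=0$. Your last paragraph says as much, which contradicts Step 2.

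The obstruction is structural. As an exact inequality, the first display says, to second order in $G$, that $\frac12\mathrm{Var}_{\mu_{N,\gamma,\beta}}(\sum_i G_i\sigma_i)$ is bounded by the inverse-Hessian form of $\mathcal Q_\gamma$. That is a sharp comparison of the true canonical covariance with the mean-field one, which no $e^{o(N)}$-accurate Laplace principle can deliver. For example, at $\beta=0$ the canonical constraint makes the second-order coefficient of the left side equal to $|\Lambda_N|/(|\Lambda_N|-1)$ times that of the right side, so the exact inequality fails there. The paper does not close this either; it imports the first display, with no error terms, from the cited coarse-graining results.

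What is needed is an $N$-uniform, error-free sub-Gaussian (or at least variance) bound for the canonical Kac--Ising measure, with the mode-dependent proxy $\varepsilon_\gamma(\theta_\gamma+1-\widehat\kappa_\gamma)^{-1}$. Your Hubbard--Stratonovich fallback aims at this but is not yet an argument, for two reasons.

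First, the Gaussian transform needs $(\kappa_\gamma(i-j))_{i,j}$ positive semidefinite. After the diagonal shift $c\ge-\min\widehat\kappa_\gamma$ (free, since $\sigma_i^2=1$), the field measure is log-concave only if $\beta(1+c)\le 1$. Near $\beta\approx 1$ this forces $\min\widehat\kappa_\gamma\ge -O(\delta_\gamma^2)$, which Assumption (\textbf{A}) does not give.

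Second, the chemical-potential integral enforcing the canonical constraint carries complex weights. Convexity cannot be invoked until you prove a local limit theorem for the magnetization under the tilted measures, uniformly in $G$.
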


\begin{proof}
By finite-volume coarse-grained version of the Lebowitz--Penrose variational principle(see Presutti \cite{Presutti2009Scaling}, Benois--Bodineau--Presutti \cite{BenoisBodineauPresutti1998}), the exponential weight of a profile is bounded above by
\[
\exp\left\{-\varepsilon_\gamma^{-1}\mathcal F_{N,\gamma}(m)
\right\}.
\]
Now we apply this upper bound to the tilted canonical expectation
\[
\mathbb E_{\mu_{N,\gamma,\beta}}
\exp\left\{
\varepsilon_\gamma^{-1}
\left\langle G,\sigma-m_N\right\rangle_\gamma
\right\}, \quad G\in\mathcal P_0.
\]
Since the canonical constraint fixes the zero mode, admissible perturbations satisfies the form \(m=m_N+r, r\in\mathcal P_0\).  By \(G\in\mathcal P_0\), we have
\(
\left\langle G,m_N\right\rangle_\gamma=0.
\)
Then the untilted canonical partition function is normalized by the same coarse-grained free energy at \(m_N\).  Subtracting this reference value gives
\[
\log
\mathbb E_{\mu_{N,\gamma,\beta}}
\exp\left\{
\varepsilon_\gamma^{-1}
\left\langle G,\sigma-m_N\right\rangle_\gamma
\right\}
\le
\varepsilon_\gamma^{-1}
\sup_{r\in\mathcal P_0}
\left\{
\left\langle G,r\right\rangle_\gamma
-
\mathcal F_{N,\gamma}(m_N+r)
+
\mathcal F_{N,\gamma}(m_N)
\right\}.
\]
Since \(G\) is zero-average and \(r\in\mathcal P_0\), the linear term
\(D\mathcal F_{N,\gamma}(m_N)[r]\) may be inserted and subtracted.  This gives
upper bound \eqref{eq:finite-volume-LP}.

It remains to prove the quadratic consequence.  
Estimate~\ref{eq:critical-static-mass-rate} yields that
\[
c\delta_\gamma^2\le \frac{1-\beta}{\beta} \le C\delta_\gamma^2,
\qquad
\theta_\gamma\le C\delta_\gamma^2,
\]
and hence \(\theta_\gamma\le C \frac{1-\beta}{\beta}\).  In particular
\(\beta<1\) for small \(\gamma\).
Since
\[
I''(m)=\frac1{1-m^2}\ge1,\qquad -1<m<1,
\]
the increment satisfies
\[
\varepsilon_\gamma\sum_i
\Bigl[
I(m_N+r_i)-I(m_N)-I'(m_N)r_i
\Bigr]
\ge
\frac12\,\varepsilon_\gamma\sum_i r_i^2 .
\]
Notice that interaction part is exactly quadratic. The Parseval's identity implies
\[
\mathcal Q_\gamma(r)
\ge
\frac12
\sum_{q\in\mathcal B_\gamma\setminus\{0\}}
\bigl(1-\beta\widehat\kappa_\gamma(q)\bigr)
|\widehat r_\gamma(q)|^2 .
\]
Since \(\theta_\gamma\le C  \frac{1-\beta}{\beta}  \) and
\(1-\widehat\kappa_\gamma(q)\ge0\), we have
\[
1-\beta\widehat\kappa_\gamma(q)
=
\beta\bigl( \frac{1-\beta}{\beta} +1-\widehat\kappa_\gamma(q)\bigr)
\ge
c\bigl(\theta_\gamma+1-\widehat\kappa_\gamma(q)\bigr),
\]
Then there exists a positive operator \(B_\gamma\) on \(\mathcal P_0\) with the Fourier multiplier
\[
B_\gamma e_q
:=
\bigl(\theta_\gamma+1-\widehat\kappa_\gamma(q)\bigr)e_q,
\qquad q\ne0,
\]
so that
\begin{equation}\label{eq:finite-volume-LP-coercive-input}
\mathcal Q_\gamma(r)
\ge c_0\left\langle r,B_\gamma r\right\rangle_\gamma,
\qquad r\in\mathcal P_0(m_N).
\end{equation}
Applying \eqref{eq:finite-volume-LP} with \(G=\varepsilon_\gamma s u\),
and using the coercivity estimate \eqref{eq:finite-volume-LP-coercive-input}, we have 
\[
\begin{aligned}
\log
\mathbb E_{\mu_{N,\gamma,\beta}}
\exp\left\{
s\left\langle u,\sigma-m_N\right\rangle_\gamma
\right\}
&\le
\varepsilon_\gamma^{-1}
\sup_{r\in\mathcal P_0}
\left\{
\varepsilon_\gamma s\left\langle u,r\right\rangle_\gamma - c_0\left\langle r,B_\gamma r\right\rangle_\gamma
\right\}.
\end{aligned}
\]
The supremum of the quadratic functional is bounded by $C\varepsilon_\gamma^2s^2 \left\langle u, B_\gamma^{-1}u\right\rangle_\gamma$.
After multiplying by \(\varepsilon_\gamma^{-1}\), we obtain \eqref{eq:finite-volume-LP-laplace-bound}.
\end{proof}

In order to give the centered weak $L^2$-energy estimate, we show the following equilibrium centered Kac fluctuation bound for $h_\gamma(\sigma;i)-m_N$.

\begin{lemma}[Equilibrium centered Kac fluctuation bound]
\label{lem:equilibrium-centered-kac-fluctuation}
Set $h_\gamma(\sigma;i) := \sum_{z\in\Lambda_N}\kappa_\gamma(i-z)\sigma_z$.
Then
\begin{equation}\label{eq:equilibrium-centered-h-fluctuation-rate}
    \int_{\Sigma_{N,M}}
\varepsilon_\gamma\sum_{i\in\Lambda_N}\bigl(h_\gamma(\sigma;i)-m_N\bigr)^2
\mu_{N,\gamma,\beta}(d\sigma)
\le C\left(
\frac{\gamma}{\delta_\gamma}+
\frac{\varepsilon_\gamma}{\delta_\gamma^2}+
\gamma\right) \to 0 \quad \text{as} \quad \gamma \to 0.
\end{equation}
\end{lemma}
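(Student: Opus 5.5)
The plan is to pass to discrete Fourier variables and bound each mode separately, using the Laplace bound \eqref{eq:finite-volume-LP-laplace-bound} of Lemma~\ref{lem:finite-volume-LP-variational}. Write $h_\gamma-m_N=\mathcal K_\gamma(\sigma-m_N)$. Since $\sum_z\kappa_\gamma(z)=1$ and the canonical constraint fixes the zero mode, $\sigma-m_N$ has zero average, and so does $h_\gamma-m_N$. By Parseval,
\[
\varepsilon_\gamma\sum_i(h_\gamma(\sigma;i)-m_N)^2=\sum_{k\ne0}|\widehat\kappa_\gamma(k)|^2\,|\widehat{(\sigma-m_N)}_\gamma(k)|^2 .
\]
In the Parseval step we work over the full set of $|\Lambda_N|$ discrete frequencies, not only over $\mathcal B_\gamma$; the modes with $|k|\gtrsim\gamma^{-1}$ are suppressed by $|\widehat\kappa_\gamma(k)|^2$.

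For each fixed $k\ne0$, take $u=\cos(2\pi k\cdot)$ or $u=\sin(2\pi k\cdot)$, both of which lie in $\mathcal P_0$. Then $\langle u,\sigma-m_N\rangle_\gamma$ is a real linear statistic with a sub-Gaussian bound from \eqref{eq:finite-volume-LP-laplace-bound}:
\[
\log\mathbb E e^{s\langle u,\sigma-m_N\rangle_\gamma}\le C\varepsilon_\gamma s^2\langle u,B_\gamma^{-1}u\rangle_\gamma .
\]
This gives the second moment $\mathbb E\langle u,\sigma-m_N\rangle_\gamma^2\le C\varepsilon_\gamma\langle u,B_\gamma^{-1}u\rangle_\gamma$. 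One can either expand the exponential at small $s$, or use $x^2\le 2s^{-2}\cosh(sx)$ and optimize $s$. Since $B_\gamma$ is a Fourier multiplier, $\langle u,B_\gamma^{-1}u\rangle_\gamma\simeq(\theta_\gamma+1-\widehat\kappa_\gamma(k))^{-1}$, which yields
\[
\mathbb E|\widehat{(\sigma-m_N)}_\gamma(k)|^2\lesssim \frac{\varepsilon_\gamma}{\theta_\gamma+1-\widehat\kappa_\gamma(k)} .
\]

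Next, sum over $k$ using $|\widehat\kappa_\gamma(k)|\le1$, the lower bound of Lemma~\ref{lem:kac-kernel-fourier-lower}, and $\theta_\gamma\ge c\delta_\gamma^2$ from \eqref{eq:critical-static-mass-rate}. Split the frequencies into three ranges.

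\emph{Low modes}, $|k|\le\gamma/\varepsilon_\gamma$. Here $1-\widehat\kappa_\gamma(k)\ge c(\varepsilon_\gamma k/\gamma)^2$, so the sum is bounded by
\[
\varepsilon_\gamma\sum_{k\ne0}\bigl(\delta_\gamma^2+(\varepsilon_\gamma k/\gamma)^2\bigr)^{-1}\lesssim \varepsilon_\gamma\cdot\frac{\gamma}{\varepsilon_\gamma}\cdot\frac{1}{\delta_\gamma}=\frac{\gamma}{\delta_\gamma},
\]
by comparison with $\int dp/(\delta_\gamma^2+p^2)$ after substituting $p=\varepsilon_\gamma k/\gamma$. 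Alternatively, bound the single smallest modes $|k|\sim1$ by $\varepsilon_\gamma/\delta_\gamma^2$. Together these produce the first two terms in \eqref{eq:equilibrium-centered-h-fluctuation-rate}.

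\emph{Intermediate modes}, $\gamma/\varepsilon_\gamma\le|k|\lesssim\gamma^{-1}$. Here $1-\widehat\kappa_\gamma\ge c$, so this range contributes at most $\varepsilon_\gamma\cdot\gamma^{-1}$. This is only small if $\varepsilon_\gamma\ll\gamma$, which is the case under (\textbf{B}) ($\varepsilon_\gamma/\gamma\to0$); to land exactly on the stated form, use the decay $|\widehat\kappa_\gamma(k)|^2\lesssim(1+\varepsilon_\gamma|k|/\gamma)^{-2}$ coming from $\mathfrak K\in W^{1,1}$ (or a Riemann-sum comparison). This gives
\[
\varepsilon_\gamma\sum_{|k|\ge\gamma/\varepsilon_\gamma}(\varepsilon_\gamma k/\gamma)^{-2}\lesssim\gamma .
\]

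\emph{Remaining high modes}, $|k|\gtrsim\gamma^{-1}$ up to $\varepsilon_\gamma^{-1}$. These lie outside $\mathcal B_\gamma$. Bound them crudely by $\mathbb E|\widehat{(\sigma-m_N)}_\gamma(k)|^2\le C\varepsilon_\gamma$, which is admissible since $B_\gamma\gtrsim\theta_\gamma+c$ there, or via near-independence of spins. Combined with the kernel decay, their total is again $O(\gamma)$. The final convergence to $0$ follows from (\textbf{B}): $\gamma/\delta_\gamma\sim\varepsilon_\gamma^{1/2}$, $\varepsilon_\gamma/\delta_\gamma^2\sim\varepsilon_\gamma^{1/2}$, and $\gamma\to0$.

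The main obstacle I expect is the high-frequency regime. Lemma~\ref{lem:kac-kernel-fourier-lower} and the operator $B_\gamma$ are only formulated on $\mathcal B_\gamma$, while Parseval requires all lattice frequencies. There I would apply \eqref{eq:finite-volume-LP-laplace-bound} with $B_\gamma$ extended by the same multiplier, using $1-\beta\widehat\kappa_\gamma\ge1-\beta|\widehat\kappa_\gamma|\ge c\theta_\gamma$ everywhere. The summability then has to come entirely from the $|\widehat\kappa_\gamma(k)|^2$ factor, which is why the $W^{1,1}$ regularity of $\mathfrak K$ is needed.
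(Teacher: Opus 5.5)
Your proposal is correct and follows the paper's proof. Both use Parseval, then the per-mode bound $\mathbb E|\widehat\sigma_\gamma(k)|^2\lesssim\varepsilon_\gamma/(\theta_\gamma+1-\widehat\kappa_\gamma(k))$ obtained from the Laplace bound \eqref{eq:finite-volume-LP-laplace-bound}, then a low/high frequency split giving $\gamma/\delta_\gamma+\varepsilon_\gamma/\delta_\gamma^2$ and $\gamma$ respectively. The differences are minor: the paper controls all high modes at once via $\varepsilon_\gamma\sum_k|\widehat\kappa_\gamma(k)|^2=\sum_z\kappa_\gamma(z)^2\le C\gamma$ instead of your pointwise decay of $\widehat\kappa_\gamma$, and your explicit inclusion of the lattice frequencies outside $\mathcal B_\gamma$ fills in a point that the paper's Parseval identity (written only over $\mathcal B_\gamma$) glosses over.
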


\begin{proof}
Since \(\sum_z\kappa_\gamma(z)=1\), convolution by \(\kappa_\gamma\) leaves
the conserved zero mode unchanged.  
Thus the zero Fourier mode of
\(h_\gamma\) is exactly \(m_N\), while every nonzero centered mode is
multiplied by \(\widehat\kappa_\gamma(k)\).  Hence Parseval's identity gives
\begin{equation}\label{eq:Parsevalidentity}
    \varepsilon_\gamma
\sum_{i\in\Lambda_N}
\bigl(h_\gamma(\sigma;i)-m_N\bigr)^2 = \sum_{k\in\mathcal B_\gamma\setminus\{0\}}
\left|\widehat\kappa_\gamma(k)\right|^2\left|\widehat\sigma_\gamma(k)\right|^2,
\end{equation}
where
\[
\widehat\sigma_\gamma(k)
:=
\varepsilon_\gamma
\sum_{i\in\Lambda_N}
\bigl(\sigma_i-m_N\bigr)e^{-2\pi i k\varepsilon_\gamma i},
\qquad
k\in\mathcal B_\gamma\setminus\{0\}.
\]
Since
\(\kappa_\gamma\) is even, nonnegative, and normalized, we have
\[
\widehat\kappa_\gamma(k) :=
\sum_{z\in\Lambda_N}\kappa_\gamma(z)
e^{-2\pi i k\varepsilon_\gamma z}
= \sum_{z\in\Lambda_N}
\kappa_\gamma(z)\cos(2\pi k\varepsilon_\gamma z),
\]
and therefore
\[
1-\widehat\kappa_\gamma(k)
=
\sum_{z\in\Lambda_N}
\kappa_\gamma(z)
\bigl(1-\cos(2\pi k\varepsilon_\gamma z)\bigr)
\ge0.
\]
For \(k\ne0\), this term is strictly positive whenever the support of
\(\kappa_\gamma\) is not contained in one period of the mode.  
Then estimate~\ref{eq:critical-static-mass-rate} gives
\(\theta_\gamma\ge c\delta_\gamma^2>0\), and 
\(
\theta_\gamma+1-\widehat\kappa_\gamma(k)>0.
\)
We now estimate the covariance from the canonical Gibbs measure $\mu_{N,\gamma,\beta}(\sigma)$ on the canonical sector $\Sigma_{N,M}$.
For a real deterministic zero-average lattice field \(u\), we set
\[
\Lambda_\gamma(s;u)
:=
\log
\mathbb E_{\mu_{N,\gamma,\beta}}
\exp\left\{
s\langle u,\sigma-m_N\rangle_\gamma
\right\}.
\]
Applying the finite-volume coarse-grained Laplace principle of
Lemma~\ref{lem:finite-volume-LP-variational} with \(G=\varepsilon_\gamma s u\)
gives the local normalized bound
\[
\Lambda_\gamma(s;u)
\le
\varepsilon_\gamma^{-1}
\sup_{r\in\mathcal P_0(m_N)}
\left\{
\varepsilon_\gamma s\langle u,r\rangle_\gamma
-
\mathcal Q_\gamma(r)
\right\},
\]
where
\[
\mathcal Q_\gamma(r)
:=
\mathcal F_{N,\gamma}(m_N+r)
-
\mathcal F_{N,\gamma}(m_N)
-
D\mathcal F_{N,\gamma}(m_N)[r].
\]
The normalization by \(\mu_{N,\gamma,\beta}\), whose normalizing constant is
\(Z_{N,\gamma,\beta}^{M}\), removes the additive finite-volume Stirling
constant before the second derivative at \(s=0\) is taken.
By Lemma~\ref{lem:finite-volume-LP-variational}, we have
\[
\Lambda_\gamma(s;u)
\le
C\varepsilon_\gamma s^2
\sum_{q\ne0}
\frac{|\widehat u_\gamma(q)|^2}
{\theta_\gamma+1-\widehat\kappa_\gamma(q)} .
\]
Differentiating at \(s=0\) gives
\[
\operatorname{Var}_{\mu_{N,\gamma,\beta}}
\bigl(\langle u,\sigma-m_N\rangle_\gamma\bigr)
\le
C\varepsilon_\gamma
\sum_{q\ne0}
\frac{|\widehat u_\gamma(q)|^2}
{\theta_\gamma+1-\widehat\kappa_\gamma(q)} .
\]
Apply this estimate to the real and imaginary parts of
\(u_i=e^{-2\pi i k\varepsilon_\gamma i}\).  With the Fourier normalization,
these test fields have only the modes \(\pm k\), so that
\begin{equation}\label{eq:static-infrared-bound}
    \mathbb E_{\mu_{N,\gamma,\beta}}
\left|\widehat\sigma_\gamma(k)\right|^2
\le C\,\frac{\varepsilon_\gamma}
{\theta_\gamma+1-\widehat\kappa_\gamma(k)},
\qquad k\in\mathcal B_\gamma\setminus\{0\}.
\end{equation}
Plugging this estimate to \eqref{eq:Parsevalidentity}, we have
\[
\int \varepsilon_\gamma
\sum_{i\in\Lambda_N}
\bigl(h_\gamma(\sigma;i)-m_N\bigr)^2\,d\mu_{N,\gamma,\beta}
\le
C\varepsilon_\gamma
\sum_{k\ne0}
\frac{|\widehat\kappa_\gamma(k)|^2}
{\theta_\gamma+1-\widehat\kappa_\gamma(k)}.
\]
Then by Lemma~\ref{lem:kac-kernel-fourier-lower},
\(
1-\widehat\kappa_\gamma(k)
\ge
c\,\min\left\{1,
\left(\varepsilon_\gamma |k|/\gamma\right)^2
\right\}.
\)
Using the \(W^{1,1}\)-regularity of \(\mathfrak K\), we have the estimate for low-frequency sum 
\[
\varepsilon_\gamma
\sum_{0<|k|\le c\gamma/\varepsilon_\gamma}
\frac{|\widehat\kappa_\gamma(k)|^2}
{\theta_\gamma+1-\widehat\kappa_\gamma(k)}
\le
C
\int_{\mathbb R}
\frac{\gamma\,|\widehat{\mathfrak K}(p)|^2}
{\theta_\gamma+c\,p^2}\,dp +C\frac{\varepsilon_\gamma}{\theta_\gamma} \le C \frac{\gamma}{\sqrt{\theta_\gamma}} + C\frac{\varepsilon_\gamma}{\theta_\gamma}. 
\]
On the complementary modes \(1-\widehat\kappa_\gamma(k)\ge c\), and Parseval's identity gives
\begin{equation}\label{eq:highkerfre}
    \varepsilon_\gamma
\sum_{|k|>c\gamma/\varepsilon_\gamma}
|\widehat\kappa_\gamma(k)|^2
\le \sum_z\kappa_\gamma(z)^2
\le C\gamma .
\end{equation}
Thus
\[
\int_{\Sigma_{N,M}} \varepsilon_\gamma \sum_{i\in\Lambda_N}
\bigl(h_\gamma(\sigma;i)-m_N\bigr)^2
\mu_{N,\gamma,\beta}(d\sigma) \le C\left( \frac{\gamma}{\sqrt{\theta_\gamma}} +\frac{\varepsilon_\gamma}{\theta_\gamma} +\gamma \right).
\]
Since $c\delta_\gamma^2
\le
\theta_\gamma \le
C\delta_\gamma^2$, we obtain \eqref{eq:equilibrium-centered-h-fluctuation-rate}.
\end{proof}

Now we show the centered weak \(L^2\)-energy estimate with nonequilibrium initial value.

\begin{lemma}\label{lem:weak-L2-h-from-microscopic}
Under Assumption \(({\bf A})-({\bf C})\), we have
\begin{equation}\label{eq:weak-L2-h-energy-critical-rate}
\mathbb E_{f_{N,0}}\int_0^T\varepsilon_\gamma\sum_{i\in\Lambda_N} \bigl(h_{\gamma,i}(t)-m_N\bigr)^2\,dt
\le C_T\delta_\gamma^2\log(\varepsilon_\gamma^{-1}).
\end{equation}
Moreover,
\begin{equation}
\label{eq:clean-critical-centered-energy}
\mathbb E_{f_{N,0}}
\left|\int_0^T\varepsilon_\gamma\sum_j h^2_{\gamma,j}(t)dt\right| \le  C\left( \delta_\gamma^2\log(\varepsilon_\gamma^{-1}) \right).
\end{equation}
\end{lemma}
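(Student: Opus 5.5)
The plan is to transfer the equilibrium bound of Lemma~\ref{lem:equilibrium-centered-kac-fluctuation} to the nonequilibrium law $f_{N,t}\mu_{N,\gamma,\beta}$ by the entropy inequality (Lemma~\ref{lem:entropy-inequality}). The exponential moment needed there is controlled by the finite-volume Laplace bound \eqref{eq:finite-volume-LP-laplace-bound}. We also use that entropy is nonincreasing along the dynamics (Lemma~\ref{lem:entropy-dissipation-kawasaki}), so that $H_N(f_{N,t}\mid\mu_{N,\gamma,\beta})\le CN$ for every $t\in[0,T]$. By Fubini it then suffices to bound, uniformly in $t$,
\[
\mathbb E_{f_{N,t}}\Bigl[\varepsilon_\gamma\sum_i (h_{\gamma,i}-m_N)^2\Bigr]
=\sum_{k\ne0}|\widehat\kappa_\gamma(k)|^2\,\mathbb E_{f_{N,t}}|\widehat\sigma_\gamma(k)|^2 ,
\]
using Parseval as in \eqref{eq:Parsevalidentity}. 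Applying the entropy inequality to the whole quadratic form at once would cost a factor $N$ from the entropy against only an $O(1)$ gain. I will therefore split into frequency shells and treat each shell separately.

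The first step is a mode-by-mode estimate. For a fixed $k$, apply Lemma~\ref{lem:entropy-inequality} to $F=a|\widehat\sigma_\gamma(k)|^2$, or to its real and imaginary parts. Under $\mu_{N,\gamma,\beta}$, the bound \eqref{eq:finite-volume-LP-laplace-bound} with $u=\cos,\sin(2\pi k\cdot)$ shows that $\widehat\sigma_\gamma(k)$ is sub-Gaussian with variance proxy $v_k:=C\varepsilon_\gamma/(\theta_\gamma+1-\widehat\kappa_\gamma(k))$. Hence $\log\mathbb E_\mu e^{a|\widehat\sigma_\gamma(k)|^2}\le C$ for $a=c/v_k$, and
\[
\mathbb E_{f_{N,t}}|\widehat\sigma_\gamma(k)|^2\le C v_k\bigl(H_N(f_{N,t}\mid\mu)+1\bigr).
\]
Used naively, this loses the factor $N\sim\varepsilon_\gamma^{-1}$. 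To avoid the loss, I will group the modes into dyadic shells $2^q\le|k|<2^{q+1}$ and apply the entropy inequality to the shell sum $F_q=\sum_{k\in\text{shell}}a_q|\widehat\kappa_\gamma(k)|^2|\widehat\sigma_\gamma(k)|^2$. The joint exponential moment of the modes in a shell is again controlled by \eqref{eq:finite-volume-LP-laplace-bound}, since the Laplace bound is a Gaussian-type domination of the whole centered field. This gives $\mathbb E_\mu e^{F_q}\le e^{C\#\text{shell}}$ once $a_q\sim (\max_{\text{shell}}|\widehat\kappa_\gamma|^2 v_k)^{-1}$.

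The second step is to collect the shell bounds. Each shell contributes
\[
a_q^{-1}\bigl(H_N+\#\text{shell}\bigr)\lesssim \frac{\varepsilon_\gamma(N+2^q)}{\theta_\gamma+1-\widehat\kappa_\gamma(2^q)}\max_{\text{shell}}|\widehat\kappa_\gamma|^2 .
\]
We have $\varepsilon_\gamma N\lesssim1$, $\theta_\gamma\simeq\delta_\gamma^2$ by \eqref{eq:critical-static-mass-rate}, and $1-\widehat\kappa_\gamma\gtrsim \min\{1,(\varepsilon_\gamma 2^q/\gamma)^2\}$ by Lemma~\ref{lem:kac-kernel-fourier-lower}. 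For $|\widehat\kappa_\gamma(k)|$ I will use the decay available from $\mathfrak K\in W^{1,1}$. The infrared shells, where $\varepsilon_\gamma 2^q/\gamma\lesssim\delta_\gamma$, each contribute $O(\delta_\gamma^2)$ after accounting for the $\delta_\gamma^{-2}$ denominator against the prefactor; this balance is where the normalization $\delta_\gamma$ enters. The shells in the intermediate range give a geometric sum. The number of dyadic shells up to $|k|\lesssim\varepsilon_\gamma^{-1}$ is $O(\log\varepsilon_\gamma^{-1})$, which is the source of the $\log(\varepsilon_\gamma^{-1})$ in \eqref{eq:weak-L2-h-energy-critical-rate}. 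Integrating over $t\in[0,T]$ yields the first claim.

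The second claim \eqref{eq:clean-critical-centered-energy} follows by writing $h_{\gamma,j}^2=(h_{\gamma,j}-m_N)^2+2m_N(h_{\gamma,j}-m_N)+m_N^2$. The cross term sums to zero by conservation of mass, and $\varepsilon_\gamma\sum_j m_N^2=m_N^2\le C\delta_\gamma^2$ by \eqref{eq:mass-sector-implies-mN-small}.

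The main obstacle is the second step. Balancing the entropy cost $N$ against the gain in the exponential moment requires the joint Gaussian domination over a whole shell, uniformly in the nonequilibrium density. It also requires checking that each infrared shell really contributes only $O(\delta_\gamma^2)$, so that the loss is confined to the logarithm. If the shell-wise Laplace domination turns out to be too weak, the fallback is to apply the entropy inequality over time windows, combined with the dissipation bound of Lemma~\ref{lem:entropy-dissipation-kawasaki}. The factor $\alpha_\gamma^{-1}$ there penalizes a persistent deviation from equilibrium, which would replace $N$ by an averaged, smaller effective entropy.
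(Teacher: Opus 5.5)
Your proposal has a genuine gap. The main route is a \emph{static} argument, and no argument that uses only $H_N(f_{N,t}\mid\mu_{N,\gamma,\beta})\le CN$ together with equilibrium exponential moments can prove this lemma. To see this, let $E$ be the event that mesoscopic block magnetizations follow the profile $m_N+a\cos(2\pi x)$ with $a$ of order one. Then $f=\mathbf 1_E/\mu_{N,\gamma,\beta}(E)$ has relative entropy $-\log\mu_{N,\gamma,\beta}(E)\le CN$ (by Stirling and $|H_\gamma|\le|\Lambda_N|$), so it is admissible under (C). Yet under $f$ one has $\varepsilon_\gamma\sum_i(h_{\gamma,i}-m_N)^2\approx a^2/2$.

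Your shell computation shows the same failure. In $a_q^{-1}(H_N+\#\text{shell})$ the entropy term makes the prefactor $\varepsilon_\gamma(N+2^q)$ of order one, not of order $\varepsilon_\gamma$. Under (B) one has $\varepsilon_\gamma/\gamma\simeq\delta_\gamma$, so already $1-\widehat\kappa_\gamma(1)\simeq\theta_\gamma\simeq\delta_\gamma^2$. The lowest shell therefore contributes $\simeq\delta_\gamma^{-2}$, not $O(\delta_\gamma^2)$, which is worse than the trivial bound $|h_{\gamma,i}-m_N|\le 2$. Splitting into shells only makes this worse, because each application of the entropy inequality pays the full $H_N\sim N$. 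The time integral in the statement is essential: the bound holds because the dynamics relaxes. Correspondingly, the $\log(\varepsilon_\gamma^{-1})$ does not count dyadic shells.

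The paper's proof is dynamical. It introduces an initial layer
$\tau_\gamma=\frac{\alpha_\gamma}{2\varepsilon_\gamma^2}\bigl[\log(H_N(f_{N,0}\mid\mu_{N,\gamma,\beta})+2)+4\log\delta_\gamma^{-1}\bigr]\lesssim\frac{\alpha_\gamma}{\varepsilon_\gamma^2}\log(\varepsilon_\gamma^{-1})$
and bounds the integrand by $4$ on $[0,\tau_\gamma]$. Since $\alpha_\gamma/\varepsilon_\gamma^2\simeq\delta_\gamma^2$ under (B), this is exactly where $\delta_\gamma^2\log(\varepsilon_\gamma^{-1})$ comes from. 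For $t\ge\tau_\gamma$, the paper uses the exponential entropy decay \eqref{eq:microscopic-entropy-mixing} at rate $\varepsilon_\gamma^2/\alpha_\gamma$ to get $H_N(f_{N,t}\mid\mu_{N,\gamma,\beta})\le\delta_\gamma^4$. Pinsker's inequality (the functional is bounded by $4$) then compares the expectation with the equilibrium bound of Lemma~\ref{lem:equilibrium-centered-kac-fluctuation}, which is $O(\gamma/\delta_\gamma+\varepsilon_\gamma/\delta_\gamma^2+\gamma)=O(\delta_\gamma^2)$.

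Your fallback points in this direction, but the integrated dissipation bound of Lemma~\ref{lem:entropy-dissipation-kawasaki} alone does not force $f_{N,t}$ toward equilibrium. You also need a functional inequality that converts Fisher information into entropy decay: a log-Sobolev inequality with constant of order $\varepsilon_\gamma^{-2}$ in microscopic time. This is what the paper invokes for \eqref{eq:microscopic-entropy-mixing}.

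Your derivation of the second claim is correct and matches the paper. The cross term vanishes by mass conservation, and $m_N^2\le C\delta_\gamma^2$.
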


\begin{proof}
Let
\[
\tau_\gamma
:=
\frac{\alpha_\gamma}{2\varepsilon_{\gamma}^2}
\left[
\log\!\left(H_N(f_{N,0}\mid\mu_{N,\gamma,\beta})+2\right) + 4\log(\delta_\gamma^{-1})
\right].
\]
Assumption \(({\bf C})\) gives
\[\log(H_N(f_{N,0}\mid\mu_{N,\gamma,\beta})+2)
\le C\log(\varepsilon_\gamma^{-1})\]. Together with Assumption \(({\bf B})\), this implies
\(
\tau_\gamma \le C\frac{\alpha_\gamma}{\varepsilon_\gamma^{2}}\log(\varepsilon_\gamma^{-1}).
\)
Then by \(|h_\gamma(\sigma;i)-m_N|\le2\), we have
\begin{equation}\label{eq:squr}
    \mathbb E_{f_{N,0}}\int_0^{T\wedge\tau_\gamma} \varepsilon_\gamma \sum_{i\in\Lambda_N} \bigl(h_\gamma(\sigma^N(t/\alpha_\gamma);i)-m_N\bigr)^2 dt \le 4\tau_\gamma \le C \frac{\alpha_\gamma}{\varepsilon_\gamma^{2}}\log(\varepsilon_\gamma^{-1}).
\end{equation}
Then Lemma \ref{lem:local-rates-elliptic} and Lemma \ref{lem:entropy-dissipation-kawasaki}, the density of accelerated process $\sigma^N(t/\alpha_\gamma)$ satisfies 
\begin{equation}\label{eq:microscopic-entropy-mixing}
    H_N(f_{N,t}\mid\mu_{N,\gamma,\beta})
\le e^{- C\varepsilon_{\gamma}^2t/\alpha_\gamma} H_N(f_{N,0}\mid\mu_{N,\gamma,\beta}),
\qquad 0\le t\le T.
\end{equation}
For \(t\ge\tau_\gamma\), by \eqref{eq:microscopic-entropy-mixing} and the
definition of \(\tau_\gamma\), we have
\(
H_N(f_{N,t}\mid\mu_{N,\gamma,\beta}) \le \delta_\gamma^4.
\)
Using the Pinsker's inequality, we get
\begin{align}\label{eq:pins}
    & \left| \mathbb E_{f_{N,0}} \left(\varepsilon_\gamma \sum_{i\in\Lambda_N}
\bigl(h_\gamma(\sigma^N(t/\alpha_\gamma);i)-m_N\bigr)^2 \right) - \int \varepsilon_\gamma \sum_{i\in\Lambda_N}
\bigl(h_\gamma(\sigma;i)-m_N\bigr)^2  d\mu_{N,\gamma,\beta}\right| \nonumber \\
\le & 4\|f_{N,t}\mu_{N,\gamma,\beta}-\mu_{N,\gamma,\beta}\|_{\mathrm{TV}} \nonumber \\
\le & 4\sqrt2\, \delta_\gamma^2 .
\end{align}
Combining estimate \eqref{eq:equilibrium-centered-h-fluctuation-rate}, \eqref{eq:squr}, and \eqref{eq:pins}, we have
\[
\mathbb E_{f_{N,0}}\int_0^T\varepsilon_\gamma\sum_{i\in\Lambda_N} \bigl(h_{\gamma,i}(t)-m_N\bigr)^2\,dt
\le
C_T\left[
\frac{\alpha_\gamma}{\varepsilon_\gamma^{2}}\log(\varepsilon_\gamma^{-1}) +
\frac{\gamma}{\delta_\gamma} +
\frac{\varepsilon_\gamma}{\delta_\gamma^2} +
\gamma + \delta_\gamma^2
\right].
\]
By Assumption \(({\bf B})\), we obtain \eqref{eq:weak-L2-h-energy-critical-rate}.
Combining with \eqref{eq:mass-sector-implies-mN-small}, we further have estimate \eqref{eq:clean-critical-centered-energy}.
\end{proof}

We also need the following mean-square Kac-gradient estimate.

\begin{lemma}\label{lem:mean-square-kac-gradient}
Under Assumptions \(({\bf A})\)--\(({\bf C})\), for every \(T>0\),
\begin{equation}\label{eq:kac-gradient-defect-scale}
\frac{\varepsilon_\gamma}{\alpha_\gamma\delta_\gamma}
\mathbb E_{f_{N,0}}
\int_0^T
\varepsilon_\gamma\sum_{i\in\Lambda_N}
\bigl(h_{\gamma,i+1}(t)-h_{\gamma,i}(t)\bigr)^2\,dt
\le
C_T\left[
\gamma^3
+\gamma^2\delta_\gamma^2
+\gamma^2\frac{\alpha_\gamma}{\varepsilon_\gamma^2}
\log(\varepsilon_\gamma^{-1})
\right].
\end{equation}
\end{lemma}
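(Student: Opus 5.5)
The plan is to repeat the initial-layer/equilibrium splitting of Lemma~\ref{lem:weak-L2-h-from-microscopic}, but for the gradient observable. I will estimate the prefactor $\frac{\varepsilon_\gamma}{\alpha_\gamma\delta_\gamma}$ together with each piece, rather than bounding the unweighted integral first. Write
\[
\mathcal G_\gamma(\sigma):=\varepsilon_\gamma\sum_{i\in\Lambda_N}\bigl(h_\gamma(\sigma;i+1)-h_\gamma(\sigma;i)\bigr)^2 .
\]
Two elementary inputs are needed. First, a deterministic bound: by $\mathfrak K\in W^{1,1}$,
\[
|h_{\gamma,i+1}-h_{\gamma,i}|\le\sum_z|\kappa_\gamma(z+1)-\kappa_\gamma(z)|\le C\gamma,
\]
hence $\mathcal G_\gamma\le C\gamma^2$ pointwise. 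Second, a Fourier representation: by Parseval as in \eqref{eq:Parsevalidentity},
\[
\mathcal G_\gamma=\varepsilon_\gamma^{2}\sum_{k\neq0}\frac{4\sin^2(\pi k\varepsilon_\gamma)}{\varepsilon_\gamma^2}\,|\widehat\kappa_\gamma(k)|^2|\widehat\sigma_\gamma(k)|^2 .
\]
This holds because the zero mode is killed by the discrete gradient.

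\emph{Step 1 (equilibrium term, producing $\gamma^3$).} Insert the static infrared bound \eqref{eq:static-infrared-bound} into the Fourier representation of $\mathcal G_\gamma$. On the low modes $|k|\le c\gamma/\varepsilon_\gamma$, use $(\varepsilon_\gamma k)^2\le C\gamma^2(1-\widehat\kappa_\gamma(k))$ from Lemma~\ref{lem:kac-kernel-fourier-lower}. This cancels the denominator $\theta_\gamma+1-\widehat\kappa_\gamma(k)$ and leaves a sum over about $\gamma/\varepsilon_\gamma$ modes, each of size $\varepsilon_\gamma\gamma^2$. On the high modes, the denominator is bounded below by a constant, and I use $\sum_z|\kappa_\gamma(z+1)-\kappa_\gamma(z)|^2\le C\gamma^3$, which follows from $\mathfrak K\in W^{1,\infty}\cap W^{1,1}$. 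The target is to show that $\frac{\varepsilon_\gamma}{\alpha_\gamma\delta_\gamma}\,\mathbb E_{\mu_{N,\gamma,\beta}}\mathcal G_\gamma$ is bounded by $C\gamma^3$ once the scaling relations of Assumption~(\textbf{B}) are used.

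\emph{Step 2 (relaxation term, producing $\gamma^2\delta_\gamma^2$).} For $t\ge\tau_\gamma$, with $\tau_\gamma$ defined as in Lemma~\ref{lem:weak-L2-h-from-microscopic}, the entropy contraction \eqref{eq:microscopic-entropy-mixing} and Pinsker's inequality bound the difference between the nonequilibrium and equilibrium expectations of $\mathcal G_\gamma$. Since $\mathcal G_\gamma\le C\gamma^2$ pointwise, this difference is at most $C\gamma^2\|f_{N,t}\mu-\mu\|_{\mathrm{TV}}$. I will lower the entropy threshold defining $\tau_\gamma$, replacing $4\log\delta_\gamma^{-1}$ by a larger multiple of $\log\varepsilon_\gamma^{-1}$. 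The goal is that the total-variation distance is at most $\frac{\alpha_\gamma\delta_\gamma}{\varepsilon_\gamma}\delta_\gamma^2$, so that after multiplication by the prefactor this piece is $C\gamma^2\delta_\gamma^2$. This choice only changes $\tau_\gamma$ by a constant factor.

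\emph{Step 3 (initial layer, producing $\gamma^2\frac{\alpha_\gamma}{\varepsilon_\gamma^2}\log\varepsilon_\gamma^{-1}$). This is the main obstacle.} The crude bound $\int_0^{\tau_\gamma}\mathcal G_\gamma\,dt\le C\gamma^2\tau_\gamma$ is off by exactly the divergent prefactor $\frac{\varepsilon_\gamma}{\alpha_\gamma\delta_\gamma}$. That prefactor must therefore be absorbed dynamically rather than statically. My plan is to write the gradient as a Kac-weighted sum of bond differences,
\[
h_{\gamma,i+1}-h_{\gamma,i}=\sum_z\kappa_\gamma(i-z)(\sigma_{z+1}-\sigma_z),
\]
and then pay for the factor $\alpha_\gamma^{-1}$ with the entropy production bound of Lemma~\ref{lem:entropy-dissipation-kawasaki}. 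That lemma gives
\[
\alpha_\gamma^{-1}\int_0^T I^{N,\gamma}(f_{N,t})\,dt\le CN .
\]
The mechanism I have in mind is the following. I will apply Lemma~\ref{lem:entropy-KV} (equivalently, the entropy inequality with the Rayleigh--Ritz bound) to the centered observable $W=\mathcal G_\gamma-\mathbb E_\mu\mathcal G_\gamma$. To verify the hypothesis \eqref{eq:entropy-KV-density-Hminus1}, I would use an integration by parts on each bond $(z,z+1)$, which should give a constant $C_W\lesssim\gamma^2$. This yields a bound of order
\[
\gamma^2\sqrt{\alpha_\gamma N T}\,\|W\|_{\mathcal H^{-1}_N},
\]
and I would then check that this is $\lesssim\frac{\alpha_\gamma\delta_\gamma}{\varepsilon_\gamma}\gamma^2\frac{\alpha_\gamma}{\varepsilon_\gamma^2}\log\varepsilon_\gamma^{-1}$. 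If that bookkeeping fails, the fallback is to restrict Lemma~\ref{lem:entropy-KV} to the layer $[0,\tau_\gamma]$. The factor $\sqrt{\tau_\gamma}$ would then replace $\sqrt T$, and I would try to close the estimate with $\tau_\gamma\lesssim\frac{\alpha_\gamma}{\varepsilon_\gamma^2}\log\varepsilon_\gamma^{-1}$. I have not checked whether either version actually produces the factor $\frac{\alpha_\gamma\delta_\gamma}{\varepsilon_\gamma}$; this is the step I expect to be hardest.

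Adding the three contributions then gives \eqref{eq:kac-gradient-defect-scale}.
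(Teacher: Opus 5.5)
There is a genuine gap, and it appears already in Step~1. Your three-way split is exactly the paper's: the equilibrium mean via Parseval and \eqref{eq:static-infrared-bound}, the post-layer interval via \eqref{eq:microscopic-entropy-mixing} and Pinsker, and the initial layer $[0,\tau_\gamma]$. The problem is the attempt to carry the prefactor $\frac{\varepsilon_\gamma}{\alpha_\gamma\delta_\gamma}$ through each piece.

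Under (\textbf{B}) this prefactor diverges like $\varepsilon_\gamma^{-7/4}$. Your own Step~1 computation gives $\mathbb E_{\mu_{N,\gamma,\beta}}\mathcal G_\gamma\le C\gamma^3$, and that order appears to be the true size. The high-frequency part is $\asymp\sum_z|\kappa_\gamma(z+1)-\kappa_\gamma(z)|^2\asymp\gamma^3$. This is already the variance of $h_{\gamma,i+1}-h_{\gamma,i}$ for exchangeable spins, and at $\beta<1$ the Kac interaction should change the structure factor only by a bounded factor (heuristically). Hence $\frac{\varepsilon_\gamma}{\alpha_\gamma\delta_\gamma}\mathbb E_{\mu_{N,\gamma,\beta}}\mathcal G_\gamma$ is of order $\varepsilon_\gamma^{1/2}$, not $O(\gamma^3)=O(\varepsilon_\gamma^{9/4})$.

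Now take $f_{N,0}\equiv1$, which (\textbf{C}) allows. The left side of \eqref{eq:kac-gradient-defect-scale} is then of order $T\varepsilon_\gamma^{1/2}$, while the right side is $O(\varepsilon_\gamma^{2}\log\varepsilon_\gamma^{-1})$. So the inequality, read literally, is not provable. No Kipnis--Varadhan argument in Step~3 can help: it controls only the centred fluctuation $\mathcal G_\gamma-\mathbb E_{\mu_{N,\gamma,\beta}}\mathcal G_\gamma$, never the mean $T\,\mathbb E_{\mu_{N,\gamma,\beta}}\mathcal G_\gamma$.

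What the paper's proof actually establishes is the unweighted bound
\[
\mathbb E_{f_{N,0}}\int_0^T\mathcal G_\gamma\bigl(\sigma^N(t/\alpha_\gamma)\bigr)\,dt\le C_T\Bigl[\gamma^3+\gamma^2\delta_\gamma^2+\gamma^2\frac{\alpha_\gamma}{\varepsilon_\gamma^2}\log(\varepsilon_\gamma^{-1})\Bigr].
\]
It comes from your Steps~1 and~2 without the prefactor, using the original $\tau_\gamma$ (no change of threshold is needed), together with the crude layer bound $C\gamma^2\tau_\gamma$ that you set aside. The prefactor enters only in the paper's last line: $\frac{\varepsilon_\gamma}{\alpha_\gamma\delta_\gamma}$ times the bracket is $O(\varepsilon_\gamma^{1/2}+\varepsilon_\gamma^{1/4}\log\varepsilon_\gamma^{-1})\to0$ under (\textbf{B}).

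The lemma should be read in this sense: the weighted quantity is bounded by the weighted bracket and tends to zero. That vanishing is all that is used later, for instance for the chain-rule error $P_{\gamma,L}(h_{\gamma,i})-\mathfrak A_{\gamma,i}$ in the proof of Theorem~\ref{cor:clean-nonlinear-drift}. So drop the search for an extra factor $\frac{\alpha_\gamma\delta_\gamma}{\varepsilon_\gamma}$ in Step~3: it is neither attainable nor needed.
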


\begin{proof}
Set \(\eta_\gamma(z):=\kappa_\gamma(z-1)-\kappa_\gamma(z).\)
Since \(\sum_{z \in \Lambda_N}\eta_\gamma(z)=0\), we have
\[
h_\gamma(\sigma;i+1)-h_\gamma(\sigma;i)
=\sum_z\eta_\gamma(i-z)(\sigma_z-m_N).
\]
The estimates in Assumption \(({\bf A})\) give
\[
\sum_z|\eta_\gamma(z)|\le C\gamma,\qquad
\sum_z|\eta_\gamma(z)|^2\le C\gamma^3,\qquad
\|\eta_\gamma\|_{\ell^\infty}\le C\gamma^2.
\]
We first prove the equilibrium estimate.  By Parseval's identity and the bound \eqref{eq:static-infrared-bound},
\[
\int \varepsilon_\gamma\sum_{i \in \Lambda_N} |h_\gamma(\sigma;i+1)-h_\gamma(\sigma;i)|^2\,
d\mu_{N,\gamma,\beta}
\le
C\varepsilon_\gamma
\sum_{k\ne0}
\frac{|\widehat\eta_\gamma(k)|^2}
{\theta_\gamma+1-\widehat\kappa_\gamma(k)} .
\]
On the low-frequency region
\(|k|\le c\gamma/\varepsilon_\gamma\), writing
\(p=\varepsilon_\gamma |k|/\gamma\), we have
\[
|\widehat\eta_\gamma(k)|
\le C\varepsilon_\gamma |k|\,|\widehat\kappa_\gamma(k)|
\le C\gamma p,
\qquad
1-\widehat\kappa_\gamma(k)\ge cp^2.
\]
Thus the low-frequency part is bounded by
\[
C\varepsilon_\gamma
\sum_{0<|k|\le c\gamma/\varepsilon_\gamma}
\frac{\gamma^2p^2}{\theta_\gamma+cp^2}
\le C\gamma^3 .
\]
On the complementary region \(1-\widehat\kappa_\gamma(k)\ge c\), Parseval's
identity for the kernel \(\eta_\gamma\) gives
\[
\varepsilon_\gamma
\sum_{|k|>c\gamma/\varepsilon_\gamma}
|\widehat\eta_\gamma(k)|^2
\le
C\sum_z|\eta_\gamma(z)|^2
\le C\gamma^3 .
\]
Hence
\begin{equation}\label{eq:equilibrium-kac-gradient-square}
\int \varepsilon_\gamma\sum_{i \in \Lambda_N} |h_\gamma(\sigma;i+1)-h_\gamma(\sigma;i)|^2\,
d\mu_{N,\gamma,\beta}
\le C\gamma^3 .
\end{equation}
We pass from equilibrium to the nonequilibrium initial law as in Lemma~\ref{lem:weak-L2-h-from-microscopic}.  Let \(\tau_\gamma\) be the time
defined in that proof.  The deterministic bound
\(\sup_i|g_{\gamma,i}|\le C\gamma\) implies
\[
\mathbb E_{f_{N,0}}
\int_0^{T\wedge\tau_\gamma}
\varepsilon_\gamma\sum_{i \in \Lambda_N} |h_\gamma(\sigma;i+1)-h_\gamma(\sigma;i)|^2\,dt
\le
C\gamma^2\tau_\gamma
\le
C\gamma^2\frac{\alpha_\gamma}{\varepsilon_\gamma^2}
\log(\varepsilon_\gamma^{-1}).
\]
For \(t\ge\tau_\gamma\), the entropy decay
\eqref{eq:microscopic-entropy-mixing} and Pinsker's inequality give
\[
\left|
\mathbb E_{f_{N,0}}
\varepsilon_\gamma\sum_i |h_\gamma(\sigma;i+1)-h_\gamma(\sigma;i)|^2
-
\int\varepsilon_\gamma\sum_i |h_\gamma(\sigma;i+1)-h_\gamma(\sigma;i)|^2\,
d\mu_{N,\gamma,\beta}
\right|
\le C\gamma^2\delta_\gamma^2 .
\]
Combining this estimate with
\eqref{eq:equilibrium-kac-gradient-square} proves the stated bound.  The
convergence \eqref{eq:kac-gradient-defect-scale} follows immediately from
Assumption \(({\bf B})\).
\end{proof}
\section{Expansion for the discrete drift}\label{sec:current}

In this section we give an expansion for the discrete drift for the rescaled field $X_\gamma$ in the \(H^{-1}\)-testing form.
For a smooth test function \(\phi\in C^\infty(\mathbb T)\) with zero mean, we set
\[
\psi_\gamma:=\Delta_{\varepsilon_\gamma}^{-1}\phi,
\qquad
\langle\psi_\gamma,1\rangle_\gamma=0,
\qquad
\Delta_{\varepsilon_\gamma}\psi_\gamma=\phi
\quad\text{on }\Lambda_{\varepsilon_\gamma}.
\]
so that
\(
\langle X_\gamma,\phi\rangle_{-1,\gamma}
:=
\langle X_\gamma,\Delta_{\varepsilon_\gamma}^{-1}\phi\rangle_\gamma
=
\langle X_\gamma,\psi_\gamma\rangle_\gamma .
\)

With the Ising–Kac Hamiltonian $H_\gamma(\sigma)= -\frac12\sum_{j \in \Lambda_N } h(\sigma, j)\sigma_j$,
the exchange energy difference admits the exact representation
\[
\Delta_i H_\gamma(\sigma)
=
d_i\bigl(h_\gamma(\sigma;i)-h_\gamma(\sigma;i+1)\bigr) + \kappa_\gamma(1)d^2_i,
\]
where $d_i:=\sigma_i-\sigma_{i+1}\in\{0,\pm2\}$.
Thus the nonlocal dependence of \(\Delta_i H_\gamma\) enters only through the discrete gradient of the coarse-grained Kac field.

Note that the increment of \(h_\gamma\) under a nearest-neighbour exchange:
\begin{equation}\label{Ex:h}
    h_\gamma(\sigma^{i,i+1};k)-h_\gamma(\sigma;k)
=
d_i\Bigl[\kappa_\gamma(k-i-1)-\kappa_\gamma(k-i)\Bigr].
\end{equation}
Therefore
\[
(\mathscr L_\gamma^{N} h_\gamma)(\sigma;k)
=
\sum_{i\in\Lambda_N}
j_i(\sigma)\Bigl[\kappa_\gamma(k-i-1)-\kappa_\gamma(k-i)\Bigr],
\qquad j_i(\sigma)= \frac{d_i}{1 + \exp{(\beta\Delta_i H_\gamma (\sigma))}}.
\]
We emphasize that the microscopic current $j_i$ is indexed by the bond $(i,i+1)$, not by sites $i$.
Thus the left bond for each $j_i(\sigma)$ is not omitted; it is simply represented by the shifted index \(i-1\).

Define the coarse-grained current
\[
\mathcal J_\gamma(t,x)
:=
\frac{\varepsilon_\gamma}{\alpha_\gamma\delta_\gamma}
\sum_{i\in\Lambda_N} \kappa_\gamma\!\left(\frac{x}{\varepsilon_\gamma}-i\right)
j_i\!\left(\sigma\!\left(\frac{t}{\alpha_\gamma}\right)\right), \quad x \in \Lambda_{\varepsilon_{\gamma}}.
\]
Then Dynkin's formula yields the martingale decomposition
\[
X_\gamma(t,x) = X_\gamma(0,x) - \int_0^t \nabla^*_{\varepsilon_\gamma} \mathcal J_\gamma(s,x)\,ds + M_\gamma(t,x),
\]
where $\nabla_{\varepsilon_\gamma}^*$ is the discrete divergence, and $M_\gamma$ is a pure-jump martingale.
For each test function $\phi \in C^{\infty}(\mathbb T)$, the weak drift of \(\langle X_\gamma, \phi\rangle_{-1,\gamma}=\langle X_\gamma,\psi_\gamma\rangle_\gamma\) can be written in conservative form as
\[
\Bigl(\alpha_\gamma^{-1}\mathscr L_\gamma^{N}\langle X_\gamma,\psi_\gamma\rangle_\gamma\Bigr)(\sigma) =  \varepsilon_\gamma\sum_{i \in \Lambda_N }\mathcal J_\gamma(\sigma;\varepsilon_{\gamma} i) \nabla_{\varepsilon_\gamma}\psi_\gamma(\varepsilon_{\gamma} i)
=
\frac{\varepsilon_\gamma^2}{\alpha_\gamma\delta_\gamma}
\sum_{i\in\Lambda_N}j_i(\sigma)\,
\nabla_{\varepsilon_\gamma}\bigl(\mathcal K_\gamma^\varepsilon\psi_\gamma\bigr)(\varepsilon_\gamma i) 
\]
in the discrete Riemann-sum sense, where the discrete convolution operator
\[
(\mathcal K_\gamma^\varepsilon f)(\varepsilon_\gamma i):= \sum_{j \in\Lambda_N}\kappa_\gamma(i-j)f(\varepsilon_\gamma j).
\]
Note that the operators \(\Delta_{\varepsilon_\gamma}\) and \(\mathcal K_\gamma^\varepsilon\)  commute on zero-mean lattice functions. Thus
\( \Delta_{\varepsilon_\gamma}\mathcal K_\gamma^\varepsilon\psi_\gamma = \mathcal K_\gamma^\varepsilon\phi\).
Since \( X_\gamma(t,\varepsilon_\gamma i)
=
\delta_\gamma^{-1}
h_\gamma\!\left(\sigma^N(t/\alpha_\gamma);i\right), \varepsilon_\gamma i\in \mathbb T\), we have
\[
h_\gamma\!\left(\sigma^N\left(\frac{t}{\alpha_\gamma}\right);i\right)
-
h_\gamma\!\left(\sigma^N\left(\frac{t}{\alpha_\gamma}\right);i+1\right)
=
-\delta_\gamma\varepsilon_\gamma
\nabla_{\varepsilon_\gamma}X_\gamma(t, \varepsilon_{\gamma} i),
\]
and
\[
\Delta_i H_\gamma = -\,d_i\,\delta_\gamma\varepsilon_\gamma
\nabla_{\varepsilon_\gamma}X_\gamma(t,\varepsilon_{\gamma} i)
+d_i^2\,\kappa_\gamma(1).
\]
Let \(F(z):=\frac{1}{1+e^z}\), so that
\(
j_i=d_i\,F\!\bigl(\beta\Delta_i H_\gamma\bigr).
\)
Set
\(a_i:=\beta d_i^2\kappa_\gamma(1), b_i:=-\beta d_i\,\delta_\gamma\varepsilon_\gamma
\nabla_{\varepsilon_\gamma}X_\gamma(t,\varepsilon_{\gamma} i)\), then
\(
\beta\Delta_i H_\gamma=a_i+b_i.
\)
Since $ \kappa_{\gamma}(0) = 0$ and the Lipschitz regularity of $\mathfrak R$, one has \(a_i=O(\gamma^2)\).
Applying Taylor expansion to \(F(a_i+b_i)\) around \(a_i\), we have
\[
F(a_i+b_i) = F(a_i)+F'(a_i)b_i+\frac12F''(a_i)b_i^2+ \widetilde R_i,
\]
with
\(
\widetilde R_i = \frac16 F^{(3)}(a_i+\theta_i b_i)b_i^3, \ \theta_i\in(0,1).
\)
Therefore
\[
j_i = d_iF(a_i)
-\beta F'(a_i)d_i^2\,\delta_\gamma\varepsilon_\gamma
\nabla_{\varepsilon_\gamma}X_\gamma
+\frac{\beta^2}{2}F''(a_i)d_i^3
(\delta_\gamma\varepsilon_\gamma\nabla_{\varepsilon_\gamma}X_\gamma)^2
+ d_i \widetilde R_i.
\]
From the linear and nonlinear expansions $j_i$, we have decomposition
\(
j_i =j_i^{\mathrm{lin}}+j_i^{\mathrm{nl}}+j_i^{\mathrm{rem}}.
\)
Here,

\medskip
\noindent\textbf{(i) Linear part.}
Since \(d_i^2\in\{0,4\}\), we have \(a_i\in\{0,4\beta \kappa_\gamma(1)\}\), hence \(F(a_i)\) takes only two values.
Using the Taylor expansion \(F(u)=\frac12-\frac{u}{4}+O(u^2)\) at \(u=0\), one obtains
\[
j_i^{\mathrm{lin}}
=
\Bigl(\tfrac12-\beta\kappa_\gamma(1)\Bigr)d_i
+\widetilde r_{i,\gamma}^{\,\mathrm{lin}},
\qquad
|\widetilde r_{i,\gamma}^{\,\mathrm{lin}}|\lesssim \kappa_\gamma(1)^2\,|d_i|,
\]
so the leading linear current coefficient is \(\tfrac12-\beta\kappa_\gamma(1)\).

\medskip
\noindent\textbf{(ii) Nonlinear part.}
Since
\(
F'(u)
=
-\frac14+\frac{u^2}{16}+O(u^4)
\)
and \(d_i b_i=-\beta d_i^2\,\delta_\gamma\varepsilon_\gamma\nabla_{\varepsilon_\gamma}X_\gamma\),
we obtain the more precise nonlinear contribution
\[
-\beta F'(a_i)d_i^2\delta_\gamma\varepsilon_\gamma
\nabla_{\varepsilon_\gamma}X_\gamma
=
\frac{\beta}{4}d_i^2\delta_\gamma\varepsilon_\gamma
\nabla_{\varepsilon_\gamma}X_\gamma
-
\frac{\beta^3}{16}d_i^6\kappa_\gamma(1)^2
\delta_\gamma\varepsilon_\gamma
\nabla_{\varepsilon_\gamma}X_\gamma
+
\widehat r_{i,\gamma}^{\,\mathrm{nl}},
\]
where
\(
|\widehat r_{i,\gamma}^{\,\mathrm{nl}}|
\lesssim
\gamma^4\,
\delta_\gamma\varepsilon_\gamma
\bigl|\nabla_{\varepsilon_\gamma}X_\gamma\bigr|.
\)
Here the term of order \(\kappa_\gamma(1)^2 = O(\gamma^2)\) is one higher Kac order than the
leading nonlinear current.  Thus
\[
j_i^{\mathrm{nl}}(\sigma) = \frac{\beta}{4}\,d_i^2(\sigma)^2\delta_\gamma\varepsilon_\gamma\,\nabla_{\varepsilon_\gamma}X_\gamma(\varepsilon_\gamma i)
+\widetilde r_{i,\gamma}^{\,\mathrm{nl}} 
=
-\frac{\beta}{4}
d_i(\sigma)^2
\sum_{j\in\Lambda_N}\kappa_\gamma(i-j)d_j(\sigma)
+
\widetilde r_{i,\gamma}^{\,\mathrm{nl}}(\sigma),
\]
where $|\widetilde r_{i,\gamma}^{\,\mathrm{nl}}| \le \gamma^2\delta_\gamma\varepsilon_\gamma
\bigl|\nabla_{\varepsilon_\gamma}X_\gamma\bigr|$.

\medskip
\noindent
\textbf{(iii) Remainder.}
By construction,
\(
j_i^{\mathrm{rem}}= \frac12\big(d_iF''(a_i)(b_i)^2\big)(t) + d_i \widetilde R_i(t) \), where $F''(a_i) \le C \gamma^2$.

We now lift the microscopic decomposition
\(
j_i =j_i^{\mathrm{lin}}+j_i^{\mathrm{nl}}+j_i^{\mathrm{rem}}
\)
to the corresponding decomposition of the Kac coarse-grained current \(
\mathcal J_\gamma
=
\mathcal J_\gamma^{\mathrm{lin}}
+
\mathcal J_\gamma^{\mathrm{nl}}
+
\mathcal J_\gamma^{\mathrm{rem}}.
\)
where
\begin{align}
\label{eq:cg-lin-leading}
\mathcal J_\gamma^{\mathrm{lin}}(t,x)
& :=
\frac{\varepsilon_\gamma}{\alpha_\gamma\delta_\gamma} \sum_{i\in\Lambda_N} \kappa_\gamma\!\left(\frac{x}{\varepsilon_\gamma}-i\right)j_i^{\mathrm{lin}}\left(\sigma\!\left(\frac{t}{\alpha_\gamma}\right)\right)
\\
\label{eq:cg-nl-leading}
\mathcal J_\gamma^{\mathrm{nl}}(t,x)
 & := \frac{\varepsilon_\gamma}{\alpha_\gamma\delta_\gamma} \sum_{i\in\Lambda_N} \kappa_\gamma\!\left(\frac{x}{\varepsilon_\gamma}-i\right)
\,j_i^{\mathrm{nl}}\left(\sigma\!\left(\frac{t}{\alpha_\gamma}\right)\right),
\\
\label{eq:cg-rem-leading}
\mathcal J_\gamma^{\mathrm{rem}}(t,x)
&:= \frac{\varepsilon_\gamma}{\alpha_\gamma\delta_\gamma}\sum_{i\in\Lambda_N} \kappa_\gamma\!\left(\frac{x}{\varepsilon_\gamma}-i\right)j_{i}^{\mathrm{rem}}\left(\sigma\!\left(\frac{t}{\alpha_\gamma}\right)\right).
\end{align}

We estimate the linear term and the remainder term in this section.
In order to finish these estimates, we need the following deterministic estimates for the discrete gradient of the coarse field $X_{\gamma}$.

\begin{lemma}\label{lem:eta-gamma-to-zero}
There exists a deterministic constant \(C>0\), independent of \(\gamma\) and \(T\),
such that
\begin{equation}\label{E:gardX:inf}
    \sup_{0\le t\le T}\sup_{x\in\Lambda_{\varepsilon_\gamma}}
\delta_\gamma\varepsilon_\gamma
\bigl| \nabla_{\varepsilon_\gamma}X_\gamma(t,x)
\bigr| \le C\gamma \rightarrow 0
\quad\text{as} \quad \gamma\downarrow0.
\end{equation}
\end{lemma}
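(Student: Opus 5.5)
The bound is deterministic, so I plan to reduce it to an identity for the microscopic increment of the Kac field and then apply the kernel regularity in Assumption (\textbf{A}); no probabilistic input is needed.

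\textbf{Step 1: rewrite as a microscopic increment.} By the definitions of \(X_\gamma\) and \(\nabla_{\varepsilon_\gamma}\), for \(x=\varepsilon_\gamma i\) and every \(t\),
\[
\delta_\gamma\varepsilon_\gamma\,\nabla_{\varepsilon_\gamma}X_\gamma(t,x)
=
h_\gamma\bigl(\sigma^N(t/\alpha_\gamma);i+1\bigr)-h_\gamma\bigl(\sigma^N(t/\alpha_\gamma);i\bigr).
\]
So it is enough to bound \(|h_\gamma(\sigma;i+1)-h_\gamma(\sigma;i)|\) uniformly over \(\sigma\in\Sigma_N\) and \(i\in\Lambda_N\). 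This is why the constant will not depend on \(t\), \(T\) or the trajectory.

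\textbf{Step 2: write the increment as a convolution.} As in the proof of Lemma~\ref{lem:mean-square-kac-gradient}, set \(\eta_\gamma(z):=\kappa_\gamma(z-1)-\kappa_\gamma(z)\). Then
\[
h_\gamma(\sigma;i+1)-h_\gamma(\sigma;i)=\sum_{z}\eta_\gamma(i-z)\sigma_z .
\]
Since \(|\sigma_z|\le1\), this gives
\[
|h_\gamma(\sigma;i+1)-h_\gamma(\sigma;i)|\le\sum_{z\in\Lambda_N}|\kappa_\gamma(z-1)-\kappa_\gamma(z)|.
\]

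\textbf{Step 3: bound the total variation of the kernel.} Because \(\kappa_\gamma(z)=\gamma\mathfrak K(\gamma z)\), each difference is
\[
\gamma\bigl(\mathfrak K(\gamma z-\gamma)-\mathfrak K(\gamma z)\bigr)
=-\gamma\int_{\gamma(z-1)}^{\gamma z}\mathfrak K'(u)\,du .
\]
Summing over \(z\), the intervals \([\gamma(z-1),\gamma z]\) tile the line, so the sum is at most \(\gamma\|\mathfrak K'\|_{L^1(\mathbb R)}\), which is finite since \(\mathfrak K\in W^{1,1}\).

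There is one subtlety on the torus. The lattice kernel is the periodization over \(\Lambda_N\), so the sum over \(z\in\Lambda_N\) is really a sum over \(\mathbb Z\) of the periodized differences. Periodization does not increase the \(\ell^1\) norm of the differences, so the same bound \(C\gamma\) holds with \(C=\|\mathfrak K'\|_{L^1}\). Alternatively, the support of \(\mathfrak K(\gamma\,\cdot)\) lies inside \(\Lambda_N\) up to a tail, because \(\varepsilon_\gamma/\gamma\to0\) means the Kac range \(\gamma^{-1}\) is much smaller than \(N\).

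\textbf{Expected obstacle.} The one place needing care is that \(\sum_z\kappa_\gamma(z)=1\) is imposed as a normalization, so \(\kappa_\gamma\) may differ from \(\gamma\mathfrak K(\gamma\cdot)\) by a factor \(1+o(1)\). Such a factor does not affect the bound. The step I consider the real check is the discrete-to-continuum total variation bound \(\sum_z|\kappa_\gamma(z-1)-\kappa_\gamma(z)|\le\gamma\|\mathfrak K'\|_{L^1}\). It follows from absolute continuity of \(W^{1,1}\) functions in one dimension, so I do not expect difficulty there. With it, \(C\gamma\to0\) gives the statement.
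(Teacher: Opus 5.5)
Your proposal is correct and follows the paper's proof. Both arguments reduce to the increment $h_\gamma(\sigma;i+1)-h_\gamma(\sigma;i)$, bound it by the discrete total variation $\sum_z|\kappa_\gamma(z+1)-\kappa_\gamma(z)|$ using $|\sigma_z|=1$, and control that by $C\gamma$ from $\mathfrak K\in W^{1,1}$; your fundamental-theorem-of-calculus estimate with $C=\|\mathfrak K'\|_{L^1}$ just makes explicit what the paper attributes to Assumption $({\bf A})$. One harmless slip: by evenness of $\kappa_\gamma$ the increment is $\sum_z\eta_\gamma(z-i)\sigma_z$ rather than $\sum_z\eta_\gamma(i-z)\sigma_z$, which does not affect the bound since only $\|\eta_\gamma\|_{\ell^1}$ enters.
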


\begin{proof}
Let \(x=\varepsilon_\gamma k\in\Lambda_{\varepsilon_\gamma}\). By definition of the
discrete gradient and \(X_\gamma\),
\[
\delta_\gamma\varepsilon_\gamma
\nabla_{\varepsilon_\gamma}X_\gamma(t,\varepsilon_\gamma k)
=
h_\gamma\!\left(\sigma^N\left(\frac{t}{\alpha_\gamma}\right);k+1\right)
-
h_\gamma\!\left(\sigma^N\left(\frac{t}{\alpha_\gamma}\right);k\right).
\]
Hence
\[
\begin{aligned}
\bigl|
\delta_\gamma\varepsilon_\gamma
\nabla_{\varepsilon_\gamma}X_\gamma(t,\varepsilon_\gamma k)
\bigr|
&=
\left|
\sum_{j\in\Lambda_N}
\bigl[\kappa_\gamma(k+1-j)-\kappa_\gamma(k-j)\bigr]
\sigma_j\!\left(\frac{t}{\alpha_\gamma}\right)
\right| \\
&\le
\sum_{j\in\Lambda_N}
\bigl|
\kappa_\gamma(k+1-j)-\kappa_\gamma(k-j)
\bigr|,
\end{aligned}
\]
since \(|\sigma_j|=1\). By translation invariance of the torus, the right-hand side does
not depend on \(k\), so
\[
\bigl|
\delta_\gamma\varepsilon_\gamma
\nabla_{\varepsilon_\gamma}X_\gamma(t,\varepsilon_\gamma k)
\bigr|
\le
\sum_{z\in\mathbb Z}
|\kappa_\gamma(z+1)-\kappa_\gamma(z)|.
\]
It remains to estimate the discrete variation of \(\kappa_\gamma\).
By Assumption \(({\bf A})\), we have
\[
\sum_{z\in\mathbb Z}|\kappa_\gamma(z+1)-\kappa_\gamma(z)|
\le C\gamma,
\]
which implies \eqref{E:gardX:inf}.
\end{proof}

Now we show the remainder term satisfies uniform estimate $|\nabla_{\varepsilon_{\gamma}}\mathcal J_\gamma^{\mathrm{rem}}| = o_\gamma(1)$.

\begin{theorem}\label{thm:remainder-expansion}
Assume that
\(
\frac{\gamma^4}{\alpha_\gamma\delta_\gamma}
\rightarrow 0 
\)
as \(\gamma\downarrow0\).
Then for every \(T>0\),
\[
\sup_{0\le t\le T}\sup_{x\in\Lambda_{\varepsilon_\gamma}}
\bigl|
\nabla_{\varepsilon_\gamma}\mathcal J_\gamma^{\mathrm{rem}}
\bigr|
\rightarrow 0 \quad \text{as} \quad \gamma\downarrow0.
\]
\end{theorem}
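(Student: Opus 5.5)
The estimate is purely deterministic and pointwise. I would bound $j_i^{\mathrm{rem}}$ uniformly in $i$, $\sigma$ and $t$. I would then move the discrete gradient onto the Kac kernel in $\mathcal J_\gamma^{\mathrm{rem}}$, which costs one factor $\gamma$. Finally I would collect the scale prefactor $\varepsilon_\gamma/(\alpha_\gamma\delta_\gamma)$ together with the lattice spacing and compare the result with the hypothesis $\gamma^4/(\alpha_\gamma\delta_\gamma)\to0$.

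\emph{Step 1: pointwise bound on the microscopic remainder.} By construction $j_i^{\mathrm{rem}}=\tfrac12 d_iF''(a_i)b_i^2+d_i\widetilde R_i$, with $|d_i|\le2$ and $a_i\in\{0,4\beta\kappa_\gamma(1)\}$. Since $F''(0)=0$ and $F'''$ is bounded, we get $|F''(a_i)|\le C|a_i|\le C\gamma$; using $\kappa_\gamma(1)=\gamma\mathfrak K(\gamma)\le C\gamma^2$ from $\mathfrak K(0)=0$ and Lipschitz continuity, this improves to $C\gamma^2$. Lemma~\ref{lem:eta-gamma-to-zero} gives $|b_i|=\beta|d_i|\,\delta_\gamma\varepsilon_\gamma|\nabla_{\varepsilon_\gamma}X_\gamma|\le C\gamma$ deterministically, uniformly in $t\le T$. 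Hence
\[
|j_i^{\mathrm{rem}}|\le C\bigl(\gamma^2\cdot\gamma^2+\|F^{(3)}\|_\infty\gamma^3\bigr).
\]
The cubic Taylor term is only $O(\gamma^3)$, so I would expand one order further: $F^{(3)}(0)$ is nonzero, but the term $\tfrac16F^{(3)}(a_i)d_ib_i^3$ is odd in $d_i$ and combines with $d_i^4$, so it does not vanish by itself. I therefore expect to need the sharper route below rather than a raw $\gamma^3$ count.

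\emph{Step 2: gradient on the kernel.} I would write
\[
\nabla_{\varepsilon_\gamma}\mathcal J_\gamma^{\mathrm{rem}}(x)=\frac{\varepsilon_\gamma}{\alpha_\gamma\delta_\gamma}\,\varepsilon_\gamma^{-1}\sum_i\bigl[\kappa_\gamma(k+1-i)-\kappa_\gamma(k-i)\bigr]j_i^{\mathrm{rem}},
\qquad x=\varepsilon_\gamma k .
\]
Using $\sum_z|\kappa_\gamma(z+1)-\kappa_\gamma(z)|\le C\gamma$, as in the proof of Lemma~\ref{lem:eta-gamma-to-zero}, this yields
\[
\sup_x|\nabla_{\varepsilon_\gamma}\mathcal J_\gamma^{\mathrm{rem}}|\le \frac{C\gamma}{\alpha_\gamma\delta_\gamma}\sup_i|j_i^{\mathrm{rem}}|.
\]
To close the argument, I need $\sup_i|j_i^{\mathrm{rem}}|\lesssim\gamma^3$, which gives exactly the bound $\gamma^4/(\alpha_\gamma\delta_\gamma)\to0$. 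So Step 1 must produce $O(\gamma^3)$, and it does: the $F''b^2$ part is $O(\gamma^4)$, and the Lagrange cubic remainder is $\le\tfrac16\|F^{(3)}\|_\infty|b_i|^3\le C\gamma^3$ by Lemma~\ref{lem:eta-gamma-to-zero}. The residual pieces $\widetilde r^{\,\mathrm{lin}}$ and $\widetilde r^{\,\mathrm{nl}}$ are not part of $j^{\mathrm{rem}}$, so they are not handled here.

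\emph{Main obstacle.} The delicate point is obtaining the cube $\gamma^3$ for the Taylor remainder uniformly, including the intermediate point $a_i+\theta_ib_i$. This reduces to the deterministic bound $\delta_\gamma\varepsilon_\gamma|\nabla_{\varepsilon_\gamma}X_\gamma|\le C\gamma$ holding for all configurations, which Lemma~\ref{lem:eta-gamma-to-zero} provides via the $W^{1,1}$ bound on $\mathfrak K$. A second check is that the discrete summation by parts carries no hidden factor $\varepsilon_\gamma^{-1}$ beyond the one accounted for in Step 2. Once this bookkeeping is verified, the hypothesis $\gamma^4/(\alpha_\gamma\delta_\gamma)\to0$ gives the claim.
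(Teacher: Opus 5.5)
Your proposal is correct and follows essentially the same route as the paper. You bound $|j_i^{\mathrm{rem}}|\le C\gamma^3$ pointwise using $F''(0)=0$ (so $|F''(a_i)|\le C\kappa_\gamma(1)$) and the deterministic bound $|b_i|\le C\gamma$ from Lemma~\ref{lem:eta-gamma-to-zero}, then place the discrete gradient on the Kac kernel to gain a factor $\gamma$, which gives $C\gamma^4/(\alpha_\gamma\delta_\gamma)$. The tentative detour in Step~1 about expanding one order further is unnecessary, as you correctly conclude in Step~2.
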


\begin{proof}
Writing \(x=\varepsilon_\gamma k\), we get
\[
 \nabla_{\varepsilon_\gamma}\mathcal J_\gamma^{\mathrm{rem}}(t,x)
=
-\frac{1}{\alpha_\gamma\delta_\gamma}
\sum_{i\in\Lambda_N}
\bigl[\kappa_\gamma(k+1-i)-\kappa_\gamma(k-i)\bigr]
j_i^{\mathrm{rem}}(t),
\]
Since
\(
F(r)=\frac{1}{1+e^r}
\)
is smooth and satisfies
\(
F''(z) \to 0,
\) as $z \to 0$,
the mean value theorem yields
\[
|F''(a)|\le \|F^{(3)}\|_{L^\infty(\mathbb R)}\,|a|
\qquad\text{for all }a\in\mathbb R.
\]
Hence
\(
|F''(a_i)|\le C\,\kappa_\gamma(1).
\)
Then by Lemma \ref{lem:eta-gamma-to-zero}, we have
\[
\big|\frac12\big(d_iF''(a_i)(b_i)^2\big)(t) \big|
\le C\,\kappa_\gamma(1) \gamma ^2,
\]
uniformly in \(i,t,\gamma\).
By Taylor's expansion, there exists \(\theta_i^\gamma\in(0,1)\) such that
\(
\widetilde R_i = \frac16\,d_i\,F^{(3)}(a_i+\theta_i^\gamma b_i)\,(b_i)^3.
\)
Since \(F^{(3)}\) is bounded and \(|d_i|\le2\), Lemma \ref{lem:eta-gamma-to-zero} yields
\(
|d_i \widetilde R_i|
\le
C\,|b_i|^3
\le
C\gamma^3.
\)
Since \(\mathfrak K\in W^{1,1}(\mathbb R)\) and \(\kappa_\gamma(z)= \gamma\mathfrak K(\gamma z)\),
we have
\(
\sum_{j\in\mathbb Z}
|\kappa_\gamma(j+1)-\kappa_\gamma(j)|
\le C\gamma.
\)
Thus 
\begin{align*}
    &\sup_{0\le t\le T}\sup_{x\in\Lambda_{\varepsilon_\gamma}}\bigl| \nabla_{\varepsilon_\gamma}\mathcal J_\gamma^{\mathrm{rem}}(t,x)\bigr| \\
\le & \frac{1}{\alpha_\gamma\delta_\gamma}
\sup_i \Big|\frac12\big(d_iF''(a_i)(b_i)^2\big)(t) \Big|
\sum_{j\in\mathbb Z}
|\kappa_\gamma(j+1)-\kappa_\gamma(j)| + \frac{1}{\alpha_\gamma\delta_\gamma}
\sup_i |\widetilde R_i^\gamma(t)|
\sum_{j\in\mathbb Z}
|\kappa_\gamma(j+1)-\kappa_\gamma(j)| \\
\le & C\,
\frac{\gamma^3\kappa_\gamma(1)}{\alpha_\gamma\delta_\gamma} + C\,
\frac{\gamma^4}{\alpha_\gamma\delta_\gamma} \to 0
\end{align*}
This proves the claim.
\end{proof}

For every test function \(\phi\in C^\infty(\mathbb T)\), with \(\psi_\gamma=\Delta_{\varepsilon_\gamma}^{-1}\phi\), we define the \(H^{-1}\)-weak drift components by
\begin{align*}
\mathcal D_\gamma^{\mathrm{lin}}(\phi;\sigma)
&:=
\bigl\langle \mathcal J_\gamma^{\mathrm{lin}}(\sigma),\nabla_{\varepsilon_\gamma}\psi_\gamma\bigr\rangle_\gamma
=
\varepsilon_\gamma\sum_{x\in\Lambda_{\varepsilon_\gamma}}
\mathcal J_\gamma^{\mathrm{lin}}(\sigma;x)\,
\nabla_{\varepsilon_\gamma}\psi_\gamma(x),\\
\mathcal D_\gamma^{\mathrm{nl}}(\phi;\sigma)
&:=
\bigl\langle \mathcal J_\gamma^{\mathrm{nl}}(\sigma),\nabla_{\varepsilon_\gamma}\psi_\gamma\bigr\rangle_\gamma
=
\varepsilon_\gamma\sum_{x\in\Lambda_{\varepsilon_\gamma}}
\mathcal J_\gamma^{\mathrm{nl}}(\sigma;x)\,
\nabla_{\varepsilon_\gamma}\psi_\gamma(x),\\
\mathcal D_\gamma^{\mathrm{rem}}(\phi;\sigma)
&:=
\bigl\langle \mathcal J_\gamma^{\mathrm{rem}}(\sigma),\nabla_{\varepsilon_\gamma}\psi_\gamma\bigr\rangle_\gamma
=
\varepsilon_\gamma\sum_{x\in\Lambda_{\varepsilon_\gamma}}
\mathcal J_\gamma^{\mathrm{rem}}(\sigma;x)\,
\nabla_{\varepsilon_\gamma}\psi_\gamma(x).
\end{align*}
Therefore,
\[
\Bigl(\alpha_\gamma^{-1}\mathscr L_\gamma^N\langle X_\gamma,\phi\rangle_{-1,\gamma}\Bigr)(\sigma)
=
\mathcal D_\gamma^{\mathrm{lin}}(\phi;\sigma)
+
\mathcal D_\gamma^{\mathrm{nl}}(\phi;\sigma)
+
\mathcal D_\gamma^{\mathrm{rem}}(\phi;\sigma).
\]
Here, the linear term
\begin{align}\label{eq:Dlin-correct-start}
\mathcal D_\gamma^{\mathrm{lin}}(\phi;\sigma)
&=
\varepsilon_\gamma\sum_{x} \Bigl(\frac12-\beta\kappa_\gamma(1)\Bigr)
\frac{\varepsilon_\gamma}{\alpha_\gamma\delta_\gamma}
\sum_{i\in\Lambda_N}
\kappa_\gamma\!\left(\frac{x}{\varepsilon_\gamma}-i\right)d_i(\sigma) \nabla_{\varepsilon_\gamma}\psi_\gamma(x)\notag\\
&=
\Bigl(\frac12-\beta\kappa_\gamma(1)\Bigr)
\frac{\varepsilon_\gamma^2}{\alpha_\gamma\delta_\gamma}
\sum_{i\in\Lambda_N}
d_i(\sigma)\,
\nabla_{\varepsilon_\gamma}(\mathcal K_\gamma^\varepsilon\psi_\gamma)(\varepsilon_\gamma i)  \nonumber \\
&= -\Bigl(\frac12-\beta\kappa_\gamma(1)\Bigr)
\frac{\varepsilon_\gamma^2}{\alpha_\gamma}
\bigl\langle X_\gamma,\mathcal K_\gamma^\varepsilon\phi\bigr\rangle_\gamma .
\end{align}
Now we consider the expansion for the \(H^{-1}\)-linear part $\mathcal D_\gamma^{\mathrm{lin}}(\phi)$.

\begin{theorem}\label{thm:expand linear part}
For every zero-mean \(\phi\in C^\infty(\mathbb T)\),
\[
\int_0^t \mathcal D_\gamma^{\mathrm{lin}}(\phi)\Big(\sigma^N\big(\frac{s}{\alpha_\gamma}\big)\Big)\,ds
=
-A'_\gamma\int_0^t\langle X_\gamma(s),\phi\rangle_\gamma\,ds
-\nu_\gamma\int_0^t\langle X_\gamma(s),\Delta_{\varepsilon_\gamma}\phi\rangle_\gamma\,ds
+o_\gamma(1)
\]
where
\[
 \nu_{\gamma} :=
\frac{\mathfrak m_{2}}{4}
\frac{\varepsilon_\gamma^4}{\alpha_\gamma\gamma^2} \to  \nu, \quad \text{and} \quad A'_\gamma := \Bigl(\frac12-\beta\kappa_\gamma(1)\Bigr)\frac{\varepsilon_\gamma^2}{\alpha_\gamma}.
\]
\end{theorem}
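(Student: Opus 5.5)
The starting point is the exact identity \eqref{eq:Dlin-correct-start}, which gives
\[
\mathcal D_\gamma^{\mathrm{lin}}(\phi;\sigma)=-A'_\gamma\langle X_\gamma,\mathcal K^\varepsilon_\gamma\phi\rangle_\gamma .
\]
The task is therefore to expand the Kac convolution acting on the smooth test function,
\[
\mathcal K^\varepsilon_\gamma\phi=\phi+\tfrac{\mathfrak m_2}{2}\tfrac{\varepsilon_\gamma^2}{\gamma^2}\Delta_{\varepsilon_\gamma}\phi+R_\gamma\phi,
\]
up to sign conventions for $\Delta_{\varepsilon_\gamma}$. I would do this mode by mode in Fourier space, where $\mathcal K^\varepsilon_\gamma$ acts by the multiplier $\widehat\kappa_\gamma(k)$. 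As in the proof of Lemma~\ref{lem:kac-kernel-fourier-lower}, evenness, normalization and the second moment give
\[
1-\widehat\kappa_\gamma(k)=\sum_z\kappa_\gamma(z)\bigl(1-\cos(2\pi k\varepsilon_\gamma z)\bigr)=2\pi^2\mathfrak m_{2,\gamma}\frac{\varepsilon_\gamma^2k^2}{\gamma^2}+O\Bigl(\bigl(\tfrac{\varepsilon_\gamma k}{\gamma}\bigr)^{2+\theta}\Bigr).
\]
The error exponent $\theta>0$ needs a little more than a second moment. I would use $1-\cos y\le y^2/2$ together with dominated convergence if only $\mathfrak m_2<\infty$ is available. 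Since $\phi$ is smooth, only finitely many modes matter up to a tail of size $O(\|\phi\|_{H^s})$ with rapid decay. By Lemma~\ref{lem:symbol_bilaplacian}, the symbol $(2\pi k)^2$ can be replaced by the discrete Laplacian symbol $\tfrac4{\varepsilon_\gamma^2}\sin^2(\pi k\varepsilon_\gamma)$ at cost $O(\varepsilon_\gamma^2k^4)$.

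Multiplying by $A'_\gamma$ then produces three pieces. The first is the mass term $-A'_\gamma\langle X_\gamma,\phi\rangle_\gamma$. The second is the fourth-order term: its coefficient is $A'_\gamma\tfrac{\mathfrak m_2}{2}\tfrac{\varepsilon_\gamma^2}{\gamma^2}$, and with $\tfrac12-\beta\kappa_\gamma(1)\to\tfrac12$ this matches $\nu_\gamma=\tfrac{\mathfrak m_2}{4}\tfrac{\varepsilon_\gamma^4}{\alpha_\gamma\gamma^2}$ up to a factor $(1-2\beta\kappa_\gamma(1))=1+O(\gamma)$. That factor multiplies the bounded quantity $\nu_\gamma\langle X_\gamma,\Delta_{\varepsilon_\gamma}\phi\rangle_\gamma$ and is absorbed into $o_\gamma(1)$. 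The third piece is the remainder $-A'_\gamma\langle X_\gamma,R_\gamma\phi\rangle_\gamma$. The main obstacle is showing that the time integral of this remainder vanishes. The prefactor $A'_\gamma\sim\varepsilon_\gamma^{-1/2}$ diverges, while the multiplier error is only $o(\varepsilon_\gamma^2/\gamma^2)=o(\varepsilon_\gamma^{1/2})$ per fixed mode. Their product is therefore $o(1)$ times a quantity like $|\widehat X_\gamma(k)|$, uniformly over the finitely many relevant $k$. For the high-frequency tail I would use the smoothness of $\phi$ and the crude bound $|\widehat\kappa_\gamma|\le1$.

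It remains to control $\mathbb E\int_0^T|\langle X_\gamma(s),g\rangle_\gamma|\,ds$ for $g=R_\gamma\phi$ or smooth $g$. For this I would invoke Lemma~\ref{lem:weak-L2-h-from-microscopic}. Since $\phi$ has zero mean, $m_N$ drops out, and Cauchy--Schwarz gives
\[
\mathbb E\int_0^T|\langle X_\gamma,g\rangle_\gamma|\,ds\le C_T\,\delta_\gamma^{-1}\bigl(\delta_\gamma^2\log\varepsilon_\gamma^{-1}\bigr)^{1/2}\|g\|_{2,\gamma}=C\sqrt{\log\varepsilon_\gamma^{-1}}\,\|g\|_{2,\gamma}.
\]
The logarithmic loss must then be beaten by the polynomial smallness of $A'_\gamma\|R_\gamma\phi\|$. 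With only a second moment, the rate is $o(\varepsilon_\gamma^{1/2})$ without a power. I would therefore try to exploit $\mathfrak K\in W^{1,\infty}$ with compact-type decay, or the $\mathfrak m_{2,\gamma}$ versus $\mathfrak m_2$ discretization error, to get a genuine power $(\varepsilon_\gamma/\gamma)^{\theta}$. This yields $A'_\gamma\|R_\gamma\phi\|\lesssim\varepsilon_\gamma^{-1/2}(\varepsilon_\gamma/\gamma)^{2+\theta}\to0$ faster than $(\log\varepsilon_\gamma^{-1})^{-1/2}$. The $o_\gamma(1)$ in the statement is then understood in $L^1(\mathbb P)$, uniformly for $t\in[0,T]$.
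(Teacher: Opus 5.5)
Your Fourier-multiplier route differs from the paper's physical-space moment expansion. It is also sharper on three points the paper glosses over: $A'_\gamma\sim\frac12\varepsilon_\gamma^{-1/2}$ really diverges (the paper calls it bounded), Lemma~\ref{lem:weak-L2-h-from-microscopic} really costs a factor $\sqrt{\log\varepsilon_\gamma^{-1}}$, and the coefficient actually produced is $(1-2\beta\kappa_\gamma(1))\frac{\mathfrak m_{2,\gamma}}{4}\frac{\varepsilon_\gamma^4}{\alpha_\gamma\gamma^2}$ rather than $\nu_\gamma$.

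However, the argument is not closed at the step you flag. Going through Lemma~\ref{lem:weak-L2-h-from-microscopic}, you need $A'_\gamma\|R_\gamma\phi\|=o\bigl((\log\varepsilon_\gamma^{-1})^{-1/2}\bigr)$, i.e.\ a power rate $(\varepsilon_\gamma/\gamma)^{2+\theta}$ for the multiplier error. Assumption~$({\bf A})$ does not supply one. With only $\mathfrak m_2<\infty$, the quantity $1-\widehat\kappa_\gamma(k)-2\pi^2\mathfrak m_{2,\gamma}(\varepsilon_\gamma k/\gamma)^2$ is merely $o\bigl((\varepsilon_\gamma k/\gamma)^2\bigr)$. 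Its modulus is governed by the tail of $u^2\mathfrak K(u)$ beyond $|u|\approx\gamma/(\varepsilon_\gamma|k|)$. For admissible kernels such as $\mathfrak K(u)\sim|u|^{-3}(\log|u|)^{-1}(\log\log|u|)^{-2}$ this decays like $1/\log\log\varepsilon_\gamma^{-1}$, which loses against $\sqrt{\log\varepsilon_\gamma^{-1}}$.

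Your suggested remedies do not help. $\mathfrak K\in W^{1,\infty}$ says nothing about decay at infinity, compact support is not assumed, and $\mathfrak m_{2,\gamma}-\mathfrak m_2$ is a further error term (also without a rate), not a source of smallness. The paper gets past this point only by using finite fourth moments, which are not part of $({\bf A})$. Its expansion has $\|r_\gamma(\phi)\|_\infty\le C_\phi(\varepsilon_\gamma/\gamma)^4$, so the remainder is $O\bigl(\frac{\varepsilon_\gamma^2}{\alpha_\gamma}(\varepsilon_\gamma/\gamma)^4\bigr)=O(\varepsilon_\gamma^{1/2})$ times the weak $L^2$ norm, which absorbs the logarithm. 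This is also why its estimate survives despite the incorrect claim that $A'_\gamma$ is bounded.

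To close the gap you have two options. The first is to state an extra moment hypothesis explicitly; in your scheme any moment of order $2+\theta$ suffices, giving a remainder $O(\varepsilon_\gamma^{\theta/4}\sqrt{\log\varepsilon_\gamma^{-1}})$. The second avoids any rate by replacing Lemma~\ref{lem:weak-L2-h-from-microscopic} with a per-mode bound, for which your mode-by-mode set-up is well suited.

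For $k\neq0$, $\widehat X_\gamma(t,k)=\delta_\gamma^{-1}\widehat\kappa_\gamma(k)\widehat\sigma_\gamma(k)$, so $|\widehat X_\gamma(t,k)|\le2\delta_\gamma^{-1}$. Meanwhile \eqref{eq:static-infrared-bound} and \eqref{eq:critical-static-mass-rate} give $\mathbb E_{\mu_{N,\gamma,\beta}}|\widehat X_\gamma(k)|^2\le C\varepsilon_\gamma\delta_\gamma^{-4}\le C$, uniformly in $k$. Split $[0,T]$ at $\tau_\gamma$ as in the proof of Lemma~\ref{lem:weak-L2-h-from-microscopic}. The initial layer now costs only $2\delta_\gamma^{-1}\tau_\gamma\lesssim\delta_\gamma\log\varepsilon_\gamma^{-1}\to0$, instead of $\log\varepsilon_\gamma^{-1}$ for the full $L^2$ norm, and Pinsker after $\tau_\gamma$ costs $O(\delta_\gamma)$. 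Hence $\sup_{k\neq0}\mathbb E_{f_{N,0}}\int_0^T|\widehat X_\gamma(t,k)|\,dt\le C$.

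Next, $1-\cos y\le y^2/2$ and the second-order analogue of Lemma~\ref{lem:symbol_bilaplacian} give $A'_\gamma|R_\gamma(k)|\le Ck^2$ for every $k$. Use this on the high modes instead of $|\widehat\kappa_\gamma|\le1$, which leaves the divergent factor $A'_\gamma$ uncompensated. For each fixed $k$, $A'_\gamma R_\gamma(k)\to0$ by dominated convergence, using only $\mathfrak m_2<\infty$ and $\mathfrak m_{2,\gamma}\to\mathfrak m_2$. Dominated convergence in $k$ against the uniformly summable weights $k^2|\widehat\phi_\gamma(k)|$ then yields the claim in $L^1(\mathbb P_{f_{N,0}})$, uniformly in $t\le T$. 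The $1-2\beta\kappa_\gamma(1)$ and $\mathfrak m_{2,\gamma}$ versus $\mathfrak m_2$ discrepancies are absorbed into the same fixed-$k$ limit without any rate.
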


\begin{proof}
Using the discrete integration by parts and the commutation relation
\(\Delta_{\varepsilon_\gamma}\mathcal K_\gamma^\varepsilon\psi_\gamma
=\mathcal K_\gamma^\varepsilon\phi\), the linear part $\mathcal D_\gamma^{\mathrm{lin}}(\phi)$ can be rewritten as
\begin{equation}\label{eq:linear-drift-exact}
\mathcal D_\gamma^{\mathrm{lin}}(\phi)
=
-\Bigl(\frac12-\beta\kappa_\gamma(1)\Bigr)
\frac{\varepsilon_\gamma^2}{\alpha_\gamma}
\bigl\langle X_\gamma,\mathcal K_\gamma^\varepsilon\phi\bigr\rangle_\gamma.
\end{equation}
Here the factor \(\delta_\gamma^{-1}\) is exactly absorbed into \(X_\gamma\).

We now expand the operator \(\mathcal K_\gamma^\varepsilon\) on smooth functions.
Since \(\kappa_\gamma\) is even and has finite second and fourth moments, a standard discrete moment expansion gives
\begin{equation}
\label{eq:Kac-test-expansion}
\mathcal K_\gamma^\varepsilon\phi =\phi
+\frac{\mathfrak m_{2,\gamma}}{2}\Bigl(\frac{\varepsilon_\gamma}{\gamma}\Bigr)^2
\Delta_{\varepsilon_\gamma}\phi
+r_\gamma(\phi),
\end{equation}
where 
\[
\mathfrak m_{2,\gamma} :=
\gamma^2\sum_{z\in\mathbb Z} z^2\,\kappa_\gamma(z) = \gamma\sum_{z\in\mathbb Z}(\gamma z)^2\,\mathfrak K(\gamma z) \to \int_{\mathbb R} z^2 \mathfrak K(z) dz = \mathfrak m_2,
\]
and the remainder satisfies $\|r_\gamma(\phi)\|_\infty
\le
C_\phi\Bigl(\frac{\varepsilon_\gamma}{\gamma}\Bigr)^4.$
Substituting \eqref{eq:Kac-test-expansion} into \eqref{eq:linear-drift-exact} yields
\begin{equation}\label{eq:linear-drift-three-pieces}
    \mathcal D_\gamma^{\mathrm{lin}}(\phi) = -\Bigl(\frac12-\beta\kappa_\gamma(1)\Bigr)\frac{\varepsilon_\gamma^2}{\alpha_\gamma}
\langle X_\gamma,\phi\rangle_\gamma
\notag 
-\Bigl(\frac12-\beta\kappa_\gamma(1)\Bigr)\frac{\mathfrak m_{2,\gamma}}{2}\frac{\varepsilon_\gamma^4}{\alpha_\gamma\gamma^2}
\langle X_\gamma,\Delta_{\varepsilon_\gamma}\phi\rangle_\gamma
\notag - \frac{\varepsilon_\gamma^2}{\alpha_\gamma}
\langle X_\gamma,r_\gamma(\phi)\rangle_\gamma .
\end{equation}
The first two terms in the identity above evaluate to
\(
-A'_\gamma\langle X_\gamma,\phi\rangle_\gamma
-\nu_\gamma\langle X_\gamma,\Delta_{\varepsilon_\gamma}\phi\rangle_\gamma.
\).
It remains to prove that the last term is negligible.

By the weak \(L^2\)-bound on \(X_\gamma\) and the uniform bound
\(\|r_\gamma(\phi)\|_\infty\le C_\phi(\varepsilon_\gamma/\gamma)^4\), we have
\begin{align*}
\mathbb E\Bigg[
\Bigg|
(\frac12-\beta\kappa_\gamma(1))\frac{\varepsilon_\gamma^2}{\alpha_\gamma}
\int_0^t
\langle X_\gamma(s),r_\gamma(\phi)\rangle_\gamma\,ds
\Bigg|^2
\Bigg]
\le &
C_t
\Bigl((\frac12-\beta\kappa_\gamma(1))\frac{\varepsilon_\gamma^2}{\alpha_\gamma}\Bigr)^2
\|r_\gamma(\phi)\|_\infty^2  \\
\le &
C_{t,\phi}
\Bigl(\frac{\varepsilon_\gamma^2}{\alpha_\gamma}\Bigr)^2\Bigl(\frac{\varepsilon_\gamma}{\gamma}\Bigr)^8 .
\end{align*}
Using the boundedness of \(A'_\gamma\) and the fact that
\(\frac12-\beta\kappa_\gamma(1)\) is bounded away from zero for small \(\gamma\), we obtain the needed bound on \(\frac{\varepsilon_\gamma^2}{\alpha_\gamma}\). In particular, as $\gamma \to 0$,
\[
 \frac{\varepsilon_\gamma^2}{\alpha_\gamma}
\int_0^t
\langle X_\gamma(s),r_\gamma(\phi)\rangle_\gamma\,ds
\rightarrow 0
\quad\text{in } L^2.
\]
This proves the claim.
\end{proof}

Combining Theorem \ref{thm:remainder-expansion} and Theorem \ref{thm:expand linear part}, the \(H^{-1}\)-tested drift of the rescaled field is
\begin{align}
\Bigl(\alpha_\gamma^{-1}\mathscr L_\gamma^N
\langle X_\gamma,\phi\rangle_{-1,\gamma}\Bigr)(\sigma)
=&
-A'_\gamma\langle X_\gamma,\phi\rangle_\gamma
-\nu_\gamma\langle X_\gamma,\Delta_{\varepsilon_\gamma}\phi\rangle_\gamma
\nonumber\\
&\quad
+\mathcal D_\gamma^{\mathrm{nl}}(\phi;\sigma)
+\mathcal D_\gamma^{\mathrm{rem}}(\phi;\sigma)
+o_\gamma(1).
\label{eq:Hminus-one-drift-summary}
\end{align}
Here the nonlinear current $\nabla_{\varepsilon_{\gamma}} \mathcal J_\gamma^{\mathrm{nl}}$ is defined in \eqref{eq:cg-nl-leading} and the \(H^{-1}\)-tested nonlinear drift has the form
\begin{align}\label{eq:Hminus-one-nonlinear-drift}
    \mathcal D_\gamma^{\mathrm{nl}}(\phi;\sigma)
    = & -\frac{\beta}{4}
\frac{\varepsilon_\gamma^2}{\alpha_\gamma\delta_\gamma}
\sum_{i,j\in\Lambda_N}
\kappa_\gamma(i-j)
d_i(\sigma)^2d_j(\sigma) G_{\gamma,i}^{\phi}
+ r_{\gamma,-1}^{(0)}(\phi;t)  \nonumber\\
    = & \frac{\beta\varepsilon_\gamma^3}{4\alpha_\gamma}
    \sum_{i\in\Lambda_N}
    d_i(\sigma)^2\,\nabla_{\varepsilon_\gamma}X_\gamma(\varepsilon_\gamma i)\,
    \nabla_{\varepsilon_\gamma}
    \bigl(\mathcal K_\gamma^\varepsilon\Delta_{\varepsilon_\gamma}^{-1}\phi\bigr)(\varepsilon_\gamma i)
    + r_{\gamma,-1}^{(0)}(\phi;t).
\end{align}
where $G_{\gamma,i}^{\phi}:=
\nabla_{\varepsilon_\gamma}
\bigl(\mathcal K_\gamma^\varepsilon\Delta_{\varepsilon_\gamma}^{-1}\phi\bigr)
(\varepsilon_\gamma i)$.
Moreover, by \(\kappa_\gamma(1)\le C\gamma\),
Assumption \(({\bf B})\), and the deterministic Kac-gradient bound \eqref{E:gardX:inf}, the error term
\begin{align}
|r_{\gamma,-1}^{(0)}(\phi;t)|
&\le
C\frac{\varepsilon_\gamma^2}{\alpha_\gamma\delta_\gamma}
\sum_{i\in\Lambda_N}
\kappa_\gamma(1)^2\,
\delta_\gamma\varepsilon_\gamma
\bigl|\nabla_{\varepsilon_\gamma}X_\gamma(\varepsilon_\gamma i)\bigr|\,
\bigl|G_{\gamma,i}^{\phi}\bigr|
\nonumber\\
&\le
C_\phi
\frac{\varepsilon_\gamma^2}{\alpha_\gamma\delta_\gamma}
\kappa_\gamma(1)^2
\sum_{i\in\Lambda_N}
\delta_\gamma\varepsilon_\gamma
\bigl|\nabla_{\varepsilon_\gamma}X_\gamma(\varepsilon_\gamma i)\bigr|
\nonumber\\
& \le C_\phi \frac{\varepsilon_\gamma\gamma^3}{\alpha_\gamma\delta_\gamma} \to 0.
\label{eq:Hminus-one-nonlinear-error}
\end{align}
In the last line, we used \(|\Lambda_N|\simeq\varepsilon_\gamma^{-1}\),
\(\kappa_\gamma(1)\le C\gamma\).
\eqref{E:gardX:inf}.  
Assumption \(({\bf B})\), the Taylor-linearization error 
\(
\frac{\varepsilon_\gamma\gamma^3}{\alpha_\gamma\delta_\gamma} = O(\varepsilon_\gamma^{1/2})\rightarrow0 .
\)
The remainder part is 
\begin{equation}\label{eq:Hminus-one-remainder-drift} \mathcal D_\gamma^{\mathrm{rem}}(\phi;\sigma)
=
\bigl\langle
\mathcal J_\gamma^{\mathrm{rem}}(\sigma),
\nabla_{\varepsilon_\gamma}\Delta_{\varepsilon_\gamma}^{-1}\phi
\bigr\rangle_\gamma,
\end{equation}
and Theorem \ref{thm:remainder-expansion} gives
\[
\sup_{0\le t\le T}
\bigl|
\mathcal D_\gamma^{\mathrm{rem}}(\phi;\sigma^N(t/\alpha_\gamma))
\bigr|
\le
C_\phi
\sup_{0\le t\le T}\sup_{x\in\Lambda_{\varepsilon_\gamma}}
\bigl|
\nabla_{\varepsilon_\gamma}\mathcal J_\gamma^{\mathrm{rem}}(t,x)
\bigr|
\longrightarrow0 .
\]
If the second-order Boltzmann--Gibbs replacement of the next section is applied to \(d_i^2\), then the leading nonlinear term in \eqref{eq:Hminus-one-nonlinear-drift} formally becomes
\[
\frac{\beta}{4}\frac{\varepsilon_\gamma^3}{\alpha_\gamma}
\sum_{i\in\Lambda_N}
\Bigl(a_{0,\gamma}+a_{2,\gamma}\delta_\gamma^2X_\gamma(\varepsilon_\gamma i)^2\Bigr)
\nabla_{\varepsilon_\gamma}X_\gamma(\varepsilon_\gamma i)\,
\nabla_{\varepsilon_\gamma}
\bigl(\mathcal K_\gamma^\varepsilon\Delta_{\varepsilon_\gamma}^{-1}\phi\bigr)(\varepsilon_\gamma i),
\]

It is useful to keep the microscopic gradient hidden in the Kac increment rather
than to treat \(d_i^2(h_{\gamma,i+1}-h_{\gamma,i})\) as an ordinary product of a
local observable and a mesoscopic increment.  Since
\[
h_{\gamma,i}(\sigma)=\sum_{z\in\Lambda_N}\kappa_\gamma(i-z)\sigma_z,
\qquad
d_j(\sigma):=\sigma_j-\sigma_{j+1},
\]
translation on the torus and the evenness of \(\kappa_\gamma\) give the exact identity
\begin{equation}\label{eq:kac-increment-as-bond-average}
    h_{\gamma,i+1}(\sigma)-h_{\gamma,i}(\sigma) = \sum_{z\in\Lambda_N}\kappa_\gamma(i-z)
\bigl(\sigma_{z+1}-\sigma_z\bigr) = -\sum_{z\in\Lambda_N}\kappa_\gamma(i-z)d_z(\sigma).
\end{equation}
Thus the leading nonlinear current can be written in terms of
microscopic bond variables:
\[
j_i^{\mathrm{nl}}(\sigma)
=
-\frac{\beta}{4}d_i(\sigma)^2
\sum_{j\in\Lambda_N}\kappa_\gamma(i-j)d_j(\sigma)
+\widetilde r_{i,\gamma}^{\,\mathrm{nl}}(\sigma).
\]
Consequently the \(H^{-1}\)-tested nonlinear drift has the microscopic-bond
form
\begin{equation}\label{eq:clean-microscopic-bond-drift}
\mathcal D_\gamma^{\mathrm{nl}}(\phi;\sigma)
=
-\frac{\beta}{4}
\frac{\varepsilon_\gamma^2}{\alpha_\gamma\delta_\gamma}
\sum_{i,j\in\Lambda_N}
\kappa_\gamma(i-j)d_i(\sigma)^2d_j(\sigma)G_{\gamma,i}^{\phi}
+r_{\gamma,-1}^{(0)}(\phi;t).
\end{equation}
This is the form used in the next section.  

\section{Second-order Boltzmann--Gibbs Principle}
\label{sec:clean-second-order-bg}

This section extracts the effective second-order Boltzmann--Gibbs replacement from the preceding multiscale discussion in a self-contained form.  It gives the
streamlined argument used for the nonlinear drift.  We keep only the route that closes the estimate: one first rewrites the nonlinear current with the microscopic bond gradient \( d_i(t):=\sigma^N(t/\alpha_\gamma)_{i}-\sigma^N(t/\alpha_\gamma)_{i+1}\), then performs the
replacement at this microscopic-gradient level by a discrete chain rule.

We use the block assumptions
\begin{equation}\label{ass:clean-bg-scales}
    \ell = \ell_\gamma=\varepsilon_\gamma^{-1/4},\qquad
L=L_\gamma=\varepsilon_\gamma^{-1/3}.
\end{equation}
so that under Assumption \(({\bf B})\)
\begin{equation}
\ell_\gamma\to\infty,\qquad L_\gamma\to\infty,\qquad
\frac{\ell_\gamma}{L_\gamma}\to0,\qquad
\gamma L_\gamma^2\to0.
\end{equation}

\subsection{Quantitative one-block estimate}
\label{subsec:quantitative-one-block}

We write
\(
\Lambda_{i,\ell}:=\{i-\ell,\dots,i+\ell\}.
\)
For \(r\in\{\ell, L\}\), define
\[
m_i^r(\sigma):=\frac{1}{2r+1}\sum_{|u|\le r}\sigma_{i+u}.
\]
For a bounded local observable \(\Psi\), let \(\Phi_{\Psi,r}^{\gamma}(m)\) be its expectation under the canonical Gibbs
measure on the block of radius \(r\), with block magnetization \(m\).
we also define the one-block fluctuation
\[
V_{i,\ell}^{\Psi,\gamma}(\sigma)
:=
\tau_i\Psi(\sigma)
-
\Phi_{\Psi,\ell}^{\gamma}\bigl(m_i^\ell(\sigma)\bigr).
\]
We also use the exact frozen-exterior canonical average
\[
\Phi_{\Psi,i,\ell}^{\gamma,\xi}(m)
:=
\int \Psi(\eta)\,
\mu_{i,\ell,m}^{\mathrm{can},\gamma,\xi}(d\eta),
\]
where \(\xi=\sigma_{\Lambda_N\setminus\Lambda_{i,\ell}}\) denotes the exterior
configuration and \(\mu_{i,\ell,m}^{\mathrm{can},\gamma,\xi}\) is the Gibbs
measure on the block \(\Lambda_{i,\ell}\), conditioned on the exterior
\(\xi\) and on the block magnetization \(m\).  The simplified canonical
average \(\Phi_{\Psi,\ell}^{\gamma}\) is obtained by replacing the frozen
exterior field by its local constant part.

\begin{lemma}[Uniform local spectral gap]\label{lem:clean-one-block-spectral-gap}
Assume \(\gamma\ell^2\le 1\).  There exists a constant \(C_{\mathrm{sg}}>0\), independent of \(i,\xi,m,\gamma\), and \(\ell\), such that for every function \(G:\Sigma_{\ell,m}^{\mathrm{can}}\to\mathbb R\),
\[
\mathrm{Var}_{\mu_{i,\ell,m}^{\mathrm{can},\gamma,\xi}}(G)
\le
C\ell^2
\sum_{|j|\le \ell-1}
D_{j,j+1}^{(\ell),\gamma}(G).
\]
\end{lemma}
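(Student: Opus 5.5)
The plan is to reduce the statement to the classical spectral gap of the symmetric simple exclusion process (SSEP) on a segment, via a bounded perturbation argument. The first step compares the frozen-exterior canonical measure $\mu_{i,\ell,m}^{\mathrm{can},\gamma,\xi}$ with the uniform measure $\pi_{\ell,m}$ on $\Sigma_{\ell,m}^{\mathrm{can}}$. The density $d\mu_{i,\ell,m}^{\mathrm{can},\gamma,\xi}/d\pi_{\ell,m}$ is proportional to
\[
\exp\Bigl\{-\beta H^{\mathrm{blk}}_{\gamma,\ell}(\eta)+\beta\sum_{u\in\Lambda_\ell}\bigl(b_{i,\xi}^{N,\gamma}(u)-\bar b_{i,\xi}^{N,\gamma}\bigr)\eta_u\Bigr\},
\]
because the constant part $\bar b$ is absorbed on the canonical sector, as in Lemma~\ref{lem:remove-frozen-field}.

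\textbf{Oscillation of the density.} Lemma~\ref{lem:remove-frozen-field} bounds the field term by $C\gamma\ell^2$. For the block Hamiltonian, I will not use the crude bound $\ell\cdot\sup|\kappa_\gamma|$ per spin. Instead I use $|\kappa_\gamma(u-v)-\kappa_\gamma(0)|\le C\gamma^2|u-v|$, which holds since $\mathfrak K\in W^{1,\infty}$, together with $\kappa_\gamma(0)=0$. This gives $\sup_{u,v\in\Lambda_\ell}\kappa_\gamma(u-v)\le C\gamma^2\ell$, so
\[
\operatorname{osc}\bigl(H^{\mathrm{blk}}_{\gamma,\ell}\bigr)\le C\ell^2\cdot\gamma^2\ell\le C\gamma\ell^2\cdot(\gamma\ell)\le C .
\]
Under $\gamma\ell^2\le1$ the log-density therefore has oscillation bounded by a universal constant $C_0$.

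\textbf{Comparison with SSEP.} By the Holley--Stroock principle, variances under the two measures are comparable within a factor $e^{2C_0}$, and so are the Dirichlet-form measures. The rates are the other ingredient. The local rate $c^{(\ell),\gamma}_{j,j+1}$ is a logistic function of an energy increment. By the exact representation of $\Delta_iH_\gamma$ from Section~\ref{sec:current}, namely $d_j(h_j-h_{j+1})+\kappa_\gamma(1)d_j^2$, this increment is bounded by $C\gamma$, using $\sum_z|\kappa_\gamma(z+1)-\kappa_\gamma(z)|\le C\gamma$. Hence the rates lie in $[c_-,c_+]$ with constants independent of $\ell$. This sharpens Lemma~\ref{lem:local-rates-elliptic}, which gives only $\ell$-dependent constants, and the same bound holds for the frozen rates. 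Thus
\[
D_{j,j+1}^{(\ell),\gamma}(G)\ge c\,\frac12\int\bigl(G(\eta^{j,j+1})-G(\eta)\bigr)^2\,d\pi_{\ell,m}.
\]

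\textbf{Conclusion and the expected obstacle.} The classical SSEP gap on a segment of length $2\ell+1$ states that $\mathrm{Var}_{\pi_{\ell,m}}(G)\le C\ell^2\sum_{|j|\le\ell-1}\frac12\,\pi_{\ell,m}\bigl[(\nabla_{j,j+1}G)^2\bigr]$, uniformly in $m$; this follows from Quastel or Lu--Yau, or from the Caputo/Diaconis--Saloff-Coste gap $\sim \ell^{-2}$. Chaining the three comparisons gives the claim with $C_{\mathrm{sg}}$ depending only on $C_0,c_\pm,\beta$. The step I expect to need most care is the uniform-in-$\ell$ oscillation bound. Bounding the block Hamiltonian naively gives $\beta\ell\cdot\ell\cdot\gamma$, which is fine, but one must also check that the uniform lower bound on the rates is independent of $\ell$ rather than using Lemma~\ref{lem:local-rates-elliptic} verbatim. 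If the kernel bound at the origin fails, I would instead note that $|H^{\mathrm{blk}}|\le\frac12\ell^2\cdot 2\sup\kappa_\gamma\le C\gamma\ell^2$, which is still $\le C$ under $\gamma\ell^2\le1$.
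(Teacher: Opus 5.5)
Your proposal is correct and follows essentially the same route as the paper. The paper also compares $\mu_{i,\ell,m}^{\mathrm{can},\gamma,\xi}$ with the uniform canonical measure, using two bounds: $C\gamma^2\ell^3\le C$ on the block Hamiltonian (via $\mathfrak K(0)=0$ and $\mathfrak K\in W^{1,\infty}$), and $C\gamma\ell^2\le C$ on the non-constant exterior field from Lemma~\ref{lem:remove-frozen-field}. It then invokes rate ellipticity uniform in all parameters (asserted in this proof, beyond the $\ell$-dependent constants of Lemma~\ref{lem:local-rates-elliptic}) and the classical $O(\ell^2)$ gap of simple exclusion on an interval; your explicit Holley--Stroock step only spells out the density comparison the paper leaves implicit.
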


\begin{proof}
Let \(\nu_{\ell,m}^{\mathrm{aux}}\) be the uniform measure on the canonical sector \(\Sigma_{\ell,m}^{\mathrm{can}}\).  The standard spectral gap estimate for nearest-neighbor simple exclusion on an interval of length \(2\ell+1\) gives
\begin{equation}\label{eq:aux-spectral-gap}
\mathrm{Var}_{\nu_{\ell,m}^{\mathrm{aux}}}(G)
\le
C\ell^2
\sum_{|j|\le\ell-1}
\frac12
\int
\bigl(G(\eta^{j,j+1})-G(\eta)\bigr)^2
\nu_{\ell,m}^{\mathrm{aux}}(d\eta).
\end{equation}
We next compare the frozen-exterior Gibbs measure with \(\nu_{\ell,m}^{\mathrm{aux}}\).  On the fixed-magnetization sector, the constant part
\(
-\frac12\gamma\mathfrak K(0)\left(\sum_{u\in\Lambda_\ell}\eta_u\right)^2
\)
of the block Hamiltonian is absorbed into the normalizing constant.  The remaining part satisfies
\begin{equation}\label{eq:oneOscH}
    \left|
\frac12\sum_{u,v\in\Lambda_\ell}
\bigl(\kappa_\gamma(u-v)-\gamma\mathfrak K(0)\bigr)\eta_u\eta_v
\right|
\le C\gamma^2\ell^3\le C,
\end{equation}
because \(\mathfrak K\in W^{1,\infty}\) and \(\gamma\ell^2\le1\).  The nonconstant part of the frozen exterior field is bounded by \(C\gamma\ell^2\le C\) by Lemma~\ref{lem:remove-frozen-field}.  Hence
\begin{equation}\label{eq:frozen-uniform-comparison}
C^{-1}\nu_{\ell,m}^{\mathrm{aux}}(d\eta)
\le
\mu_{i,\ell,m}^{\mathrm{can},\gamma,\xi}(d\eta)
\le
C\nu_{\ell,m}^{\mathrm{aux}}(d\eta),
\end{equation}
with \(C\) independent of \(i,\xi,m,\ell,\gamma\).  Moreover, the exchange rates are uniformly elliptic: the same estimate as in Lemma~\ref{lem:local-rates-elliptic}, using the \(L^1\)-normalization of \(\kappa_\gamma\), gives
\[
0<c_-\le c_{j,j+1}^{(\ell),\gamma,\xi}(\eta)\le c_+<\infty
\]
uniformly in all parameters.  Combining \eqref{eq:aux-spectral-gap} with \eqref{eq:frozen-uniform-comparison} and the lower bound \(c_-\) gives the claimed spectral gap.
\end{proof}

\begin{theorem}[Quantitative one-block estimates]
\label{thm:clean-one-block}
Let \(\Psi\) be a bounded local observable with
\(\operatorname{supp}\Psi\subset\Lambda_r\), and let \(J_i\) be a bounded
deterministic lattice function.  Assume \(\ell>r\) and
\(\gamma\ell^2\le1\).  Then
\[
\begin{aligned}
&\mathbb E_{f_{N,0}}
\left|
\int_0^T
\varepsilon_\gamma\sum_i
J_i
\left[
\tau_i\Psi(\sigma^N(t/\alpha_\gamma))
-
\Phi_{\Psi,\ell}^{\gamma}
\bigl(m_i^\ell(\sigma^N(t/\alpha_\gamma))\bigr)
\right]dt
\right|
\\
&\qquad\le
C_\Psi\|J\|_{\ell^\infty}
\left(
\sqrt{T\alpha_\gamma\ell^3}
+T\gamma\ell^2
\right).
\end{aligned}
\]
The same estimate holds when \(J_i=J_i(t,\sigma)\) is bounded and measurable
with respect to the exterior of the block \(\{i-\ell,\dots,i+\ell\}\).
\end{theorem}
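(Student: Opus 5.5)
The argument splits the one-block fluctuation into two pieces. The first is a frozen-exterior fluctuation, which is handled through the Fisher information. The second is the error from replacing the exact frozen-exterior canonical average $\Phi^{\gamma,\xi}_{\Psi,i,\ell}$ by the simplified average $\Phi^\gamma_{\Psi,\ell}$; this produces the $T\gamma\ell^2$ term. Concretely, write
\[
\tau_i\Psi-\Phi^\gamma_{\Psi,\ell}(m_i^\ell)=\bigl[\tau_i\Psi-\Phi^{\gamma,\xi}_{\Psi,i,\ell}(m_i^\ell)\bigr]+\bigl[\Phi^{\gamma,\xi}_{\Psi,i,\ell}(m_i^\ell)-\Phi^\gamma_{\Psi,\ell}(m_i^\ell)\bigr].
\]
Lemma~\ref{lem:remove-frozen-field} gives the total variation bound $C\gamma\ell^2$, uniformly in $i,\xi,m$. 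Hence the second bracket is bounded deterministically by $C\|\Psi\|_\infty\gamma\ell^2$. After multiplying by $J_i$, summing with weight $\varepsilon_\gamma$ and integrating over $[0,T]$, its contribution is $C_\Psi\|J\|_\infty T\gamma\ell^2$. Note that $\varepsilon_\gamma\sum_i 1=1$.

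For the first bracket $W_i:=\tau_i\Psi-\Phi^{\gamma,\xi}_{\Psi,i,\ell}(m_i^\ell)$, I plan to use the standard entropy/Feynman--Kac route. Set $W:=\varepsilon_\gamma\sum_i J_iW_i$, which has mean zero under each conditional measure $\mu^{\mathrm{can},\gamma,\xi}_{i,\ell,m}$ and hence under $\mu_{N,\gamma,\beta}$. The entropy inequality (Lemma~\ref{lem:entropy-inequality}) on path space, together with the Feynman--Kac and Rayleigh--Ritz bound used in the proof of Lemma~\ref{lem:entropy-KV}, gives for every $a>0$
\[
\mathbb E_{f_{N,0}}\Bigl|\int_0^T W\,dt\Bigr|\le \frac{C_HN+\log2}{a}+\frac{T}{a}\sup_{\|g\|_2=1}\Bigl\{a\langle W,g^2\rangle_\mu-\alpha_\gamma^{-1}D^{N,\gamma}(g)\Bigr\}.
\]
To bound $\langle W_i,g^2\rangle$, disintegrate $\mu_{N,\gamma,\beta}$ as in Lemma~\ref{lem:clean-one-block-FI-projection}. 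For fixed exterior $\xi$, $W_i$ is centered under $\mu^{\mathrm{can},\gamma,\xi}_{i,\ell,m}$, so
\[
\bigl|E_{\mu^{\xi}}[W_i q]\bigr|\le \|W_i\|_{L^2}\,\mathrm{Var}(\sqrt q)^{1/2}\cdot C\|q\|_1^{1/2}.
\]
Here $q$ is the conditional density and $\sqrt q$ enters through the standard inequality $|E[Wq]|\le 2\|W\|_\infty\,\mathrm{Var}(\sqrt q)^{1/2}\|q\|_1^{1/2}$. The uniform spectral gap (Lemma~\ref{lem:clean-one-block-spectral-gap}, valid since $\gamma\ell^2\le1$) bounds $\mathrm{Var}(\sqrt q)\le C\ell^2\mathcal I^{\gamma,\xi}_{i,\ell}(q)$, where the local rates may be taken to be the frozen ones by the uniform comparison of rates. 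Integrating over $\xi$ against $f^{\mathrm{out}}\nu$ and applying Cauchy--Schwarz, then the projection inequality \eqref{eq:projection-ineq-summed}, yields
\[
|\langle W,g^2\rangle|\le C_\Psi\|J\|_\infty\,\ell\,\bigl(\varepsilon_\gamma\ell\,D^{N,\gamma}(g)\bigr)^{1/2}.
\]
Since $J_i$ is exterior-measurable (or deterministic), it factors out of the conditional expectation, which gives the extension claimed in the last sentence of the statement.

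Optimizing $aC\|J\|\ell^{3/2}\varepsilon_\gamma^{1/2}D^{1/2}-\alpha_\gamma^{-1}D$ over $D\ge0$ gives at most $Ca^2\|J\|^2\ell^3\varepsilon_\gamma\alpha_\gamma$. The total bound is therefore
\[
\frac{C N}{a}+Ca\,T\|J\|^2\ell^3\varepsilon_\gamma\alpha_\gamma .
\]
Using $N\varepsilon_\gamma\simeq 1$ and choosing $a\simeq \bigl(N/(T\ell^3\varepsilon_\gamma\alpha_\gamma)\bigr)^{1/2}\|J\|^{-1}$ yields $C\|J\|_\infty\sqrt{T\alpha_\gamma\ell^3}$, as claimed.

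I expect the main obstacle to be the bookkeeping in the middle step. The spectral gap is stated for the simplified Dirichlet form $D^{(\ell),\gamma}_{j,j+1}$, whereas the projection lemma controls the frozen-exterior $\mathcal I^{\gamma,\xi}_{i,\ell}$. Bridging them requires the two-sided comparisons \eqref{eq:frozen-uniform-comparison} and the ellipticity bounds $c_\pm$, and every constant must stay independent of $\ell$ and $\gamma$. I also need to keep track of the $\varepsilon_\gamma\ell$ overcounting factor so that it combines with $N$ to give exactly $\ell^3$ rather than a worse power.
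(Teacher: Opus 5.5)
Your argument is correct and gives the stated rate, but for the dynamical part it follows a different route from the paper.

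\textbf{Frozen-field step.} This part is the same as in the paper: Lemma~\ref{lem:remove-frozen-field} gives the deterministic $T\gamma\ell^2$ term.

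\textbf{Centred part.} The paper does not pass to path space. It fixes $t$ and disintegrates the actual density $f_{N,t}$. It then bounds $\bigl|\int W f_{N,t}\,d\mu_{N,\gamma,\beta}\bigr|$ by $C_\Psi\|J\|_{\ell^\infty}\ell\,\bigl(\varepsilon_\gamma\ell\,I^{N,\gamma}(f_{N,t})\bigr)^{1/2}$, which is the same spectral-gap and projection computation you do for $g^2$. Finally it integrates in time, using Cauchy--Schwarz and the entropy-production bound $\int_0^T I^{N,\gamma}(f_{N,t})\,dt\le C\alpha_\gamma\varepsilon_\gamma^{-1}$ from Lemma~\ref{lem:entropy-dissipation-kawasaki}.

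\textbf{Comparison.} The paper's route is shorter and handles an explicitly time-dependent $J_i(t,\sigma)$ for free. However, it only controls one-time marginals, so what it actually yields is $\bigl|\mathbb E_{f_{N,0}}\int_0^T W\,dt\bigr|$. The paper's intermediate display, with the absolute value inside $\mathbb E_{f_{N,0}}$, cannot follow from a Fisher-information bound: at equilibrium $q\equiv1$, so the right side vanishes while the left side does not. Your route uses the path-space entropy inequality together with Feynman--Kac and Rayleigh--Ritz. This is the argument in the proof of Lemma~\ref{lem:entropy-KV}, with $C_W=C_\Psi\|J\|_{\ell^\infty}\ell^{3/2}\varepsilon_\gamma^{1/2}$. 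It genuinely bounds $\mathbb E_{f_{N,0}}\bigl|\int_0^T W\,dt\bigr|$, which is what the statement asserts. It is also what the later $L^1$ replacements use, for example Proposition~\ref{prop:deterministic-microscopic-to-kac-mobility-main}.

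\textbf{Two small points.} First, in your first conditional bound the factor should be $\|W_i\|_\infty$ rather than $\|W_i\|_{L^2}$, as you state in the next sentence. Second, if $J_i$ depends explicitly on $t$, you need one of the following, which your sketch leaves implicit:
\begin{itemize}
\item the time-dependent Feynman--Kac bound $\log\mathbb E_{\mu}\exp\int_0^T V(t,\sigma_t)\,dt\le\int_0^T\lambda_{\max}\bigl(\mathscr A_N+V(t)\bigr)\,dt$ for reversible dynamics, or
\item a piecewise-constant-in-time approximation combined with the Markov property.
\end{itemize}
For exterior-measurable $J_i(\sigma)$ without explicit time dependence, your argument applies verbatim.
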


\begin{proof}
By Lemma~\ref{lem:remove-frozen-field}, equivalently by the density
comparison \eqref{eq:local-can-comparison}, the nonconstant frozen exterior
field produces a block Hamiltonian oscillation of order \(C\gamma\ell^2\). Therefore 
\begin{equation}\label{eq:frozen-field co}
    \mathbb E_{f_{N,0}} \left| \int_0^T
\varepsilon_\gamma\sum_i
J_i(t,\sigma^N(t/\alpha_\gamma))
\left[ \tau_i \Phi_{\Psi,\ell}^{\gamma}(\sigma^N(t/\alpha_\gamma))
-
\Phi_{\Psi,i,\ell}^{\gamma,\xi}(m_i^\ell(\sigma^N(t/\alpha_\gamma)))
\right]dt \right|
\end{equation}
It remains to estimate the centered frozen-exterior part.

Fix \(t \in [0,T]\). Now we write the Gibbs measure relative to the exterior of
\(\Lambda_{i,\ell}\) as
\[
\mu_{N,\gamma,\beta}(d\sigma)
=
\nu_{i,\ell}^{N,\gamma}(d\xi)\,
\mu_{i,\ell,m(\xi)}^{\mathrm{can},\gamma,\xi}(d\eta),
\qquad
\sigma=\eta\oplus_i\xi,
\]
and write the density \(f_{N,t}\) as
\[
f_{N,t}(\eta\oplus_i\xi)
= f_{i,\ell,t}^{\mathrm{out}}(\xi)\,
q_{i,\ell,t}^{\gamma,\xi}(\eta),
\]
where \(q_{i,\ell,t}^{\gamma,\xi}\) is a density with respect to
\(\mu_{i,\ell,m(\xi)}^{\mathrm{can},\gamma,\xi}\).  Define the one-block local
Fisher information
\[
\mathcal I_{i,\ell}^{\gamma,\xi}(q) :=
\sum_{|u|\le\ell-1}
\frac12
\int
c_{u,u+1}^{(\ell),\gamma,\xi}(\eta)
\left(
\sqrt{q(\eta^{u,u+1})}-\sqrt{q(\eta)}
\right)^2
\mu_{i,\ell,m(\xi)}^{\mathrm{can},\gamma,\xi}(d\eta).
\]
For
\[
H_{i,\ell}^{\Psi,\xi}(\eta) :=
\Psi(\eta) - \Phi_{\Psi,i,\ell}^{\gamma,\xi}(m^\ell(\eta)),
\]
we have \(\int H_{i,\ell}^{\Psi,\xi}d\mu_{i,\ell,m}^{\mathrm{can},\gamma,\xi}=0\)
on every canonical sector.  
If
\(\bar q:=\int\sqrt q\,d\mu_{i,\ell,m}^{\mathrm{can},\gamma,\xi}\), then
\[
\int H_{i,\ell}^{\Psi,\xi}q\,d\mu
=
\int
H_{i,\ell}^{\Psi,\xi}
\bigl(\sqrt q-\bar q\bigr)
\bigl(\sqrt q+\bar q\bigr)d\mu,
\qquad
\mu=\mu_{i,\ell,m}^{\mathrm{can},\gamma,\xi}.
\]
Since \(H_{i,\ell}^{\Psi,\xi}\) is bounded and
\(\int q\,d\mu=1\), the last factor has a uniform \(L^2(\mu)\)-bound.  The
Cauchy--Schwarz inequality and Lemma~\ref{lem:clean-one-block-spectral-gap}
applied to \(\sqrt q\) give
\[
\left|
\int H_{i,\ell}^{\Psi,\xi}q\,d\mu
\right|
\le
C_\Psi\ell\,
\left(\mathcal I_{i,\ell}^{\gamma,\xi}(q)\right)^{1/2}.
\]
Since \(J_i(t,\sigma)\) is exterior-measurable, it is frozen under the conditioning.  Hence
\[
\begin{aligned}
&\mathbb E_{f_{N,0}}
\left|
\varepsilon_\gamma\sum_i
J_i(t,\sigma^N(t/\alpha_\gamma))
\left[
\tau_i\Psi(\sigma^N(t/\alpha_\gamma))
-
\Phi_{\Psi,i,\ell}^{\gamma,\xi}(m_i^\ell(\sigma^N(t/\alpha_\gamma)))
\right]
\right|
\\
&\qquad\le
C_\Psi\|J\|_{\ell^\infty}\ell\,
\varepsilon_\gamma\sum_i
\int
f_{i,\ell,t}^{\mathrm{out}}(\xi)
\left(
\mathcal I_{i,\ell}^{\gamma,\xi}
\bigl(q_{i,\ell,t}^{\gamma,\xi}\bigr)
\right)^{1/2}
\nu_{i,\ell}^{N,\gamma}(d\xi).
\end{aligned}
\]
Note that each internal local exchange is the corresponding global exchange, and after summing over all block centers each global bond is counted at most \(C\ell\) times. 
Using Lemma~\ref{lem:clean-one-block-FI-projection}, and applying the Cauchy--Schwarz inequality yields
\[
\begin{aligned}
&\mathbb E_{f_{N,0}}
\left|
\varepsilon_\gamma\sum_i
J_i(t,\sigma^N(t/\alpha_\gamma))
\left[
\tau_i\Psi(\sigma^N(t/\alpha_\gamma))
-
\Phi_{\Psi,i,\ell}^{\gamma,\xi}(m_i^\ell(\sigma^N(t/\alpha_\gamma)))
\right]
\right|
\\
&\qquad\le
C_\Psi\|J\|_{\ell^\infty}\ell
\left(
\varepsilon_\gamma\sum_i
\int
f_{i,\ell,t}^{\mathrm{out}}(\xi)
\mathcal I_{i,\ell}^{\gamma,\xi}
\bigl(q_{i,\ell,t}^{\gamma,\xi}\bigr)
\nu_{i,\ell}^{N,\gamma}(d\xi)
\right)^{1/2} \\
&\qquad\le
C_\Psi\|J\|_{\ell^\infty}
\left(
T\varepsilon_\gamma\ell^3
\int_0^T I^{N,\gamma}(f_{N,t})dt
\right)^{1/2}.
\end{aligned}
\]
By Lemma~\ref{lem:entropy-dissipation-kawasaki} and
Assumption \(({\bf C})\), we have $\int_0^T I^{N,\gamma}(f_{N,t})dt \le C\alpha_\gamma\varepsilon_\gamma^{-1}$.
Therefore
\begin{equation*}
    \mathbb E_{f_{N,0}}\left|\int_0^T
\varepsilon_\gamma\sum_i
J_i(t,\sigma^N(t/\alpha_\gamma))
\left[
\tau_i\Psi(\sigma^N(t/\alpha_\gamma)) - \Phi_{\Psi,i,\ell}^{\gamma,\xi}(m_i^\ell(\sigma^N(t/\alpha_\gamma)))
\right]dt \right| \le 
C_\Psi\|J\|_{\ell^\infty}
\sqrt{T\alpha_\gamma\ell^3}.
\end{equation*}
Combining this estimate with the frozen-field comparison \eqref{eq:frozen-field co} gives the quantitative one-block estimates.
\end{proof}

\subsection{Kac block estimates}

In this section, we consider the observable \(\Psi=d_i^2\) in the drift term, and give the local equilibrium average for the local observable \(d_i^2\).

\begin{lemma}\label{lem:clean-explicit-dsquare}
The local equilibrium average for local observable \(\Psi=d_i^2\) is given by
\[
\Phi_{\Psi,L}^{\gamma}(m)
=
a_{0,L}+a_{2,L}m^2+\rho_{\gamma,L}(m),
\qquad
a_{0,L}=2+\frac1L,\qquad
a_{2,L}=-2-\frac1L,
\]
where
\(
\sup_{|m|\le1}|\rho_{\gamma,L}(m)|
\le C\gamma^2L^3 .
\)
\end{lemma}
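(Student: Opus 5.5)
The plan is to compare the simplified block canonical measure \(\mu_{\gamma,L,m}^{\mathrm{can}}\) from \eqref{eq:simplified-block-canonical} with the uniform measure \(\nu_{L,m}^{\mathrm{aux}}\) on \(\Sigma_{L,m}^{\mathrm{can}}\). The expectation of \(d_0^2\) under the uniform measure is computed exactly by hypergeometric sampling, and the density correction goes into \(\rho_{\gamma,L}\).

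\textbf{Step 1 (exact uniform computation).} Put \(n:=2L+1\) and \(k:=\#\{u:\eta_u=+1\}=n(1+m)/2\). Since \(d_0^2=(\eta_0-\eta_1)^2=4\,\mathbf 1\{\eta_0\neq\eta_1\}\), sampling without replacement gives
\[
\int d_0^2\,d\nu_{L,m}^{\mathrm{aux}}=4\cdot\frac{2k(n-k)}{n(n-1)}=\frac{2n}{n-1}\,(1-m^2).
\]
Here \(\frac{n}{n-1}=\frac{2L+1}{2L}=1+\frac{1}{2L}\), so the uniform average is \((2+\frac1L)(1-m^2)\). This yields exactly \(a_{0,L}=2+\frac1L\) and \(a_{2,L}=-2-\frac1L\), with no error term.

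\textbf{Step 2 (removing the Kac interaction).} As in the proof of Lemma~\ref{lem:clean-one-block-spectral-gap}, the constant part \(-\frac12\gamma\mathfrak K(0)(\sum_u\eta_u)^2\) is absorbed into the normalization on the canonical sector. Let \(W(\eta):=\frac{\beta}{2}\sum_{u,v\in\Lambda_L}\bigl(\kappa_\gamma(u-v)-\gamma\mathfrak K(0)\bigr)\eta_u\eta_v\). Then \(\mu_{\gamma,L,m}^{\mathrm{can}}\propto e^{W}\nu^{\mathrm{aux}}_{L,m}\). Since \(\mathfrak K\in W^{1,\infty}\), each coefficient satisfies \(|\kappa_\gamma(z)-\gamma\mathfrak K(0)|\le\gamma^2\|\mathfrak K'\|_\infty|z|\). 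Summing over \(u,v\in\Lambda_L\), with \(\sum_{u,v}|u-v|\lesssim L^3\), gives \(\|W\|_\infty\le C\gamma^2L^3\), which matches \eqref{eq:oneOscH}. For any bounded \(\Psi\) we then have
\[
\Bigl|\frac{\int\Psi e^{W}d\nu}{\int e^{W}d\nu}-\int\Psi\,d\nu\Bigr|\le\|\Psi\|_\infty\bigl(e^{2\|W\|_\infty}-1\bigr)\le C\gamma^2L^3,
\]
provided \(\gamma^2L^3\le1\). This holds under \eqref{ass:clean-bg-scales}, since \(\gamma L^2\to0\). With \(\Psi=d_0^2\le4\), this gives \(\sup_m|\rho_{\gamma,L}(m)|\le C\gamma^2L^3\).

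\textbf{Main obstacle.} The only delicate point is the bookkeeping in Step 2. The naive bound \(|\kappa_\gamma|\le C\gamma\) would only give \(\gamma L^2\), so the constant \(\gamma\mathfrak K(0)\) must be subtracted first using canonical invariance; in fact \(\mathfrak K(0)=0\), so the subtraction is trivial. After that, the Lipschitz gain of one extra factor \(\gamma|u-v|\) is what produces \(\gamma^2L^3\). I also note that the sup over \(|m|\le1\) is understood over \(m\in\mathcal M_L\), and the polynomial extends the formula to all \(m\).
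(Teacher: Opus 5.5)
Your proposal is correct and follows the paper's proof. The paper computes the uniform canonical average exactly, obtaining $(2+\frac1L)(1-m^2)$; your hypergeometric count is equivalent to its exchangeability formula for $\mathbb E[\eta_i\eta_j]$. It then transfers this to $\mu^{\mathrm{can}}_{\gamma,L,m}$ using the $C\gamma^2L^3$ oscillation bound \eqref{eq:oneOscH} and $0\le d_i^2\le 4$, which is what your density-ratio estimate does in slightly more explicit form.
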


\begin{proof}
Under the auxiliary uniform canonical measure \(\nu_{\ell,m}^{\mathrm{aux}}\) on a block of size \(2L+1\), exchangeability implies that for every \(i\neq j\),
\[
\mathbb E_{\nu_{L,m}^{\mathrm{aux}}}[\eta_i\eta_j]=\mathbb E_{\nu_{\ell,m}^{\mathrm{aux}}}(\sigma_0\sigma_{1})
=
\frac{(2L+1)^2m^2-(2L+1)}{(2L+1)(2L)}.
\]
Since \(d_i^2=2-2\sigma_i\sigma_{i+1}\), this yields
\begin{equation}\label{eq:exaux}
    \mathbb E_{\nu_{L,m}^{\mathrm{aux}}}(d_i^2) = \left(2+\frac1L\right) -\left(2+\frac1L\right)m^2.
\end{equation}
By estimate \eqref{eq:oneOscH}, the Kac interaction restricted to a block $\Lambda_L$ has oscillation bounded by \(C\gamma^2L^3\) on each canonical sector.  
Comparing the density of canonical Gibbs measure $\mu_{\gamma,L,m}^{\mathrm{can}}$ with respect to the auxiliary uniform canonical measure $\nu_{L,m}^{\mathrm{aux}}$, we have
\[
\left\|
\mu_{\gamma,L,m}^{\mathrm{can}}-\nu_{L,m}^{\mathrm{aux}}
\right\|_{\mathrm{TV}}
\le
C_\beta\,\gamma^2L^3.
\]
Since \(0\le d_i^2\le 4\), we obtain
\[
\left|
\Phi_{\Psi,L}^{\gamma}(m)
-
\int d_i^2\,\nu_{L,m}^{\mathrm{aux}}(d\eta)
\right|
\le
4\left\|
\mu_{\gamma,L,m}^{\mathrm{can}}-\nu_{L,m}^{\mathrm{aux}}
\right\|_{\mathrm{TV}}
\le
C \gamma^2L^3.
\]
uniformly in \(m\).
Combining this with the exact formula \eqref{eq:exaux} proves the lemma.
\end{proof}

As in Lemma~\ref{lem:weak-L2-h-from-microscopic}, we have the following \(L^2\) estimate for $|m_i^\ell(\sigma^N(t/\alpha_\gamma))-h_\gamma(\sigma^N(t/\alpha_\gamma);i)|^2$.

\begin{lemma}\label{lem:clean-strong-kac-defect}
Under Assumptions \(({\bf A})\)--\(({\bf C})\), and the block assumption \eqref{ass:clean-bg-scales}, we have
\begin{align}\label{eq:centered-block-to-kac-L2-rate-main}
    \mathbb E_{f_{N,0}}\int_0^T
\varepsilon_\gamma\sum_i
\left|
 m_i^\ell(\sigma^N(t/\alpha_\gamma))
-h_\gamma(\sigma^N(t/\alpha_\gamma);i)
\right|^2dt 
\le C\varepsilon^{1/4}_{\gamma} \to 0 \quad \text{as} \quad \gamma \to 0.
\end{align}
For every smooth mean-zero \(\phi\), define
\[
W_{\gamma,j}^{\ell,\phi}(t)
:=
\sum_i
\kappa_\gamma(i-j)G_{\gamma,i}^{\phi}
\left[
 m_i^\ell(\sigma^N(t/\alpha_\gamma))
-h_\gamma(\sigma^N(t/\alpha_\gamma);i)
\right].
\]
Then
\begin{equation}\label{eq:kac-covariance}
    \mathbb E_{f_{N,0}}\int_0^T \varepsilon_\gamma\sum_j
\left|W_{\gamma,j}^{\ell,\phi}(t)\right|^2dt
\le C\varepsilon^{1/4}_{\gamma}  \to 0 \quad \text{as} \quad \gamma \to 0.
\end{equation}
\end{lemma}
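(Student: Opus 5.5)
The plan is to copy the scheme of Lemma~\ref{lem:weak-L2-h-from-microscopic} and Lemma~\ref{lem:mean-square-kac-gradient}: first an equilibrium bound under $\mu_{N,\gamma,\beta}$, then a transfer to the nonequilibrium law by the initial-layer cutoff $\tau_\gamma$ together with entropy decay and Pinsker. The key observation is that
\[
m_i^\ell-h_{\gamma,i}=\sum_z\bigl(\omega_\ell(i-z)-\kappa_\gamma(i-z)\bigr)(\sigma_z-m_N),
\]
where $\omega_\ell:=(2\ell+1)^{-1}\mathbf 1_{[-\ell,\ell]}$. This holds because both kernels have unit mass. The difference is therefore a convolution of $\sigma-m_N$ against the zero-mass kernel $\omega_\ell-\kappa_\gamma$.

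\textbf{Equilibrium step.} By Parseval and the static infrared bound \eqref{eq:static-infrared-bound}, the equilibrium expectation is at most
\[
C\varepsilon_\gamma\sum_{k\neq0}\frac{|\widehat\omega_\ell(k)-\widehat\kappa_\gamma(k)|^2}{\theta_\gamma+1-\widehat\kappa_\gamma(k)}.
\]
I split the frequencies into three regions.
\begin{enumerate}
\item For $|k|\le c\gamma/\varepsilon_\gamma$, write $p=\varepsilon_\gamma|k|/\gamma$. Then $|1-\widehat\omega_\ell(k)|\lesssim(\varepsilon_\gamma\ell k)^2=(\gamma\ell p)^2$ and $|1-\widehat\kappa_\gamma(k)|\lesssim p^2$. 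The denominator is at least $cp^2$ by Lemma~\ref{lem:kac-kernel-fourier-lower}. The summand is therefore $\lesssim p^2$, and summing over $|k|\lesssim\gamma/\varepsilon_\gamma$ gives a contribution $O(\gamma)$.
\item For $\gamma/\varepsilon_\gamma\lesssim|k|\lesssim(\varepsilon_\gamma\ell)^{-1}$, the denominator is bounded below by a constant, and $\varepsilon_\gamma\sum|\widehat\kappa_\gamma|^2\le C\gamma$ by \eqref{eq:highkerfre}.
\item For the block kernel at high frequencies, Parseval gives $\varepsilon_\gamma\sum_k|\widehat\omega_\ell(k)|^2\le C\sum_z\omega_\ell(z)^2\le C/\ell$.
\end{enumerate}
Altogether the equilibrium bound is $C(\gamma+\ell^{-1})$. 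Since $\ell=\varepsilon_\gamma^{-1/4}$, this is $C\varepsilon_\gamma^{1/4}$, and $\gamma\sim\varepsilon_\gamma^{3/4}$ is even smaller.

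\textbf{Transfer step.} The summand is deterministically bounded by $4$. On $[0,\tau_\gamma]$ it therefore contributes at most $C\tau_\gamma\lesssim\varepsilon_\gamma^{1/2}\log\varepsilon_\gamma^{-1}$. For $t\ge\tau_\gamma$, \eqref{eq:microscopic-entropy-mixing} and Pinsker give an additional error $C\delta_\gamma^2\sim\varepsilon_\gamma^{1/2}$. Both are $o(\varepsilon_\gamma^{1/4})$, which proves \eqref{eq:centered-block-to-kac-L2-rate-main}.

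\textbf{Second estimate.} Set $D_i:=m_i^\ell-h_{\gamma,i}$. Because $\sum_i\kappa_\gamma(i-j)=1$, Jensen's inequality applied to the probability weights $\kappa_\gamma(\cdot-j)$ gives
\[
|W^{\ell,\phi}_{\gamma,j}|^2\le\sum_i\kappa_\gamma(i-j)|G^\phi_{\gamma,i}|^2D_i^2.
\]
Summing over $j$ then yields
\[
\varepsilon_\gamma\sum_j|W_{\gamma,j}^{\ell,\phi}|^2\le\|G^\phi_\gamma\|_\infty^2\,\varepsilon_\gamma\sum_iD_i^2.
\]
Here $\|G^\phi_\gamma\|_\infty=\|\nabla_{\varepsilon_\gamma}\mathcal K^\varepsilon_\gamma\Delta_{\varepsilon_\gamma}^{-1}\phi\|_\infty\le C_\phi$ uniformly in $\gamma$. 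This follows because $\mathcal K_\gamma^\varepsilon$ is an $\ell^\infty$ contraction and $\nabla_{\varepsilon_\gamma}\Delta^{-1}_{\varepsilon_\gamma}\phi$ is uniformly bounded for smooth mean-zero $\phi$. Estimate \eqref{eq:kac-covariance} then follows from the first estimate.

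The main obstacle is the middle-frequency bookkeeping. One must check that the low-frequency cancellation $1-\widehat\omega_\ell\approx0$ holds on the whole range where the denominator degenerates. This needs $\ell\varepsilon_\gamma\ll\varepsilon_\gamma/\gamma$, i.e.\ $\gamma\ell\to0$, which holds under \eqref{ass:clean-bg-scales}. One must also confirm that the $1/\ell$ term, rather than some $\theta_\gamma^{-1}$-enhanced term, is the dominant contribution.
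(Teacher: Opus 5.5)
Your proposal is correct and follows the paper's proof essentially step for step: the zero-mass difference multiplier $\widehat a_\ell-\widehat\kappa_\gamma$, Parseval plus the static infrared bound \eqref{eq:static-infrared-bound}, a low/high frequency split at $|k|\sim\gamma/\varepsilon_\gamma$, the initial-layer/entropy-decay/Pinsker transfer, and the $\kappa_\gamma$-convolution contraction for $W^{\ell,\phi}_{\gamma,j}$. Your regions 2 and 3 together amount to the paper's high-frequency step (Parseval applied to each kernel separately). Your equilibrium bound $C(\gamma+\ell^{-1})$ is slightly cleaner than the paper's, which carries an extra $\varepsilon_\gamma/\gamma$ term, but both give the stated $C\varepsilon_\gamma^{1/4}$ rate.
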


\begin{proof}
Since both \(m_i^\ell\) and \(h_{\gamma,i}\) preserve the zero Fourier mode, we have
\[
m_i^\ell-m_N=(a_\ell*(\sigma-m_N))_i,
\qquad
h_{\gamma,i}-m_N=(\kappa_\gamma*(\sigma-m_N))_i,
\]
where
\[
a_\ell(r):=\frac{\mathbf 1_{\{|r|\le \ell\}}}{2\ell+1},
\qquad
\widehat a_\ell(k)
:=
\sum_{|r|\le\ell}a_\ell(r)e^{-2\pi i k\varepsilon_\gamma r}.
\]
Thus their difference has Fourier multiplier \(\widehat a_\ell(k)-\widehat\kappa_\gamma(k)\) on the nonzero modes.  
By Parseval's identity and \eqref{eq:static-infrared-bound}, we get
\begin{equation}\label{eq:block-to-kac-fourier-bound-main}
    \int_{\Sigma_{N,M}}
\varepsilon_\gamma\sum_i \left|  m_i^\ell(\sigma)-h_\gamma(\sigma;i) \right|^2 \mu_{N,\gamma,\beta}(d\sigma) \le
C\varepsilon_\gamma
\sum_{k\ne0} \frac{\left|\widehat a_\ell(k)-\widehat\kappa_\gamma(k)\right|^2}{\theta_\gamma+1-\widehat\kappa_\gamma(k)}.
\end{equation}
Now we split the above summation of modes into low-frequency region \(|k|\le c\gamma/\varepsilon_\gamma\) and high-frequency region \(|k| > c\gamma/\varepsilon_\gamma\).  

For low-frequency region \(|k|\le c\gamma/\varepsilon_\gamma\), put
\(q=\varepsilon_\gamma|k|/\gamma\).  Assumption \(({\bf A})\) implies
\[
1-\widehat\kappa_\gamma(k)\asymp q^2,
\qquad
|1-\widehat\kappa_\gamma(k)|\le Cq^2,
\qquad
|1-\widehat a_\ell(k)|\le C(\gamma\ell)^2q^2.
\]
Since
\[
\widehat a_\ell(k)-\widehat\kappa_\gamma(k)
=
\bigl(1-\widehat\kappa_\gamma(k)\bigr)
-
\bigl(1-\widehat a_\ell(k)\bigr),
\]
we have $\left|\widehat a_\ell(k)-\widehat\kappa_\gamma(k)\right| \le Cq^2$ on this low-frequency region.  
Therefore, comparing the sum with the Riemann sum in the variable \(q=\varepsilon_\gamma |k|/\gamma\), we have
\begin{equation}\label{eq:lowfre}
    \varepsilon_\gamma
\sum_{0<|k|\le c\gamma/\varepsilon_\gamma}
\frac{\left|\widehat a_\ell(k)-\widehat\kappa_\gamma(k)\right|^2}{\theta_\gamma+1-\widehat\kappa_\gamma(k)}
\le
C\varepsilon_\gamma
\sum_{0<|k|\le c\gamma/\varepsilon_\gamma}
\frac{q^4}{\theta_\gamma+cq^2}
\le C\gamma.
\end{equation}
Since
\[
\frac{q^4}{\theta_\gamma+cq^2}\le Cq^2,
\qquad
\varepsilon_\gamma
\sum_{0<|k|\le c\gamma/\varepsilon_\gamma}
\left(\frac{\varepsilon_\gamma |k|}{\gamma}\right)^2
\le C\gamma,
\]
the last bound \eqref{eq:lowfre} is uniform in \(\theta_\gamma>0\).
Hence the low-frequency contribution is bounded by \(C\gamma\).  

On the high-frequency region \(1-\widehat\kappa_\gamma(k)\ge c\), by Parseval's identity and Lemma~\ref{lem:weak-L2-h-from-microscopic}, we have
\begin{align}\label{eq:highfre}
    \varepsilon_\gamma \sum_{|k|>c\gamma/\varepsilon_\gamma}
\frac{\left|\widehat a_\ell(k)-\widehat\kappa_\gamma(k)\right|^2}{\theta_\gamma+1-\widehat\kappa_\gamma(k)}  \le & C\varepsilon_\gamma\sum_{|k|>c\gamma/\varepsilon_\gamma}|\widehat a_\ell(k)|^2
+C\varepsilon_\gamma\sum_{|k|>c\gamma/\varepsilon_\gamma}|\widehat\kappa_\gamma(k)|^2
+C\frac{\varepsilon_\gamma}{\gamma} \nonumber \\
\le & C \sum_{|r|\le\ell}a_\ell(r)^2 + C \sum_{z \in \Lambda_N}\kappa_\gamma(z)^2 +C\frac{\varepsilon_\gamma}{\gamma} \nonumber \\
\le & C(\ell^{-1}+\gamma+\varepsilon_\gamma/\gamma).
\end{align}
Plugging estimates \eqref{eq:lowfre} and \eqref{eq:highfre} in \eqref{eq:block-to-kac-fourier-bound-main}, we have 
\begin{equation}
    \mathbb E_{\mu_{N,\gamma,\beta}}\int_0^T \varepsilon_\gamma\sum_i \left|m_i^\ell(\sigma^N(t/\alpha_\gamma)) -h_\gamma(\sigma^N(t/\alpha_\gamma);i)\right|^2dt \le C(\ell^{-1}+\gamma+\varepsilon_\gamma/\gamma).
\end{equation}
Using same argument in Lemma~\ref{lem:weak-L2-h-from-microscopic}, we have 
\begin{align}
      &  \mathbb E_{f_{N,0}}\int_0^T
\varepsilon_\gamma\sum_i
\left|
 m_i^\ell(\sigma^N(t/\alpha_\gamma))
-h_\gamma(\sigma^N(t/\alpha_\gamma);i)
\right|^2dt \nonumber\\
& \qquad \le
C\frac{\alpha_\gamma}{\varepsilon_\gamma^{2}}\log(\varepsilon_\gamma^{-1})
+CT\left(\gamma+\frac1\ell+\frac{\varepsilon_\gamma}{\gamma}\right)
+CT \delta^2_{\gamma}.
\end{align}
Under the scaling assumptions, we have \eqref{eq:centered-block-to-kac-L2-rate-main}.

For the covariance estimate, Since
\(\kappa_\gamma\) is nonnegative and $\int_{\mathbb R} \kappa_\gamma dz = 1$, convolution by
\(\kappa_\gamma\) is a contraction on the discrete
\(L^2(\Lambda_N)\) space.  
Hence
\[
\varepsilon_\gamma\sum_j|W_{\gamma,j}^{\ell,\phi}|^2
\le
\varepsilon_\gamma\sum_i
\left|G_{\gamma,i}^{\phi}\right|^2
\left|m_i^\ell-h_{\gamma,i}\right|^2
\le
C_\phi
\varepsilon_\gamma\sum_i
\left|m_i^\ell-h_{\gamma,i}\right|^2.
\]
Combining with \eqref{eq:centered-block-to-kac-L2-rate-main} gives \eqref{eq:kac-covariance}.
\end{proof}

\begin{proposition}
\label{prop:deterministic-microscopic-to-kac-mobility-main}
Let \(J=(J_i)_{i\in\Lambda_N}\) be a deterministic bounded coefficient. 
Under Assumptions \(({\bf A})\)--\(({\bf C})\), and the block assumption \eqref{ass:clean-bg-scales}, we have
\begin{align}\label{eq:dkac-mobility}
    \mathbb E_{f_{N,0}} \left|\int_0^T \varepsilon_\gamma\sum_i
J_i\left[d_i\bigl(\sigma^N(t/\alpha_\gamma)\bigr)^2 - \Phi_{\Psi,L}^{\gamma}
\bigl(h_\gamma(\sigma^N(t/\alpha_\gamma);i)\bigr)\right]dt \right| \le C\|J\|_{\ell^\infty}\varepsilon^{1/8}_{\gamma}.
\end{align}
\end{proposition}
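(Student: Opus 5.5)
The plan is to go from the microscopic observable to the Kac field in three replacements, chained by the triangle inequality:
\[
d_i^2 \;\rightsquigarrow\; \Phi^\gamma_{\Psi,\ell}(m_i^\ell) \;\rightsquigarrow\; \Phi^\gamma_{\Psi,L}(m_i^\ell) \;\rightsquigarrow\; \Phi^\gamma_{\Psi,L}(h_{\gamma,i}).
\]
Each error is then bounded separately in $\mathbb E_{f_{N,0}}|\int_0^T\varepsilon_\gamma\sum_i J_i(\cdots)\,dt|$.

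\emph{Step 1 (one block).} Apply Theorem~\ref{thm:clean-one-block} with $\Psi=d_0^2$, so $r=1$, and $\ell=\ell_\gamma=\varepsilon_\gamma^{-1/4}$. Here $\gamma\ell^2\le1$ holds because $\gamma\sim\varepsilon_\gamma^{3/4}$. The error is
\[
C\|J\|_{\ell^\infty}\bigl(\sqrt{T\alpha_\gamma\ell^3}+T\gamma\ell^2\bigr)\lesssim \|J\|_{\ell^\infty}\bigl(\varepsilon_\gamma^{5/8}+\varepsilon_\gamma^{1/4}\bigr).
\]

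\emph{Step 2 (change of block profile).} By Lemma~\ref{lem:clean-explicit-dsquare}, applied at radii $\ell$ and $L$ (the comparison argument there works for any radius),
\[
\sup_{|m|\le 1}\bigl|\Phi^\gamma_{\Psi,\ell}(m)-\Phi^\gamma_{\Psi,L}(m)\bigr|\le \frac{C}{\ell}+C\gamma^2(\ell^3+L^3).
\]
The polynomial parts differ only through the $1/\ell-1/L$ corrections in $a_{0}$ and $a_{2}$. This step is deterministic and of order $\varepsilon_\gamma^{1/4}$. Equivalently, one can compare $\Phi_{\Psi,\ell}$ directly with the limiting profile $2-2m^2$ at both radii.

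\emph{Step 3 (block density to Kac field).} The polynomial part of $\Phi^\gamma_{\Psi,L}$ is $a_{0,L}+a_{2,L}m^2$, with $|a_{2,L}|\le 3$. Using $|m_i^\ell|,|h_{\gamma,i}|\le1$, we get
\[
\bigl|a_{2,L}\bigl((m_i^\ell)^2-h_{\gamma,i}^2\bigr)\bigr|\le 6\,|m_i^\ell-h_{\gamma,i}|.
\]
The remainder $\rho_{\gamma,L}$ contributes at most $2\sup|\rho_{\gamma,L}|\le C\gamma^2L^3=C\varepsilon_\gamma^{1/2}$. For the main part, apply Cauchy--Schwarz in $(t,i)$ and invoke \eqref{eq:centered-block-to-kac-L2-rate-main}:
\[
\mathbb E_{f_{N,0}}\int_0^T\varepsilon_\gamma\sum_i|J_i|\,|m_i^\ell-h_{\gamma,i}|\,dt\le C\|J\|_{\ell^\infty}\sqrt{T}\,\bigl(C\varepsilon_\gamma^{1/4}\bigr)^{1/2}= C\|J\|_{\ell^\infty}\varepsilon_\gamma^{1/8}.
\]
This is the dominant error and explains the exponent $1/8$ in \eqref{eq:dkac-mobility}.

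Summing the three bounds gives $C\|J\|_{\ell^\infty}(\varepsilon_\gamma^{1/8}+\varepsilon_\gamma^{1/4}+\varepsilon_\gamma^{1/2})\le C\|J\|_{\ell^\infty}\varepsilon_\gamma^{1/8}$. Step 1 is licit because $J$ is deterministic.

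\textbf{Main obstacle.} The delicate point is Step 3. Its rate is limited by the $L^2$ block--Kac defect of Lemma~\ref{lem:clean-strong-kac-defect}, which in turn rests on the static infrared bound and the initial-layer entropy decay. So I would check carefully that the Lipschitz reduction loses nothing beyond the square root. A secondary issue is making sure Lemma~\ref{lem:clean-explicit-dsquare} legitimately applies at radius $\ell$ as well as $L$, so that Step 2 is uniform in $m$; if it does not, I would route Step 2 through the uniform measure directly using \eqref{eq:oneOscH}.
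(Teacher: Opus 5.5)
Your proposal is correct and follows essentially the same route as the paper. The paper inserts $\Phi^\gamma_{\Psi,\ell}(m_i^\ell)$ and bounds the first bracket by Theorem~\ref{thm:clean-one-block}. It then treats $\Phi^\gamma_{\Psi,\ell}(m_i^\ell)-\Phi^\gamma_{\Psi,L}(h_{\gamma,i})$ in one step, using Lemma~\ref{lem:clean-explicit-dsquare} at both radii (error $\frac1\ell+\frac1L+\gamma^2\ell^3+\gamma^2L^3$), the Lipschitz bound and Cauchy--Schwarz with \eqref{eq:centered-block-to-kac-L2-rate-main}; this is just your Steps~2 and~3 merged. One harmless arithmetic slip: $\sqrt{\alpha_\gamma\ell^3}\sim\varepsilon_\gamma^{7/8}$, not $\varepsilon_\gamma^{5/8}$.
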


\begin{proof}
Insert and subtract the one-block projection:
\[
 d_i^2-
\Phi_{\Psi,L}^{\gamma}(h_{\gamma,i})
=
\left[d_i^2-\Phi_{\Psi,\ell}^{\gamma}(m_i^\ell)\right]
+
\left[
\Phi_{\Psi,\ell}^{\gamma}(m_i^\ell)
-
\Phi_{\Psi,L}^{\gamma}(h_{\gamma,i})
\right].
\]
By Theorem~\ref{thm:clean-one-block}, we have
\begin{equation}\label{eq:Kacmob1}
    \mathbb E_{f_{N,0}}\left|\int_0^T
\varepsilon_\gamma\sum_i
J_i\left[ d_i^2-
\Phi_{\Psi,\ell}^{\gamma}(m_i^\ell)\right]dt\right| \le C\left(\sqrt{T\alpha_\gamma\ell^3}
+T\gamma\ell^2\right).
\end{equation}
For the second bracket, Combining Theorem~\ref{thm:clean-one-block}, Lemma~\ref{lem:clean-explicit-dsquare}, and Lemma \ref{lem:clean-strong-kac-defect}, we have
\begin{align}\label{eq:Kacmob2}
&  \mathbb E_{f_{N,0}}\left|\int_0^T\varepsilon_\gamma\sum_i J_i\left[ \Phi_{\Psi,\ell}^{\gamma}(m_i^\ell)
-
\Phi_{\Psi,L}^{\gamma}(h_{\gamma,i})\right]dt\right| \nonumber \\
    \le & C\|J\|_{\ell^\infty}
\left[\mathbb E_{f_{N,0}}
\int_0^T \varepsilon_\gamma\sum_i|m_i^\ell-h_{\gamma,i}|dt +T\left(\frac1\ell+\frac1L+\gamma^2\ell^3+\gamma^2L^3\right)\right] \nonumber \\
\le & C\|J\|_{\ell^\infty}
\left[\mathbb E_{f_{N,0}}
\int_0^T \varepsilon_\gamma\sum_i|m_i^\ell-h_{\gamma,i}|^2dt \right]^{\frac{1}{2}}+T\left(\frac1\ell+\frac1L+\gamma^2\ell^3+\gamma^2L^3\right) \nonumber\\
\le & C \varepsilon^{1/8}_{\gamma}+T\left(\frac1\ell+\frac1L+\gamma^2\ell^3+\gamma^2L^3\right).
\end{align}
Combining with estimates \eqref{eq:Kacmob1} and \eqref{eq:Kacmob2}, we obtain \eqref{eq:dkac-mobility}.
\end{proof}

\begin{theorem}[Kac-scale covariance estimate]\label{thm:clean-kac-covariance}
Under Assumptions \(({\bf A})\)--\(({\bf C})\), and the block assumption \eqref{ass:clean-bg-scales}, for every smooth mean-zero \(\phi\), we have
\begin{align}\label{eq:caling and block}
    \mathbb E_{f_{N,0}}\int_0^T \varepsilon_\gamma \sum_j\left| \sum_i \kappa_\gamma(i-j)G_{\gamma,i}^{\phi} \left( d_i(t)^2- \Phi_{\Psi,L}^{\gamma}(h_{\gamma,i}(t)) \right)\right|^2dt  \le C_{\phi}T \varepsilon^{1/8}_{\gamma}.
\end{align}
\end{theorem}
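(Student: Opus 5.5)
The quantity is a \emph{squared} Kac-smoothed fluctuation, so I will not apply Proposition~\ref{prop:deterministic-microscopic-to-kac-mobility-main} directly, since that proposition controls only a linear time integral. I will expand the square instead. Write $V_i(t):=d_i(t)^2-\Phi_{\Psi,L}^{\gamma}(h_{\gamma,i}(t))$, which is bounded by a constant uniformly. Then
\[
\varepsilon_\gamma\sum_j\Bigl|\sum_i\kappa_\gamma(i-j)G^\phi_{\gamma,i}V_i\Bigr|^2
=\varepsilon_\gamma\sum_{i,i'}\widetilde\kappa_\gamma(i-i')\,G^\phi_{\gamma,i}G^\phi_{\gamma,i'}V_iV_{i'},
\qquad \widetilde\kappa_\gamma:=\kappa_\gamma*\kappa_\gamma .
\]
Here $\widetilde\kappa_\gamma$ is again a nonnegative, even, normalized kernel of range $\gamma^{-1}$ with $\|\widetilde\kappa_\gamma\|_\infty\lesssim\gamma$. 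I then split $V_i$ as in the proof of Proposition~\ref{prop:deterministic-microscopic-to-kac-mobility-main}:
\[
V_i=V^{(1)}_i+V^{(2)}_i,\qquad
V^{(1)}_i=d_i^2-\Phi^\gamma_{\Psi,\ell}(m_i^\ell),\qquad
V^{(2)}_i=\Phi^\gamma_{\Psi,\ell}(m_i^\ell)-\Phi^\gamma_{\Psi,L}(h_{\gamma,i}).
\]
By $(a+b)^2\le2a^2+2b^2$, it suffices to treat each piece separately.

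\textbf{Smooth piece $V^{(2)}$.} Use that $\Phi$ is Lipschitz in $m$; this is explicit from Lemma~\ref{lem:clean-explicit-dsquare}, up to the errors $\gamma^2\ell^3$, $\gamma^2L^3$, $1/\ell$, $1/L$. Combined with the contraction of $\kappa_\gamma*$ on the discrete $\ell^2$ and $|G^\phi_{\gamma,i}|\le C_\phi$, this gives
\[
\varepsilon_\gamma\sum_j\Bigl|\sum_i\kappa_\gamma(i-j)G^\phi_{\gamma,i}V_i^{(2)}\Bigr|^2
\le C_\phi\,\varepsilon_\gamma\sum_i|m_i^\ell-h_{\gamma,i}|^2+C_\phi\Bigl(\tfrac1\ell+\tfrac1L+\gamma^2L^3\Bigr)^2 .
\]
This is exactly the computation behind \eqref{eq:kac-covariance}. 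Lemma~\ref{lem:clean-strong-kac-defect} then bounds its time integral by $C\varepsilon_\gamma^{1/4}$, plus terms that vanish polynomially under \eqref{ass:clean-bg-scales}.

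\textbf{Fluctuating piece $V^{(1)}$.} I will exploit the averaging in $\widetilde\kappa_\gamma$. Split the double sum into near-diagonal pairs $|i-i'|\le 2\ell+r$ and the remaining pairs.
\begin{itemize}
\item \emph{Near-diagonal pairs.} Boundedness of $V^{(1)}$ and $\|\widetilde\kappa_\gamma\|_\infty\lesssim\gamma$ give a deterministic bound $C_\phi\gamma\ell$, which is $o(\varepsilon_\gamma^{1/8})$.
\item \emph{Separated pairs.} For fixed $i$, the factor $F_i:=\sum_{|i'-i|>2\ell+r}\widetilde\kappa_\gamma(i-i')G^\phi_{\gamma,i'}V^{(1)}_{i'}$ is bounded. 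However, it is \emph{not} measurable with respect to the exterior of $\Lambda_{i,\ell}$, because $V^{(1)}_{i'}$ depends on $m^\ell_{i'}$, whose block may overlap $\Lambda_{i,\ell}$. I therefore enlarge the exclusion zone to $|i-i'|>2\ell+r+1$, so that $\Lambda_{i',\ell}\cap\Lambda_{i,\ell}=\emptyset$. Then $F_i$ is exterior-measurable and bounded by $C_\phi$.
\end{itemize}
The separated pairs are then handled by the last sentence of Theorem~\ref{thm:clean-one-block}, applied with the random, exterior-measurable coefficient $J_i=G^\phi_{\gamma,i}F_i$. This bounds the expected time integral by $C_\phi(\sqrt{T\alpha_\gamma\ell^3}+T\gamma\ell^2)$. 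Under Assumption $({\bf B})$ and \eqref{ass:clean-bg-scales} this equals $C_\phi T\,(\varepsilon_\gamma^{7/8}+\varepsilon_\gamma^{1/4})$, which is $\le C_\phi T\varepsilon_\gamma^{1/8}$. The cross term between $V^{(1)}$ and $V^{(2)}$ is avoided entirely by the $2a^2+2b^2$ split.

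\textbf{Main obstacle.} The delicate point is the exterior-measurability of the coefficient in the separated-pair step. The subtraction $\Phi^\gamma_{\Psi,\ell}(m^\ell_{i'})$ inside $F_i$ couples neighbouring blocks. If the enlarged exclusion zone is insufficient, I would first try a two-scale version: replace $V^{(1)}_{i'}$ in $F_i$ by its $\widetilde\kappa_\gamma$-average over blocks disjoint from $\Lambda_{i,\ell}$, and absorb the boundary layer of width $O(\ell)$ into the $\gamma\ell$ error. All resulting errors are powers of $\varepsilon_\gamma$ at least $1/8$, which yields \eqref{eq:caling and block}.
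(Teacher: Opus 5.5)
Your proposal is correct and follows the paper's proof essentially step for step. The paper also expands the square with the kernel $\mathcal K_\gamma^{(2)}=\kappa_\gamma*\kappa_\gamma$ and splits $d_i^2-\Phi_{\Psi,L}^{\gamma}(h_{\gamma,i})$ into the one-block fluctuation $A_{\gamma,i}^{\ell}$ and the smooth part $B_{\gamma,i}^{\ell,L}$. It bounds near pairs by $C_\phi T\gamma R$ and handles far pairs ($R\ge 2\ell+3$) through the exterior-measurable version of Theorem~\ref{thm:clean-one-block}. It controls $B^{\ell,L}$ via the Lipschitz bound from Lemma~\ref{lem:clean-explicit-dsquare}, the $\ell^2$-contraction of $\kappa_\gamma*$, and Lemma~\ref{lem:clean-strong-kac-defect}.

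The measurability worry in your last paragraph does not arise. As soon as $|i-i'|\ge 2\ell+1$, the blocks $\Lambda_{i,\ell}$ and $\Lambda_{i',\ell}$ are disjoint, so $F_i$ is already exterior-measurable and the two-scale fallback is not needed.
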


\begin{proof}
We decompose
\[
d_i(t)^2- \Phi_{\Psi,L}^{\gamma}(h_{\gamma,i}(t))  =A_{\gamma,i}^{\ell}(t) + B_{\gamma,i}^{\ell,L}(t),
\]
where
\[
A_{\gamma,i}^{\ell}:=d_i^2-
\Phi_{\Psi,\ell}^{\gamma}(m_i^\ell),
\qquad
B_{\gamma,i}^{\ell,L}:=
\Phi_{\Psi,\ell}^{\gamma}(m_i^\ell)-
\Phi_{\Psi,L}^{\gamma}(h_{\gamma,i}).
\]
We first estimate the one-block part \(A^\ell\).  Expanding the square gives
\[
    \varepsilon_\gamma\sum_j
\left|\sum_i\kappa_\gamma(i-j)G_{\gamma,i}^{\phi}A_{\gamma,i}^{\ell}
\right|^2 =
\varepsilon_\gamma
\sum_{i,k} G_{\gamma,i}^{\phi}G_{\gamma,k}^{\phi}
A_{\gamma,i}^{\ell}A_{\gamma,k}^{\ell}
\mathcal K_\gamma^{(2)}(i,k),
\]
where
\(
\mathcal K_{\gamma}^{(2)}(i,k)
:=
\sum_j\kappa_\gamma(i-j)\kappa_\gamma(k-j),
\)
which satisfies \(\sup_{i,k}\mathcal K_{\gamma}^{(2)}(i,k)\le C\gamma\).
For each \(i \in \Lambda_N\) and some $R > 2\ell +3$, there are
at most \(2R+1\) indices \(k\) with \(|i-k|\le R\).
Since \(|A_{\gamma,i}^{\ell}|\le C\),
\(\|G_\gamma^\phi\|_{\ell^\infty}\le C_\phi\),
we have
\begin{equation}
    \mathbb E_{f_{N,0}}\int_0^T \varepsilon_\gamma
\sum_{\substack{i,k \in \Lambda_{N}\\ |i-k|\le R}}\left|G_{\gamma,i}^{\phi}G_{\gamma,k}^{\phi}A_{\gamma,i}^{\ell}A_{\gamma,k}^{\ell}\mathcal K_\gamma^{(2)}(i,k)
\right|dt \le C_\phi T\gamma R
\end{equation}
Now we estimate the far covariance is an exterior one-block estimate.
For the far part, if \(R\ge2\ell+3\), then the stochastic test function
\[
J^{s}_i(\sigma):=G_{\gamma,i}^{\phi}
\sum_{\substack{k\\ |i-k|>R}}
\mathcal K_\gamma^{(2)}(i,k)
G_{\gamma,k}^{\phi}
A_{\gamma,k}^{\ell}(\sigma).
\]
is measurable with respect to the exterior of the block \(\{i-\ell,\ldots,i+\ell\}\).
Moreover, since \(\sum_k\mathcal K_\gamma^{(2)}(i,k)=1\) and \(A^\ell\) is bounded, the stochastic test function $J^{s}_i$ is $\ell^{\infty}(\Lambda_N)$ bounded.
Then by Theorem~\ref{thm:clean-one-block}, we have
\begin{equation}\label{eq:EAL}
    \mathbb E_{f_{N,0}}
\left|
\int_0^T \varepsilon_{\gamma} \sum_{i \in \Lambda_N}
J^{s}_i(\sigma^N(t/\alpha_\gamma))
A_{\gamma,i}^{\ell}(\sigma^N(t/\alpha_\gamma))dt
\right| \le C_\phi\left(\sqrt{T\alpha_\gamma\ell^3}+T\gamma\ell^2\right).
\end{equation}
It remains to estimate \(B^{\ell,L}\).  
By Lemma~\ref{lem:clean-explicit-dsquare}, we have
\[
\Big|\Phi_{\Psi,\ell}^{\gamma}(m_i^\ell)-
\Phi_{\Psi,L}^{\gamma}(h_{\gamma,i}) \Big|
\le
C|m_i^\ell-h_{\gamma,i}| +C\left(\frac1\ell+\frac1L+\gamma^2\ell^3+\gamma^2L^3\right).
\]
Since convolution by \(\kappa_\gamma\) is an \(L^2\)-contraction and \(\|G_\gamma^\phi\|_{\ell^\infty}\le C_\phi\), then by Lemma \ref{lem:clean-strong-kac-defect},  
\begin{align}\label{eq:EBL}
    &\mathbb E_{f_{N,0}}\int_0^T \varepsilon_\gamma\sum_j
\left| \sum_i\kappa_\gamma(i-j)G_{\gamma,i}^{\phi}B_{\gamma,i}^{\ell,L}(t) \right|^2dt \nonumber \\
\le & 2 \mathbb E_{f_{N,0}}
\int_0^T \varepsilon_\gamma\sum_i
\left|\Phi_{\Psi,\ell}^{\gamma}(m_i^\ell)-
\Phi_{\Psi,L}^{\gamma}(h_{\gamma,i})
\right|^2dt  \nonumber \\
\le &  C_\phi   \mathbb E_{f_{N,0}}\int_0^T
\varepsilon_\gamma\sum_i
\left|
 m_i^\ell(\sigma^N(t/\alpha_\gamma)) -h_\gamma(\sigma^N(t/\alpha_\gamma);i)
\right|^2dt + C_\phi T\left(
\frac1\ell+\frac1L+\gamma^2\ell^3+\gamma^2L^3
\right)^2 \nonumber \\
\le & C_\phi \left[T \varepsilon^{1/4}_{\gamma} +T\left(
\frac1\ell+\frac1L+\gamma^2\ell^3+\gamma^2L^3
\right)^2\right].
\end{align}
Combining with \eqref{eq:EAL} and \eqref{eq:EBL}, we have
\begin{align*}
    &\mathbb E_{f_{N,0}}\int_0^T \varepsilon_\gamma \sum_j\left| \sum_i \kappa_\gamma(i-j)G_{\gamma,i}^{\phi} \left( d_i(t)^2- \Phi_{\Psi,L}^{\gamma}(h_{\gamma,i}(t)) \right)\right|^2dt \nonumber \\
&\qquad\le
C_\phi \left[ T\gamma R + \sqrt{T\alpha_\gamma\ell^3}
+T\gamma\ell^2 + T \varepsilon^{1/4}_{\gamma}
+T\left(\frac1\ell+\frac1L+\gamma^2\ell^3+\gamma^2L^3\right)^2
\right].
\end{align*}
With the scaling and block assumptions, we obtain \eqref{eq:caling and block}.
\end{proof}

\subsection{second-order Boltzmann--Gibbs replacement}
\label{subsec:three-step-bg-proof}

In this subsection, we show the second-order Boltzmann–Gibbs principle by a conservative multiscale replacement argument for the fast local observable $d^2_i$.
In order to avoid the divergence from $\nabla_{\varepsilon_{\gamma}} X_{\gamma}$ in the \(H^{-1}\)-tested nonlinear drift from $\mathcal D_\gamma^{\mathrm{nl}}$, we use the microscopic-bond representation \eqref{eq:clean-microscopic-bond-drift}.
The point is that the nonlinear drift does not contain an ordinary product of a local observable with a smooth macroscopic coefficient.  
Instead, the microscopic gradient \(d_j\) appears inside the Kac increment
\[
 \delta_{\gamma} \nabla_{\varepsilon_{\gamma}} X_{\gamma}(\varepsilon_{\gamma} i) = h_{\gamma,i+1}-h_{\gamma,i}
=-\sum_{j \in \Lambda_N}\kappa_\gamma(i-j)d_j .
\]
Thus the replacement has to be performed at the bond-gradient level.  

By local equilibrium principle for the local observable $d^2_i$ in Lemma \ref{lem:clean-explicit-dsquare}, we define
\[
P_{\gamma,L}(m):= 2 -2m^2.
\]
We then introduce the error term
\[
U_j(t,\sigma) := \sum_{i \in \Lambda_N}\kappa_\gamma(i-j)G_{\gamma,i}^{\phi} \left[d_i(\sigma)^2-P_{\gamma,L}(h_{\gamma,i}(\sigma))\right].
\]
in the second-order Boltzmann--Gibbs principle for
\(d_i^2\), seen from the bond \(j\) with the test coefficient \(G_{\gamma,i}^{\phi}\).
Proving that \(\sum_j d_jU_j\) is negligible is therefore the conservative form of the second-order Boltzmann--Gibbs principle needed for the nonlinear current.

Inspired by the polynomial decomposition method in \cite{G2017}, we decompose the Kac-averaged error \(U_j\) into its symmetric and antisymmetric
parts with respect to the exchange \(\tau_j\).  The symmetric part is controlled
by the Kipnis--Varadhan \(H^{-1}\) estimate. The antisymmetric part contains the genuine
exchange variation of the replacement error and is handled by explicit local cancellations and the Kac-gradient structure.
More precisely, with respect to the exchange \(\tau_j\sigma:=\sigma^{j,j+1}\), we decompose
\[
U_j(t,\sigma)=U_j^{\mathrm s}(t,\sigma)+U_j^{\mathrm a}(t,\sigma),
\]
where
\[
U_j^{\mathrm s}(t,\sigma)
:=
\frac12\left[U_j(t,\sigma)+U_j(t,\tau_j\sigma)\right],
\qquad
U_j^{\mathrm a}(t,\sigma)
:=
\frac12\left[U_j(t,\sigma)-U_j(t,\tau_j\sigma)\right].
\]
Under the exchange \(\tau_j\), the current variable changes sign, \(d_j(\tau_j\sigma)=-d_j(\sigma)\).  Hence \(d_jU_j^{\mathrm s}\) is antisymmetric with respect to \(\tau_j\) and can be estimated directly with the Kipnis--Varadhan estimate.  This gives the dynamic control of the centered fluctuation.  The remaining  antisymmetric part \(U_j^{\mathrm a}\) records how
the Boltzmann–Gibbs error itself changes under the exchange; it is not a generic error, but the part that must be handled by the local-diagonal cancellation and the
chain-rule exchange variation.

We first show that the variance of $U_j(t,\sigma)$ is negligible as $\gamma \to 0$.

\begin{lemma}\label{lem:clean-chain-rule-variance}
Under Assumption \(({\bf A})-({\bf C})\), and block assumption \eqref{ass:clean-bg-scales}, we have
\begin{equation}\label{eq:three-step-P-variance}
    \mathbb E_{f_{N,0}}
\int_0^T\varepsilon_\gamma \sum_{j \in \Lambda_N} \left| U_j(t,\sigma) \right|^2dt \le
C_\phi T \varepsilon^{1/8}_{\gamma}.
\end{equation}
\end{lemma}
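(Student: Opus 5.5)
The plan is to reduce \eqref{eq:three-step-P-variance} to the Kac-scale covariance estimate of Theorem~\ref{thm:clean-kac-covariance}. The only difference between $U_j$ and the quantity controlled there is that $U_j$ uses the simplified polynomial $P_{\gamma,L}(m)=2-2m^2$ instead of the exact local equilibrium average $\Phi^\gamma_{\Psi,L}$. So I split
\[
U_j=\sum_i\kappa_\gamma(i-j)G^\phi_{\gamma,i}\bigl[d_i^2-\Phi^\gamma_{\Psi,L}(h_{\gamma,i})\bigr]
+\sum_i\kappa_\gamma(i-j)G^\phi_{\gamma,i}\bigl[\Phi^\gamma_{\Psi,L}(h_{\gamma,i})-P_{\gamma,L}(h_{\gamma,i})\bigr]
=:U_j^{(1)}+U_j^{(2)}.
\]
Then $|U_j|^2\le 2|U_j^{(1)}|^2+2|U_j^{(2)}|^2$, and it suffices to bound each piece separately.

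The first piece is exactly the integrand of \eqref{eq:caling and block}. Theorem~\ref{thm:clean-kac-covariance} therefore gives
\[
\mathbb E_{f_{N,0}}\int_0^T\varepsilon_\gamma\sum_j|U_j^{(1)}|^2\,dt\le C_\phi T\varepsilon_\gamma^{1/8}.
\]

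For the second piece there is no randomness to control, only a deterministic polynomial discrepancy. By Lemma~\ref{lem:clean-explicit-dsquare}, uniformly in $|m|\le1$,
\[
|\Phi^\gamma_{\Psi,L}(m)-P_{\gamma,L}(m)|\le \tfrac1L(1+m^2)+C\gamma^2L^3\le \tfrac2L+C\gamma^2L^3 .
\]
The coefficient $\kappa_\gamma\ge0$ has unit mass and $\|G^\phi_\gamma\|_{\ell^\infty}\le C_\phi$, so convolution by $\kappa_\gamma$ is an $\ell^2$ contraction (as in Lemma~\ref{lem:clean-strong-kac-defect}). This yields the pointwise-in-time bound
\[
\varepsilon_\gamma\sum_j|U_j^{(2)}|^2\le C_\phi\bigl(L^{-1}+\gamma^2L^3\bigr)^2 .
\]
Integrating in time contributes $C_\phi T(L^{-1}+\gamma^2L^3)^2$.

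Finally I insert the block scales \eqref{ass:clean-bg-scales} together with Assumption $({\bf B})$, namely $L=\varepsilon_\gamma^{-1/3}$ and $\gamma\sim\varepsilon_\gamma^{3/4}$. These give $L^{-2}=\varepsilon_\gamma^{2/3}$ and $\gamma^4L^6\sim\varepsilon_\gamma$, both of which are $o(\varepsilon_\gamma^{1/8})$. Hence the second piece is dominated by the first, and the lemma follows.

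The main obstacle is already absorbed into Theorem~\ref{thm:clean-kac-covariance}. That theorem handles the near-diagonal $|i-k|\le R$ covariance part and the exterior-measurable far part through the one-block estimate Theorem~\ref{thm:clean-one-block}. In the present step the only point to check is that $h_{\gamma,i}\in[-1,1]$, so that the uniform-in-$m$ bound of Lemma~\ref{lem:clean-explicit-dsquare} applies. This holds because $h_{\gamma,i}$ is a convex average of spins, so no moment estimate on $X_\gamma$ is needed.
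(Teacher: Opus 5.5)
Your proposal is correct and is essentially the paper's proof. It uses the same split of $d_i^2-P_{\gamma,L}(h_{\gamma,i})$ into the Kac-scale covariance term, controlled by Theorem~\ref{thm:clean-kac-covariance}, and the deterministic discrepancy $\Phi^{\gamma}_{\Psi,L}-P_{\gamma,L}=O(L^{-1}+\gamma^2L^3)$ from Lemma~\ref{lem:clean-explicit-dsquare}; you are slightly more careful than the paper in keeping the factor $2$ from $|a+b|^2\le 2|a|^2+2|b|^2$ and in checking explicitly that $(L^{-1}+\gamma^2L^3)^2\lesssim\varepsilon_\gamma^{2/3}=o(\varepsilon_\gamma^{1/8})$.
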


\begin{proof}
We decompose
\[
d_i^2- P_{\gamma,L}(h_{\gamma,i})
=
\left[d_i^2-\Phi_{\Psi,L}^{\gamma}(h_{\gamma,i})\right]
+
\left[\Phi_{\Psi,L}^{\gamma}(h_{\gamma,i})-P_{\gamma,L}(h_{\gamma,i})\right].
\]
By Theorem~\ref{thm:clean-kac-covariance} and Lemma~\ref{lem:clean-explicit-dsquare}, we have
\begin{align}\label{eq:kace1}
   & \mathbb E_{f_{N,0}}\int_0^T\varepsilon_\gamma \sum_j \left| \sum_i
\kappa_\gamma(i-j)G_{\gamma,i}^{\phi} \left[d_i^2-\Phi_{\Psi,L}^{\gamma}(h_{\gamma,i})
+ \Phi_{\Psi,L}^{\gamma}(h_{\gamma,i})-P_{\gamma,L}(h_{\gamma,i}) \right]\right|^2dt\nonumber \\
  \le  & \mathbb E_{f_{N,0}}\int_0^T\varepsilon_\gamma \sum_j \left| \sum_i
\kappa_\gamma(i-j)G_{\gamma,i}^{\phi} \left[ d_i^2-\Phi_{\Psi,L}^{\gamma}(h_{\gamma,i}) \right]\right|^2dt \nonumber \\
 & \qquad + C\mathbb E_{f_{N,0}}\int_0^T\varepsilon_\gamma \sum_j \left| \sum_i
\kappa_\gamma(i-j)G_{\gamma,i}^{\phi} \left( L^{-1}+\gamma^2L^3 \right)\right|^2dt \nonumber \\
\le & C_\phi T \varepsilon^{1/8}_{\gamma}.
\end{align}
This proves the lemma.
\end{proof}

\begin{theorem}\label{thm:clean-microscopic-gradient}
Under Assumption \(({\bf A})-({\bf C})\), and block assumption \eqref{ass:clean-bg-scales}, we have
\begin{align}\label{eq:Esymm}
   \mathbb E_{f_{N,0}}
\left| \int_0^T
\frac{\varepsilon_\gamma^2}{\alpha_\gamma\delta_\gamma}
\sum_jd_j(t)U_j^{\mathrm s}(t,\sigma^N(t/\alpha_\gamma))dt
\right| \le C_{\phi,T}\varepsilon_\gamma^{1/16}.
\end{align}
\end{theorem}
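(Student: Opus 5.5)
The plan is to exploit the antisymmetry of $W:=\sum_j d_jU_j^{\mathrm s}$ under each exchange $\tau_j$. Since $d_j(\tau_j\sigma)=-d_j(\sigma)$ and $U_j^{\mathrm s}(\tau_j\sigma)=U_j^{\mathrm s}(\sigma)$, each summand $d_jU_j^{\mathrm s}$ is odd under $\tau_j$. This gives two things.

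First, $\int W\,d\mu_{N,\gamma,\beta}=0$ by reversibility. Indeed, $\mu_{N,\gamma,\beta}(\tau_j\sigma)c_{j,j+1}(\tau_j\sigma)=\mu_{N,\gamma,\beta}(\sigma)c_{j,j+1}(\sigma)$, so after the small correction coming from the non-uniform rates the mean vanishes.

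Second, $W$ has a gradient (flux) representation. For any density $g^2$ we write
\[
\langle d_jU_j^{\mathrm s},g^2\rangle_{\mu}
=\tfrac12\int d_jU_j^{\mathrm s}\bigl(g^2(\sigma)-g^2(\tau_j\sigma)\bigr)\,d\mu+\text{(rate correction)}.
\]
We then factor $g^2(\sigma)-g^2(\tau_j\sigma)=(g(\sigma)-g(\tau_j\sigma))(g(\sigma)+g(\tau_j\sigma))$ and use the uniform ellipticity of Lemma~\ref{lem:local-rates-elliptic}. Cauchy--Schwarz then gives
\[
|\langle W,g^2\rangle_\mu|\le C\Bigl(\sum_j\int |U_j^{\mathrm s}|^2g^2\,d\mu\Bigr)^{1/2}\langle g,(-\mathscr L_\gamma^N)g\rangle_\mu^{1/2}.
\]
This is exactly hypothesis \eqref{eq:entropy-KV-density-Hminus1}, with the role of $C_W$ played by the weighted $L^2$ size of $U^{\mathrm s}$ under the time-$t$ law. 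It also gives $\|W\|_{\mathcal H^{-1}_N}^2\lesssim\sum_j\int|U_j^{\mathrm s}|^2d\mu$.

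Rather than applying Lemma~\ref{lem:entropy-KV} with a crude sup bound, I would redo its proof with the prefactor. We use the entropy inequality on path space with parameter $a$ and the Feynman--Kac variational bound. For $V=\pm a\frac{\varepsilon_\gamma^2}{\alpha_\gamma\delta_\gamma}W$, the flux bound and Young's inequality absorb the Dirichlet form $\alpha_\gamma^{-1}\langle g,-\mathscr L g\rangle$, leaving
\[
\lambda_{\max}\le C a^2\frac{\varepsilon_\gamma^4}{\alpha_\gamma\delta_\gamma^2}\sum_j\int|U_j^{\mathrm s}|^2g^2\,d\mu .
\]
The quantity $\sum_j\int |U_j^{\mathrm s}|^2 g^2$ is not controlled uniformly in $g$. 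The cleaner route is therefore to work directly with the nonequilibrium law via the standard ``integration by parts along the trajectory'' version. Using Lemma~\ref{lem:entropy-dissipation-kawasaki}, i.e.\ $\int_0^T I^{N,\gamma}(f_{N,t})\,dt\le C\alpha_\gamma\varepsilon_\gamma^{-1}$, we bound
\[
\mathbb E_{f_{N,0}}\Bigl|\int_0^T\tfrac{\varepsilon_\gamma^2}{\alpha_\gamma\delta_\gamma}W\,dt\Bigr|
\le\frac{\varepsilon_\gamma^2}{\alpha_\gamma\delta_\gamma}\Bigl(\int_0^T\!\!\sum_j\int|U_j^{\mathrm s}|^2f_{N,t}\,d\mu\,dt\Bigr)^{1/2}\Bigl(\int_0^T I^{N,\gamma}(f_{N,t})\,dt\Bigr)^{1/2}.
\]
This uses the density version of the flux identity, $|\langle W,f\rangle|\le C(\sum_j\int |U_j^{\mathrm s}|^2 f)^{1/2}I(f)^{1/2}$, obtained from $\sqrt f$. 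The expectation of the absolute value is then handled through the entropy/Kipnis--Varadhan argument as in Lemma~\ref{lem:entropy-KV}.

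Next, $|U_j^{\mathrm s}|^2\le\frac12(|U_j|^2+|U_j\circ\tau_j|^2)$. Moreover, $U_j\circ\tau_j$ differs from $U_j$ only through the terms $d_j^2,d_{j\pm1}^2$ and $h_{\gamma,i}$, which change by $O(\gamma^2)$. The local terms carry weight $\kappa_\gamma\le C\gamma$. Hence
\[
\varepsilon_\gamma\sum_j\mathbb E|U_j^{\mathrm s}|^2\lesssim \varepsilon_\gamma\sum_j\mathbb E|U_j|^2+\gamma^2,
\]
and Lemma~\ref{lem:clean-chain-rule-variance} bounds the time-integral by $C_\phi T(\varepsilon_\gamma^{1/8}+\gamma^2)$.

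Collecting factors, the right-hand side is
\[
\frac{\varepsilon_\gamma^2}{\alpha_\gamma\delta_\gamma}\bigl(\varepsilon_\gamma^{-1}\varepsilon_\gamma^{1/8}\bigr)^{1/2}\bigl(\alpha_\gamma\varepsilon_\gamma^{-1}\bigr)^{1/2}
=\frac{\varepsilon_\gamma^{1+1/16}}{\alpha_\gamma^{1/2}\delta_\gamma}.
\]
Under Assumption (\textbf{B}) this equals $\varepsilon_\gamma^{1+1/16-5/4-1/4}$, which diverges. So the naive count fails, and this is the main obstacle. To fix it, I would exploit the extra smallness of $U^{\mathrm s}$, which carries the factor $\kappa_\gamma$. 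In the Fourier picture, $U_j$ is a Kac average, and the relevant bound is on $\varepsilon_\gamma\sum_j|U_j|^2$ weighted by $\gamma$ through $\sup\mathcal K^{(2)}\le C\gamma$. I would also exploit the identity $\frac{\varepsilon_\gamma^2\delta_\gamma^2}{\alpha_\gamma}\to 6\chi/\beta$, rewriting the prefactor as $O(\delta_\gamma^{-3})$ times $\varepsilon_\gamma^{0}$. The goal is to show that the effective $\mathcal H^{-1}$ norm gains $\gamma$ from the Kac smoothing and $\delta_\gamma^2$ from the centering of $P_{\gamma,L}$ around $m_N$.

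I expect tuning these gains so that the final exponent is $\varepsilon_\gamma^{1/16}$ to be the delicate step. If it falls short, the fallback is to rescale time blocks using the spectral gap of Lemma~\ref{lem:clean-one-block-spectral-gap} on mesoscopic boxes of size $L$.
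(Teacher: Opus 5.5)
\textbf{Verdict.} Your proposal has a genuine gap: it does not prove the bound. Your power counting is right, however, and it exposes an error at the same step of the paper's own proof.

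\textbf{Where your argument stops.} Your chain is the paper's: oddness of $d_jU_j^{\mathrm s}$ under $\tau_j$, an $\mathcal H^{-1}_N$ bound of order $\sum_j\|U_j^{\mathrm s}\|^2$, the entropy--Kipnis--Varadhan estimate with $H_N\le CN$, then Lemma~\ref{lem:clean-chain-rule-variance} plus $|U_j^{\mathrm a}|\le C_\phi\gamma$. Along it you correctly find the prefactor $\varepsilon_\gamma^{1+1/16}/(\alpha_\gamma^{1/2}\delta_\gamma)\sim\varepsilon_\gamma^{-7/16}$, and then only sketch remedies, which cannot close this scheme. The factor in front of $(\mathbb E\int_0^T\varepsilon_\gamma\sum_j|U_j^{\mathrm s}|^2dt)^{1/2}$ is $\varepsilon_\gamma/(\alpha_\gamma^{1/2}\delta_\gamma)\sim\varepsilon_\gamma^{-1/2}$. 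Even granting that the mean square of $U^{\mathrm s}$ is the right constant, you would therefore need $\varepsilon_\gamma\sum_j\mathbb E|U_j^{\mathrm s}|^2=o(\varepsilon_\gamma)$, and $O(\varepsilon_\gamma^{9/8})$ for the stated rate. This is not available:
\begin{itemize}
\item The leading part of $d_i^2-P_{\gamma,L}(h_{\gamma,i})$ is $-2(\sigma_i\sigma_{i+1}-h_{\gamma,i}^2)$, whose one-bond variance is of order one.
\item Kac averaging therefore reduces the mean square of $U_j$ only to order $\gamma\sim\varepsilon_\gamma^{3/4}$, which still leaves a divergent $\varepsilon_\gamma^{-1/8}$.
\item The $\delta_\gamma^2$ you hope to gain from centering $P_{\gamma,L}$ at $m_N$ does not affect this local part.
\end{itemize}

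\textbf{Two intermediate steps that fail as written.}
\begin{itemize}
\item Your ``density version'' $|\langle W,f_{N,t}\rangle_\mu|\lesssim(\sum_j\int|U_j^{\mathrm s}|^2f_{N,t}\,d\mu)^{1/2}I^{N,\gamma}(f_{N,t})^{1/2}$ controls $|\mathbb E\int_0^TW\,dt|$, not $\mathbb E|\int_0^TW\,dt|$.
\item Centering does not follow from reversibility, because $\mu_{N,\gamma,\beta}$ is not $\tau_j$-invariant. As in the paper, it follows from translation invariance and $\sum_iG^\phi_{\gamma,i}=0$.
\end{itemize}

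\textbf{The paper does not supply the missing gain either.} It closes by asserting in \eqref{eq:scancellation} the prefactor $\varepsilon_\gamma^{3/2}/(\delta_\gamma\sqrt{\alpha_\gamma})=O(1)$. Take its preceding bound $C\sqrt{\alpha_\gamma/\varepsilon_\gamma}\,(\mathbb E\int_0^T\sum_j|U_j^{\mathrm s}|^2dt)^{1/2}$, multiply by $\varepsilon_\gamma^2/(\alpha_\gamma\delta_\gamma)$, and write $\sum_j=\varepsilon_\gamma^{-1}\cdot\varepsilon_\gamma\sum_j$. This gives $\varepsilon_\gamma/(\delta_\gamma\sqrt{\alpha_\gamma})\sim\varepsilon_\gamma^{-1/2}$, which is your count. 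The dropped factor $\varepsilon_\gamma^{-1/2}$ is exactly the $\sqrt{H_N}\lesssim\sqrt N$ cost of the path-space entropy inequality under (\textbf{C}).

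The paper's $O(1)$ is what the stationary bound $\mathbb E_\mu(\int_0^TW\,dt)^2\le 2T\alpha_\gamma\|W\|_{\mathcal H^{-1}_N}^2$ would give (Lemma~\ref{lem:KV-invariant}, accelerated). There the linear variational formula \eqref{VCH-1} makes $\sum_j\|U_j^{\mathrm s}\|^2_{L^2(\mu)}$ the natural constant.

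Under (\textbf{C}), your objection about uniformity in $g$ also applies to the paper. Hypothesis \eqref{eq:entropy-KV-density-Hminus1}, tested on $g^2$, only yields the constant $\sup_\sigma\sum_j|U_j^{\mathrm s}(\sigma)|^2$, which carries no smallness. The paper further replaces that constant by the nonequilibrium average of Lemma~\ref{lem:clean-chain-rule-variance}.

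\textbf{What would be needed.} The gap you located is real, and closing it requires an input that neither your proposal nor the paper provides. One option is an initial density bounded in $L^2(\mu_{N,\gamma,\beta})$, so that the stationary bound transfers by Cauchy--Schwarz on path space. The $\tau_j$-non-invariance of $\mu_{N,\gamma,\beta}$ would still have to be handled.
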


\begin{proof}
We first show the centering of the observable 
\[
W(\sigma)=\sum_{j \in \Lambda_N} d_j(\sigma)U_j^{\mathrm{s}}(t,\sigma).
\]
For fixed relative distance \(r=i-j\), define
\[
C_{\gamma,r}(t)
:=
\int
d_j(\sigma)
\frac12
\left[
d_i(\sigma)^2- P_{\gamma,L}(h_{\gamma,i}(\sigma)) + d_i(\tau_j\sigma)^2- P_{\gamma,L}(h_{\gamma,i}(\tau_j \sigma)
\right]
d\mu_{N,\gamma,\beta}(\sigma).
\]
Notice that the canonical Gibbs measure is translation invariant, since both the Hamiltonian and the fixed-magnetization sector are invariant under translations.  
Moreover, if \(\theta_\ell\) denotes the spatial shift on the torus, then
\(d_j(\theta_\ell\sigma)=d_{j-\ell}(\sigma)\),
\(\theta_\ell\tau_j=\tau_{j-\ell}\theta_\ell\), and
\(
P_{\gamma,L}(h_{\gamma,i}(\theta_\ell\sigma))
= P_{\gamma,L}(h_{\gamma,i-\ell}(\sigma)),
\)
since the Kac field \(h_\gamma\) is
translation covariant.  Hence, for fixed \(\gamma\) and \(t\), the above
expectation is unchanged if the pair \((j,i)\) is translated simultaneously.
It therefore depends on the pair only through the relative displacement
\(r=i-j\), which justifies the notation \(C_{\gamma,r}(t)\).
Since \(G_{\gamma,i}^{\phi}\) is a periodic discrete gradient, we have
\[
\begin{aligned}
\mathbb{E}_{\mu_{N,\gamma,\beta}}W  = \int\sum_jd_j(\sigma)U_j^{\mathrm s}(t,\sigma)
d\mu_{N,\gamma,\beta}(\sigma) =
\left(\sum_r\kappa_\gamma(r)C_{\gamma,r}(t)\right)
\left(\sum_iG_{\gamma,i}^{\phi}\right)
=0 .
\end{aligned}
\]
Now we approximate the centered observable $W(\sigma)$ by a family of piecewise constant observables.
On a given partition \(0=t_0<t_1<\cdots<t_n=T\), we define the conditional expectations
\[
U_{j}^{\mathrm s,n}(t,\sigma)
:=
\frac1{|I_r|}
\int_{I_r}U_j^{\mathrm s}(s,\sigma)\,ds,
\qquad t\in I_r .
\]
Note that these approximate preserve the exchange symmetry
\(
U_j^{\mathrm s}(t,\sigma)=U_{j}^{\mathrm s,n}(\sigma), t\in[t_{r-1},t_r).
\)
Therefore, on each $I_r$, the observable
\(
W_n(\sigma)=\sum_j d_j(\sigma)U_j^{\mathrm{s,n}}(\sigma)
\) is a time-independent and centering.

Now we give the estimate for a time-independent centered observable $W_n(\sigma)$ on $I_r$.
By Lemma~\ref{lem:local-rates-elliptic}, for every test function \(F\) on the canonical sector, we have
\begin{align*}
\left| \int_{\Sigma_{N,M}} W_n(\sigma)F(\sigma)\,d\mu_{N,\gamma,\beta}(\sigma) \right| & \le C
\left(\sum_j\int |U_j^{\mathrm s}(\sigma)|^2d\mu_{N,\gamma,\beta}\right)^{1/2}
\left(D^{N,\gamma}(F)\right)^{1/2}\\
&\le  C \left( \sum_{j\in\Lambda_N} |U_j^{\mathrm s,n }(\sigma)|^2\right)^{1/2} 
\left( \sum_{j\in\Lambda_N} D^{N,\gamma}_{j}(F) \right)^{1/2} \\
&\le   D^{N,\gamma}(f) + C \sum_{j\in\Lambda_N} |U_j^{\mathrm s,n }(\sigma)|^2.
\end{align*}
Then by the variational definition of the negative norm \eqref{VCH-1}, we have
\[
  2\langle W_n,F\rangle_{\mu_{N,\gamma,\beta}}-D^{N,\gamma}(F) \le 2\bigl| \langle W,F\rangle_{\mu_{N,\gamma,\beta}} \bigr|-  D^{N,\gamma}(F) \le C\sum_{j\in\Lambda_N} |U_j^{\mathrm s,n }(\sigma)|^2.
\]
Recall the variational definition of the negative norm
\eqref{VCH-1}. After Optimizing over \(D^{N,\gamma}(F)^{1/2}\), we have
\begin{equation*}
    \|W_n\|_{\mathcal H_N^{-1}}^2
\le C\sum_j\int |U_j^{\mathrm{s,n}}(\sigma)|^2d\mu_{N,\gamma,\beta}(\sigma).
\end{equation*}
For each \(g \in L^2(\mu_{N,\gamma,\beta})\) with $\|g\|_{L^2(\mu_{N,\gamma,\beta})}=1$, since
\[
g(\sigma)^2-g(\tau_j\sigma)^2
=
\bigl(g(\sigma)-g(\tau_j\sigma)\bigr)
\bigl(g(\sigma)+g(\tau_j\sigma)\bigr),
\]
by using the reversibility of $\mu_{N,\gamma,\beta}$, we have
\[
\left|
\left\langle W_r,g^2\right\rangle_{\mu_{N,\gamma,\beta}}
\right|
\le
C
\left(\sum_j\int |U_{j,r}^{\mathrm s}(\sigma)|^2
d\mu_{N,\gamma,\beta}(\sigma)\right)^{1/2}
\left\langle g,(-\mathscr L_\gamma^N)g
\right\rangle_{\mu_{N,\gamma,\beta}}^{1/2}.
\]
Using the same argument as in the proof of Lemma~\ref{lem:entropy-KV}, by using the Markov property at the partition times and the Feynman--Kac variational formula, for one sign and any \(a>0\),
\[
\frac1a\log
\mathbb E_{\mu_{N,\gamma,\beta}}
\exp\left\{
a\sum_{r=1}^M
\int_{t_{r-1}}^{t_r}
W_n(\sigma^N(t/\alpha_\gamma))\,dt
\right\} \le aC \alpha_\gamma
\sum_{r=1}^M(t_r-t_{r-1})\|W_r\|_{\mathcal H_N^{-1}}^2,
\]
The same estimate holds for the negative sign.
Optimizing in \(a\), we have
\[
\mathbb{E}_{f_{N,0}} \left|
\int_0^T\sum_jd_j(\sigma^N(t/\alpha_\gamma))U_j^{\mathrm{s,n}}(t,\sigma^N(t/\alpha_\gamma))dt
\right|
\le
C\sqrt{\frac{\alpha_\gamma}{\varepsilon_\gamma}}
\left(
\mathbb E\int_0^T\sum_j|U_j^{\mathrm{s,n}}(t,\sigma^N(t/\alpha_\gamma))|^2dt
\right)^{1/2}.
\]
Since the state space is finite for fixed $N$ and the functions are uniformly bounded, as $n \to \infty$,
\[
U_j^{\mathrm s,n}\to U_j^{\mathrm s}
\quad\text{in }L^2\!\left([0,T]\times\mu_{N,\gamma,\beta}\right)
\]
for every \(j\).  The right-hand side above therefore converges by dominated
convergence, and the corresponding additive functionals converge in \(L^1\)
along the accelerated path. 
Finally, multiplication  by \(\varepsilon_\gamma^2/(\alpha_\gamma\delta_\gamma)\) gives the estimate 
\begin{align}\label{eq:scancellation}
    & \mathbb E_{f_{N,0}} \left|\int_0^T
\frac{\varepsilon_\gamma^2}{\alpha_\gamma\delta_\gamma}
\sum_j d_j(\sigma^N(t/\alpha_\gamma))U_j^{\mathrm{s}}(t,\sigma^N(t/\alpha_\gamma))dt
\right| \nonumber \\
\le & C \frac{\varepsilon_\gamma^{3/2}}
{\delta_\gamma\sqrt{\alpha_\gamma}}
\left( \mathbb E_{f_{N,0}}
\int_0^T \varepsilon_\gamma\sum_j
|U_j^{\mathrm{s}}(t,\sigma^N(t/\alpha_\gamma))|^2dt
\right)^{\frac{1}{2}}
\end{align}
It remains to bound the square average of \(U_j^{\mathrm{s}}\).  Since \( |U_j^{\mathrm{s}}|^2\le2|U_j|^2+2|U_j^{\mathrm{a}}|^2\),
Lemma~\ref{lem:clean-chain-rule-variance} implies
\begin{equation}
    \mathbb E_{f_{N,0}}\int_0^T \varepsilon_\gamma\sum_j |U_j(t,\sigma_t)|^2dt \le  C_{\phi} T \varepsilon^{1/8}_{\gamma}.
\end{equation}
For the second term, by Assumption \(({\bf A})\) and \(\tau_j\sigma:=\sigma^{j,j+1}\), we have
\begin{align}
    |U_j^{\mathrm a}| \le & \frac12 \left| \sum_{i \in  \{j +1,j-1\} }\kappa_\gamma(i-j)G_{\gamma,i}^{\phi}
\left[ d_i^2(\sigma)-d_i^2(\tau_j\sigma)
\right] \right| + \frac12 \left| \sum_{i \in \Lambda_N}\kappa_\gamma(i-j)G_{\gamma,i}^{\phi} \left[
P_{\gamma,L}(h_{\gamma,i}(\sigma)) - P_{\gamma,L}(h_{\gamma,i}(\tau_j \sigma)
\right] \right| \nonumber\\
\le & \frac12 C_{\phi}  \left| \sum_{i \in  \{j +1,j-1\} }\kappa_\gamma(i-j)\left[ d_i^2(\sigma)-d_i^2(\tau_j\sigma)
\right] \right|  \nonumber \\
 & \qquad  + C\sum_{i \in \Lambda_N}\kappa_\gamma(i-j)\bigl(|\kappa_\gamma(i-1-j)-\kappa_\gamma(i-j)|+ |\kappa_\gamma(i-j)-\kappa_\gamma(i+1-j)|\bigr) \nonumber \\
\le &  C_\phi\gamma,
\end{align}
uniformly in $\sigma, j$. Therefore
\begin{align}
    \mathbb E_{f_{N,0}}\int_0^T \varepsilon_\gamma\sum_j |U_j^{\mathrm s}(t,\sigma_t)|^2dt \le & 2\mathbb E_{f_{N,0}}\int_0^T \varepsilon_\gamma\sum_j |U_j^{\mathrm a}(t,\sigma_t)|^2dt + 2\mathbb E_{f_{N,0}}\int_0^T \varepsilon_\gamma\sum_j |U_j(t,\sigma_t)|^2dt \nonumber \\
    \le & C_\phi T (\gamma + \varepsilon^{1/8}_{\gamma}).
\end{align}
Plugging above estimate in \eqref{eq:scancellation} and using
\(\varepsilon_\gamma^{3/2}/(\delta_\gamma\sqrt{\alpha_\gamma})=O(1)\), we obtain \eqref{eq:Esymm}.
\end{proof}

The antisymmetric part $U_j^{\mathrm a}(t,\sigma) $ has two sources: the local variation of \(d_i^2\), handled by the signed odd and local-diagonal cancellations, and the exchange variation
of \( P_{\gamma,L}(h_{\gamma,i}(\sigma))\), handled in Lemma \ref{lem:clean-chain-rule-exchange-variation}.
Based on one block estimates, we first show the cancellation for the signed odd local observables $d_j^2\sigma_{j+2}, d_j^2\sigma_{j-1}$. It is enough to consider $d_0^2\sigma_2, d_0^2\sigma_{-1}$.

\begin{lemma}\label{lem:clean-signed-odd}
Let \(H_{\gamma,j}\) be a uniformly bounded and deterministic function.
Define
\[
\Phi_{+,\ell}^{\gamma}(m)
:=
\mathbb E_{\mu_{\gamma,\ell,m}^{\mathrm{can}}}[d_0^2\sigma_2],
\quad
\Phi_{-,\ell}^{\gamma}(m) := \mathbb E_{\mu_{\gamma,\ell,m}^{\mathrm{can}}}[d_0^2\sigma_{-1}].
\]
Then under Assumptions \(({\bf A})\)--\(({\bf C})\), and the block assumption \eqref{ass:clean-bg-scales},
\[
\lim_{\gamma \to 0}\mathbb E_{f_{N,0}}
\left|\int_0^T \varepsilon_\gamma\sum_j
H_{\gamma,j}\,\Phi_{\varsigma,\ell}^{\gamma}
\bigl(m_j^\ell(\sigma^N(t/\alpha_\gamma))\bigr)dt \right| \le C_{\phi,T}\varepsilon^{1/8}_{\gamma}  \to 0, \quad \varsigma\in\{+,-\}.
\]
\end{lemma}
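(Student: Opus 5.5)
The plan is to compute $\Phi_{\pm,\ell}^{\gamma}(m)$ explicitly, up to a small Kac error, and then show that this explicit function is small along the dynamics. I will proceed exactly as in Lemma~\ref{lem:clean-explicit-dsquare}. First I replace the block canonical measure $\mu_{\gamma,\ell,m}^{\mathrm{can}}$ by the uniform canonical measure $\nu_{\ell,m}^{\mathrm{aux}}$. The oscillation bound \eqref{eq:oneOscH} gives a total-variation error of order $C\gamma^2\ell^3$. Since $|d_0^2\sigma_2|\le 4$, this error enters $\Phi_{\pm,\ell}^\gamma$ additively, uniformly in $m$.

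Under $\nu_{\ell,m}^{\mathrm{aux}}$ I write $d_0^2\sigma_2=2\sigma_2-2\sigma_0\sigma_1\sigma_2$. Exchangeability then gives $\mathbb E[\sigma_2]=m$ and
\[
\mathbb E[\sigma_0\sigma_1\sigma_2]=\frac{(2\ell+1)^3m^3-3(2\ell+1)^2m+2(2\ell+1)m}{(2\ell+1)(2\ell)(2\ell-1)}=m^3+O(\ell^{-1}).
\]
The case of $\sigma_{-1}$ is identical. Hence
\[
\Phi_{\pm,\ell}^{\gamma}(m)=2m-2m^3+\rho_{\pm}(m),\qquad \sup_{|m|\le1}|\rho_\pm(m)|\le C(\ell^{-1}+\gamma^2\ell^3).
\]
The key point is that the leading part is odd in $m$ and vanishes linearly at $m=0$. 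Therefore $|\Phi_{\pm,\ell}^\gamma(m)|\le 2|m|+C(\ell^{-1}+\gamma^2\ell^3)$.

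Next I pass from the block magnetization to the Kac field. I write $|m_j^\ell|\le |m_j^\ell-h_{\gamma,j}|+|h_{\gamma,j}-m_N|+|m_N|$ and use $\|H_\gamma\|_\infty\le C$. Cauchy--Schwarz handles the first two pieces: Lemma~\ref{lem:clean-strong-kac-defect} bounds the first by $(C\varepsilon_\gamma^{1/4})^{1/2}=C\varepsilon_\gamma^{1/8}$, and Lemma~\ref{lem:weak-L2-h-from-microscopic} bounds the second by $(C\delta_\gamma^2\log\varepsilon_\gamma^{-1})^{1/2}$. The third piece is deterministic, with $|m_N|\le C\delta_\gamma$ by \eqref{eq:mass-sector-implies-mN-small}. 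Integrating in time and summing, the whole expression is bounded by
\[
C_T\bigl(\varepsilon_\gamma^{1/8}+\delta_\gamma\sqrt{\log\varepsilon_\gamma^{-1}}+\ell^{-1}+\gamma^2\ell^3\bigr).
\]
With $\ell=\varepsilon_\gamma^{-1/4}$, $\gamma\sim\varepsilon_\gamma^{3/4}$ and $\delta_\gamma\sim\varepsilon_\gamma^{1/4}$, each term tends to zero.

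The main obstacle is matching the stated rate $\varepsilon_\gamma^{1/8}$. The term $\ell^{-1}=\varepsilon_\gamma^{1/4}$ and the term $\gamma^2\ell^3=\varepsilon_\gamma^{3/4}$ are fine. The term $\delta_\gamma\sqrt{\log}\sim\varepsilon_\gamma^{1/4}\sqrt{\log\varepsilon_\gamma^{-1}}$ is also $o(\varepsilon_\gamma^{1/8})$. So the rate is dictated by the block-to-Kac defect, and the only delicate input is the finite-size exchangeability computation of the cubic moment. Should a cleaner route be needed, I would instead cancel the linear part $2m$ against $\sum_j H_{\gamma,j}$ using mass conservation. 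That cancellation is available only if $H_\gamma$ has zero mean, however, so I plan to rely on the smallness of $m_N$ via $|m_N|\le C\delta_\gamma$ rather than on any structure of $H_\gamma$.
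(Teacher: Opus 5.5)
Your proposal is correct and follows essentially the paper's route: replace $\mu_{\gamma,\ell,m}^{\mathrm{can}}$ by the uniform canonical measure at total-variation cost $C\gamma^2\ell^3$, and use exchangeability to see that $\Phi_{\pm,\ell}$ is an odd cubic in $m$. You then pass from $m_j^\ell$ to the Kac field via Lemma~\ref{lem:clean-strong-kac-defect}, and conclude by the smallness of $h_{\gamma,j}$ from Lemma~\ref{lem:weak-L2-h-from-microscopic} together with $|m_N|\le C\delta_\gamma$.

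Your single bound $|\Phi_{\pm,\ell}^\gamma(m)|\le 2|m|+C(\ell^{-1}+\gamma^2\ell^3)$, followed by one triangle inequality, slightly streamlines the paper's separate linear and cubic estimates. It also explicitly keeps the $\gamma^2\ell^3$ error, which the paper derives but omits from its final combination.
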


\begin{proof}
Let \(\nu_{\ell,m}^{\mathrm{aux}}\) be the uniform law on the block canonical
sector.  
As in Lemma~\ref{lem:clean-explicit-dsquare}, by estimate \eqref{eq:oneOscH}, the Kac interaction restricted to a block $\Lambda_L$ has oscillation bounded by \(C\gamma^2L^3\) on each canonical sector. Thus
\[
\left\|
\mu_{\gamma,L,m}^{\mathrm{can}}-\nu_{L,m}^{\mathrm{aux}}
\right\|_{\mathrm{TV}}
\le
C_\beta\,\gamma^2L^3.
\]
uniformly in $m$.
Since $|d_0^2\sigma_2|+|d_0^2\sigma_{-1}| \le 8$, it follows that
\[
\sup_{|m|\le1}
\left|
\Phi_{\varsigma,\ell}^{\gamma}(m)
-\Phi_{\varsigma,\ell}^{\mathrm{aux}}(m)
\right|
\le C\gamma^2\ell^3,
\]
Under \(\nu_{\ell,m}^{\mathrm{aux}}\), the spins in the block canonical sector are exchangeable. Since
\[
d_0^2\sigma_2=2\sigma_2-2\sigma_0\sigma_1\sigma_2,
\qquad
d_0^2\sigma_{-1}=2\sigma_{-1}-2\sigma_0\sigma_1\sigma_{-1},
\]
each auxiliary expectation is an odd polynomial of the block magnetization
\(m\) of degree at most three.  Hence
\[
\Phi_{\varsigma,\ell}^{\mathrm{aux}}(m)
=b_{1,\ell}m+b_{3,\ell}m^3,
\qquad
\sup_\ell(|b_{1,\ell}|+|b_{3,\ell}|)<\infty .
\]
Since \(|u^p-v^p|\le C|u-v|\) for \(u,v\in[-1,1]\) and \(p=1,3\), Cauchy--Schwarz inequality and the  estimate \eqref{eq:centered-block-to-kac-L2-rate-main} give
\begin{equation}\label{eq:conloc0}
    \mathbb E_{f_{N,0}}\int_0^T \varepsilon_\gamma\sum_j
|H_{\gamma,j}|
\left| \bigl(m_j^\ell(\sigma^N(t/\alpha_\gamma))\bigr)^p -h_{\gamma,j}(t)^p
\right|dt \le C\|H_\gamma\|_{\ell^\infty}
\sqrt{T} \varepsilon^{1/8}_{\gamma}.
\end{equation}
For the weak averages of \(h_{\gamma,j}\), by Lemma~\ref{lem:weak-L2-h-from-microscopic}, we have
\begin{align}\label{eq:conloc1}
\mathbb E_{f_{N,0}}
\left|\int_0^T\varepsilon_\gamma\sum_jH_{\gamma,j}h_{\gamma,j}dt\right|
\le & \|H_\gamma\|_{\ell^\infty}
\mathbb E_{f_{N,0}}
\int_0^T \left(\varepsilon_\gamma\sum_j(h_{\gamma,j}(t)-m_N)^2\right)^{1/2}dt
+T\|H_\gamma\|_{\ell^\infty}|m_N| \nonumber \\
\le & C\left( \delta_\gamma^2\log(\varepsilon_\gamma^{-1}) \right)^{1/2} 
\rightarrow0.
\end{align}
For the cubic term, since \(|h_{\gamma,j}|\le1\) and
\(|h_{\gamma,j}-m_N|\le2\), we have
\(
|h_{\gamma,j}|^3
\le
C|h_{\gamma,j}-m_N|^2+C|m_N|.
\)
Then using Lemma~\ref{lem:weak-L2-h-from-microscopic} and estimate \eqref{eq:mass-sector-implies-mN-small} again, we have
\begin{equation}\label{eq:conloc2}
    \mathbb E_{f_{N,0}}
\int_0^T\varepsilon_\gamma\sum_j|H_{\gamma,j}||h_{\gamma,j}|^3dt
\le
C \delta_\gamma^2\log(\varepsilon_\gamma^{-1})\rightarrow0.
\end{equation}
Combining \eqref{eq:conloc0}, \eqref{eq:conloc1}, and \eqref{eq:conloc2} proves the lemma.
\end{proof}

Based on the above cancellation, we estimate the local-diagonal variation of \(d_i^2\).

\begin{lemma}\label{lem:clean-local-diagonal}
Then under Assumptions \(({\bf A})-({\bf C})\) and the block assumptions \eqref{ass:clean-bg-scales}, we have
\[
\mathbb E_{f_{N,0}}
\left|
\int_0^T \frac{\varepsilon_\gamma^2}{\alpha_\gamma\delta_\gamma}
\sum_{j\in \Lambda_N}d_j(t)\frac12
\sum_{i=j-1,j+1}
\kappa_\gamma(i-j)G_{\gamma,i}^{\phi}
\left[
d^2_i(\sigma^N(t/\alpha_\gamma))-d^2_i\bigl(\tau_j(\sigma^N(t/\alpha_\gamma))\bigr)
\right] dt
\right| \le C_{\phi,T}\varepsilon^{1/8}_{\gamma}.
\]
\end{lemma}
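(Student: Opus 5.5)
The plan is to compute the local-diagonal variation explicitly and reduce it to the signed odd observables of Lemma~\ref{lem:clean-signed-odd}. Only the bonds $i=j\pm1$ share a site with the exchanged bond $(j,j+1)$, since $\tau_j$ only moves $\sigma_j,\sigma_{j+1}$; this is why the sum is restricted to $i=j\pm1$. We use $d_i^2=2-2\sigma_i\sigma_{i+1}$.

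\textbf{Step 1: explicit exchange variation.} For $i=j+1$,
\[
d_{j+1}^2(\sigma)-d_{j+1}^2(\tau_j\sigma)=-2\sigma_{j+2}(\sigma_{j+1}-\sigma_j)=2d_j\sigma_{j+2}.
\]
Similarly, for $i=j-1$,
\[
d_{j-1}^2(\sigma)-d_{j-1}^2(\tau_j\sigma)=-2\sigma_{j-1}(\sigma_j-\sigma_{j+1})=-2d_j\sigma_{j-1}.
\]
Multiplying by $d_j$ and using $d_j^2\in\{0,4\}$, the integrand becomes
\[
\kappa_\gamma(1)\bigl[G^{\phi}_{\gamma,j+1}\,d_j^2\sigma_{j+2}-G^{\phi}_{\gamma,j-1}\,d_j^2\sigma_{j-1}\bigr],
\]
using the evenness $\kappa_\gamma(-1)=\kappa_\gamma(1)$. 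These are exactly the translates $\tau_j(d_0^2\sigma_2)$ and $\tau_j(d_0^2\sigma_{-1})$.

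\textbf{Step 2: the prefactor.} The total coefficient is
\[
\frac{\varepsilon_\gamma^2\kappa_\gamma(1)}{\alpha_\gamma\delta_\gamma}\sum_j=\frac{\varepsilon_\gamma\kappa_\gamma(1)}{\alpha_\gamma\delta_\gamma}\,\varepsilon_\gamma\sum_j .
\]
By Assumption~({\bf A}), $\kappa_\gamma(1)=\gamma\mathfrak K(\gamma)\le C\gamma^2$ since $\mathfrak K(0)=0$ and $\mathfrak K$ is Lipschitz. Under Assumption~({\bf B}) this gives $\varepsilon_\gamma\gamma^2/(\alpha_\gamma\delta_\gamma)\sim\varepsilon_\gamma^{1+3/2-5/2-1/4}=\varepsilon_\gamma^{-1/4}$. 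This factor blows up, so a crude bound is insufficient and a genuine cancellation must be extracted.

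\textbf{Step 3: replacement and the cancellation.} Apply the one-block estimate, Theorem~\ref{thm:clean-one-block}, with $\Psi=d_0^2\sigma_2$ and $\Psi=d_0^2\sigma_{-1}$ and bounded deterministic $J_j=G^\phi_{\gamma,j\pm1}$. This replaces the observables by $\Phi^\gamma_{\pm,\ell}(m_j^\ell)$ at cost $\sqrt{T\alpha_\gamma\ell^3}+T\gamma\ell^2$.

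The error, multiplied by $\varepsilon_\gamma^{-1/4}$, is controlled as follows. With $\ell=\varepsilon_\gamma^{-1/4}$ we have $\alpha_\gamma\ell^3\sim\varepsilon_\gamma^{7/4}$, so the first term is fine. However, $\gamma\ell^2\sim\varepsilon_\gamma^{1/4}$ gives only $O(1)$ after multiplication. The frozen-field cost therefore must be sharpened: since the observable is odd, the leading $O(\gamma\ell^2)$ correction from the linear field gradient is itself odd and contributes a term of the same type as the main term.

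Next, the two leading parts partly cancel. At the level of $\Phi^{\mathrm{aux}}$ both expectations equal $b_{1,\ell}m+b_{3,\ell}m^3$ by exchangeability, so the bracket becomes
\[
(G^\phi_{\gamma,j+1}-G^\phi_{\gamma,j-1})\,\Phi^{\mathrm{aux}}(m^\ell_j),
\]
which gains a factor $2\varepsilon_\gamma\|\nabla G^\phi\|_\infty$. Since $G^\phi$ is smooth on the macroscopic scale, this factor kills the $\varepsilon_\gamma^{-1/4}$ blow-up. The remaining differences, namely $\Phi^\gamma_{\pm,\ell}-\Phi^{\mathrm{aux}}_{\pm,\ell}=O(\gamma^2\ell^3)$ and the frozen-field correction, are odd in $m$. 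They are handled by Lemma~\ref{lem:clean-signed-odd}, with $H_{\gamma,j}$ the normalized coefficient, together with the weak bounds \eqref{eq:conloc1}--\eqref{eq:conloc2}.

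\textbf{Main obstacle.} The main difficulty is the power counting against the divergent prefactor $\varepsilon_\gamma^{-1/4}$. The discrete-gradient gain from $G_{j+1}-G_{j-1}$ is essential, and the non-exchangeable Kac corrections must be kept odd so that the weak $h_\gamma$-bounds, of size $\delta_\gamma\sqrt{\log}$, absorb them. If the frozen-field term $\gamma\ell^2$ does not close, the fallback is a smaller block $\ell$, chosen to balance $\alpha_\gamma\ell^3$ against $\gamma\ell^2\varepsilon_\gamma^{-1/4}$.
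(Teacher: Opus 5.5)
Your route differs from the paper's and can be completed, but only through what you call the fallback. The sharpening you propose at $\ell=\varepsilon_\gamma^{-1/4}$ does not hold up.

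Steps 1 and 2 are right. This includes the sharper bound $\kappa_\gamma(1)\le C\gamma^2$ (from $\mathfrak K(0)=0$) and the resulting divergent prefactor $\varepsilon_\gamma\kappa_\gamma(1)/(\alpha_\gamma\delta_\gamma)\lesssim\varepsilon_\gamma^{-1/4}$. The identity $\Phi^{\mathrm{aux}}_{+,\ell}=\Phi^{\mathrm{aux}}_{-,\ell}$ is also exact by exchangeability.

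The problem is the main line at $\ell=\varepsilon_\gamma^{-1/4}$, where the frozen-field cost $\varepsilon_\gamma^{-1/4}\gamma\ell^2$ is of order one. The oddness argument does not fix this, for two reasons.
\begin{itemize}
\item The frozen-field correction depends on the exterior configuration $\xi$ through the slope of $b^{N,\gamma}_{i,\xi}$, not only on $m_j^\ell$. So Lemma~\ref{lem:clean-signed-odd}, which concerns deterministic functions of $m_j^\ell$, cannot absorb it.
\item The reflection $u\mapsto 1-u$ that exchanges $d_0^2\sigma_2$ and $d_0^2\sigma_{-1}$ reverses that slope to leading order. Hence in $G^{\phi}_{\gamma,j+1}(\cdot)_+-G^{\phi}_{\gamma,j-1}(\cdot)_-$ the first-order corrections carry the coefficient $G^{\phi}_{\gamma,j+1}+G^{\phi}_{\gamma,j-1}$, not the difference. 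Their dependence on $m$ is even and not small at $m=0$.
\end{itemize}
Showing that this correction is nonetheless $O(\gamma)$ for observables supported near the block centre would be a separate perturbative estimate, which you do not give.

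Your fallback works and should be the argument; no balancing is needed. The left-hand side does not involve $\ell$, and the exchangeability identity holds for every block. So apply Theorem~\ref{thm:clean-one-block} with a fixed $\ell\ge 3$.
\begin{itemize}
\item The replacement errors are $\varepsilon_\gamma^{-1/4}\bigl(\sqrt{T\alpha_\gamma\ell^3}+T\gamma\ell^2+T\gamma^2\ell^3\bigr)=O(\varepsilon_\gamma^{1/2})$.
\item The main term is at most $C_\phi T\varepsilon_\gamma^{3/4}$, since $|G^{\phi}_{\gamma,j+1}-G^{\phi}_{\gamma,j-1}|\le C_\phi\varepsilon_\gamma$.
\end{itemize}
Neither oddness nor Lemma~\ref{lem:clean-signed-odd} is needed. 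Even at $\ell=\varepsilon_\gamma^{-1/4}$, the Kac correction $\varepsilon_\gamma^{-1/4}\gamma^2\ell^3=\varepsilon_\gamma^{1/2}$ is absorbed by a crude bound.

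Comparison with the paper. The paper regroups the bracket as
\[
(G^{\phi}_{\gamma,j}-G^{\phi}_{\gamma,j-1})\sigma_{j-1}+(G^{\phi}_{\gamma,j+1}-G^{\phi}_{\gamma,j})\sigma_{j+2}+G^{\phi}_{\gamma,j}(\sigma_{j+2}-\sigma_{j-1}).
\]
The first two pieces carry a bounded prefactor and are handled by the one-block estimate and Lemma~\ref{lem:clean-signed-odd}. The divergent third piece is treated dynamically. The observable $B_j=d_j^2(\sigma_{j+2}-\sigma_{j-1})$ has zero mean on every four-site canonical sector, by the same reflection you exploit, so it is a bounded local exchange gradient. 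This gives $\|W_G\|^2_{\mathcal H^{-1}_N}\le C_\phi\varepsilon_\gamma$ for $W_G=\varepsilon_\gamma\sum_jG^{\phi}_{\gamma,j}B_j$. Lemma~\ref{lem:entropy-KV} then yields the bound $\varepsilon_\gamma\kappa_\gamma(1)/(\delta_\gamma\sqrt{\alpha_\gamma})$.

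You use the same symmetry statically, after the one-block replacement. This is shorter: it avoids the gradient representation, the $\mathcal H^{-1}$ variational bound (whose integration by parts under the non-uniform Gibbs measure needs some care), and Lemma~\ref{lem:clean-signed-odd}.

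The price is that your route relies essentially on $\kappa_\gamma(1)\le C\gamma^2$. With only $\kappa_\gamma(1)\le C\gamma$, which is what the paper uses, the prefactor becomes $\varepsilon_\gamma^{-1}$. The frozen-field cost $\varepsilon_\gamma^{-1}\gamma\ell^2$ then diverges for every $\ell$. The Kipnis--Varadhan route still closes in that case, because time averaging gains the factor $\sqrt{\alpha_\gamma}$.
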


\begin{proof}
Note that when a local bond is changed by the exchange \(\tau_j\sigma:= \sigma^{j,j+1}\), its square is unchanged.  
Thus the only affected neighboring squares are
\begin{equation}\label{eq:diodd}
    d_{j-1}^2(\sigma)- d_{j-1}^2(\tau_j\sigma)
=-2\sigma_{j-1}d_j,
\qquad
d_{j+1}^2(\sigma)-d_{j+1}^2(\tau_j\sigma)
=2\sigma_{j+2}d_j
\end{equation}
and \(d_i^2(\sigma) - d_i^2(\tau_j\sigma) = 0 \) for $i \neq j\pm1$.
Then using \(\kappa_\gamma(1)=\kappa_\gamma(-1)\), we have
\begin{align}\label{eq:aIDE0}
     & \frac{\varepsilon_\gamma^2}{\alpha_\gamma\delta_\gamma}
\sum_{j\in \Lambda_N}d_j(t)\frac12 \sum_{i\in \{j-1,j+1\} }
\kappa_\gamma(i-j)G_{\gamma,i}^{\phi}\left[d_i(t)^2-d_i\bigl(\tau_j(\sigma^N(t/\alpha_\gamma))\bigr)^2
\right] dt \nonumber\\
= & \frac{\varepsilon_\gamma^2\kappa_\gamma(1)}{\alpha_\gamma\delta_\gamma}
\sum_{j\in \Lambda_N} d_j^2
\left[ -G_{\gamma,j-1}^{\phi}\sigma_{j-1} +G_{\gamma,j+1}^{\phi}\sigma_{j+2} \right] \nonumber\\
 = & \frac{\varepsilon_\gamma^2\kappa_\gamma(1)}{\alpha_\gamma\delta_\gamma}\sum_{j\in \Lambda_N} d_j^2
 \left[\big(G_{\gamma,j}^{\phi} -G_{\gamma,j-1}^{\phi}\big)\sigma_{j-1} + \big(G_{\gamma,j+1}^{\phi} - G_{\gamma,j}^{\phi} \big)\sigma_{j+2} \right] + \frac{\varepsilon_\gamma^2\kappa_\gamma(1)}{\alpha_\gamma\delta_\gamma}\sum_{j\in \Lambda_N}  G_{\gamma,j}^{\phi}d_j^2(\sigma_{j+2}-\sigma_{j-1}).
\end{align}
We estimate the first term by one block estimates and Lemma~\ref{lem:clean-signed-odd}. Denote
\[
H_{\gamma,j}^{+,\phi}:=
\varepsilon_\gamma^{-1}(G_{\gamma,j+1}^{\phi}-G_{\gamma,j}^{\phi}),
\qquad
H_{\gamma,j}^{-,\phi}:=
\varepsilon_\gamma^{-1}(G_{\gamma,j}^{\phi}-G_{\gamma,j-1}^{\phi}).
\]
Since \(G_\gamma^\phi\) is the discrete gradient of \(\Delta_{\varepsilon_\gamma}^{-1}\phi\), the coefficients \(H_{\gamma,j}^{\pm,\phi}\) are uniformly bounded in \(j,\gamma\). 
Under Assumption \(({\bf B})\), and  \eqref{ass:clean-bg-scales}, the prefactor \(\varepsilon_\gamma^2\kappa_\gamma(1)/(\alpha_\gamma\delta_\gamma)\) is bounded.
By Theorem~\ref{thm:clean-one-block} and Lemma~\ref{lem:clean-signed-odd} with two local observables \(d_0^2\sigma_2\) and \(d_0^2\sigma_{-1}\), the difference part 
\begin{align}\label{eq:aIDE2}
  & \frac{\varepsilon_\gamma^2\kappa_\gamma(1)}{\alpha_\gamma\delta_\gamma}
\int_0^T
\varepsilon_\gamma\sum_j
\left[
H_{\gamma,j}^{+,\phi}d_j^2\sigma_{j+2}
+H_{\gamma,j}^{-,\phi}d_j^2\sigma_{j-1}
\right]dt \nonumber \\
\le &  \frac{\varepsilon_\gamma^2\kappa_\gamma(1)}{\alpha_\gamma\delta_\gamma}
\mathbb E_{f_{N,0}}\left|\int_0^T \varepsilon_\gamma\sum_j
H^+_{\gamma,j}\,\Phi_{+,\ell}^{\gamma}\bigl(m_j^\ell(\sigma^N(t/\alpha_\gamma))\bigr)dt
+ \int_0^T \varepsilon_\gamma\sum_j
H^-_{\gamma,j}\,\Phi_{-,\ell}^{\gamma}\bigl(m_j^\ell(\sigma^N(t/\alpha_\gamma))\bigr)dt \right| \nonumber\\
& \qquad + C_\phi  \frac{\varepsilon_\gamma^2\kappa_\gamma(1)}{\alpha_\gamma\delta_\gamma} \left( \sqrt{T\alpha_\gamma\ell^3} +T\gamma\ell^2 \right). \nonumber \\
\le & C_{\phi,T}\frac{\varepsilon_\gamma^2\kappa_\gamma(1)}{\alpha_\gamma\delta_\gamma}\varepsilon^{1/8}_{\gamma} \to 0.
\end{align}
For the second term, we set the local observable
\[
B_j(\sigma):=d_j(\sigma)^2(\sigma_{j+2}-\sigma_{j-1}),
\qquad
W_G(\sigma):=\varepsilon_\gamma\sum_jG_{\gamma,j}^{\phi}B_j(\sigma).
\]
Since \(\mu_{N,\gamma,\beta}\) is translation invariant, the integral $\int B_j(\sigma)\,d\mu_{N,\gamma,\beta}(\sigma)$ is independent of \(j\).  
Since \(G_{\gamma}^{\phi}\) is a periodic discrete
gradient, \(\sum_jG_{\gamma,j}^{\phi}=0\). Therefore \(W_G\) is a centered observable.
We next prove the fixed-range \(\mathcal H^{-1}\) bound.
The local observable \(B_j\) has zero average on each such sector: reflection of the four-site block around the exchanges \(j-1\leftrightarrow j+2\) and
\(j\leftrightarrow j+1\), keeps \(d_j^2\) fixed, and changes
\(\sigma_{j+2}-\sigma_{j-1}\) to its negative.  Hence \(B_j\) lies in the
range of the finite graph divergence.  Equivalently, there are bounded local functions \(A_{j,u}\), \(u=j-1,j,j+1\), such that
\[
B_j(\sigma)
=
\sum_{u=j-1}^{j+1}
\left[
A_{j,u}(\sigma^{u,u+1})-A_{j,u}(\sigma)
\right],
\qquad
\sup_{j,u,\sigma}|A_{j,u}(\sigma)|\le C .
\]
The bound is uniform because there are only finitely many four-site sectors, and \(A_{j,u}\) are translates of the corresponding finite list of solutions.

For a test function \(F\) on the canonical sector $\sum_{n,M}$, we have
\[
\left|
\int W_G(\sigma)F(\sigma)\,d\mu_{N,\gamma,\beta}(\sigma)
\right|
\le
C\varepsilon_\gamma
\sum_j |G_{\gamma,j}^{\phi}|
\sum_{u=j-1}^{j+1}
\left(D_u^{N,\gamma}(F)\right)^{1/2}.
\]
Since each global bond is counted only a bounded number of times, by Cauchy--Schwarz inequality and \(\varepsilon_\gamma\sum_j|G_{\gamma,j}^{\phi}|^2\le C_\phi\), we have
\[
\left|
\int W_G F\,d\mu_{N,\gamma,\beta}
\right| \le C_\phi\sqrt{\varepsilon_\gamma}
\left(D^{N,\gamma}(F)\right)^{1/2},
\]
Applying the \(\mathcal H^{-1}\) estimate used in the proof of Theorem~\ref{thm:clean-microscopic-gradient}, we get
\(\|W_G\|_{\mathcal H_N^{-1}}^2\le C_\phi\varepsilon_\gamma \).
In particular, taking \(F=g^2\) with $\|g\|_{L^2(\mu_{N,\gamma,\beta})}=1$, and using
\[
g(\sigma)^2-g(\sigma^{u,u+1})^2
=
\bigl(g(\sigma)-g(\sigma^{u,u+1})\bigr)
\bigl(g(\sigma)+g(\sigma^{u,u+1})\bigr),
\]
we obtain
\[
\left|
\left\langle W_G,g^2\right\rangle_{\mu_{N,\gamma,\beta}}
\right|
\le
C_\phi\sqrt{\varepsilon_\gamma}
\left\langle g,(-\mathscr L_\gamma^N)g
\right\rangle_{\mu_{N,\gamma,\beta}}^{1/2}.
\]
Then using the entropy--Kipnis--Varadhan estimate Lemma~\ref{lem:entropy-KV}, we have
\begin{equation}\label{eq:aIDE1}
    \mathbb E_{f_{N,0}}
\left|
\int_0^T
\frac{\varepsilon_\gamma^2\kappa_\gamma(1)}{\alpha_\gamma\delta_\gamma}
\sum_j
G_{\gamma,j}^{\phi}d_j^2(t)(\sigma_{j+2}^N(t/\alpha_\gamma)-\sigma_{j-1}^N(t/\alpha_\gamma))\,dt
\right|
\le C_{\phi,T}
\frac{\varepsilon_\gamma\kappa_\gamma(1)}
{\delta_\gamma\sqrt{\alpha_\gamma}},
\end{equation}
which tends to $0$ by Assumption \(({\bf B})\) and
\(\kappa_\gamma(1)\le C\gamma\) from Assumption \(({\bf A})\).
Plugging estimates \eqref{eq:aIDE1} and \eqref{eq:aIDE2} into \eqref{eq:aIDE1}, we prove the lemma.
\end{proof}

\begin{lemma}\label{lem:clean-chain-rule-exchange-variation}
Under Assumptions \(({\bf A})-({\bf C})\)
and the block assumptions \eqref{ass:clean-bg-scales}, we have
\begin{align}
    & \mathbb E_{f_{N,0}}
\left| \frac{\varepsilon_\gamma^2}{\alpha_\gamma\delta_\gamma}
\sum_j d_j(t)\frac12
\sum_i
\kappa_\gamma(i-j)G_{\gamma,i}^{\phi}
\left[ P_{\gamma,L}(h_{\gamma,i}\Big(\bigl(\tau_j(\sigma^N(t/\alpha_\gamma))\bigr)\Big) - P_{\gamma,L}\Big(h_{\gamma,i}(\bigl(\sigma^N(t/\alpha_\gamma)\bigr)\Big)
\right]\,dt \right| \nonumber \\
\le & C_\phi \varepsilon^{1/16}_{\gamma}\left(\log(\varepsilon^{-1}_{\gamma})\right)^{1/2}.
\end{align}
\end{lemma}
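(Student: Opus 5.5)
The idea is to compute the exchange variation of $P_{\gamma,L}(h_{\gamma,i})=2-2h_{\gamma,i}^2$ exactly, using the increment formula \eqref{Ex:h}. For the exchange $\tau_j$ we have
\[
h_{\gamma,i}(\tau_j\sigma)-h_{\gamma,i}(\sigma)=d_j\,\eta_{ij},\qquad \eta_{ij}:=\kappa_\gamma(i-j-1)-\kappa_\gamma(i-j).
\]
Substituting this into $P_{\gamma,L}$ gives the identity
\[
P_{\gamma,L}(h_{\gamma,i}(\tau_j\sigma))-P_{\gamma,L}(h_{\gamma,i}(\sigma))
=-4h_{\gamma,i}\,d_j\eta_{ij}-2d_j^2\eta_{ij}^2 .
\]
Multiplying by $d_j$, the first term becomes $-4h_{\gamma,i}d_j^2\eta_{ij}$ and the second becomes $-2d_j^3\eta_{ij}^2$. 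The quantity to control therefore splits into two pieces:
\[
\mathrm{I}=-2\frac{\varepsilon_\gamma^2}{\alpha_\gamma\delta_\gamma}\sum_{i,j}\kappa_\gamma(i-j)\eta_{ij}G^\phi_{\gamma,i}h_{\gamma,i}d_j^2,
\qquad
\mathrm{II}=-\frac{\varepsilon_\gamma^2}{\alpha_\gamma\delta_\gamma}\sum_{i,j}\kappa_\gamma(i-j)\eta_{ij}^2G^\phi_{\gamma,i}d_j^3 .
\]

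Term $\mathrm{II}$ should be a deterministic error. We use $|\eta_{ij}|\le C\gamma^2$, $\sum_j\kappa_\gamma(i-j)\eta_{ij}^2\le C\gamma^4$, $|\Lambda_N|\simeq\varepsilon_\gamma^{-1}$ and $\|G^\phi_\gamma\|_\infty\le C_\phi$. Together these give $|\mathrm{II}|\le C_\phi\varepsilon_\gamma\gamma^4/(\alpha_\gamma\delta_\gamma)$. Under Assumption (\textbf{B}) this is $O(\varepsilon_\gamma^{5/4})$, so it tends to $0$.

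For term $\mathrm{I}$, the first step is to exploit the antisymmetry of the weight. Since $\kappa_\gamma$ is even, $\sum_z\kappa_\gamma(z)\bigl(\kappa_\gamma(z-1)-\kappa_\gamma(z)\bigr)=O(\gamma^3)$, and this sum is exactly zero up to a boundary effect. So the weight $w_{ij}:=\kappa_\gamma(i-j)\eta_{ij}$ has nearly zero total mass in $i$. It also has first moment $\sum_z z\,w(z)\approx-\tfrac12\gamma\int\mathfrak K^2=O(\gamma)$, and $\sum_z|w(z)|\le C\gamma^2$.

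Next, replace $d_j^2$ by its local equilibrium value $P_{\gamma,L}(h_{\gamma,j})=2-2h_{\gamma,j}^2$. Rewrite $\mathrm{I}$ as $\varepsilon_\gamma\sum_j d_j^2\widetilde J_j$, where
\[
\widetilde J_j:=\varepsilon_\gamma^{-1}\sum_i w_{ij}G^\phi_{\gamma,i}h_{\gamma,i}.
\]
This coefficient is bounded by $C_\phi\gamma^2\varepsilon_\gamma^{-1}$. It is not exterior-measurable on a block around $j$, but its dependence on the block spins is only through $h_{\gamma,i}$, which varies by $O(\gamma\ell)$ inside the block.

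We apply Proposition~\ref{prop:deterministic-microscopic-to-kac-mobility-main}, or equivalently Theorem~\ref{thm:clean-one-block} with the frozen-exterior version of $\widetilde J_j$. This yields, with $2-2h^2$ in place of $d_j^2$,
\[
\mathrm{I}=-2\frac{\varepsilon_\gamma}{\alpha_\gamma\delta_\gamma}\sum_{i,j}w_{ij}G^\phi_{\gamma,i}h_{\gamma,i}\bigl(2-2h_{\gamma,j}^2\bigr)+\text{error}.
\]
Here the error carries the prefactor $\gamma^2\varepsilon_\gamma/(\alpha_\gamma\delta_\gamma)\cdot\varepsilon_\gamma^{1/8}$.

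In the replaced expression, the constant part $2$ pairs with $\sum_j w_{ij}\approx0$, so it is negligible. The remaining part $4h_{\gamma,i}h_{\gamma,j}^2$ is cubic in the Kac field. Using $|h_{\gamma,j}-h_{\gamma,i}|\le C\gamma^2|i-j|$, it reduces to $C\gamma^2\cdot\frac{\varepsilon_\gamma}{\alpha_\gamma\delta_\gamma}\varepsilon_\gamma^{-1}\cdot\varepsilon_\gamma\sum_i|h_{\gamma,i}|^3$. We then invoke \eqref{eq:clean-critical-centered-energy} together with $|h|^3\le|h-m_N|^2+C|m_N|$. The size is $\frac{\gamma^2}{\alpha_\gamma\delta_\gamma}\delta_\gamma^2\log(\varepsilon_\gamma^{-1})$, with a square root once Cauchy--Schwarz is applied. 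This is where the factor $\varepsilon_\gamma^{1/16}(\log\varepsilon_\gamma^{-1})^{1/2}$ in the statement should come from.

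The main obstacle is the bookkeeping of powers. The naive prefactor $\gamma^2/(\alpha_\gamma\delta_\gamma)$ does not vanish under Assumption (\textbf{B}). So I first try to gain the extra cancellation from the near-zero mass of $w$. I do this by discrete summation by parts in $i$: move the difference onto $G^\phi_{\gamma,i}h_{\gamma,i}h^2_{\gamma,j}$, which costs a first moment $O(\gamma)$ times a Kac gradient. I then control that Kac gradient by Lemma~\ref{lem:mean-square-kac-gradient}, which is precisely the estimate calibrated with the factor $\varepsilon_\gamma/(\alpha_\gamma\delta_\gamma)$. If this is still insufficient, the fallback is to treat the centered part $d_j^2-P_{\gamma,L}(h_{\gamma,j})$ through the Kac covariance estimate, Theorem~\ref{thm:clean-kac-covariance}, applied with kernel $w$ instead of $\kappa_\gamma$. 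That route gives a bound of the form $\bigl(\text{prefactor}\bigr)\cdot(\varepsilon_\gamma^{1/8})^{1/2}$.
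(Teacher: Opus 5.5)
Your exact expansion of the exchange variation, the deterministic bound on II, and the removal of the constant part of $P_{\gamma,L}(h_{\gamma,j})$ via $\sum_z\kappa_\gamma(z)\eta_\gamma(z)=-\tfrac12\sum_z\eta_\gamma(z)^2=O(\gamma^3)$ are fine; the paper handles these pieces the same way. The gap is in the fluctuation piece of I,
\[
\frac{\varepsilon_\gamma}{\alpha_\gamma\delta_\gamma}\,\varepsilon_\gamma\sum_i G^{\phi}_{\gamma,i}h_{\gamma,i}\sum_j K_\gamma(i-j)\bigl(d_j^2-P_{\gamma,L}(h_{\gamma,j})\bigr),\qquad K_\gamma(r):=\kappa_\gamma(r)\eta_\gamma(r).
\]
Its effective prefactor is $\varepsilon_\gamma\gamma^2/(\alpha_\gamma\delta_\gamma)$; the extra $\varepsilon_\gamma^{-1}$ in your $\widetilde J_j$ is spurious. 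Since $\varepsilon_\gamma\gamma^2/\alpha_\gamma$ stays bounded under (B), this prefactor is of order $\delta_\gamma^{-1}\sim\varepsilon_\gamma^{-1/4}$.

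Your main route bounds $h_{\gamma,i}$ by $1$ and feeds $\|\widetilde J\|_{\ell^\infty}\lesssim\gamma^2$ into Proposition~\ref{prop:deterministic-microscopic-to-kac-mobility-main}. That produces an error $\delta_\gamma^{-1}\varepsilon_\gamma^{1/8}\sim\varepsilon_\gamma^{-1/8}$, which diverges. Even the $\gamma\ell^2$ term of Theorem~\ref{thm:clean-one-block} alone contributes $\delta_\gamma^{-1}\gamma\ell^2=O(1)$.

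The summation by parts through the first moment of $w$ does not fix this. First, $d_j^2$ has no smoothness in $j$. Second, moving the difference onto $G^{\phi}_{\gamma,i}h_{\gamma,i}$ only yields $\gamma\sup_i|h_{\gamma,i+1}-h_{\gamma,i}|\lesssim\gamma^2$, so there is no gain in $\ell^\infty$. Your fallback names the right tool, but ``prefactor $\times\,\varepsilon_\gamma^{1/16}$'' with this prefactor is still $\varepsilon_\gamma^{-3/16}$.

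The missing idea is to use the smallness of the Kac field on this centered piece, not only on the smooth one. Keep $h_{\gamma,i}$ as a factor and apply Cauchy--Schwarz in $(t,i)$, as the paper does:
\begin{itemize}
\item Lemma~\ref{lem:weak-L2-h-from-microscopic} gives $\bigl(\mathbb E_{f_{N,0}}\int_0^T\varepsilon_\gamma\sum_i h_{\gamma,i}^2\,dt\bigr)^{1/2}\lesssim\delta_\gamma(\log\varepsilon_\gamma^{-1})^{1/2}$.
\item Repeating Theorem~\ref{thm:clean-kac-covariance} with the kernel $\gamma^{-2}K_\gamma$, together with $\sup_m|\Phi^{\gamma}_{\Psi,L}(m)-P_{\gamma,L}(m)|\lesssim L^{-1}+\gamma^2L^3$, gives an $L^2$ bound $\gamma^2\varepsilon_\gamma^{1/16}$ for $\sum_jK_\gamma(i-j)\bigl(d_j^2-P_{\gamma,L}(h_{\gamma,j})\bigr)$.
\end{itemize}
The factor $\delta_\gamma$ cancels $\delta_\gamma^{-1}$ and leaves $\frac{\varepsilon_\gamma\gamma^2}{\alpha_\gamma}\varepsilon_\gamma^{1/16}(\log\varepsilon_\gamma^{-1})^{1/2}$. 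This centered piece, not the cubic part, is where the rate in the statement comes from.

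The smooth cubic part needs no summation by parts at all. The bound $|\sum_iK_\gamma(i-j)G^{\phi}_{\gamma,i}h_{\gamma,i}|\le C_\phi\gamma^2$, combined with $\varepsilon_\gamma\sum_jh_{\gamma,j}^2\lesssim\delta_\gamma^2\log\varepsilon_\gamma^{-1}$ in expectation, gives $O(\delta_\gamma\log\varepsilon_\gamma^{-1})$ directly. Incidentally, the Lipschitz bound is $|h_{\gamma,j}-h_{\gamma,i}|\le C\gamma|i-j|$ by Lemma~\ref{lem:eta-gamma-to-zero}, not $C\gamma^2|i-j|$.
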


\begin{proof}
By the exchange increment identity \eqref{Ex:h}, the exchange \(\tau_j\) changes the Kac field $h_{\gamma,k}$ by
\[
h_{\gamma,k}(\tau_j\sigma)-h_{\gamma,k}(\sigma)
=
d_j(\sigma)
\left[
\kappa_\gamma(k-j-1)-\kappa_\gamma(k-j)
\right].
\]
Set
\(
\eta_\gamma(z):=\kappa_\gamma(z-1)-\kappa_\gamma(z).
\)
We decompose
\begin{align*}
    P_{\gamma,L}(h_{\gamma,i}(\tau_j\sigma)) -P_{\gamma,L}(h_{\gamma,i}(\sigma)) = &  -4h_{\gamma,i}(\sigma)d_j(\sigma)\eta_\gamma(i-j) -2d_j(\sigma)^2\eta_\gamma(i-j)^2  \\
    = & d_j(\sigma)B_{\gamma,i,j}(\sigma) + E_{\gamma,i,j}(\sigma),
\end{align*}
where
\[
 B_{\gamma,i,j}(\sigma) := 4h_{\gamma,i}(\sigma)\eta_\gamma(i-j), \qquad E_{\gamma,i,j}(\sigma) := -2d_j(\sigma)^2\eta_\gamma(i-j)^2.
\]
For the linear term we split $P_{\gamma,L}(h_{\gamma,j}) = \left(d_j^2- P_{\gamma,L}(h_{\gamma,j})\right) + P_{\gamma,L}(h_{\gamma,j}) $.
We first control the centered part
\[
\sum_j \left( d_j^2-P_{\gamma,L}(h_{\gamma,j}) \right)
\sum_i\kappa_\gamma(i-j)G_{\gamma,i}^{\phi}
h_{\gamma,i}\eta_\gamma(i-j)
=
\sum_iG_{\gamma,i}^{\phi}h_{\gamma,i}
\sum_jK_\gamma(i-j) \left( d_j^2-P_{\gamma,L}(h_{\gamma,j}) \right).
\]
where $K_\gamma(r):=\kappa_\gamma(r)\eta_\gamma(r)$.
Then by Assumption \(({\bf A})\)
\[
\sum_{r \in \Lambda_N}| K_\gamma(r)|\le C\gamma^2,\qquad
\|K_\gamma\|_{\ell^\infty}\le C\gamma^3,\qquad
\sup_{a,b \in \Lambda_N}\sum_{i \in \Lambda_N}| K_\gamma(i-a) K_\gamma(i-b)|\le C\gamma^3 .
\]
Repeating the covariance expansion of Theorem~\ref{thm:clean-kac-covariance} with \(K_\gamma\) gives
\[
\mathbb E_{f_{N,0}}
\int_0^T
\varepsilon_\gamma\sum_i
\left|
\sum_j \gamma^{-2}K_\gamma(i-j)
\bigl[d_j(t)^2-\Phi_{\Psi,L}^{\gamma}(h_{\gamma,j}(t))\bigr]
\right|^2dt
\le C_{\phi,T}\varepsilon_\gamma^{1/8}.
\]
Moreover, by Lemma \ref{lem:clean-explicit-dsquare}, we have
\[
\sup_m|\Phi_{\Psi,L}^{\gamma}(m)-P_{\gamma,L}(m)|
\le C(L^{-1}+\gamma^2L^3),
\]
Thus
\begin{equation}\label{eq:differentiated-centered-kac-bound}
\mathbb E_{f_{N,0}}
\int_0^T
\varepsilon_\gamma\sum_i
\left|
\sum_jK_\gamma(i-j)\bigl(d_j(t)^2-P_{\gamma,L}(h_{\gamma,j}(t))\bigr)
\right|^2dt
\le C_{\phi,T}\gamma^4\varepsilon_\gamma^{1/8}.
\end{equation}
Therefore, by Lemma~\ref{lem:mean-square-kac-gradient} and Lemma~\ref{lem:weak-L2-h-from-microscopic},
\begin{align}\label{eq:eakerf0}
&
\mathbb E_{f_{N,0}}
\left|
\int_0^T
\frac{\varepsilon_\gamma^2}{\alpha_\gamma\delta_\gamma}
\sum_j\bigl(d_j(t)^2-P_{\gamma,L}(h_{\gamma,j}(t))\bigr)
\sum_i\kappa_\gamma(i-j)G_{\gamma,i}^{\phi}
h_{\gamma,i}(t)\eta_\gamma(i-j)\,dt
\right| \nonumber \\
&\qquad\le
C_\phi
\frac{\varepsilon_\gamma}{\alpha_\gamma\delta_\gamma}
\left(
\mathbb E_{f_{N,0}}
\int_0^T\varepsilon_\gamma\sum_i h_{\gamma,i}(t)^2dt
\right)^{1/2}
\left(
\mathbb E_{f_{N,0}}
\int_0^T\varepsilon_\gamma\sum_i
\left|\sum_jK_\gamma(i-j)B^P_{\gamma,j}(t)\right|^2dt
\right)^{1/2} \nonumber \\
&\qquad\le
C_{\phi,T}
\frac{\varepsilon_\gamma\gamma^2\varepsilon_\gamma^{1/16}}
{\alpha_\gamma}
\left(\log(\varepsilon_\gamma^{-1})\right)^{1/2}.
\end{align}
Now we estimate the smooth part 
\[
4\sum_{j \in \Lambda_N}  P_{\gamma,L}(h_{\gamma,j})\sum_{i \in \Lambda_N} \kappa_\gamma(i-j)G_{\gamma,i}^{\phi}h_{\gamma,i}(\sigma)\eta_\gamma(i-j) = 4\sum_{j \in \Lambda_N}  P_{\gamma,L}(h_{\gamma,j})\sum_{i \in \Lambda_N}G_{\gamma,i}^{\phi}h_{\gamma,i}(\sigma) K_\gamma(i-j).
\]
Since $\sum_{r \in \Lambda_N}| K_\gamma(r)|\le C\gamma^2$ and \(\|G_\gamma^\phi\|_{\ell^\infty}\le C_\phi\), we have
\[
\left| \varepsilon_\gamma\sum_j
\sum_i \kappa_\gamma(i-j) \mathfrak A_{\gamma,j}(\sigma) G_{\gamma,i}^{\phi}B_{\gamma,i,j}(\sigma)
\right| 
\le C_\phi\gamma^3 + C_\phi\gamma^2\,
\varepsilon_\gamma\sum_j
\bigl(h_{\gamma,j}^2+h_{\gamma,j+1}^2\bigr).
\]
Using Lemma~\ref{lem:weak-L2-h-from-microscopic} and $|h_{\gamma,j}| \le 1$, we obtain
\begin{equation}\label{eq:eakerf}
     \mathbb E_{f_{N,0}} \left| \int_0^T \frac{\varepsilon_\gamma^2}{2\alpha_\gamma\delta_\gamma}
\sum_{j \in \Lambda_N}  P_{\gamma,L}(h_{\gamma,j})\sum_{i \in \Lambda_N} \kappa_\gamma(i-j)G_{\gamma,i}^{\phi}\eta_\gamma(i-j)h_{\gamma,i}(\sigma^N(t/\alpha_\gamma))\,dt \right|
 \le C_{\phi,T} \varepsilon_\gamma^{1/8}\left( \log(\varepsilon_\gamma^{-1}) \right)^{\frac{1}{2}}.
\end{equation}
By \( \sum_i
\kappa_\gamma(i-j)
\left(\eta_\gamma(i-j)
\right)^2
\le C\gamma^4 \), we have
\begin{align}\label{eq:assE0}
    \int_0^T \frac{\varepsilon_\gamma^2}{\alpha_\gamma\delta_\gamma}
\sum_j \sum_i \kappa_\gamma(i-j)|G_{\gamma,i}^{\phi}|
|E_{\gamma,i,j}|\,dt \le & \int_0^T \frac{\varepsilon_\gamma^2}{\alpha_\gamma\delta_\gamma}
\sum_j \sum_i \kappa_\gamma(i-j)|G_{\gamma,i}^{\phi}|
C\left(\eta_\gamma(i-j)\right)^2 \,dt \nonumber \\
\le &
C_{\phi,T}
\frac{\varepsilon_\gamma\gamma^4}{\alpha_\gamma\delta_\gamma}.
\end{align}
Combining the three estimates \eqref{eq:eakerf}, \eqref{eq:eakerf0}, and \eqref{eq:assE0} proves the lemma.
\end{proof}

Based on the microscopic-gradient-cancellation in Theorem~\ref{thm:clean-microscopic-gradient},  Lemma~\ref{lem:clean-local-diagonal}, and Lemma~\ref{lem:clean-chain-rule-exchange-variation}, now we show the second-order Boltzmann–Gibbs principle for the microscopic-gradient.

\begin{theorem}\label{cor:clean-nonlinear-drift}
Under Assumptions \(({\bf A})\)--\(({\bf C})\), and the block assumption \eqref{ass:clean-bg-scales}, we have
\begin{equation}
    \int_0^T
\mathcal D_\gamma^{\mathrm{nl}}(\phi;\sigma^N(t/\alpha_\gamma))dt =  -\frac{\beta }{2}
\frac{\varepsilon_\gamma^2}{\alpha_\gamma}
\int_0^T
\left\langle
X_\gamma(t),
\mathcal K_\gamma^\varepsilon\phi
\right\rangle_\gamma dt 
+ \frac{\beta}{6}
\frac{\varepsilon_\gamma^2\delta_\gamma^2}{\alpha_\gamma}
\int_0^T
\left\langle
X_\gamma(t)^3,
\mathcal K_\gamma^\varepsilon\phi
\right\rangle_\gamma dt + r_{\gamma,\mathrm{BG}}^\phi(T),
\end{equation}
where the remainder satisfies
\(
\mathbb E_{f_{N,0}}
\bigl|r_{\gamma,\mathrm{BG}}^\phi(T)\bigr|
\longrightarrow 0\) as \(\gamma\downarrow0.\)
\end{theorem}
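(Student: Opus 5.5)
Starting from the microscopic-bond form \eqref{eq:clean-microscopic-bond-drift}, I write
\[
\mathcal D_\gamma^{\mathrm{nl}}(\phi;\sigma)
=-\frac{\beta}{4}\frac{\varepsilon_\gamma^2}{\alpha_\gamma\delta_\gamma}\sum_j d_j\sum_i\kappa_\gamma(i-j)G^\phi_{\gamma,i}P_{\gamma,L}(h_{\gamma,i})
-\frac{\beta}{4}\frac{\varepsilon_\gamma^2}{\alpha_\gamma\delta_\gamma}\sum_j d_jU_j
+r^{(0)}_{\gamma,-1}(\phi;t),
\]
where $U_j$ is the Kac-averaged Boltzmann--Gibbs error from the previous subsection. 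The last term is already negligible by \eqref{eq:Hminus-one-nonlinear-error}. For the middle term I split $U_j=U_j^{\mathrm s}+U_j^{\mathrm a}$. The symmetric part is controlled by Theorem~\ref{thm:clean-microscopic-gradient}.

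For the antisymmetric part, $U_j^{\mathrm a}$ splits exactly into two pieces. The first is the local-diagonal variation of $d_{j\pm1}^2$, which is handled by Lemma~\ref{lem:clean-local-diagonal}. The second is the exchange variation of $P_{\gamma,L}(h_{\gamma,i})$, which is handled by Lemma~\ref{lem:clean-chain-rule-exchange-variation}. Together these three results show that the $U_j$ contribution goes to $0$ in $L^1(\mathbb P_{f_{N,0}})$ after time integration. They are the whole content of the second-order Boltzmann--Gibbs replacement.

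What remains is the closed term, where I substitute $P_{\gamma,L}(m)=2-2m^2$ and handle the two pieces separately. For the constant piece $2$, I use the Kac increment identity \eqref{eq:kac-increment-as-bond-average}, $\sum_j\kappa_\gamma(i-j)d_j=-(h_{\gamma,i+1}-h_{\gamma,i})=-\delta_\gamma\varepsilon_\gamma\nabla_{\varepsilon_\gamma}X_\gamma$. This turns the piece into $\frac{\beta}{2}\frac{\varepsilon_\gamma^3}{\alpha_\gamma}\sum_i\nabla_{\varepsilon_\gamma}X_\gamma\,\nabla_{\varepsilon_\gamma}(\mathcal K_\gamma^\varepsilon\Delta_{\varepsilon_\gamma}^{-1}\phi)$. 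Summation by parts, with the sign convention $\Delta_{\varepsilon_\gamma}=\nabla^*\nabla$ and the commutation $\Delta_{\varepsilon_\gamma}\mathcal K^\varepsilon_\gamma\Delta^{-1}_{\varepsilon_\gamma}\phi=\mathcal K^\varepsilon_\gamma\phi$, then gives $-\frac{\beta}{2}\frac{\varepsilon_\gamma^2}{\alpha_\gamma}\langle X_\gamma,\mathcal K^\varepsilon_\gamma\phi\rangle_\gamma$, exactly as in the derivation of \eqref{eq:linear-drift-exact}.

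For the quadratic piece $-2h_{\gamma,i}^2=-2\delta_\gamma^2X_\gamma^2$, the product $X_\gamma(x_i)^2\nabla_{\varepsilon_\gamma}X_\gamma(x_i)$ is a discrete derivative up to a chain-rule error. I use the exact identity $a^2(b-a)=\frac13(b^3-a^3)-\frac13(b-a)^2(b+2a)$ with $a=h_{\gamma,i}$ and $b=h_{\gamma,i+1}$. The leading part $\frac13\nabla_{\varepsilon_\gamma}(X_\gamma^3)$, after the same summation by parts, produces $\frac{\beta}{6}\frac{\varepsilon_\gamma^2\delta_\gamma^2}{\alpha_\gamma}\langle X_\gamma^3,\mathcal K^\varepsilon_\gamma\phi\rangle_\gamma$, with the sign fixed exactly as in the constant term. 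The chain-rule defect is bounded pointwise by $C\frac{\varepsilon_\gamma}{\alpha_\gamma\delta_\gamma}\,\varepsilon_\gamma\sum_i|h_{\gamma,i+1}-h_{\gamma,i}|^2\,\|G^\phi_\gamma\|_\infty$, using $|h|\le1$. Lemma~\ref{lem:mean-square-kac-gradient} makes this $O(\gamma^2+\gamma^2\frac{\alpha_\gamma}{\varepsilon_\gamma^2}\log\varepsilon_\gamma^{-1})\to0$.

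The main obstacle is bookkeeping rather than new estimates. The prefactors $\frac{\beta}{4}\cdot2$ and $\frac{\beta}{4}\cdot\frac{2}{3}$ must match $\frac{\beta}{2}$ and $\frac{\beta}{6}$, and the signs must be tracked consistently through the relation between $d_j$ and the Kac gradient. I also need to confirm that each remainder from the three cited results carries the prefactor $\frac{\varepsilon_\gamma^2}{\alpha_\gamma\delta_\gamma}$ in the normalization those results were stated with, so that they sum to an $r^\phi_{\gamma,\mathrm{BG}}(T)$ that is $o(1)$ in $L^1$.
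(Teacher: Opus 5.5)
Your proposal is correct and is essentially the paper's proof. It uses the same split $U_j=U_j^{\mathrm s}+U_j^{\mathrm a}$ controlled by Theorem~\ref{thm:clean-microscopic-gradient}, Lemma~\ref{lem:clean-local-diagonal} and Lemma~\ref{lem:clean-chain-rule-exchange-variation}, then a discrete chain rule whose defect is absorbed by Lemma~\ref{lem:mean-square-kac-gradient}, and a final summation by parts; your identity $a^2(b-a)=\frac13(b^3-a^3)-\frac13(b-a)^2(b+2a)$ is just the paper's secant coefficient $\mathfrak A_{\gamma,i}$ for $Q_{\gamma,L}(m)=2m-\frac23 m^3$, written out piece by piece.
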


\begin{proof}
Since $U_j(t,\sigma) = U_j^{\mathrm s}(t,\sigma)+U_j^{\mathrm a}(t,\sigma)$, the target integral is the sum of the symmetric contribution with
\(U_j^{\mathrm s}\) and the antisymmetric contribution with \(U_j^{\mathrm a}\).

By Theorem~\ref{thm:clean-microscopic-gradient}, the symmetric part satisfies
\begin{align}
   \mathbb E_{f_{N,0}}
\left| \int_0^T \frac{\varepsilon_\gamma^2}{\alpha_\gamma\delta_\gamma}
\sum_jd_j(t)U_j^{\mathrm s}(t,\sigma^N(t/\alpha_\gamma))dt
\right| \le C_{\phi,T}\varepsilon_\gamma^{1/8}.
\end{align}
From the definition of \(U_j^{\mathrm a}\), the antisymmetric part
\[
\begin{aligned}
d_jU_j^{\mathrm a}
={}&
d_j\frac12\sum_i\kappa_\gamma(i-j)G_{\gamma,i}^{\phi}
\left[d_i^2(\sigma)-d_i^2(\tau_j\sigma)\right]
+
d_j\frac12\sum_i\kappa_\gamma(i-j)G_{\gamma,i}^{\phi}
\left[
\mathfrak A_{\gamma,i}(\tau_j\sigma)-\mathfrak A_{\gamma,i}(\sigma)
\right].
\end{aligned}
\]
The first term is the antisymmetric part coming from \(d_i^2\).
Note that only bonds whose square changes under \(\tau_j\) are
\(i=j-1\) and \(i=j+1\)). Therefore
\begin{equation}
    \frac{\varepsilon_\gamma^2}{\alpha_\gamma\delta_\gamma}
\sum_jd_j(t)
\frac12 \sum_i\kappa_\gamma(i-j)G_{\gamma,i}^{\phi}
\left[
d_i(t)^2-d_i\bigl(\tau_j(\sigma^N(t/\alpha_\gamma))\bigr)^2
\right] = \mathscr A_{\gamma}^{\mathrm{ld},\phi}(t) .
\end{equation}
The remaining antisymmetric term is $\mathscr L_{\gamma}^{\mathrm{cr},\phi}(t)$.
Thus by Lemma~\ref{lem:clean-local-diagonal} and  Lemma~\ref{lem:clean-chain-rule-exchange-variation}, 
\begin{align}
     \mathbb E_{f_{N,0}} \left| \int_0^T \frac{\varepsilon_\gamma^2}{\alpha_\gamma\delta_\gamma} \sum_jd_j(t)U^{\mathrm a}_j(t,\sigma^N(t/\alpha_\gamma))dt \right| \le  C_\phi \varepsilon^{1/16}_{\gamma}\left(\log(\varepsilon^{-1}_{\gamma})\right)^{1/2}
\end{align}
Combining the symmetric estimate and antisymmetric estimate, we obtain 
\[
\mathbb E_{f_{N,0}}
\left|
\int_0^T
\frac{\varepsilon_\gamma^2}{\alpha_\gamma\delta_\gamma}
\sum_j d_j(t)U_j(t,\sigma^N(t/\alpha_\gamma))\,dt
\right| \le C_{\phi,T} \varepsilon^{1/16}_{\gamma}\left(\log(\varepsilon^{-1}_{\gamma})\right)^{1/2}.
\]
Consider the discrete chain-rule coefficient
\[
\mathfrak A_{\gamma,i}(t)
:=
\begin{cases}
\dfrac{
Q_{\gamma,L}(h_{\gamma,i+1}(t))
-
Q_{\gamma,L}(h_{\gamma,i}(t))
}{
h_{\gamma,i+1}(t)-h_{\gamma,i}(t)
},
& h_{\gamma,i+1}(t)\neq h_{\gamma,i}(t),
\\[1.2em]
P(h_{\gamma,i}(t)),
& h_{\gamma,i+1}(t)=h_{\gamma,i}(t),
\end{cases}
\]
so that
\[
\mathfrak A_{\gamma,i}(t)
\bigl(h_{\gamma,i+1}(t)-h_{\gamma,i}(t)\bigr)
=
Q_{\gamma,L}(h_{\gamma,i+1}(t))
-
Q_{\gamma,L}(h_{\gamma,i}(t)).
\]
Since \(\|h_{\gamma,i}\|_{\ell^{\infty}}\le1\), and 
\(
P_{\gamma,L}(h_{\gamma,i})-\mathfrak A_{\gamma,i}
= -\frac23
\bigl(h_{\gamma,i+1}-h_{\gamma,i}\bigr)
\bigl(h_{\gamma,i+1}+2h_{\gamma,i}\bigr),
\)
by Lemma \ref{lem:mean-square-kac-gradient} we have
\begin{equation}
    \mathbb E_{f_{N,0}}
\left|
\int_0^T
\frac{\varepsilon_\gamma^2}{\alpha_\gamma\delta_\gamma}
\sum_iG_{\gamma,i}^{\phi}
\bigl(P_{\gamma,L}(h_{\gamma,i})-\mathfrak A_{\gamma,i}\bigr)
g_{\gamma,i}\,dt
\right|  \le C_{\phi,T} \varepsilon^{1/16}_{\gamma}\left(\log(\varepsilon^{-1}_{\gamma})\right)^{1/2}.
\end{equation}
Therefore
\[
\int_0^T
\mathcal D_\gamma^{\mathrm{nl}}(\phi;\sigma_t)\,dt
=
\frac{\beta}{4}
\int_0^T
\frac{\varepsilon_\gamma^2}{\alpha_\gamma\delta_\gamma}
\sum_iG_{\gamma,i}^{\phi}
\mathfrak A_{\gamma,i}(t)g_{\gamma,i}(t)\,dt
+o(1).
\]
Using discrete summation by parts on the torus, for every smooth mean-zero \(\phi\), we have
\[
\begin{aligned}
 \int_0^T \mathcal D_\gamma^{\mathrm{nl}}(\phi;\sigma^N(t/\alpha_\gamma))dt = &
\frac{\beta}{4}
\int_0^T
\frac{\varepsilon_\gamma^2}{\alpha_\gamma\delta_\gamma}
\sum_i
G_{\gamma,i}^{\phi}
\left[
Q_{\gamma,L}(h_{\gamma,i+1}(t)) - Q_{\gamma,L}(h_{\gamma,i}(t))
\right]dt + r_{\gamma,\mathrm{BG}}^\phi(T)\\
= & -\frac{\beta}{4}
\int_0^T
\frac{\varepsilon_\gamma^2}{\alpha_\gamma\delta_\gamma}
\left\langle
Q_{\gamma,L}(h_\gamma(t)),
\mathcal K_\gamma^\varepsilon\phi
\right\rangle_\gamma dt
+ r_{\gamma,\mathrm{BG}}^\phi(T) \\
= & -\frac{\beta}{4}
\int_0^T \frac{\varepsilon_\gamma^2}{\alpha_\gamma\delta_\gamma}
\left\langle
2 \delta_\gamma X_\gamma
 -\frac{2}{3}\delta_\gamma^3X_\gamma^3,
\mathcal K_\gamma^\varepsilon\phi
\right\rangle_\gamma dt
+ r_{\gamma,\mathrm{BG}}^\phi(T). \\
\end{aligned}
\]
The proof is completed.
\end{proof}

\begin{corollary}[Discrete drift form of the approximate SPDE]\label{cor:discrete-spde-drift-form}
Under Assumptions \(({\bf A})\)--\(({\bf C})\), $X_{\gamma}$ satisfies the approximate discrete SPDE 
\begin{equation}
\partial_t X_\gamma = -\nu_\gamma\Delta_{\varepsilon_\gamma}^2X_\gamma
-A_\gamma\Delta_{\varepsilon_\gamma}X_\gamma
+\chi_\gamma\Delta_{\varepsilon_\gamma} X_\gamma^3
+\dot M_\gamma+R_\gamma,
\label{eq:formal-discrete-spde-drift}
\end{equation}
where
\[
\nu_\gamma=
\frac{\mathfrak m_{2,\gamma}}{4}
\frac{\varepsilon_\gamma^4}{\alpha_\gamma\gamma^2},
\quad
\chi_\gamma=
\frac{\beta}{6}
\frac{\varepsilon_\gamma^2\delta_\gamma^2}{\alpha_\gamma}, \quad A_\gamma := \frac{\varepsilon_\gamma^2}{\alpha_\gamma} \left( \frac12-\beta\kappa_\gamma(1) - \frac{\beta}{2} \right),
\]
for every fixed \(T>0\) and \(t\in[0,T]\), the remainder
\(
R_{\gamma,-1}^{\phi}(t)\rightarrow0 \quad\text{in }L^1(\Omega).
\) as $\gamma \to 0$, and \(\dot M_\gamma\) denotes the remaining Dynkin martingale noise.
Moreover, for every mean-zero test function \(\phi\), the extension \(\widetilde R_\gamma:=\mathrm{Ext}_\gamma R_\gamma\) satisfies
\[
R_{\gamma,-1}^{\phi}(t)
:=
\int_0^t
\bigl\langle
\widetilde R_\gamma(s),(-\Delta)^{-1}\phi
\bigr\rangle\,ds \to 0
\]
in probability, uniformly for \(t\in[0,T]\).  
\end{corollary}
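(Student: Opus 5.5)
The corollary is obtained by assembling the drift decomposition \eqref{eq:Hminus-one-drift-summary} with the Boltzmann--Gibbs replacement of Theorem~\ref{cor:clean-nonlinear-drift}, and then reading the result in the $H^{-1}$-tested weak form. Fix a mean-zero $\phi\in C^\infty(\mathbb T)$ and set $\psi_\gamma=\Delta_{\varepsilon_\gamma}^{-1}\phi$. The Dynkin decomposition \eqref{eq:dynkin}, applied to $\langle X_\gamma,\psi_\gamma\rangle_\gamma$, gives
\[
\langle X_\gamma(t),\psi_\gamma\rangle_\gamma=\langle X_\gamma(0),\psi_\gamma\rangle_\gamma+\int_0^t\bigl(\mathcal D^{\mathrm{lin}}_\gamma+\mathcal D^{\mathrm{nl}}_\gamma+\mathcal D^{\mathrm{rem}}_\gamma\bigr)(\phi;\sigma^N(s/\alpha_\gamma))\,ds+M^\gamma_t(\psi_\gamma).
\]
The three drift pieces are handled in turn.

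\emph{Linear part.} Theorem~\ref{thm:expand linear part} replaces $\int_0^t\mathcal D^{\mathrm{lin}}_\gamma$ by $-A'_\gamma\int\langle X_\gamma,\phi\rangle_\gamma-\nu_\gamma\int\langle X_\gamma,\Delta_{\varepsilon_\gamma}\phi\rangle_\gamma$, up to an $L^2$-small error.

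\emph{Nonlinear part.} Theorem~\ref{cor:clean-nonlinear-drift} yields $-\frac{\beta}{2}\frac{\varepsilon_\gamma^2}{\alpha_\gamma}\int\langle X_\gamma,\mathcal K^\varepsilon_\gamma\phi\rangle_\gamma+\chi_\gamma\int\langle X_\gamma^3,\mathcal K^\varepsilon_\gamma\phi\rangle_\gamma$, with an $L^1$-small error.

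\emph{Remainder part.} Theorem~\ref{thm:remainder-expansion} makes $\int_0^t\mathcal D^{\mathrm{rem}}_\gamma$ uniformly small. The same applies to the Taylor error $r^{(0)}_{\gamma,-1}$ by \eqref{eq:Hminus-one-nonlinear-error}.

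The key bookkeeping step is to combine the two $\langle X_\gamma,\cdot\rangle$ terms of order $\varepsilon_\gamma^2/\alpha_\gamma$, which individually diverge. In the nonlinear term I replace $\mathcal K^\varepsilon_\gamma\phi$ by $\phi$ using the expansion \eqref{eq:Kac-test-expansion}. The error has the form
\[
\tfrac{\beta}{2}\tfrac{\varepsilon_\gamma^2}{\alpha_\gamma}\Bigl\langle X_\gamma,\tfrac{\mathfrak m_{2,\gamma}}{2}\bigl(\tfrac{\varepsilon_\gamma}{\gamma}\bigr)^2\Delta_{\varepsilon_\gamma}\phi+r_\gamma(\phi)\Bigr\rangle_\gamma .
\]
The prefactor of its first summand is of order $\nu_\gamma$, so it is bounded. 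This summand therefore contributes a correction to the bilaplacian coefficient, which I will track and then absorb into $\nu_\gamma$, or possibly argue is matched by the stated normalization. The $r_\gamma$ summand vanishes exactly as in the proof of Theorem~\ref{thm:expand linear part}. After this replacement, the coefficients $A'_\gamma$ and $\frac{\beta}{2}\frac{\varepsilon_\gamma^2}{\alpha_\gamma}$ combine into $A_\gamma$ of Assumption (\textbf{B}), which converges to $A$. This cancellation is exactly the critical tuning.

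The cubic term needs the analogous replacement $\mathcal K^\varepsilon_\gamma\phi\to\phi$. Its error is
\[
\chi_\gamma\langle X_\gamma^3,(\mathcal K^\varepsilon_\gamma-1)\phi\rangle_\gamma\lesssim(\varepsilon_\gamma/\gamma)^2\,\delta_\gamma^{-3}\,\varepsilon_\gamma\sum_i|h_{\gamma,i}|^3 .
\]
I expect this to be the main obstacle, because $X_\gamma^3$ is not uniformly integrable from the available bounds. I would control it with the cubic estimate \eqref{eq:conloc2}, namely $|h|^3\lesssim|h-m_N|^2+|m_N|$, combined with Lemma~\ref{lem:weak-L2-h-from-microscopic}. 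This gives an $L^1$ bound of order $(\varepsilon_\gamma/\gamma)^2\delta_\gamma^{-1}\log\varepsilon_\gamma^{-1}$, which tends to $0$ under Assumption (\textbf{B}) since $(\varepsilon_\gamma/\gamma)^2\delta_\gamma^{-1}\sim\varepsilon_\gamma^{1/4}$. I would also pass between $\langle\cdot,\phi\rangle_\gamma$ and $\langle\cdot,\Delta_{\varepsilon_\gamma}\psi_\gamma\rangle_\gamma$ using summation by parts, so that the terms read as $\langle-\nu_\gamma\Delta^2_{\varepsilon_\gamma}X_\gamma-A_\gamma\Delta_{\varepsilon_\gamma}X_\gamma+\chi_\gamma\Delta_{\varepsilon_\gamma}X_\gamma^3,\psi_\gamma\rangle_\gamma$. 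Defining $R_\gamma$ as the sum of all collected errors then gives \eqref{eq:formal-discrete-spde-drift}, with $R^\phi_{\gamma,-1}(t)\to0$ in $L^1(\Omega)$ for each fixed $t$.

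\emph{Uniformity in $t$ and the continuum pairing.} Every error above is the time integral of a quantity whose $L^1$ bound is monotone in the horizon. I would therefore bound $\sup_{t\le T}|R^\phi_{\gamma,-1}(t)|$ by the integral of the absolute integrand over $[0,T]$ wherever possible. For the Kipnis--Varadhan-type pieces, the estimates are in fact stated for arbitrary $T$, and I would upgrade them with a maximal inequality: either apply the argument to a fine time grid and use Lipschitz bounds between grid points, or use the standard supremum version of Lemma~\ref{lem:entropy-KV}. Finally, Lemma~\ref{lem:fourier-extension-operator} and the symbol comparison Lemma~\ref{lem:symbol_bilaplacian} show that $\langle\widetilde R_\gamma,(-\Delta)^{-1}\phi\rangle$ differs from the lattice pairing $\langle R_\gamma,\psi_\gamma\rangle_\gamma$ by $O(\varepsilon_\gamma^2)$ times bounded quantities. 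This yields convergence to $0$ in probability, uniformly on $[0,T]$.
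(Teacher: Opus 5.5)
Your route is the paper's route. Its proof is exactly this assembly of Theorems~\ref{thm:remainder-expansion}, \ref{thm:expand linear part} and \ref{cor:clean-nonlinear-drift}, and it absorbs only the cubic difference $\langle X_\gamma^3,(\mathcal K^\varepsilon_\gamma-1)\phi\rangle_\gamma$ into $R_\gamma$. But the step you call bookkeeping is a genuine gap. You cannot close it by quoting those theorems, and the paper's proof passes over it without comment.

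First, the signs do not cancel. As printed, Theorem~\ref{thm:expand linear part} contributes $-A'_\gamma\langle X_\gamma,\phi\rangle_\gamma$ and Theorem~\ref{cor:clean-nonlinear-drift} contributes $-\frac{\beta}{2}\frac{\varepsilon_\gamma^2}{\alpha_\gamma}\langle X_\gamma,\mathcal K^\varepsilon_\gamma\phi\rangle_\gamma$. Together they give the coefficient $\frac{\varepsilon_\gamma^2}{\alpha_\gamma}\bigl(\frac12-\beta\kappa_\gamma(1)+\frac{\beta}{2}\bigr)\sim\varepsilon_\gamma^{-1/2}$, not $A_\gamma$.

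Second, the term you flag is of order one. By \eqref{eq:Kac-test-expansion}, $\frac{\beta}{2}\frac{\varepsilon_\gamma^2}{\alpha_\gamma}\langle X_\gamma,(\mathcal K^\varepsilon_\gamma-1)\phi\rangle_\gamma=\beta\nu_\gamma\langle X_\gamma,\Delta_{\varepsilon_\gamma}\phi\rangle_\gamma+o(1)$. It cannot go into $R_\gamma$, and absorbing it into $\nu_\gamma$ changes the coefficient the statement fixes. Taking the cited results at face value, no consistent choice works. Keeping the printed signs gives bilaplacian coefficient $(1+\beta)\nu_\gamma\to2\nu$. Flipping either contribution to force the critical cancellation leaves $\pm(1-\beta)\nu_\gamma\to0$.

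The missing idea is that \eqref{eq:linear-drift-exact} counts the Kac convolution twice. Summing by parts,
$\sum_i d_i\,\nabla_{\varepsilon_\gamma}(\mathcal K^\varepsilon_\gamma\psi_\gamma)(\varepsilon_\gamma i)=\pm\varepsilon_\gamma\sum_i\sigma_i(\mathcal K^\varepsilon_\gamma\phi)(\varepsilon_\gamma i)=\pm\delta_\gamma\langle X_\gamma,\phi\rangle_\gamma$, because $X_\gamma=\delta_\gamma^{-1}h_\gamma$ already contains $\kappa_\gamma$.
So the linear current gives $\pm A'_\gamma\langle X_\gamma,\phi\rangle_\gamma$ with no fourth-order part. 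The whole bilaplacian comes from the $\frac{\beta}{2}$-term, with coefficient $\beta\nu_\gamma$. There the second convolution is genuine, since the field enters through $h_{\gamma,i+1}-h_{\gamma,i}$.

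Read the relative sign directly from $j_i\approx(\frac12-\beta\kappa_\gamma(1))d_i+\frac{\beta}{4}d_i^2(h_{\gamma,i+1}-h_{\gamma,i})$. The first current is down-gradient and the second is up-gradient, so in any fixed convention for $\Delta_{\varepsilon_\gamma}$ they enter with opposite signs, and that produces $A_\gamma$. Since $A_\gamma\to A$ while $\varepsilon_\gamma^2/\alpha_\gamma\to\infty$, and $\kappa_\gamma(1)=O(\gamma^2)$, you get $1-\beta=O(\varepsilon_\gamma^{1/2})$. The discrepancy $(1-\beta)\nu_\gamma\langle X_\gamma,\Delta_{\varepsilon_\gamma}\phi\rangle_\gamma$ is then $o(1)$ in $L^1$ by Lemma~\ref{lem:weak-L2-h-from-microscopic}, and that is what may legitimately go into $R_\gamma$.

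The same computation shows that the $\frac{\beta}{2}$-term and the cubic term come from one primitive $\frac{\beta}{4}Q_{\gamma,L}(h)$ with $Q_{\gamma,L}(h)=2h-\frac23h^3$. Hence $A_\gamma$ and $\chi_\gamma$ enter the tested identity with the same sign. The displayed equation pairs $-A_\gamma$ with $+\chi_\gamma$, so those signs must be corrected, not reproduced.

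A smaller slip concerns your cubic error. With $|h|^3\lesssim|h-m_N|^2+|m_N|$ and $|m_N|\sim\delta_\gamma$, the $|m_N|$ part of your bound is of order $(\varepsilon_\gamma/\gamma)^2\delta_\gamma^{-2}\sim1$. Use $|h|^3\le h^2\le2(h-m_N)^2+2m_N^2$ instead to obtain your rate $(\varepsilon_\gamma/\gamma)^2\delta_\gamma^{-1}\log\varepsilon_\gamma^{-1}$.
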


\begin{proof}
Let \(\phi\in C^\infty(\mathbb T)\) be mean-zero and set \(\psi_\gamma=\Delta_{\varepsilon_\gamma}^{-1}\phi\). 
Using Theorem~\ref{thm:remainder-expansion}, Theorem~\ref{thm:expand linear part}, Theorem \ref{cor:clean-nonlinear-drift}, and Assumption \(({\bf B})\), we have the Dynkin decomposition
\begin{align}
\langle X_\gamma(t),\phi\rangle_{-1,\gamma} - \langle X_\gamma(0),\phi\rangle_{-1,\gamma}
&= \int_0^t
\Bigl[ -A_\gamma\langle X_\gamma(s),\phi\rangle_\gamma
-\nu_\gamma \langle X_\gamma(s),\Delta_{\varepsilon_\gamma}\phi\rangle_\gamma +\chi_\gamma \langle X_\gamma(s)^3,\phi\rangle_\gamma \Bigr]ds \nonumber\\
&\qquad  + M_{\gamma,-1}^{\phi}(t)  + R_{\gamma,-1}^{\phi}(t),
\end{align}
Here, the martingale is given by $M_{\gamma,-1}^{\phi}(t):=M_t^\gamma(\psi_\gamma)$, and the difference between \(\langle X_\gamma^3,\mathcal K_\gamma^\varepsilon\phi\rangle_\gamma\) and \(\langle X_\gamma^3,\phi\rangle_\gamma\) is absorbed into \(R_{\gamma,-1}^{\phi}\), since \(\mathcal K_\gamma^\varepsilon\phi-\phi\to0\) for smooth \(\phi\).
The formal strong form \eqref{eq:formal-discrete-spde-drift} follows by discrete summation by parts in the \(H^{-1}\)-testing convention.
\end{proof}

\section{Convergence of the noise term}\label{sec:noise}

In this section we identify the martingale part in the Dynkin decomposition and show that,
after the conservative replacement of the local mobility, it converges to the Gaussian noise
term in the limiting stochastic Cahn--Hilliard equation. 
Unlike the \(H^{-1}\)-test \(\psi_\gamma=\Delta_{\varepsilon_\gamma}^{-1}\phi\) in Section~\ref{sec:current}, we test \( X_\gamma\) with smooth test functions in the \(L^2\) inner product as a simplification for the martingale part.
More precisely, for every smooth test function \(\phi\in C^\infty(\mathbb T)\), we study the Dynkin martingale
\[
M_t^\gamma(\phi)
:=
\langle X_\gamma(t),\phi\rangle_\gamma
-
\langle X_\gamma(0),\phi\rangle_\gamma
-
\int_0^t
\Bigl(\alpha_\gamma^{-1}\mathscr L_\gamma^N\langle X_\gamma,\phi\rangle_\gamma\Bigr)
\Bigl(\sigma^N(\alpha_\gamma^{-1}s)\Bigr)\,ds.
\]
Then by using the convergence of semigroup, we will show that the associated discrete stochastic convolution \(Z_\gamma\) converges to the stochastic convolution term
\(
Z(t):= \sigma_{*}\int_0^t S(t-r) \nabla \cdot\xi(r)dr.
\)

\subsection{Convergence of the discrete martingale}

The goal of this subsection is twofold:

\begin{enumerate}
\item[(i)] to compute the predictable quadratic variation of \(M^\gamma(\phi)\) and identify
its limit as
\(
\sigma_*^2\,t\int_{\mathbb T}|\nabla\phi(x)|^2\,dx;
\)
\item[(ii)] to deduce, by a martingale central limit theorem, that the family
\(\{M^\gamma(\phi)\}_\gamma\) converges to the centered Gaussian martingale associated with the
conservative noise \(\sigma_*\nabla\xi\).
\end{enumerate}

The structure in the Dynkin martingale $M_t^\gamma(\phi) $ is parallel to the multiscale analysis of the drift part, but simpler: only the background mobility coefficient survives in the quadratic variation.

\begin{theorem}\label{thm:martingale-bracket-corrected-rate}
Assume, in addition to Assumptions \(({\bf A})\)--\(({\bf C})\).
For every \(\phi,\psi\in C^\infty(\mathbb T)\), we have
\[
\langle M^\gamma(\phi),M^\gamma(\psi)\rangle_t
\longrightarrow
\sigma_*^2\,t\int_{\mathbb T}\partial_x\phi(x)\,\partial_x\psi(x)\,dx
\]
in \(L^1(\mathbb P_{f_{N,0}})\), uniformly for \(t\in[0,T]\).
Equivalently, the martingale process $M_{\gamma}$ converges to the conservative Gaussian martingale $\sigma_*\nabla\cdot W(t,x)$ associated with the divergence-type noise $\sigma_*\partial_x\xi$.
\end{theorem}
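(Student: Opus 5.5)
We will compute the predictable bracket from the carré du champ of the jump process. Since each exchange across bond $(i,i+1)$ changes $\langle X_\gamma,\phi\rangle_\gamma$ by
\[
\varepsilon_\gamma\delta_\gamma^{-1}\sum_k\phi(x_k)\bigl[h_\gamma(\sigma^{i,i+1};k)-h_\gamma(\sigma;k)\bigr]
=\varepsilon_\gamma\delta_\gamma^{-1}\,d_i\,\bigl[(\mathcal K_\gamma^\varepsilon\phi)(x_{i+1})-(\mathcal K_\gamma^\varepsilon\phi)(x_i)\bigr]
\]
(up to the sign convention in \eqref{Ex:h}), polarization gives
\[
\langle M^\gamma(\phi),M^\gamma(\psi)\rangle_t=\int_0^t\frac{\varepsilon_\gamma^4}{\alpha_\gamma\delta_\gamma^2}\sum_{i}q_i(s)\,\nabla_{\varepsilon_\gamma}(\mathcal K_\gamma^\varepsilon\phi)(x_i)\,\nabla_{\varepsilon_\gamma}(\mathcal K_\gamma^\varepsilon\psi)(x_i)\,ds ,
\]
with $q_i=d_i^2c_{i,i+1}$. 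Writing $\varepsilon_\gamma^4/(\alpha_\gamma\delta_\gamma^2)=\widetilde\sigma_\gamma^2\,\varepsilon_\gamma$, the bracket is $\widetilde\sigma_\gamma^2\int_0^t\varepsilon_\gamma\sum_i J_i\,q_i\,ds$ with the deterministic, uniformly bounded coefficient $J_i=\nabla_{\varepsilon_\gamma}\mathcal K_\gamma^\varepsilon\phi\,\nabla_{\varepsilon_\gamma}\mathcal K_\gamma^\varepsilon\psi(x_i)$. By \eqref{eq:Kac-test-expansion}, $J_i$ converges uniformly to $\partial_x\phi\,\partial_x\psi(x_i)$.

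Next we simplify the mobility. By Lemma~\ref{lem:eta-gamma-to-zero}, $\beta\Delta_iH_\gamma=a_i+b_i=O(\gamma)$ uniformly, so $c_{i,i+1}=\tfrac12+O(\gamma)$ deterministically. Hence $q_i=\tfrac12 d_i^2+O(\gamma)$, and the error contributes $O(\gamma t)$ to the bracket. We then replace $d_i^2$ by $\Phi^\gamma_{\Psi,L}(h_{\gamma,i})$ using Proposition~\ref{prop:deterministic-microscopic-to-kac-mobility-main}, at cost $C\|J\|_\infty\varepsilon_\gamma^{1/8}$ in $L^1$. Since $J$ does not depend on $t$, the estimate is uniform in $t\le T$: apply it on $[0,t]$ and control the supremum over $t$ by a finite grid in $t$ together with the Lipschitz bound $|\partial_t\langle M\rangle_t|\le C$. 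By Lemma~\ref{lem:clean-explicit-dsquare}, $\Phi^\gamma_{\Psi,L}(m)=2-2m^2+O(L^{-1}+\gamma^2L^3)$. The quadratic piece $\varepsilon_\gamma\sum_iJ_ih_{\gamma,i}^2$ is bounded in expectation via \eqref{eq:clean-critical-centered-energy} by $C\delta_\gamma^2\log\varepsilon_\gamma^{-1}\to0$. What remains is
\[
\widetilde\sigma_\gamma^2\,t\,\varepsilon_\gamma\sum_iJ_i\to\sigma_*^2\,t\int_{\mathbb T}\partial_x\phi\,\partial_x\psi\,dx
\]
by a Riemann sum argument and Assumption~(\textbf{B}). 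This step also requires checking that the normalization constant $\tfrac12\cdot2=1$ matches the definition of $\sigma_*$; any mismatch would only rescale $\sigma_*$.

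For the functional limit we apply a martingale central limit theorem, for instance Ethier--Kurtz Theorem 7.1.4 or Rebolledo's. This requires, besides the bracket convergence, that the maximal jump vanish. Each jump of $M^\gamma(\phi)$ has size at most
\[
\varepsilon_\gamma\delta_\gamma^{-1}\cdot 2\cdot\varepsilon_\gamma\|\nabla\mathcal K_\gamma^\varepsilon\phi\|_\infty\lesssim\varepsilon_\gamma^{2}\delta_\gamma^{-1}\to0
\]
deterministically. Joint convergence over finitely many test functions follows from polarization and the Cram\'er--Wold device. This identifies the limit as the continuous Gaussian martingale with covariance $\sigma_*^2t\langle\partial_x\phi,\partial_x\psi\rangle$, i.e. $\sigma_*\nabla\cdot W$.

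The main obstacle is the replacement of $d_i^2$ inside the bracket. It has to be carried out under the nonequilibrium law, which is exactly what Proposition~\ref{prop:deterministic-microscopic-to-kac-mobility-main} provides. Unlike the drift, however, no $\varepsilon_\gamma^{-1}$-amplifying prefactor appears here, so only first-order (one-block plus Kac-scale) replacement is needed. If uniformity in $t$ proves delicate, we would first prove pointwise-in-$t$ convergence and then upgrade it using equicontinuity of $t\mapsto\langle M^\gamma\rangle_t$, which is Lipschitz with a uniform constant.
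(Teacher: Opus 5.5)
Your proposal is correct and follows essentially the same route as the paper: the exact carr\'e-du-champ bracket with mobility $q_i=d_i^2c_{i,i+1}$, the deterministic reduction $c_{i,i+1}=\tfrac12+O(\gamma)$, the replacement of $d_i^2$ via Proposition~\ref{prop:deterministic-microscopic-to-kac-mobility-main}, a Riemann-sum limit with $\widetilde\sigma_\gamma^2\to\sigma_*^2$, jumps of size $O(\varepsilon_\gamma^2/\delta_\gamma)$, and the martingale CLT with polarization. You also spell out two steps the paper skips: it writes $\Phi^\gamma_{\Psi,L}(m_N)$ directly, while you pass from $\Phi^\gamma_{\Psi,L}(h_{\gamma,i})$ to the constant $2$ by bounding the quadratic piece with \eqref{eq:clean-critical-centered-energy}, and you obtain uniformity in $t$ from a time grid plus the Lipschitz bound on the bracket.
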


\begin{proof}
We first compute the predictable quadratic variation. 
At a Kawasaki jump across the bond
\((i,i+1)\), the configuration changes from \(\sigma\) to \(\sigma^{i,i+1}\), and
\eqref{Ex:h} yields
\[
h_\gamma(\sigma^{i,i+1};k)-h_\gamma(\sigma;k)
=
d_i(\sigma)\Bigl[\kappa_\gamma(k-i-1)-\kappa_\gamma(k-i)\Bigr].
\]
Therefore
\begin{align*}
\delta_{i}\langle X_\gamma,\phi\rangle_\gamma(\sigma)
&:=
\langle X_\gamma(\sigma^{i,i+1}),\phi\rangle_\gamma
-
\langle X_\gamma(\sigma),\phi\rangle_\gamma \\
&=
\frac{\varepsilon_\gamma}{\delta_\gamma}
\sum_{k\in\Lambda_N}
\Bigl(h_\gamma(\sigma^{i,i+1};k)-h_\gamma(\sigma;k)\Bigr)\phi(\varepsilon_\gamma k) \\
&=
-\frac{\varepsilon_\gamma^2}{\delta_\gamma}\,
d_i(\sigma)\,
\nabla_{\varepsilon_\gamma}\!\bigl(\mathcal K_\gamma^\varepsilon\phi\bigr)
(\varepsilon_\gamma i).
\end{align*}
Hence, by the standard bracket formula for pure-jump Dynkin martingales,
\begin{equation}\label{eq:qv-exact}
\langle M^\gamma(\phi)\rangle_t =  \frac{\varepsilon_\gamma^4}{\alpha_\gamma\delta_\gamma^2}
\int_0^t \sum_{i \in\Lambda_N} d^2_i\big( \sigma^N(\alpha_{\gamma}^{-1}s) \big)c_i\Big(\sigma^N\big( \alpha_{\gamma}^{-1}s \big)\Big)
\Bigl[
\nabla_{\varepsilon_\gamma}(\mathcal K_\gamma^\varepsilon\phi)(\varepsilon_\gamma i)
\Bigr]^2
\,ds.
\end{equation}
We set
\(
q_i(\sigma):=d_i(\sigma)^2\,F\!\bigl(\beta\,\Delta_iH_\gamma(\sigma)\bigr),
\)
where $F(z):=\frac{1}{1+e^z}$.
The quantity \(q_i\) is the local mobility observable entering the predictable quadratic variation of the martingale.
In parallel with the second-order Boltzmann–Gibbs principle for $d^2_i$, we now consider the second-order Boltzmann–Gibbs principle for \(
q_i(\sigma).
\)

We next separate the mobility into a leading symmetric part and a small remainder.
A direct computation shows that
\[
\Delta_i H_\gamma(\sigma)
=
d_i(\sigma)\sum_{j\in\Lambda_N}
\bigl(\kappa_\gamma(i-j)-\kappa_\gamma(i+1-j)\bigr)\sigma_j
+\kappa_\gamma(1)d_i(\sigma)^2.
\]
Hence, since \(|d_i(\sigma)|\le 2\) and \(|\sigma_j|\le1\),
\[
|\Delta_i H_\gamma(\sigma)|
\le
2\sum_{j\in\Lambda_N}
\bigl|\kappa_\gamma(i-j)-\kappa_\gamma(i+1-j)\bigr|
+4\kappa_\gamma(1).
\]
Since
\(
\kappa_\gamma(z)=\gamma\,\mathfrak K(\gamma z),
\mathfrak K\in W^{1,1}(\mathbb R)\cap W^{1,\infty}(\mathbb R),
\)
the discrete \(L^1\)-difference estimate, the mean-value theorem, and
\(\mathfrak K(0)=0\) imply
\[
\sup_{i,\sigma}|\beta\Delta_i H_\gamma(\sigma)|\le C\gamma.
\]
Since the function $F(z):=\frac{1}{1+e^{z}}$ satisfies
\[
F(0)=\frac12, \quad
|F'(z)|=\frac{e^{z}}{(1+e^{z})^2}\le \frac{1}{4}
\quad\text{for all }z\in\mathbb R,
\]
the logistic map is globally Lipschitz. 

The Taylor expansion of \(F\) at $z =0$ gives $F(\beta\Delta_iH_\gamma(\sigma))
=
\frac12+r_i^\gamma(\sigma)$, where \( \sup_{i,\sigma}|r_i^\gamma(\sigma)|\le C\gamma. \)
Hence
\[
d_i(\sigma)^2c_{i,i+1}(\sigma)
=
\frac12\,d_i(\sigma)^2+\widetilde r_i^\gamma(\sigma),
\qquad
\sup_{i,\sigma}|\widetilde r_i^\gamma(\sigma)|\le C\gamma.
\]
Note that \(\widetilde r_i^\gamma\) is not a strictly local observable; it can be treated as a small uniform error term.

Using the previous decomposition,
\begin{align}\label{eq:qv-decomposition-1}
    \langle M^\gamma(\phi)\rangle_t
= &
\frac12\,\frac{\varepsilon_\gamma^4}{\alpha_\gamma\delta_\gamma^2}
\int_0^t
\sum_{i\in\Lambda_N}
d_i^2\!\left(\frac{s}{\alpha_\gamma}\right)\,
\Big|
\nabla_{\varepsilon_\gamma}(\mathcal K_\gamma^\varepsilon\phi)(\varepsilon_\gamma i)\Big|^2\,ds \nonumber  \\
& \quad + \frac{\varepsilon_\gamma^4}{\alpha_\gamma\delta_\gamma^2}
\int_0^t
\sum_{i\in\Lambda_N}
\widetilde r_i^\gamma\!\left(\sigma^N \left(\frac{s}{\alpha_\gamma}\right)\right)\,
\Big|
\nabla_{\varepsilon_\gamma}(\mathcal K_\gamma^\varepsilon\phi)(\varepsilon_\gamma i)
\Big|^2\,ds \nonumber \\
= & \frac{1}{2}\mathcal Q_\gamma(\phi;t) + \mathcal R_\gamma^{\mathrm{mob}}(\phi;t).
\end{align}
We first show remainder term $\mathcal R_\gamma^{\mathrm{mob}}(\phi;t)$ is negligible.
Using \(\sup_{i,\sigma}|\widetilde r_i^\gamma(\sigma)|\le C\gamma\),
\(|\Lambda_N|\simeq\varepsilon_\gamma^{-1}\), and
\(\sup_i|\nabla_{\varepsilon_\gamma}(\mathcal K_\gamma^\varepsilon\phi)
(\varepsilon_\gamma i)|\le C_\phi\), as
\(\gamma\,\frac{\varepsilon_\gamma^3}{\alpha_\gamma\delta_\gamma^2}\to0\) we have
\[
 \big|\mathcal R_\gamma^{\mathrm{mob}}(\phi;t)\big|=\Big| \frac{\varepsilon_\gamma^4}{\alpha_\gamma\delta_\gamma^2}
\int_0^t
\sum_{i\in\Lambda_N}
\widetilde r_i^\gamma\!\left(\sigma^N \left(\frac{s}{\alpha_\gamma}\right)\right)\,
\Big| \nabla_{\varepsilon_\gamma}(\mathcal K_\gamma^\varepsilon\phi)(\varepsilon_\gamma i)\Big|^2\,ds \Big|
\le
C_\phi\,t\,
\gamma\,\frac{\varepsilon_\gamma^3}{\alpha_\gamma\delta_\gamma^2} \to 0.
\]
Note that coefficient \(J_\gamma^\phi\) in $\mathcal Q_\gamma$ is deterministic and \(\|J_\gamma^\phi\|_{\ell^\infty}\le C_\phi\).  
Hence Proposition~\ref{prop:deterministic-microscopic-to-kac-mobility-main} implies that for every fixed
interval \([0,t]\subset[0,T]\),
\begin{equation}
    \mathbb E_{f_{N,0}}
\left| \int_0^t \varepsilon_\gamma\sum_i
J_{\gamma,i}^{\phi} \left[ d_i\bigl(\sigma^N(s/\alpha_\gamma)\bigr)^2 - \Phi_{\Psi,L}^{\gamma}(m_N)
\right]ds \right| \le C_{\phi,T} \varepsilon^{1/8}_{\gamma} \to 0,
\end{equation}
where
\[
\Phi_{\Psi,L}^{\gamma}(m)
= 2+\frac1L - \left( 2+\frac1L \right)m^2+ \rho_{\gamma,L}(m).
\]
Hence, uniformly for \(t\in[0,T]\),
\[
\mathcal Q_\gamma(\phi;t)
= \Phi_{\Psi,L}^{\gamma}(m_N)\frac{\varepsilon_\gamma^4}{\alpha_\gamma\delta_\gamma^2}
\int_0^t
\sum_{i\in\Lambda_N}
\left|
\nabla_{\varepsilon_\gamma}
(\mathcal K_\gamma^\varepsilon\phi)(\varepsilon_\gamma i)
\right|^2ds
+o(1).
\]
Since \(m_N := \varepsilon_{\gamma} M(N) \to 0\) and $1/L \to 0$ as $\gamma \to 0$, 

\[
\sigma_{\gamma,L}^2
:=
\frac12\,\Phi_{\Psi,L}^{\gamma}(m_N)
\frac{\varepsilon_\gamma^3}{\alpha_\gamma\delta_\gamma^2}
\longrightarrow \sigma_*^2 .
\]

The discrete Kac convolution
approximates the identity at the level of gradients:
\[
\varepsilon_\gamma\sum_{i\in\Lambda_N}
\left|
\nabla_{\varepsilon_\gamma}
(\mathcal K_\gamma^\varepsilon\phi)(\varepsilon_\gamma i)
\right|^2
\longrightarrow
\int_{\mathbb T}|\partial_x\phi(x)|^2\,dx .
\]
Combining the estimates above, we obtain
\[
\langle M^\gamma(\phi)\rangle_t
\rightarrow
 \sigma_*^2\,t\int_{\mathbb T}|\partial_x\phi(x)|^2\,dx \quad \text{in} \quad L^1(\mathbb P_{f_{N,0}})
\]
uniformly for \(t\in[0,T]\).

Finally, at each Kawasaki jump across the bond \((i,i+1)\), we have
\begin{equation}
    \bigl|\Delta M^\gamma(\phi)\bigr|
=
\left|
\delta_{i,i+1}^\gamma\langle X_\gamma,\phi\rangle_\gamma
\right|
\le
C\,\frac{\varepsilon_\gamma^2}{\delta_\gamma}\,
\|\nabla_{\varepsilon_\gamma}(\mathcal K_\gamma^\varepsilon\phi)\|_{\ell^\infty}
\le
C_\phi\,\frac{\varepsilon_\gamma^2}{\delta_\gamma}.
\end{equation}
Since \(\varepsilon_\gamma^2/\delta_\gamma\to0\), the jumps vanish.
Martingale central limit theorem yields
\(
M^\gamma(\phi)\rightarrow M(\phi)
\)
in \(D([0,T],\mathbb R)\), where \(M(\phi)\) is a continuous centered Gaussian martingale with quadratic variation
\[
\langle M(\phi)\rangle_t =  \sigma_*^2\,t\int_{\mathbb T}|\partial_x\phi(x)|^2\,dx.
\]
The cross-variation follows by polarization:
\[
\langle M^\gamma(\phi),M^\gamma(\psi)\rangle_t
=
\frac14\Bigl(
\langle M^\gamma(\phi+\psi)\rangle_t
-
\langle M^\gamma(\phi-\psi)\rangle_t
\Bigr) \rightarrow
 \sigma_*^2\,t\int_{\mathbb T}\partial_x\phi\,\partial_x\psi\,dx.
\]
This completes the proof.
\end{proof}

\subsection{Convergence of the stochastic convolution in \(\mathcal C^\alpha\)}

Recall that
\(
Z_\gamma(t):=\int_0^t S_\gamma(t-r)\,dM_\gamma(r),
\)
where \(M_\gamma\) is the lattice martingale field constructed above.
We set
\(
\widetilde Z_\gamma(t):=\mathrm{Ext}_\gamma Z_\gamma(t).
\)
For \(k\in\mathcal B_\gamma\), define the Fourier mode of martingale
\[
\widehat M_\gamma(t,k):=M_t^\gamma(e_{-k}).
\]
We also denote
\[
D_\gamma(k):=\frac{e^{2\pi i k\varepsilon_\gamma}-1}{\varepsilon_\gamma},
\qquad
\vartheta_\gamma(k):=
\sum_{z\in\Lambda_N}\kappa_\gamma(z)e^{-2\pi i k\varepsilon_\gamma z}.
\]
Now we show the convergence of the Fourier-mode bracket.

\begin{lemma}\label{lem:Fourier-mode-bracket-final}
Under the assumptions of Theorem~\ref{thm:martingale-bracket-corrected-rate},
for every fixed \(K<\infty\), uniformly in
\(|k|,|\ell|\le K\),
\[
\left\langle \widehat M_\gamma(k),\overline{\widehat M_\gamma(\ell)}\right\rangle_t
=
\delta_{k,\ell}\,q_\gamma(k)\,t
+
o_\gamma(1)
\]
in \(L^1(\mathbb P_{f_{N,0}})\), uniformly for \(t\in[0,T]\), where
\[
q_\gamma(k)
:=
\sigma_{\gamma,L}^2\,|D_\gamma(k)|^2\,|\vartheta_\gamma(k)|^2,
\qquad
|q_\gamma(k)|\le C|k|^2
\quad\text{for all }k\in\mathcal B_\gamma,
\]
and
\(
q_\gamma(k) \rightarrow \sigma_*^2(2\pi k)^2
\)
locally uniformly in \(k\in\mathbb Z\).\\
Moreover, the exact brackets satisfy the uniform high-frequency bound
\begin{equation}\label{eq:fourier-mode-bracket-uniform-bound}
\left\langle \widehat M_\gamma(k)\right\rangle_T
\le C_T |D_\gamma(k)|^2|\vartheta_\gamma(k)|^2
\le C_T |k|^2,
\quad k\in\mathcal B_\gamma.
\end{equation}
\end{lemma}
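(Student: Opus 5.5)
The plan is to repeat the computation behind Theorem~\ref{thm:martingale-bracket-corrected-rate}, but with the complex Fourier test functions $e_{-k}$, $e_{-\ell}$ in place of real $\phi,\psi$. The cross-bracket is then handled through the sesquilinear version of the jump formula \eqref{eq:qv-exact}.

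\textbf{Step 1: the jump of a Fourier mode.} At a jump across the bond $(i,i+1)$, \eqref{Ex:h} gives
\[
\delta_i\widehat M_\gamma(k)=-\frac{\varepsilon_\gamma^2}{\delta_\gamma}\,d_i\,\nabla_{\varepsilon_\gamma}(\mathcal K^\varepsilon_\gamma e_{-k})(\varepsilon_\gamma i).
\]
The exponentials diagonalize both the convolution and the discrete gradient, so $\nabla_{\varepsilon_\gamma}(\mathcal K^\varepsilon_\gamma e_{-k})(x_i)=\overline{D_\gamma(k)}\,\overline{\vartheta_\gamma(k)}\,e_{-k}(x_i)$, up to the conjugation and sign conventions. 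The bracket formula therefore yields
\[
\Big\langle \widehat M_\gamma(k),\overline{\widehat M_\gamma(\ell)}\Big\rangle_t=\frac{\varepsilon_\gamma^4}{\alpha_\gamma\delta_\gamma^2}\,\overline{D_\gamma(k)\vartheta_\gamma(k)}\,D_\gamma(\ell)\vartheta_\gamma(\ell)\int_0^t\sum_i q_i\big(\sigma^N(s/\alpha_\gamma)\big)\,e^{2\pi i(\ell-k)x_i}\,ds.
\]
The high-frequency bound \eqref{eq:fourier-mode-bracket-uniform-bound} follows at once from this formula. Take $k=\ell$, use $0\le q_i\le 4\sup c\le C$ and $|\Lambda_N|\simeq\varepsilon_\gamma^{-1}$, and note that $\widetilde\sigma_\gamma^2=\varepsilon_\gamma^3/(\alpha_\gamma\delta_\gamma^2)$ is bounded. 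This gives $\langle\widehat M_\gamma(k)\rangle_T\le C_T|D_\gamma(k)|^2|\vartheta_\gamma(k)|^2$. Then $|D_\gamma(k)|\le 2\pi|k|$ and $|\vartheta_\gamma(k)|\le1$ give the $C_T|k|^2$ bound; the same inequalities give $|q_\gamma(k)|\le C|k|^2$.

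\textbf{Step 2: replace the mobility.} For the main term with $|k|,|\ell|\le K$, I would proceed exactly as in Theorem~\ref{thm:martingale-bracket-corrected-rate}. First write $q_i=\tfrac12 d_i^2+\widetilde r_i^\gamma$, where $\sup|\widetilde r_i^\gamma|\le C\gamma$; this remainder contributes $O(\gamma\,\widetilde\sigma_\gamma^2 t K^2)\to0$. Next apply Proposition~\ref{prop:deterministic-microscopic-to-kac-mobility-main} to the real and imaginary parts of the deterministic coefficient $J_i=e^{2\pi i(\ell-k)x_i}$, which is bounded by $1$. This replaces $d_i^2$ by $\Phi^\gamma_{\Psi,L}(h_{\gamma,i})$ with an $L^1$ error of order $\varepsilon_\gamma^{1/8}$.

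\textbf{Step 3: freeze the field at $m_N$.} Then pass from $\Phi^\gamma_{\Psi,L}(h_{\gamma,i})$ to $\Phi^\gamma_{\Psi,L}(m_N)$. By Lemma~\ref{lem:clean-explicit-dsquare} the function $\Phi$ is Lipschitz up to $\rho_{\gamma,L}=O(\gamma^2L^3)$. Lemma~\ref{lem:weak-L2-h-from-microscopic} with Cauchy--Schwarz bounds $\mathbb E\int_0^T\varepsilon_\gamma\sum_i|h_{\gamma,i}-m_N|\,dt\lesssim(\delta_\gamma^2\log\varepsilon_\gamma^{-1})^{1/2}\to0$. Because this replacement error is only $L^1$-small and the time integrals are monotone in $t$ up to bounded integrands, the estimate is uniform in $t\in[0,T]$.

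\textbf{Step 4: orthogonality and the limit.} What remains is
\[
\tfrac12\Phi^\gamma_{\Psi,L}(m_N)\,\widetilde\sigma_\gamma^2\;t\;\varepsilon_\gamma\sum_i e^{2\pi i(\ell-k)x_i}.
\]
By discrete orthogonality on $\Lambda_{\varepsilon_\gamma}$ (for $|k-\ell|\le 2K<2N+1$), this equals $\delta_{k\ell}\,\sigma^2_{\gamma,L}\,t$. Multiplying by the prefactor gives $\delta_{k\ell}q_\gamma(k)t$. Finally, $q_\gamma(k)\to\sigma_*^2(2\pi k)^2$ locally uniformly, because $D_\gamma(k)\to2\pi i k$ and $\vartheta_\gamma(k)\to1$; the latter follows from \eqref{eq:Kac-test-expansion}, since $\varepsilon_\gamma/\gamma\to0$. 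Also $\sigma^2_{\gamma,L}\to\sigma_*^2$, as shown in the theorem.

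The main obstacle is Step 2. Proposition~\ref{prop:deterministic-microscopic-to-kac-mobility-main} is stated for a real deterministic coefficient and a fixed terminal time, whereas here the coefficient is oscillatory and complex and we need uniformity in $t$. The real/imaginary splitting handles the first issue. For uniformity in $t$, I would apply the proposition on a finite grid of times and use the Lipschitz-in-$t$ bound $|\partial_t(\cdot)|\le C$ on the integrands between grid points.
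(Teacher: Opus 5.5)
Your proposal is correct and takes essentially the same route as the paper: compute the sesquilinear jump bracket for the complex test functions $e_{-k},e_{-\ell}$, factor out $D_\gamma\vartheta_\gamma$, replace the mobility by $\sigma_{\gamma,L}^2$ through the same replacement used in Theorem~\ref{thm:martingale-bracket-corrected-rate}, and conclude by discrete orthogonality, with the high-frequency bound read off pathwise from $0\le q_i\le 4$. Your version is slightly more careful than the paper's, which simply invokes that theorem's computation: you pass explicitly from $\Phi^\gamma_{\Psi,L}(h_{\gamma,i})$ to $\Phi^\gamma_{\Psi,L}(m_N)$ via Lemma~\ref{lem:weak-L2-h-from-microscopic}, and you use a time-grid argument for uniformity in $t$; your use of \eqref{eq:Kac-test-expansion} to get $\vartheta_\gamma(k)\to1$ in place of the paper's Riemann-sum comparison is an equally valid minor variation.
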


\begin{proof}
Using the bracket computation in the proof of
Theorem~\ref{thm:martingale-bracket-corrected-rate} with
\(
\phi=e_{-k},
\psi=e_{\ell},
\)
we obtain
\[
\left\langle \widehat M_\gamma(k),\overline{\widehat M_\gamma(\ell)}\right\rangle_t
=
\sigma_{\gamma,L}^2\,t\,
\varepsilon_\gamma\sum_{i\in\Lambda_N}
\nabla_{\varepsilon_\gamma}(\mathcal K_\gamma^\varepsilon e_{-k})(\varepsilon_\gamma i)\,
\overline{\nabla_{\varepsilon_\gamma}(\mathcal K_\gamma^\varepsilon e_{-\ell})(\varepsilon_\gamma i)}
+
o_\gamma(1),
\]
where the error converges to zero in \(L^1(\mathbb P_{f_{N,0}})\), uniformly in \(t\in[0,T]\).

After changing of variables \(z=i-j\), we have
\begin{align*}
    (\mathcal K_\gamma^\varepsilon e_k)(\varepsilon_\gamma i) = \sum_{j\in\Lambda_N}\kappa_\gamma(i-j)e^{2\pi i k\varepsilon_\gamma j} = 
\left(
\sum_{z\in\Lambda_N}\kappa_\gamma(z)e^{-2\pi i k\varepsilon_\gamma z}
\right)
e^{2\pi i k\varepsilon_\gamma i}.
\end{align*}
Applying the discrete gradient,
\[
\nabla_{\varepsilon_\gamma}(\mathcal K_\gamma^\varepsilon e_k)(\varepsilon_\gamma i)
=
\vartheta_\gamma(k)\,
\frac{e_k(\varepsilon_\gamma(i+1))-e_k(\varepsilon_\gamma i)}{\varepsilon_\gamma}
=
\vartheta_\gamma(k)\,D_\gamma(k)\,e_k(\varepsilon_\gamma i),
\]
where the notation agrees with the definitions above, since
\(\kappa_\gamma(z)=\gamma\,\mathfrak K(\gamma z)\).

Now we show that \(\vartheta_\gamma(k)\to1\) locally uniformly.
Set $\xi_{\gamma,k}:=2\pi k\frac{\varepsilon_\gamma}{\gamma}$ so that
\[
\vartheta_\gamma(k)
=
\sum_{m\in\Lambda_N}\gamma\,\mathfrak K(\gamma m)e^{-i\xi_{\gamma,k}(\gamma m)}.
\]
Since \(\mathfrak K\in L^1(\mathbb R)\) and $\int_{\mathbb R}\mathfrak K(u)\,du=1$, its Fourier transform is continuous, and
\[
\widehat{\mathfrak K}(k) := \int_{\mathbb R}\mathfrak K(u)e^{-i\xi_{\gamma,k}u}\,du \rightarrow1 \quad \text{as} \quad \gamma \to 0
\]
locally uniformly for \(|k|\le K\). 
Applying the standard Riemann-sum estimate for the \(W^{1,1}\)-function
\(
u\mapsto \mathfrak K(u)e^{-i\xi_{\gamma,k}u},
\)
we obtain
\[
\left|
\vartheta_\gamma(k)-\int_{\mathbb R}\mathfrak K(u)e^{-i\xi_{\gamma,k}u}\,du
\right|
\leq \gamma(\|\mathfrak K' \|_{L^1} + |\xi_{\gamma,k}|\|\mathfrak K\|_{L^1}) \le
C\gamma\bigl(1+2\pi k \frac{\varepsilon_\gamma}{\gamma} \bigr).
\]
Therefore
\[
\sup_{|k|\le K}
\left|
\vartheta_\gamma(k)- 1
\right|
\rightarrow 0 \quad \text{as} \quad \frac{\varepsilon_{\gamma}}{\gamma} \to 0.
\]
Then 
\begin{equation}\label{E:jumpc}
    \sup_{i,\gamma}|\nabla_{\varepsilon_\gamma}(\mathcal K_\gamma^\varepsilon e_k)(\varepsilon_\gamma i)| =  |D_\gamma(k)||\vartheta_\gamma(k)| \leq C_k,
\end{equation}
and 
\begin{align*}
\left\langle \widehat M_\gamma(k),\overline{\widehat M_\gamma(\ell)}\right\rangle_t
&=
\sigma_{\gamma,L}^2\,t\,
\varepsilon_\gamma\sum_{i\in\Lambda_N}
\nabla_{\varepsilon_\gamma}(\mathcal K_\gamma^\varepsilon e_{-k})(\varepsilon_\gamma i)\,
\overline{\nabla_{\varepsilon_\gamma}(\mathcal K_\gamma^\varepsilon e_{-\ell})(\varepsilon_\gamma i)}
+
o_\gamma(1) \\
&=
\sigma_{\gamma,L}^2\,t\,
\vartheta_\gamma(-k)D_\gamma(-k)\,
\overline{\vartheta_\gamma(-\ell)D_\gamma(-\ell)}
\,
\varepsilon_\gamma\sum_{i\in\Lambda_N}e_{-k}(\varepsilon_\gamma i)e_{\ell}(\varepsilon_\gamma i)
+
o_\gamma(1).
\end{align*}
By the orthogonality of discrete Fourier model,
\(
\varepsilon_\gamma\sum_{i\in\Lambda_N}e_{-k}(\varepsilon_\gamma i)e_{\ell}(\varepsilon_\gamma i)
=
\delta_{k,\ell},
\)
and therefore
\[
\left\langle \widehat M_\gamma(k),\overline{\widehat M_\gamma(\ell)}\right\rangle_t
=
\delta_{k,\ell}\,
\sigma_{\gamma,L}^2\,|D_\gamma(k)|^2\,|\vartheta_\gamma(k)|^2\,t
+
o_\gamma(1).
\]

Next, note that
\[
|D_\gamma(k)|
=
\left|\frac{e^{2\pi i k\varepsilon_\gamma}-1}{\varepsilon_\gamma}\right|
=
\frac{2|\sin(\pi k\varepsilon_\gamma)|}{\varepsilon_\gamma}
\le 2\pi |k|,
\]
and for each fixed \(k\),
\(
D_\gamma(k)\longrightarrow 2\pi i k, 
|D_\gamma(k)|^2\longrightarrow (2\pi k)^2.
\)
Therefore
\[
q_\gamma(k)
=
\sigma_{\gamma,L}^2\,|D_\gamma(k)|^2\,|\vartheta_\gamma(k)|^2
\longrightarrow
\sigma_*^2(2\pi k)^2
\]
locally uniformly in \(k\in\mathbb Z\).

It remains to show the estimate \eqref{eq:fourier-mode-bracket-uniform-bound} for high-frequency tightness.
From the exact quadratic variation formula, \(0\le q_i=d_i^2c_{i,i+1}\le4\),
Assumption \(({\bf B})\), and the Fourier identity above,
\[
\begin{aligned}
\left\langle \widehat M_\gamma(k)\right\rangle_T
&\le
C\frac{\varepsilon_\gamma^4}{\alpha_\gamma\delta_\gamma^2}
\int_0^T\sum_i
\left|
\nabla_{\varepsilon_\gamma}
(\mathcal K_\gamma^\varepsilon e_{-k})(\varepsilon_\gamma i)
\right|^2ds
\\
&\le
C_T\frac{\varepsilon_\gamma^3}{\alpha_\gamma\delta_\gamma^2}
|D_\gamma(k)|^2|\vartheta_\gamma(k)|^2
\le
C_T|k|^2 .
\end{aligned}
\]
Here \(|D_\gamma(k)|\le2\pi|k|\), and
\(|\vartheta_\gamma(k)|\le C\) follows from the normalization and
non-negativity of the Kac kernel.  This completes the proof.

\end{proof}

\begin{lemma}\label{lemma:ZgammaE}
Under scale assumption, the family stochastic convolution term \(\{\widetilde Z_\gamma\}_\gamma\) has uniform bound
\begin{align}\label{E:UZ_gamma}
    \sup_{\gamma >0 } \mathbb E
\Bigl[
\sup_{t\le T}\|\widetilde Z_\gamma(t)\|_{C^\alpha(\mathbb T)}
\Bigr]  <\infty, \quad \alpha <\frac{1}{2},
\end{align}
Moreover, \(\{\widetilde Z_\gamma\}_\gamma\) is tight in
\(
C([0,T],\mathcal C^\alpha(\mathbb T))
\)
for every \(\alpha<\frac12\).
\end{lemma}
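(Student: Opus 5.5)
The plan is to work mode by mode in Fourier space and use Littlewood--Paley blocks, combined with a Kolmogorov criterion. On $\mathcal T_\gamma$ the semigroup $S_\gamma(t)=e^{-t\nu_\gamma\Delta_{\varepsilon_\gamma}^2}$ is diagonal with symbol $e^{-t\nu_\gamma\lambda_\gamma(k)}$, so
\[
\widehat{Z}_\gamma(t,k)=\int_0^t e^{-(t-r)\nu_\gamma\lambda_\gamma(k)}\,d\widehat M_\gamma(r,k),\qquad k\in\mathcal B_\gamma .
\]
Since each $\widehat Z_\gamma(t,k)$ is a stochastic integral against a square-integrable pure-jump martingale, its second moment is controlled by the predictable bracket. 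The bracket bound \eqref{eq:fourier-mode-bracket-uniform-bound} only controls $\langle \widehat M_\gamma(k)\rangle_T$ in total. I will use the exact formula \eqref{eq:qv-exact}, whose integrand is pointwise bounded because $0\le q_i\le 4$. This gives the density bound $d\langle\widehat M_\gamma(k)\rangle_r\le C|k|^2dr$ uniformly in $\gamma$. It yields
\[
\mathbb E|\widehat Z_\gamma(t,k)|^2\le C\int_0^t e^{-2(t-r)\nu_\gamma\lambda_\gamma(k)}|k|^2dr .
\]
By Lemma~\ref{lem:symbol_bilaplacian} ($\lambda_\gamma(k)\simeq|k|^4$) and Lemma~\ref{ass:semigroup}(ii),(iii), this is at most $C|k|^{-2}$. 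The time increments $\mathbb E|\widehat Z_\gamma(t,k)-\widehat Z_\gamma(s,k)|^2$ are bounded by $C|t-s|^\vartheta |k|^{-2+4\vartheta}$. For the cross terms between different modes, only the diagonal bounds are needed, through the $L^2$-type block estimate below.

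Next come the Besov estimates. For a block $q$, hypercontractivity is not available for jump martingales, so I would use the Burkholder--Davis--Gundy inequality for $p$-th moments. The predictable bracket part is handled by the above density bound. The jump part is controlled by the jump size $C_\phi\varepsilon_\gamma^2/\delta_\gamma$ per mode, which gives contributions like $(\varepsilon_\gamma^2/\delta_\gamma)^{p}\cdot|\mathcal B_\gamma|^{C}$, negligible under Assumption~({\bf B}). This gives
\[
\mathbb E\|\Delta_q\widetilde Z_\gamma(t)\|_{L^p}^p\lesssim \bigl(\textstyle\sum_{k\in\mathcal A_q}\mathbb E|\widehat Z_\gamma(t,k)|^2\bigr)^{p/2}\lesssim 2^{-qp/2}+o_\gamma(1),
\]
and similarly $|t-s|^{\vartheta p/2}2^{-qp(1/2-2\vartheta)}$ for increments. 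Summing over $q$ with Besov embedding $B^{\alpha+1/p}_{p,p}\hookrightarrow C^\alpha$, for $p$ large and $\vartheta$ small so that $\alpha<\frac12-2\vartheta-\frac1p$, gives
\[
\mathbb E\|\widetilde Z_\gamma(t)-\widetilde Z_\gamma(s)\|_{C^\alpha}^p\lesssim|t-s|^{\vartheta p/2}+o_\gamma(1).
\]
The $o_\gamma(1)$ must be made of the form $C\varepsilon_\gamma^{c}$, which is also valid for $|t-s|\ge\varepsilon_\gamma^{c'}$, to feed the Kolmogorov argument.

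The sup over time is the delicate part, because $\widetilde Z_\gamma$ is càdlàg rather than continuous. I would apply Kolmogorov's criterion on the grid of mesh $\varepsilon_\gamma^{c'}$ to bound the supremum over grid points. For off-grid times, I would use the mild-form identity $Z_\gamma(t)=S_\gamma(t-s)Z_\gamma(s)+\int_s^tS_\gamma(t-r)dM_\gamma$. The sup over a short interval would be controlled by Doob's inequality on the martingale $\int_s^{\cdot}S_\gamma(t-r)dM_\gamma(r)$ with frozen $t$, together with the regularity of $S_\gamma$. The maximal jump size, at most $C\varepsilon_\gamma^2\delta_\gamma^{-1}\gamma^{-C}$ in $C^\alpha$, tends to $0$. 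This gives \eqref{E:UZ_gamma}, and Aldous/Kolmogorov tightness in $D$ with vanishing jumps. Tightness in $C([0,T],\mathcal C^\alpha)$ then follows from a compact embedding $C^{\alpha'}\subset C^\alpha$ for $\alpha<\alpha'<\frac12$, after replacing $\widetilde Z_\gamma$ by its piecewise-linear interpolation on the grid, which differs from it by $o(1)$ in sup norm.

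I expect the main obstacle to be this last step: handling the càdlàg, non-Gaussian nature of the process. That means getting high moments from BDG with jump terms, and controlling the temporal supremum uniformly in $\gamma$.
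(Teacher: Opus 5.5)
Your architecture is the paper's. It uses Fourier modes and Littlewood--Paley blocks, a deterministic bracket density from $0\le q_i\le 4$ in \eqref{eq:qv-exact}, Lemma~\ref{ass:semigroup}, BDG, Besov embedding and Kolmogorov. The bracket bound should be applied to the whole block test function, so that discrete Parseval removes the cross terms; no property of $\mathbb E|\widehat Z_\gamma(t,k)|^2$ does that. Where you go beyond the paper is in taking the jumps seriously. The paper's BDG/factorization bound contains only the predictable bracket, and it applies Kolmogorov directly in $C([0,T],B^{\alpha'}_{p,p})$ although $\widetilde Z_\gamma$ is c\`adl\`ag. Your grid argument, short-interval control through the frozen-$t$ martingale, and interpolation are the right repair. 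The gap is in the one place where you actually size the jumps.

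You bound the jump of mode $k$ by $\frac{\varepsilon_\gamma^2}{\delta_\gamma}|d_i|\,|\vartheta_\gamma(k)D_\gamma(k)|\le C\frac{\varepsilon_\gamma^2}{\delta_\gamma}|k|$. You then assert that quantities like $(\varepsilon_\gamma^2/\delta_\gamma)^p|\mathcal B_\gamma|^C$ and $\varepsilon_\gamma^2\delta_\gamma^{-1}\gamma^{-C}$ are negligible under ({\bf B}). The exponent is not free. Since $\varepsilon_\gamma^2/\delta_\gamma\sim\varepsilon_\gamma^{7/4}$ and $|\mathcal B_\gamma|\sim\gamma^{-1}\sim\varepsilon_\gamma^{-3/4}$, you need $C<7/3$ per power.

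With $|\vartheta_\gamma D_\gamma|\le 2\pi|k|$, the jump of the block-$q$ martingale at fixed $(t,x)$ is of order $\frac{\varepsilon_\gamma^2}{\delta_\gamma}2^{2q}$. The $B^{\alpha'}_{p,p}$-weighted contribution of the top block $2^q\sim\gamma^{-1}$ is then $(\frac{\varepsilon_\gamma^2}{\delta_\gamma}\gamma^{-2-\alpha'})^p\sim\varepsilon_\gamma^{p(1-3\alpha')/4}$. Likewise the $C^\alpha$-norm of a single jump is only bounded by $\frac{\varepsilon_\gamma^2}{\delta_\gamma}\gamma^{-2-\alpha}\sim\varepsilon_\gamma^{(1-3\alpha)/4}$. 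Both blow up for $\alpha',\alpha>1/3$. These bounds are attained if $|\vartheta_\gamma|\le1$ is all you use, since the block kernel is then essentially a Littlewood--Paley piece of $\delta'$. So as written your argument reaches only $\alpha<1/3$.

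The missing input is the Kac smoothing of the jumps. Summation by parts gives $(e^{2\pi i k\varepsilon_\gamma}-1)\vartheta_\gamma(k)=\sum_z\bigl(\kappa_\gamma(z+1)-\kappa_\gamma(z)\bigr)e^{-2\pi i k\varepsilon_\gamma z}$. Hence, by Assumption ({\bf A}), $|\vartheta_\gamma(k)D_\gamma(k)|\le C\min\{|k|,\gamma/\varepsilon_\gamma\}$. The block jump is then at most of order $\frac{\varepsilon_\gamma^2}{\delta_\gamma}2^{2q}$ for $2^q\le\gamma/\varepsilon_\gamma$, and of order $\frac{\varepsilon_\gamma\gamma}{\delta_\gamma}2^{q}$ above it. The weighted jump sizes are therefore $\lesssim\varepsilon_\gamma^{(5-\alpha')/4}+\varepsilon_\gamma^{3(1-\alpha')/4}$, up to a factor $\log\varepsilon_\gamma^{-1}$ from the number of blocks, and the same bound holds for the $C^\alpha$-norm of one jump. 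These vanish for all $\alpha'<1$. Raised to the power $p$, they give errors of the form $C\varepsilon_\gamma^{c}$, with $c$ as large as your grid argument needs (the mesh must be $\ll\gamma^4$). With this correction your plan proves the lemma for every $\alpha<\frac12$.
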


\begin{proof}
By Fourier expansion, the Littlewood--Paley block of $\widetilde Z_\gamma$ is given by
\[
\Delta_j\widetilde Z_\gamma(t,x)
=
\sum_{k\in\mathcal A_j\cap\mathcal B_\gamma}
\varrho_q(k)
\int_0^t e^{-(t-r)\lambda_\gamma(k)}\,d\widehat M_\gamma(r,k)\,e_k(x).
\]
Fix \(p\ge2\). 
Using the factorization method for stochastic convolutions combined with the Burkholder-Davis-Gundy inequality on the underlying martingales, we have
\[
\mathbb E
\Bigl[
\sup_{t\le T}\|\Delta_j\widetilde Z_\gamma(t)\|_{L^p}^p
\Bigr]
\lesssim_p
\mathbb E
\Bigl[
\Bigl(
\sup_{x\in\mathbb T}
\langle \Delta_j\widetilde Z_\gamma(\cdot,x)\rangle_T
\Bigr)^{p/2}
\Bigr].
\]
The high-frequency estimate uses the exact bracket, not the fixed-mode
convergence.  Since \(0\le q_i=d_i^2c_{i,i+1}\le4\), Assumption \(({\bf B})\),
and the discrete Fourier orthogonality give, uniformly in \(x\),
\[
\begin{aligned}
\langle \Delta_j\widetilde Z_\gamma(\cdot,x)\rangle_T
&\lesssim
\sum_{k\in\mathcal A_j\cap\mathcal B_\gamma}
\int_0^T
e^{-2(T-r)\lambda_\gamma(k)}
|D_\gamma(k)|^2|\vartheta_\gamma(k)|^2\,dr .
\end{aligned}
\]
Together with \eqref{eq:fourier-mode-bracket-uniform-bound}, the semigroup
lower bound \(\lambda_\gamma(k)\gtrsim |k|^4\), and
\(|\mathcal A_j|\sim2^q\), \(|k|\sim2^q\) on \(\mathcal A_j\), this yields
\[
\langle \Delta_j\widetilde Z_\gamma(\cdot,x)\rangle_T
\lesssim
\sum_{k\in\mathcal A_j\cap\mathcal B_\gamma}|k|^{-2}
\lesssim 2^{-q}.
\]
Therefore
\[
\sup_{\gamma >0 }
\mathbb E
\Bigl[
\sup_{t\le T}\|\Delta_j\widetilde Z_\gamma(t)\|_{L^p}^p
\Bigr]
\lesssim_p 2^{-qp/2}.
\]
Let \(\alpha'\in(0,\frac12)\). Then
\begin{align}
    \sup_{\gamma >0 } \mathbb E
\Bigl[
\sup_{t\le T}\|\widetilde Z_\gamma(t)\|_{B^{\alpha'}_{p,p}}^p
\Bigr] = & \sup_{\gamma >0 } \mathbb E
\Bigl[ \sup_{t\le T} 2^{\alpha' pj} \sum_{j\ge-1}\|\Delta_j\widetilde Z_\gamma(t)\|_{L^p}^p \Bigr] \nonumber \\
 \leq & \sum_{j\ge-1}2^{\alpha' pj}2^{-j p/2}<\infty,
\end{align}
Since the Besov embedding \(B^{\alpha'}_{p,p}(\mathbb T)\hookrightarrow \mathcal C^\alpha(\mathbb T)\) holds whenever \(\alpha<\alpha'-\frac1p\), for \(p>1\) large enough, we obtain the bound \eqref{E:UZ_gamma}.

Now we show the time increments and tightness of \(\{\widetilde Z_\gamma\}_\gamma\). For \(0\le s<t\le T\), write
\[
\Delta_j\widetilde Z_\gamma(t)-\Delta_j\widetilde Z_\gamma(s)
=
I_{\gamma,q}^{(1)}(s,t)+I_{\gamma,q}^{(2)}(s,t),
\]
where
\[
I_{\gamma,q}^{(1)}(s,t)
:=
\sum_{k\in\mathcal A_j\cap\mathcal B_\gamma}
\varrho_q(k)\int_s^t e^{-(t-r)\lambda_\gamma(k)}\,d\widehat M_\gamma(r,k)\,e_k,
\]
and
\[
I_{\gamma,q}^{(2)}(s,t)
:=
\sum_{k\in\mathcal A_j\cap\mathcal B_\gamma}
\varrho_q(k)\int_0^s
\bigl(e^{-(t-r)\lambda_\gamma(k)}-e^{-(s-r)\lambda_\gamma(k)}\bigr)\,
d\widehat M_\gamma(r,k)\,e_k.
\]
Applying Burkholder--Davis--Gundy inequality together with the same uniform
bracket estimate and semigroup estimates \ref{ass:semigroup}, we obtain
\[
\sup_\gamma
\mathbb E
\Bigl[
\|\Delta_j(\widetilde Z_\gamma(t)-\widetilde Z_\gamma(s))\|_{L^p}^p
\Bigr]
\lesssim_p
|t-s|^{\vartheta p/2}\,2^{-q(1-4\vartheta)p/2}.
\]
Now we choose \(\alpha'\) with
\(
0<\alpha'<\frac12-2\vartheta,
\)
so that
\(
\sum_{q\ge-1}2^{\alpha' pq}2^{-q(1-4\vartheta)p/2}<\infty,
\)
Then
\[
\sup_\gamma
\mathbb E
\Bigl[
\|\widetilde Z_\gamma(t)-\widetilde Z_\gamma(s)\|_{B^{\alpha'}_{p,p}}^p
\Bigr]
\lesssim
|t-s|^{\vartheta p/2}.
\]
The Kolmogorov's criterion yields tightness of \(\{\widetilde Z_\gamma\}_\gamma\) in
\(
C([0,T],B^{\alpha'}_{p,p}(\mathbb T)).
\)
Since \(B^{\alpha'}_{p,p}(\mathbb T)\hookrightarrow \mathcal C^\alpha(\mathbb T)\) for every \(\alpha<\alpha'-\frac1p\), the family \(\{\widetilde Z_\gamma\}_\gamma\) is tight in
\(
C([0,T],\mathcal C^\alpha(\mathbb T))
\)
for every \(\alpha<\frac12\).
\end{proof}

Now we show the convergence of the stochastic convolution.

\begin{theorem}
\label{thm:Zgamma-final}
For every \(\alpha<\frac12\), the discrete stochastic convolution
\(\widetilde Z_\gamma\rightarrow Z\) in \(C([0,T],\mathcal C^\alpha(\mathbb T)).\)
\end{theorem}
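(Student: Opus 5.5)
The plan is the standard tightness-plus-identification route. Lemma~\ref{lemma:ZgammaE} already gives tightness of $\{\widetilde Z_\gamma\}_\gamma$ in $C([0,T],\mathcal C^\alpha(\mathbb T))$ for every $\alpha<\frac12$. By Prokhorov it therefore suffices to prove two things. First, every subsequential limit point has the law of $Z$. Second, $Z$ is determined in law, as a Gaussian process, by its finite-dimensional distributions, or equivalently by the joint law of finitely many Fourier modes $\widehat Z(t,k)$, $|k|\le K$, at finitely many times. Since $\mathcal C^\alpha$ embeds continuously into $\mathcal S'(\mathbb T)$ and Fourier coefficients separate points, identifying the law of all low modes identifies the law of the limit on $C([0,T],\mathcal C^{\alpha})$. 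We can then upgrade from $\alpha'$ to any $\alpha<\frac12$ by interpolation with the uniform bound \eqref{E:UZ_gamma}.

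The identification for a fixed mode $k$ with $|k|\le K$ goes as follows. We have
\[
\widehat{\widetilde Z}_\gamma(t,k)=\int_0^t e^{-(t-r)\nu_\gamma\lambda_\gamma(k)}\,d\widehat M_\gamma(r,k).
\]
Since the integrand is deterministic and smooth in $r$, integration by parts gives
\[
\widehat{\widetilde Z}_\gamma(t,k)=\widehat M_\gamma(t,k)-\int_0^t \nu_\gamma\lambda_\gamma(k)e^{-(t-r)\nu_\gamma\lambda_\gamma(k)}\widehat M_\gamma(r,k)\,dr.
\]
This is a continuous functional of the path $\widehat M_\gamma(\cdot,k)\in D([0,T])$ under the Skorokhod topology, at least at continuous limit paths. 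By Lemma~\ref{ass:semigroup}(i) together with $\nu_\gamma\to\nu$, the kernels converge uniformly on $[0,T]$ to $e^{-(t-r)\nu(2\pi k)^4}$. It therefore suffices to establish the joint convergence of the vector martingale $(\widehat M_\gamma(\cdot,k))_{|k|\le K}$ to the continuous Gaussian martingale $(\widehat W(\cdot,k))_{|k|\le K}$ with brackets $\delta_{k,\ell}\sigma_*^2(2\pi k)^2 t$. Lemma~\ref{lem:Fourier-mode-bracket-final} gives exactly this bracket convergence in $L^1$, uniformly in $t$. The jumps are bounded by $C_k\varepsilon_\gamma^2/\delta_\gamma\to0$ by \eqref{E:jumpc}. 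The multidimensional martingale CLT, applied to real and imaginary parts with cross brackets, then yields joint convergence in $D([0,T],\mathbb C^{2K+1})$ to a continuous limit. Applying the continuous mapping theorem, the low modes of $\widetilde Z_\gamma$ converge jointly in law to those of $Z(t)=\sigma_*\int_0^tS(t-r)\nabla\cdot\xi(r)\,dr$, whose modes are precisely $\int_0^te^{-(t-r)\nu(2\pi k)^4}\,d\widehat W(r,k)$.

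To close, we combine the two steps. Take any subsequential limit $\bar Z$ of $\widetilde Z_\gamma$. The projection $U\mapsto(\widehat U(\cdot,k))_{|k|\le K}$ is continuous from $C([0,T],\mathcal C^\alpha)$ to $C([0,T],\mathbb C^{2K+1})$. Hence the low modes of $\bar Z$ have the law of the low modes of $Z$ for every $K$, so $\bar Z\overset{d}{=}Z$, and the whole family converges in law. If convergence in probability rather than in law is intended, I would instead realize everything on one probability space via Skorokhod representation. I would not try to prove this stronger version, since the microscopic martingales are not coupled to $\xi$.

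I expect the main obstacle to be the high-frequency part, that is, making sure the truncation at $|k|\le K$ costs nothing uniformly in $\gamma$. This is the content of the uniform bound \eqref{eq:fourier-mode-bracket-uniform-bound}, which in turn uses the exact bracket bound $q_i\le4$ rather than the mode-wise limit. If a direct argument is needed instead of Prokhorov, I would bound $\mathbb E\sup_t\|(1-\Pi_K)\widetilde Z_\gamma(t)\|_{\mathcal C^{\alpha}}\lesssim K^{-(\alpha'-\alpha)}$ uniformly in $\gamma$, using the Littlewood--Paley estimate from the proof of Lemma~\ref{lemma:ZgammaE}, and do the same for $Z$. A secondary, minor point is to check that the projected martingale CLT applies despite the $L^1$ (rather than almost sure) bracket convergence; this holds because convergence in probability of the brackets suffices.
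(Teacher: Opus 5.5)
Your proposal is correct and follows essentially the paper's route: tightness from Lemma~\ref{lemma:ZgammaE}, then identification of finitely many Fourier modes via Lemma~\ref{lem:Fourier-mode-bracket-final}, the jump bound \eqref{E:jumpc}, a martingale CLT, and integration by parts against the exponential kernel (this last step is the paper's $A_n$/$B_n$ split). The only difference is in how you close: the paper uses a Skorokhod coupling and an explicit high-frequency tail estimate, whereas you use the extended continuous mapping theorem plus the fact that low-mode marginals determine the law of a tight limit, which makes the tail estimate unnecessary and avoids the paper's loose ``in probability'' phrasing.
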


\begin{proof}
Lemma \ref{lemma:ZgammaE} yields that, passing to a further subsequence if necessary, we have
\(
\widetilde Z_{\gamma_n}\rightarrow \widetilde Z \)
in $C([0,T],B^\beta_{p,p}(\mathbb T))$.
To identify the limit, it is enough to show that every convergent subsequence has limit equal to \(Z\).
We now prove that \(\widetilde Z=Z\) in law.

Fix \(Q > 0\), and let
\(
\mathcal K_Q:=\{k\in\mathbb Z:\ |k|\le C\,2^Q\}.
\)
For \(n\) sufficiently large, \(\mathcal K_Q\subset \mathcal B_{\gamma_n}\).
Note that for each fixed \(k\in\mathbb Z\), the estimate \eqref{E:jumpc} implies that $M_\gamma(\cdot,k)$ satisfies the following jump condition
\[
\sup_{t\le T}|\Delta \widehat M_\gamma(t,k)| \le 2\frac{\varepsilon_\gamma^2}{\delta_\gamma} \sup_{i,\gamma}|\nabla_{\varepsilon_\gamma}(\mathcal K_\gamma^\varepsilon e_k)(\varepsilon_\gamma i)|
\le
C_k\,\frac{\varepsilon_\gamma^2}{\delta_\gamma} \to 0.
\]
Then by Lemma~\ref{lem:Fourier-mode-bracket-final} and Skorokhod's representation theorem, after replacing the processes by copies on a common probability space,
we assume that
\[
\sup_{t\le T}
\max_{k\in\mathcal K_Q}
\bigl|
\widehat M_{\gamma_n}(t,k)-\widehat M(t,k)
\bigr|
\longrightarrow0
\qquad\text{a.s.}
\]
For \(k\in\mathcal K_Q\), we define
\[
\widehat{\widetilde Z}_{\gamma_n}(t,k)
=
\int_0^t e^{-(t-r)\lambda_{\gamma_n}(k)}\,d\widehat M_{\gamma_n}(r,k),
\qquad
\widehat Z(t,k)
=
\int_0^t e^{-(t-r)(2\pi k)^4}\,d\widehat M(r,k).
\]
We claim that, for each fixed \(k\in\mathcal K_Q\),
\[
\sup_{t\le T}
\bigl|
\widehat{\widetilde Z}_{\gamma_n}(t,k)-\widehat Z(t,k)
\bigr|
\rightarrow0
\qquad\text{a.s.}
\]
We decompose
\(
\widehat{\widetilde Z}_{\gamma_n}(t,k)-\widehat Z(t,k)
=
A_n(t,k)+B_n(t,k),
\)
where
\[
A_n(t,k)
:=
\int_0^t
\Bigl(
e^{-(t-r)\lambda_{\gamma_n}(k)}
-
e^{-(t-r)(2\pi k)^4}
\Bigr)\,d\widehat M_{\gamma_n}(r,k),
\]
and
\[
B_n(t,k)
:=
\int_0^t e^{-(t-r)(2\pi k)^4}\,d\bigl(\widehat M_{\gamma_n}-\widehat M\bigr)(r,k).
\]

We first estimate \(A_n\). After integration by parts, we have
\[
\sup_{t\le T}|A_n(t,k)|
\le
\sup_{r\le T}|\widehat M_{\gamma_n}(r,k)|
\int_0^T
\left|
\lambda_{\gamma_n}(k)e^{-u\lambda_{\gamma_n}(k)}
-(2\pi k)^4e^{-u(2\pi k)^4}
\right|\,du.
\]
Since \(k\) is fixed and \(
\lambda_{\gamma_n}(k)\longrightarrow (2\pi k)^4,
\)
the integral tends to \(0\). Moreover, since \(\widehat M_{\gamma_n}(\cdot,k)\to \widehat M(\cdot,k)\) uniformly,
\(
\sup_{r\le T}|\widehat M_{\gamma_n}(r,k)|
\)
is almost surely bounded for large \(n\).
Hence
\[
\sup_{t\le T}|A_n(t,k)|\longrightarrow0
\qquad\text{a.s.}
\]

We next estimate \(B_n\). Using integration by parts, we have
\[
B_n(t,k)
=
\bigl(\widehat M_{\gamma_n}(t,k)-\widehat M(t,k)\bigr)
-
(2\pi k)^4
\int_0^t
e^{-(t-r)(2\pi k)^4}
\bigl(\widehat M_{\gamma_n}(r,k)-\widehat M(r,k)\bigr)\,dr.
\]
Hence by the Skorokhod representation, as $\gamma \to 0$,
\[
\sup_{t\le T}|B_n(t,k)|
\le
\left(1+(2\pi k)^4T\right)
\sup_{r\le T}
\bigl|
\widehat M_{\gamma_n}(r,k)-\widehat M(r,k)
\bigr| \to 0, \quad \text{a.s.} \quad 
\]
Combining with convergences of $A_n(t,k)$ and $B_n(t,k)$, we conclude that for every \(p\in[1,\infty]\) and \(j \le Q\),
\[
\sup_{t\le T}
\|\Delta_j(\widetilde Z_{\gamma_n}(t)-Z(t))\|_{L^p(\mathbb T)}
\le
C_j
\max_{k\in\mathcal A_j}
\sup_{t\le T}
\bigl|
\widehat{\widetilde Z}_{\gamma_n}(t,k)-\widehat Z(t,k)
\bigr|
\longrightarrow0
\qquad\text{a.s.}
\]
Summing over \(j \le Q\), we obtain
\[
\sup_{t\le T}
\left\|
\sum_{j \le Q}\Delta_j(\widetilde Z_{\gamma_n}(t)-Z(t))
\right\|_{B^{\alpha'}_{p,p}}
\rightarrow0
\qquad\text{a.s.}
\]
It remains to control the high-frequency tail.
By Lemma \ref{lemma:ZgammaE},
\[
\sup_n
\mathbb E
\Bigl[
\sup_{t\le T}
\sum_{j>Q}2^{\alpha' pq}\|\Delta_j\widetilde Z_{\gamma_n}(t)\|_{L^p}^p
\Bigr]
\lesssim
\sum_{j>Q}2^{\alpha' pq}2^{-qp/2}\longrightarrow0
\]
as \(Q\to\infty\).
The same estimate holds for \(Z\), since its Fourier modes satisfy
\(
\mathbb E\bigl|\widehat Z(t,k)\bigr|^2
\lesssim |k|^{-2},
\)
and the same Littlewood--Paley argument applies.
Hence, for every \(\varepsilon>0\), we first choose \(Q\) large enough, so that
\[
\mathbb P\!\left(
\sup_{t\le T} \big\|\sum_{j>Q}\Delta_j\widetilde Z_{\gamma_n}(t)
\big\|_{B^{\alpha'}_{p,p}} + \sup_{t\le T}
\big\|\sum_{j>Q}\Delta_j Z(t) \big\|_{B^{\alpha'}_{p,p}} > \varepsilon
\right)
\]
is arbitrarily small, uniformly in \(n\). Then, for this fixed \(Q\), the low-frequency part converges almost surely to zero.
Therefore
\[
\sup_{t\le T}\|\widetilde Z_{\gamma_n}(t)-Z(t)\|_{B^{ \alpha'}_{p,p}}
\rightarrow0
\qquad\text{in probability.}
\]
This shows that every subsequential limit \(\widetilde Z\) coincides with \(Z\).
Hence
\(
\widetilde Z_\gamma\rightarrow Z\) in \(C([0,T],B^{\alpha'}_{p,p}(\mathbb T)).
\)
Finally, since
\(
B^{\alpha'}_{p,p}(\mathbb T)\hookrightarrow \mathcal C^\alpha(\mathbb T)
\)
for some $\alpha'>\alpha+\frac1p$,
we conclude
$\widetilde Z_\gamma \rightarrow Z$ in $C([0,T],\mathcal C^\alpha(\mathbb T))$
for every \(\alpha<\frac12\).
\end{proof}

\section{Energy estimates for $Y_{\gamma}$}
\label{subsec:energy-estimates}

In this section, we show the uniform \(H^{-1}\)-energy estimate for \(Y_\gamma\).
Let
\(
\widetilde Y_\gamma:=\mathrm{Ext}_\gamma Y_\gamma,
\widetilde Z_\gamma:=\mathrm{Ext}_\gamma Z_\gamma,
\)
where \(Z_\gamma\) is the linear stochastic convolution and
\(Y_\gamma\) is the remainder field.
Note that the extension operator is uniformly bounded on the discrete Sobolev scales used below, namely
\[
\|\widetilde f_\gamma\|_{H^{-1}(\mathbb T)}
\lesssim
\|f_\gamma\|_{-1,\gamma},
\qquad
\|\widetilde f_\gamma\|_{H^{1}(\mathbb T)}
\lesssim
\|f_\gamma\|_{1,\gamma},
\qquad
\|\widetilde f_\gamma\|_{L^p(\mathbb T)}
\lesssim
\|f_\gamma\|_{L^p_\gamma},
\]
uniformly in \(\gamma\). 
Corollary \ref{cor:discrete-spde-drift-form} yields that \(\widetilde Y_\gamma\) satisfies
\begin{equation}\label{E:disCH}
    \partial_t \widetilde Y_\gamma =
-\nu_{\gamma }\Delta^2 \widetilde Y_\gamma
- A_\gamma\Delta(\widetilde Y_\gamma+\widetilde Z_\gamma)
+ \chi_{\gamma}\Delta\bigl((\widetilde Y_\gamma+\widetilde Z_\gamma)^3\bigr) + \widetilde R_\gamma,
\end{equation}
in weak form, where the discrete operators have been transferred to \(\mathbb T\) through the extension, and the remainders satisfy
 \begin{equation}\label{E:energy-remainder-control}
\|\widetilde R_\gamma\|_{L^2(0,T;H^{-3}(\mathbb T))}
\longrightarrow 0
\quad\text{in probability}.
\end{equation}

The weaker scalar convergence of the tested remainders is not sufficient for the
energy estimate below, because the \(H^{-1}\)-testing produces the term
\(\int_0^T\|\widetilde R_\gamma(s)\|_{H^{-3}}^2\,ds\).

\begin{proposition}\label{prop:Ygamma-Hminus1}
There exist constants \(C_T >0\), independent of \(\gamma\), such that for every \(T>0\),
\begin{align}\label{E:YH-1}
    \sup_{t\in[0,T]}\|\widetilde Y_\gamma(t)\|_{H^{-1}(\mathbb T)}^2 + &  \int_0^T\|\widetilde Y_\gamma(s)\|_{H^1(\mathbb T)}^2\,ds + \int_0^T\|\widetilde Y_\gamma(s)\|_{L^4(\mathbb T)}^4\,ds \nonumber \\
& \le C_T \left( 1+ \|\widetilde X_\gamma(0)\|_{H^{-1}(\mathbb T)}^2
+ \int_0^T\|\widetilde Z_\gamma(s)\|_{L^4 (\mathbb T)}^4\,ds
+ \int_0^T\|\widetilde R_\gamma(s)\|_{H^{-3}(\mathbb T)}^2\,ds \right).
\end{align}

In particular, by \eqref{E:energy-remainder-control}, the last term is
\(o_{\mathbb P}(1)\) and the estimate is uniform in probability.
We also have the uniform time regularity 
\begin{equation}\label{E:Ygamma-TimeR}
    \sup_{\gamma>0} \mathbb E \Bigl[ \|\partial_t \widetilde Y_\gamma\|_{L^{4/3}(0,T;H^{-3})}^{4/3} \Bigr] <\infty,
\end{equation}

provided, in addition, that
\(\sup_\gamma\mathbb E\|\widetilde R_\gamma\|_{L^{4/3}(0,T;H^{-3})}^{4/3}<\infty\).

Under the same condition, for \(0\le s<t\le T\),
\[
\|\widetilde Y_\gamma(t)-\widetilde Y_\gamma(s)\|_{H^{-3}}
\le
\int_s^t \|\partial_t\widetilde Y_\gamma(r)\|_{H^{-3}}\,dr
\le
|t-s|^{1/4}\,
\|\partial_t\widetilde Y_\gamma\|_{L^{4/3}(0,T;H^{-3})}.
\]
\end{proposition}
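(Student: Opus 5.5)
The plan is to run, for the extended remainder equation \eqref{E:disCH}, the same $H^{-1}$ energy identity that produced \eqref{eq:intro-energy} for the continuum shifted equation \eqref{eq:intro-Y}. By Lemma~\ref{lem:fourier-extension-operator} and Lemma~\ref{lem:fourier-norm-equivalence}, the discrete operators commute with $\mathrm{Ext}_\gamma$ and the discrete and continuum Sobolev norms are uniformly equivalent for $s\in[-2,2]$. We may therefore work on $\mathcal T_\gamma$.

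\textbf{Step 1: mean zero.} The zero mode of $\widetilde Y_\gamma$ is conserved, since mass conservation gives $\langle X_\gamma,1\rangle_\gamma$ constant and $Z_\gamma$ is mean-zero. Subtracting the constant mean $M_\gamma$ costs only $O(1)$ terms. So we can test \eqref{E:disCH} against $(-\Delta)^{-1}(\widetilde Y_\gamma-M_\gamma)$.

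\textbf{Step 2: energy identity.} Testing gives
\[
\frac12\frac{d}{dt}\|\widetilde Y_\gamma\|_{H^{-1}}^2+\nu_\gamma\|\nabla\widetilde Y_\gamma\|_{L^2}^2
=-A_\gamma\langle \widetilde Y_\gamma+\widetilde Z_\gamma,\widetilde Y_\gamma\rangle-\chi_\gamma\langle(\widetilde Y_\gamma+\widetilde Z_\gamma)^3,\widetilde Y_\gamma\rangle+\langle\widetilde R_\gamma,(-\Delta)^{-1}\widetilde Y_\gamma\rangle .
\]
We bound the right-hand side term by term.

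1. The cubic term uses $(y+z)^3y\ge \tfrac12y^4-Cz^4$, which yields $-\tfrac{\chi_\gamma}2\|\widetilde Y_\gamma\|_{L^4}^4+C\|\widetilde Z_\gamma\|_{L^4}^4$. Here $\chi_\gamma\to\chi>0$.
2. The term $A_\gamma\|\widetilde Y_\gamma\|_{L^2}^2$ is handled in either of two ways: absorb it via $\|Y\|_{L^2}^2\le \|Y\|_{H^{-1}}\|Y\|_{H^1}$ and Young, or bound it by $\eta\|Y\|_{L^4}^4+C_\eta$.
3. The cross term $A_\gamma\langle\widetilde Z_\gamma,\widetilde Y_\gamma\rangle$ is bounded by $\|Z\|_{L^4}^4+\|Y\|_{L^4}^4$-type Young terms with small constants.
4. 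The remainder satisfies $|\langle R,(-\Delta)^{-1}Y\rangle|\le\|R\|_{H^{-3}}\|Y\|_{H^1}\le \tfrac{\nu}{4}\|Y\|_{H^1}^2+C\|R\|_{H^{-3}}^2$.

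After absorbing, Gronwall on $[0,T]$ gives \eqref{E:YH-1}. The initial value is handled by $\widetilde Y_\gamma(0)=\widetilde X_\gamma(0)$.

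\textbf{Step 3: time regularity.} We read off $\partial_t\widetilde Y_\gamma$ from \eqref{E:disCH} in $H^{-3}$.

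1. The fourth-order term gives $\|\Delta^2 Y\|_{H^{-3}}\lesssim\|Y\|_{H^1}$, which lies in $L^2_t\subset L^{4/3}_t$.
2. For the cubic term, $\|\Delta (Y+Z)^3\|_{H^{-3}}\lesssim\|(Y+Z)^3\|_{H^{-1}}\lesssim\|(Y+Z)^3\|_{L^{4/3}}$, using $L^{4/3}\hookrightarrow H^{-1}$ in one dimension. This is bounded by $\|Y+Z\|_{L^4}^3$, which lies in $L^{4/3}_t$ by \eqref{E:YH-1}.
3. The linear term and $\widetilde R_\gamma$ are handled similarly, the latter by the extra hypothesis.

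Taking expectations requires moments of the right side of \eqref{E:YH-1}. These come from Lemma~\ref{lemma:ZgammaE} (moments of $\|\widetilde Z_\gamma\|_{C^\alpha}$ for all $p$, by the same BDG argument) together with the assumed $L^{4/3}$ bound on $\widetilde R_\gamma$. The Hölder estimate then follows from $\int_s^t f\le |t-s|^{1/4}\|f\|_{L^{4/3}}$.

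\textbf{Main obstacle.} The hardest point is justifying that the discrete nonlinearity transfers to the torus. $\mathrm{Ext}_\gamma(X_\gamma^3)$ is not $(\mathrm{Ext}_\gamma X_\gamma)^3$, and this aliasing must be absorbed into $\widetilde R_\gamma$ or handled with discrete norms. I would do the whole computation on the lattice using $\langle\cdot,\cdot\rangle_\gamma$, where the positivity $\langle X^3,Y\rangle_\gamma$ holds exactly and the $\|\cdot\|_{L^p_\gamma}$ norms are available. Only at the end would I transfer norms via Lemma~\ref{lem:fourier-norm-equivalence}. A secondary point is the uniformity of the mean-zero inversion of $\Delta_{\varepsilon_\gamma}$, which follows from Lemma~\ref{lem:symbol_bilaplacian}.
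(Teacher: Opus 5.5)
Your derivation of \eqref{E:YH-1} is the paper's own argument. You test \eqref{E:disCH} against $(-\Delta)^{-1}\widetilde Y_\gamma$, absorb the $A_\gamma$-term by interpolation or Young, and use the pointwise lower bound on $(y+z)^3y$. You then pair $\widetilde R_\gamma$ with $(-\Delta)^{-1}\widetilde Y_\gamma$ through $H^{-3}$--$H^{3}$ duality and close with Gronwall. The $H^{-3}$ bound on $\partial_t\widetilde Y_\gamma$ and the $|t-s|^{1/4}$ modulus are also read off as in the paper; your $L^{4/3}\hookrightarrow H^{-1}$ step is equivalent to its $L^1\times W^{2,\infty}$ pairing.

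Your Step~1 is a correction the paper omits. $\widetilde Y_\gamma$ has conserved mean $\varepsilon_\gamma M(N)/\delta_\gamma\to M\neq 0$, so the paper's test function $(-\Delta)^{-1}\widetilde Y_\gamma$ is not defined as written. Subtracting the mean costs only $O(1)$ terms, which the constant $1$ in \eqref{E:YH-1} absorbs.

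Two minor points:
\begin{itemize}
\item The $A_\gamma$-term actually enters with a plus sign; it is the destabilising term for $A>0$. This is harmless because you absorb its absolute value.
\item On aliasing, the paper does not do a lattice computation. It simply takes \eqref{E:disCH} with the torus cube as given, which pushes the aliasing error into $\widetilde R_\gamma$. Your lattice route avoids this, but it then needs the $L^4_\gamma\to L^4(\mathbb T)$ comparison. That comparison is asserted at the start of Section~\ref{subsec:energy-estimates}; it is not part of Lemma~\ref{lem:fourier-norm-equivalence}.
\end{itemize}

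The step that does not go through is the expectation bound \eqref{E:Ygamma-TimeR}. Raise your pathwise bound on $\|\partial_t\widetilde Y_\gamma\|_{H^{-3}}$ to the power $4/3$ and integrate in time. This produces $\int_0^T\|\widetilde Y_\gamma+\widetilde Z_\gamma\|_{L^4}^4\,dt$, so you need a uniform first moment of the whole right-hand side of \eqref{E:YH-1}. That includes $\int_0^T\|\widetilde R_\gamma\|_{H^{-3}}^2\,ds$ and $\|\widetilde X_\gamma(0)\|_{H^{-1}}^2$.

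The extra hypothesis $\sup_\gamma\mathbb E\|\widetilde R_\gamma\|_{L^{4/3}(0,T;H^{-3})}^{4/3}<\infty$ does not provide this. An $L^{4/3}$-in-time bound does not control the $L^2$-in-time norm, and \eqref{E:energy-remainder-control} is only convergence in probability. You also cannot rearrange the energy estimate to use the weaker norm. The test function $(-\Delta)^{-1}\widetilde Y_\gamma$ is controlled in $H^3$ only by $\|\widetilde Y_\gamma\|_{H^1}$, which lies merely in $L^2_t$. A uniform second moment of $\|\widetilde X_\gamma(0)\|_{H^{-1}}$ is not among the stated hypotheses either. The paper's proof has the same hole: it passes from pathwise integrability of $\|\widetilde Y_\gamma\|_{H^1}$ and $\|\widetilde Y_\gamma\|_{L^4}^3$ straight to a uniform expectation.

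There are two ways to repair this.
\begin{itemize}
\item Strengthen the extra hypothesis to $\sup_\gamma\mathbb E\bigl[\int_0^T\|\widetilde R_\gamma\|_{H^{-3}}^2\,ds+\|\widetilde X_\gamma(0)\|_{H^{-1}}^2\bigr]<\infty$. By H\"older, this implies the stated $L^{4/3}$ condition.
\item Replace \eqref{E:Ygamma-TimeR} by the statement that $\|\partial_t\widetilde Y_\gamma\|_{L^{4/3}(0,T;H^{-3})}$ is bounded in probability, uniformly in $\gamma$. This follows from your pathwise bound, \eqref{E:energy-remainder-control}, the convergence of $\widetilde X_\gamma(0)$, and Lemma~\ref{lemma:ZgammaE}. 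It is all that the Aubin--Lions--Simon and Arzel\`a--Ascoli tightness argument of Section~\ref{sec:limit} actually uses.
\end{itemize}
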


\begin{proof}
Testing the equation \eqref{E:disCH} against \( (-\Delta)^{-1} \widetilde Y_\gamma\), we obtain
\[
\frac12 \frac{d}{dt}\|\widetilde Y_\gamma\|_{H^{-1}(\mathbb T)}^2
+
\nu_{\gamma}\| \widetilde Y_\gamma\|_{\dot{H}^{1}(\mathbb T)}^2
=
A_\gamma\langle \widetilde Y_\gamma+ \widetilde Z_\gamma, \widetilde Y_\gamma\rangle
- \chi_{\gamma}\langle (\widetilde Y_\gamma+ \widetilde Z_\gamma)^3, \widetilde Y_\gamma\rangle
+
\langle \widetilde R_\gamma, (-\Delta)^{-1} \widetilde Y_\gamma\rangle.
\]
For the linear term, by the Poincar\'e inequality and Young's inequality, for any $\eta > 0$:
\begin{equation}\label{E:energy1}
    |A_\gamma\langle \widetilde Y_\gamma+ \widetilde Z_\gamma, \widetilde Y_\gamma\rangle|
\le
\eta\|\widetilde Y_\gamma\|_{H^{1}(\mathbb T)}^2
+
C_\eta\|\widetilde Y_\gamma\|_{H^{-1}(\mathbb T)}^2
+
C_\eta\|\widetilde Z_\gamma\|_{L^2(\mathbb T)}^2.
\end{equation}
For the nonlinear term, using the Young's inequality, one obtains the pointwise lower bound \((\widetilde Y_\gamma+\widetilde Z_\gamma)^3 \widetilde Y_\gamma \ge c_0|\widetilde Y_\gamma|^4-C|\widetilde Z_\gamma|^4\). Hence,
\begin{equation}\label{E:energy2}
    -\chi_{\gamma}\langle (\widetilde Y_\gamma+ \widetilde Z_\gamma)^3, \widetilde Y_\gamma\rangle
\le
-c_1\|\widetilde Y_\gamma\|_{L^4(\mathbb T)}^4 + C\|\widetilde Z_\gamma\|_{L^4(\mathbb T)}^4.
\end{equation}
For the remainder term, we use the \(H^{-3}\times H^3\) duality and the estimate
\(\|(-\Delta)^{-1}\widetilde Y_\gamma\|_{H^3}
\lesssim \|\widetilde Y_\gamma\|_{H^1}\):
\begin{equation}\label{E:energy3}
    |\langle \widetilde R_\gamma,(-\Delta)^{-1}\widetilde Y_\gamma\rangle|
\le
\|\widetilde R_\gamma\|_{H^{-3}(\mathbb T)} \|(-\Delta)^{-1}\widetilde Y_\gamma\|_{H^3(\mathbb T)}
\le
\eta \|\widetilde Y_\gamma\|_{H^{1}(\mathbb T)}^2
+ C_\eta \|\widetilde R_\gamma\|_{H^{-3}(\mathbb T)}^2.
\end{equation}
Collecting the estimates \eqref{E:energy1}-\eqref{E:energy3}, and choosing \(\eta>0\) sufficiently small to be absorbed by the coercive term \(\nu_\gamma\|\widetilde Y_\gamma\|_{\dot{H}^{1}}^2\), we get
\[
\frac{d}{dt}\|\widetilde Y_\gamma\|_{H^{-1}(\mathbb T)}^2
+
c\|\widetilde Y_\gamma\|_{H^1(\mathbb T)}^2
+
c\|\widetilde Y_\gamma\|_{L^4(\mathbb T)}^4
\le
C\Bigl(
1+\|\widetilde Z_\gamma\|_{L^4(\mathbb T)}^4+\|\widetilde Y_\gamma\|_{H^{-1}(\mathbb T)}^2 + \|\widetilde R_\gamma\|_{H^{-3}(\mathbb T)}^2
\Bigr).
\]
Applying Gronwall's inequality yields \eqref{E:YH-1}; the control
\eqref{E:energy-remainder-control} then makes the additional remainder contribution
negligible in probability.

To see the time regularity \eqref{E:Ygamma-TimeR}, let \(\varphi\in H^3(\mathbb T)\). Then
\[
\langle \partial_t \widetilde Y_\gamma,\varphi\rangle
=
- \nu_{\gamma}\langle \widetilde Y_\gamma,\Delta^2\varphi\rangle
-
A_\gamma\langle \widetilde Y_\gamma+\widetilde Z_\gamma,\Delta\varphi\rangle
-
\chi_{\gamma}\langle (\widetilde Y_\gamma+\widetilde Z_\gamma)^3,\Delta\varphi\rangle
+
\langle \widetilde R_\gamma,\varphi\rangle.
\]
For the bi-Laplacian term,
\[
|\langle \widetilde Y_\gamma, \Delta_{\varepsilon_\gamma}^2\varphi\rangle|
=
|\langle \nabla_{\varepsilon_\gamma} \widetilde Y_\gamma,\nabla_{\varepsilon_\gamma}\Delta_{\varepsilon_\gamma}\varphi\rangle|
\le
C\|\widetilde Y_\gamma\|_{H^1}\|\varphi\|_{H^3}.
\]
For the linear lower-order term,
\[
|\langle \widetilde Y_\gamma+\widetilde Z_\gamma,\Delta\varphi\rangle|
\le
\bigl(\|\widetilde Y_\gamma\|_{L^2}+\|\widetilde Z_\gamma\|_{L^2}\bigr)\|\varphi\|_{H^2}
\le
C\bigl(\|\widetilde Y_\gamma\|_{H^1}+\|\widetilde Z_\gamma\|_{L^\infty}\bigr)\|\varphi\|_{H^3}.
\]
For the cubic term, since in one dimension \(H^3(\mathbb T)\hookrightarrow W^{2,\infty}(\mathbb T)\),
\[
|\langle (\widetilde Y_\gamma+\widetilde Z_\gamma)^3,\Delta\varphi\rangle|
\le
\|(\widetilde Y_\gamma+\widetilde Z_\gamma)^3\|_{L^1}\,\|\Delta\varphi\|_{L^\infty}
\le
C\|\widetilde Y_\gamma+\widetilde Z_\gamma\|_{L^3}^3\,\|\varphi\|_{H^3}.
\]
By Hölder inequality and the \(L^4\)-bound on \(\widetilde Y_\gamma\),
\[
\|\widetilde Y_\gamma\|_{L^3}^3
\le
C\bigl(1+\|\widetilde Y_\gamma\|_{L^4}^3\bigr),
\]
Then combining with $\|\widetilde Z_\gamma\|_{L^\infty}
\lesssim
\|\widetilde Z_\gamma\|_{\mathcal C^\alpha}$ for every \(\alpha \in (0, \frac{1}{2})\),
we obtain
\[
\|\partial_t \widetilde Y_\gamma\|_{H^{-3}}
\le
C\Bigl(
\|\widetilde Y_\gamma\|_{H^1}
+
\|\widetilde Y_\gamma\|_{L^4}^3
+
1
+
\|\widetilde Z_\gamma\|_{\mathcal C^\alpha}^3
+
\|\widetilde R_\gamma\|_{H^{-3}}
\Bigr).
\]
Since
\(
\widetilde Y_\gamma\in L^2(0,T;H^1)\cap L^4(0,T;L^4),
\)
we have
\[
\|\widetilde Y_\gamma\|_{H^1}\in L^2(0,T),
\qquad
\|\widetilde Y_\gamma\|_{L^4}^3\in L^{4/3}(0,T).
\]
Together with the stated \(L^{4/3}(0,T;H^{-3})\)-bound on
\(\widetilde R_\gamma\), this yields the uniform
\(L^{4/3}(0,T;H^{-3})\)-bound.

Consequently, for \(0\le s<t\le T\),
\[
\|\widetilde Y_\gamma(t)-\widetilde Y_\gamma(s)\|_{H^{-3}}
\le
\int_s^t \|\partial_t\widetilde Y_\gamma(r)\|_{H^{-3}}\,dr
\le
|t-s|^{1/4}\,
\|\partial_t\widetilde Y_\gamma\|_{L^{4/3}(0,T;H^{-3})}.
\]
This gives a uniform \(1/4\)-H\"older-type modulus in time with values in \(H^{-3}\), in expectation.
\end{proof}

\section{Convergence to the stochastic Cahn--Hilliard equation}
\label{sec:limit}

In this section, we prove the convergence of \(X_\gamma\) to the stochastic Cahn--Hilliard limit. Then we show that the induced canonical Gibbs measure \(
\mu_\gamma:=(\widetilde X_\gamma)_\#\mu_{N,\gamma,\beta}
\) converges to the canonical $\phi^4_1$ measure on the conserved-mass hyperplane $V_M$ in distribution.
We denote 
\(
\widetilde X_\gamma=\widetilde Y_\gamma+\widetilde Z_\gamma,
\)
where \(\widetilde Z_\gamma\) is the Fourier extension of the linear stochastic convolution and
\(\widetilde Y_\gamma\) is the Fourier extension of the remainder field.

\begin{theorem}[\textbf{Theorem 1}]
\label{thm:Xgamma-final-convergence}
Assume that the initial data converge:
\(
\widetilde X_\gamma(0)\rightarrow X_0, \text{in }H^{-1}(\mathbb T);
\)
Then as $\gamma \to 0$, 
\(
\widetilde X_\gamma\) converges in law to \(X
\)
in
\(
L^2(0,T;L^2(\mathbb T))\cap C([0,T],H^{-3}(\mathbb T)),
\)
where \(X = Y+Z \in V_M \) is the unique weak solution of the stochastic Cahn--Hilliard equation on the conserved-mass hyperplane \( V_M := \{ \phi \in L^2: \langle 1,\phi \rangle = M \}\).
\end{theorem}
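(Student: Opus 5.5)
The argument is the standard tightness–identification–uniqueness scheme, run on the Da Prato--Debussche splitting $\widetilde X_\gamma=\widetilde Y_\gamma+\widetilde Z_\gamma$.

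\textbf{Step 1 (tightness).} Theorem~\ref{thm:Zgamma-final} gives $\widetilde Z_\gamma\to Z$ in law in $C([0,T],\mathcal C^\alpha(\mathbb T))$ for every $\alpha<\frac12$. In particular $\int_0^T\|\widetilde Z_\gamma\|_{L^4}^4\,ds$ is tight by \eqref{E:UZ_gamma}. The initial data $\widetilde X_\gamma(0)$ converge in $H^{-1}$, and $\int_0^T\|\widetilde R_\gamma\|_{H^{-3}}^2\,ds=o_{\mathbb P}(1)$ by \eqref{E:energy-remainder-control}. Feeding these into Proposition~\ref{prop:Ygamma-Hminus1} shows that
\[
\widetilde Y_\gamma \quad\text{is bounded in probability in}\quad L^\infty(0,T;H^{-1})\cap L^2(0,T;H^1)\cap L^4((0,T)\times\mathbb T).
\]
To control the time derivative in probability, I would use the estimate on $\partial_t\widetilde Y_\gamma$ derived in that proof. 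It bounds $\|\partial_t\widetilde Y_\gamma\|_{L^{4/3}(0,T;H^{-3})}$ by the quantities above, so a moment hypothesis on $\widetilde R_\gamma$ is not needed; working on the events where these quantities are bounded suffices. Then:
\begin{itemize}
\item the Aubin--Lions--Simon lemma, with the compact embeddings $H^1\Subset L^2\subset H^{-3}$, gives tightness of $\widetilde Y_\gamma$ in $L^2(0,T;L^2)$;
\item the $|t-s|^{1/4}$ modulus in $H^{-3}$, together with the $L^\infty(0,T;H^{-1})$ bound and the compact embedding $H^{-1}\Subset H^{-3}$, gives tightness in $C([0,T],H^{-3})$ by Arzel\`a--Ascoli.
\end{itemize}
Hence the triple $(\widetilde Y_\gamma,\widetilde Z_\gamma,M_\gamma)$ is tight. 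By Prokhorov and Skorokhod, along a subsequence there are copies converging almost surely to some $(Y,Z,M)$.

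\textbf{Step 2 (identification).} I pass to the limit in the tested equation from Corollary~\ref{cor:discrete-spde-drift-form}, using test functions $\phi$ of zero mean. Four ingredients are needed.
\begin{itemize}
\item \emph{Coefficients.} $\nu_\gamma\to\nu$, $A_\gamma\to A$ and $\chi_\gamma\to\chi$ by Assumption~(\textbf{B}).
\item \emph{Discrete operators.} Lemma~\ref{lem:symbol_bilaplacian} gives $\Delta_{\varepsilon_\gamma}\phi\to\Delta\phi$ uniformly for smooth $\phi$.
\item \emph{Linear terms.} These converge because of the strong $L^2(0,T;L^2)$ convergence of $\widetilde X_\gamma=\widetilde Y_\gamma+\widetilde Z_\gamma$.
\item \emph{Remainder.} $R^\phi_{\gamma,-1}\to0$ in probability, uniformly in $t$.
\end{itemize}
The nonlinear term is the delicate one, and I expect it to be the main obstacle. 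Two issues arise.
\begin{itemize}
\item \emph{Continuity of the cube.} Strong $L^2$ convergence alone does not give $\langle X_\gamma^3,\phi\rangle\to\langle X^3,\phi\rangle$. I would interpolate with the uniform $L^4$ bound on $\widetilde Y_\gamma$ and the $C^\alpha$ bound on $\widetilde Z_\gamma$: these give uniform integrability of $|\widetilde X_\gamma|^3$, so Vitali upgrades convergence in $L^2$ to convergence in $L^3((0,T)\times\mathbb T)$.
\item \emph{Lattice versus extension.} The drift is written with the lattice cube $X_\gamma^3$ paired through $\langle\cdot,\cdot\rangle_\gamma$, whereas compactness is available for the extension $\mathrm{Ext}_\gamma X_\gamma$, and $\mathrm{Ext}_\gamma(X_\gamma^3)\neq(\mathrm{Ext}_\gamma X_\gamma)^3$. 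The difference is an aliasing/commutator error. I would bound it using the band limitation $|k|\lesssim\gamma^{-1}$ together with the $H^1$ bound on $\widetilde Y_\gamma$, aiming for an error of order $\gamma^{s}$ times controlled norms; this is the step I would check first.
\end{itemize}
Finally, Theorem~\ref{thm:martingale-bracket-corrected-rate} identifies $M$ as a continuous Gaussian martingale with covariance $\sigma_*^2t\int\partial_x\phi\,\partial_x\psi$. This yields the weak formulation for $X=Y+Z$, with $Z$ the stochastic convolution driven by $\sigma_*\nabla\cdot\xi$ on a possibly enlarged probability space, and with $Y$ solving \eqref{eq:intro-Y} weakly. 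The mass constraint $\langle X(t),1\rangle=M$ passes to the limit from $\langle X_\gamma(t),1\rangle_\gamma\to M$.

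\textbf{Step 3 (uniqueness and full convergence).} The limit has the regularity class of Proposition~\ref{thm:wellposedness-SCH-1d}. That proposition gives pathwise uniqueness: for two solutions $Y_1,Y_2$ with the same $Z$, an $H^{-1}$ energy estimate on the difference applies, using the monotonicity of $u\mapsto u^3$ and absorbing the $A$-term by interpolating $L^2$ between $H^{-1}$ and $H^1$. Yamada--Watanabe then gives uniqueness in law. Since every subsequential limit has the same law, the full family $\widetilde X_\gamma$ converges in law in $L^2(0,T;L^2)\cap C([0,T],H^{-3})$.
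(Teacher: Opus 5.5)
Your scheme is the paper's. It uses the energy bound of Proposition~\ref{prop:Ygamma-Hminus1}, Aubin--Lions--Simon in $L^2(0,T;L^2)$ together with Arzel\`a--Ascoli in $C([0,T],H^{-3})$, and Skorokhod. It then interpolates strong $L^2$ convergence against the $L^4$ bound to get $L^3$ convergence, hence $L^1$ convergence of the cube. Finally, uniqueness of the shifted equation passes from subsequences to the full family. Your remark that the bound on $\partial_t\widetilde Y_\gamma$ is only needed in probability is a small improvement. It removes the extra moment hypothesis on $\widetilde R_\gamma$ attached to \eqref{E:Ygamma-TimeR}, which the paper's proof uses without stating it as a hypothesis.

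The one substantive divergence is where you identify the limit, and it is what creates the obstacle you leave open. You pass to the limit in the lattice-tested $X$-equation of Corollary~\ref{cor:discrete-spde-drift-form}. That forces you to re-identify the martingale and to control $\mathrm{Ext}_\gamma(X_\gamma^3)-(\mathrm{Ext}_\gamma X_\gamma)^3$, which you leave as an unproved step.

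The paper instead passes to the limit directly in the extended $Y$-equation \eqref{E:disCH}, which has three advantages:
\begin{itemize}
\item the nonlinearity there is already $\chi_\gamma\Delta\bigl((\widetilde Y_\gamma+\widetilde Z_\gamma)^3\bigr)$;
\item every lattice/extension discrepancy is placed in $\widetilde R_\gamma$ and controlled by \eqref{E:energy-remainder-control};
\item there is no martingale term, because $\widetilde Z_\gamma\to Z$ is already settled by Theorem~\ref{thm:Zgamma-final}.
\end{itemize}
Your Step~1 already rests on \eqref{E:disCH} and \eqref{E:energy-remainder-control}, since Proposition~\ref{prop:Ygamma-Hminus1} is built on them. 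Switching Step~2 to that equation therefore removes the aliasing step at no cost.

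If you did insist on the lattice route, your proposed tool would not work as stated. With $\mathcal B_\gamma=\{|k|\le c\gamma^{-1}\}$, the modes of $X_\gamma$ outside $\mathcal B_\gamma$ are invisible to $\widetilde Y_\gamma$, so the $H^1$ bound on $\widetilde Y_\gamma$ cannot control them. The smallness has to come from the Kac multiplier. Summation by parts gives $|\widehat\kappa_\gamma(k)|\lesssim\gamma/(\varepsilon_\gamma|k|)$, which makes the part of $X_\gamma$ outside $\mathcal B_\gamma$ of size at most $\delta_\gamma^{-1}\gamma^2\varepsilon_\gamma^{-1}\to0$ in $L^2$, deterministically.
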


\begin{proof}
We first show the tightness and convergence of $Y_{\gamma}$ via the classical compactness argument.

Since for every $\kappa \in (0,1)$,
\(
H^1(\mathbb T)\hookrightarrow H^{1-\kappa}(\mathbb T)
\hookrightarrow H^{-3}(\mathbb T),
\)
the Aubin--Lions--Simon compactness argument yields relative compactness of
\(\{\widetilde Y_\gamma\}_\gamma\) in \(L^2(0,T;L^2(\mathbb T))\) from the uniform bounds
\[
\widetilde Y_\gamma
\ \text{bounded in }L^2(0,T;H^1),
\qquad
\partial_t\widetilde Y_\gamma
\ \text{bounded in }L^{4/3}(0,T;H^{-3}).
\]
On the other hand, the time Hölder estimate \eqref{E:Ygamma-TimeR} and the uniform
\(L^\infty(0,T;H^{-1})\)-bound imply, by the Arzel\`a--Ascoli criterion in the weak topology together with the compact embedding
\(
H^{-1}(\mathbb T) \hookrightarrow H^{-3}(\mathbb T),
\)
that the laws of \(\{\widetilde Y_\gamma\}_\gamma\) are tight in
\(
C([0,T],H^{-3}(\mathbb T)).
\)
Combining the two compactness statements proves the tightness of \(\{\widetilde Y_\gamma\}_\gamma\) in
\[
L^2(0,T;H^{1-\kappa}(\mathbb T))\cap C([0,T],H^{-3}(\mathbb T)).
\]
By Skorokhod's representation theorem, there exists a probability space $(\Omega, \mathcal{F}, \mathbb P)$ and a limit point
\[
Y \in L^\infty(0,T;H^{-1})
\cap
L^2(0,T;H^1)
\cap
L^4((0,T)\times\mathbb T),
\]
so that along a subsequence,
\[
\widetilde Y_\gamma\to Y
\qquad\text{a.s. in }L^2(0,T;L^2(\mathbb T))\cap C([0,T],H^{-3}(\mathbb T)),
\]
and $\widetilde Z_\gamma\to Z$ a.s. in $C([0,T],\mathcal C^\alpha(\mathbb T)$.
Using the interpolation inequality, the uniform \(L^4\)-bound and strong \(L^2\)-convergence imply that
\[
\widetilde X_{\gamma} = \widetilde Y_\gamma+\widetilde Z_\gamma\to Y+Z
\qquad\text{strongly in }L^3((0,T)\times\mathbb T),
\]
Therefore
\[
(\widetilde Y_\gamma+\widetilde Z_\gamma)^3
\to
(Y+Z)^3
\qquad\text{strongly in }L^1((0,T)\times\mathbb T).
\]
Now test the weak form of the \(\widetilde Y_\gamma\)-equation against any \(\varphi\in C^\infty(\mathbb T)\), integrate in time from \(0\) to \(t\), and pass to the limit.
The linear terms pass to the limit by the strong \(L^2\)-convergence of \(\widetilde Y_\gamma\) and the convergence of \(\widetilde Z_\gamma\) to \(Z\).
The cubic term passes to the limit by the \(L^1\)-convergence above.
Combining with
\(\|\widetilde R_\gamma\|_{L^1(0,T;H^{-3})}
\le T^{1/2}\|\widetilde R_\gamma\|_{L^2(0,T;H^{-3})}\to0\)
in probability, and with the initial condition,
we conclude that the limit \(Y\) satisfies the shifted equation
\[
\partial_t Y
=
- \nu\Delta^2Y - A\Delta(Y+Z)+ \chi\Delta((Y+Z)^3), \quad Y(0)=X_0,
\]
in weak sense.
By the uniqueness of the shifted equation, then all subsequences have the same limit, and therefore the full sequence converges:
\(
\widetilde Y_\gamma\rightarrow Y
\)
in
\(
L^2(0,T; H^{1-\kappa}(\mathbb T))\cap C([0,T],H^{-3}(\mathbb T)), \forall \kappa \in (0,1).
\)
Therefore,
\[
\widetilde X_\gamma=\widetilde Y_\gamma+\widetilde Z_\gamma
\rightarrow
Y+Z=:X \qquad \text{a.s. in } L^2(0,T; H^{1-\kappa}(\mathbb T))\cap C([0,T],H^{-3}(\mathbb T)).
\]
Combining with the weak formulations for \(Y\) and \(Z\), we obtain that
\[
\partial_tX = -\nu \Delta^2X- A\Delta X+ \chi \Delta(X^3)
 + \sigma_* \nabla \xi,
\qquad
X(0)=X_0.
\]
Moreover, the uniqueness of stochastic Cahn--Hilliard equation yields that every subsequence of \(\{\widetilde X_\gamma\}_\gamma\) has the same limit \(X\).
This completes the proof.
\end{proof}

Let \(
\mu_\gamma:=(\widetilde X_\gamma)_\#\mu_{N,\gamma,\beta}
\)
be the equilibrium measure of the Kac coarse-grained field $\widetilde X_\gamma$ induced by the microscopic canonical Gibbs measure $\mu_{N,\gamma,\beta}$ on the fixed total-magnetization sector. By construction of \(\widetilde X_{\gamma}\), \(\mu_\gamma\) is supported on the same conserved-mass hyperplane as \(\widetilde X_\gamma\).
By above convergence result, we also have the convergence of canonical Gibbs measures.

\begin{proposition}\label{prop:induced-gibbs-measure-Xgamma}
Define the induced equilibrium measure of \(X_\gamma\) by
\(
\mu_\gamma
:=
(\widetilde X_\gamma)_\#\mu_{N,\gamma,\beta}.
\)
Then $\mu_\gamma\rightarrow \mu$ in distribution
as $\gamma \to 0$, where \(\mu\) is the $\phi^4_1$ measure on the conserved-mass hyperplane \( V_M \).
\end{proposition}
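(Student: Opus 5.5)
The plan is to combine stationarity of $\mu_{N,\gamma,\beta}$ for the Kawasaki dynamics with the dynamical convergence of Theorem~\ref{thm:Xgamma-final-convergence}, and then use uniqueness of the invariant measure of the limiting stochastic Cahn--Hilliard equation on $V_M$. The structure is the familiar ``limits of invariant measures are invariant'' argument.

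\textbf{Step 1: tightness.} First I show that $\{\mu_\gamma\}_\gamma$ is tight in $H^{-1}(\mathbb T)$, or at least in some $H^{-s}$ with the mass constraint preserved. By Lemma~\ref{lem:equilibrium-centered-kac-fluctuation} and the Fourier bound \eqref{eq:static-infrared-bound}, each mode satisfies $\mathbb E_{\mu_{N,\gamma,\beta}}|\widehat X_\gamma(k)|^2\lesssim \delta_\gamma^{-2}\varepsilon_\gamma|\widehat\kappa_\gamma(k)|^2/(\theta_\gamma+1-\widehat\kappa_\gamma(k))$. Using \eqref{eq:critical-static-mass-rate} and Lemma~\ref{lem:kac-kernel-fourier-lower}, this should give a bound $\lesssim (1+|k|^2)^{-1}$ uniformly in $\gamma$. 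Summing against $(1+|k|^2)^{-s}$ then yields uniform moments in $H^{1/2-\kappa}$, hence tightness in $H^{-1}$ by compact embedding. The zero mode is deterministic and equals $\langle X_\gamma,1\rangle_\gamma\to M$, so every limit point is supported on $V_M$.

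\textbf{Step 2: invariance of limit points.} Start the dynamics from $f_{N,0}\equiv 1$, i.e.\ with initial law $\mu_{N,\gamma,\beta}$. Assumption (C) holds trivially since the relative entropy is zero. By stationarity, $\mathrm{Law}(\widetilde X_\gamma(t))=\mu_\gamma$ for all $t$. Along a subsequence with $\mu_{\gamma_n}\to\bar\mu$, use Skorokhod to realize $\widetilde X_{\gamma_n}(0)\to X_0$ almost surely in $H^{-1}$ with $X_0\sim\bar\mu$. Theorem~\ref{thm:Xgamma-final-convergence} is stated for deterministic convergent data, so I apply it conditionally on the initial configuration, or equivalently rerun its tightness--identification argument with random initial data. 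This gives $\widetilde X_{\gamma_n}\to X$, the solution started from $X_0$, in $C([0,T],H^{-3})$. Hence $\mathrm{Law}(X(t))=\lim\mu_{\gamma_n}=\bar\mu$ for every $t$, so $\bar\mu$ is invariant for the stochastic Cahn--Hilliard semigroup on $V_M$.

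\textbf{Step 3: identification via uniqueness.} The final step uses that the canonical $\phi^4_1$ measure $\mu$ in \eqref{phi41law} is invariant, in fact reversible. This is the standard gradient structure: the equation is the $H^{-1}$-gradient flow of $\mathcal F$ with noise of covariance $\sigma_*^2(-\Delta)$, so the equilibrium density is $\exp(-2\mathcal F/\sigma_*^2)$ (cf.\ \cite{Fu91}, \cite{DaPratoDebussche1995}). I then need uniqueness of the invariant measure on $V_M$. For this I would invoke strong Feller plus irreducibility for additive nondegenerate conservative noise in 1D, or a Harnack-inequality argument as in \cite{GX2020}. Uniqueness forces $\bar\mu=\mu$, and since every subsequential limit coincides, $\mu_\gamma\to\mu$.

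\textbf{Main obstacle.} I expect the hardest step to be Step 2. Theorem~\ref{thm:Xgamma-final-convergence} needs the remainder controls, for example \eqref{E:energy-remainder-control}, to hold uniformly when the initial data are only tight rather than convergent, and the energy bound \eqref{E:YH-1} requires $\|\widetilde X_\gamma(0)\|_{H^{-1}}$ to be bounded in probability. Both should follow from Step 1 and the fact that all entropy bounds are trivial at equilibrium. A secondary difficulty is making the $\gamma$-uniform modewise bound in Step 1 rigorous, since it relies on the precise critical tuning $\theta_\gamma\asymp\delta_\gamma^2$ to cancel the $\delta_\gamma^{-2}$ prefactor at low frequencies. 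An alternative that avoids dynamics altogether would be a direct Lebowitz--Penrose Laplace-principle computation in the spirit of \cite{Hairer2018}. I would keep that as a fallback only, since the coarse-graining error estimates needed are not available from the earlier results.
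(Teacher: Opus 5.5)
Your argument is correct and has the same architecture as the paper's proof. You start from $\mu_{N,\gamma,\beta}$, so that $\mathrm{Law}(\widetilde X_\gamma(t))=\mu_\gamma$ for all $t$. You then use Theorem~\ref{thm:Xgamma-final-convergence} to show that subsequential limits are invariant, and conclude by uniqueness of the invariant measure on $V_M$.

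The genuine difference is the tightness step. The paper obtains it dynamically. It time-averages the $\mathcal C^\alpha$ bound on $\widetilde Z_\gamma$ from Lemma~\ref{lemma:ZgammaE} and the energy estimate \eqref{E:YH-1} (via $H^1\hookrightarrow\mathcal C^{1/2}$), then lets $T\to\infty$ using stationarity, which gives tightness in $\mathcal C^\alpha$. You instead read tightness directly off the equilibrium two-point bound \eqref{eq:static-infrared-bound}, and this works:
\begin{itemize}
\item On $|k|\le c\gamma/\varepsilon_\gamma$ you need $\varepsilon_\gamma\lesssim\delta_\gamma^4$ (against $\theta_\gamma\asymp\delta_\gamma^2$) and $\gamma^2\lesssim\varepsilon_\gamma\delta_\gamma^2$. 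Both follow exactly from the balance limits for $\chi_\gamma$, $\nu_\gamma$ and $\widetilde\sigma_\gamma$ in Assumption~$(\mathbf B)$.
\item On $|k|>c\gamma/\varepsilon_\gamma$ you also need $|\widehat\kappa_\gamma(k)|\lesssim\gamma/(\varepsilon_\gamma|k|)$. This is not contained in Lemma~\ref{lem:kac-kernel-fourier-lower}; it follows from $\mathfrak K\in W^{1,1}$ by summation by parts. Even without it, the high modes vanish in $H^{-1}$.
\end{itemize}

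What each route buys:
\begin{itemize}
\item Your route is purely static and gives the natural $H^{1/2-}$ regularity of the $\phi^4_1$ measure.
\item It also sidesteps a weak point of the paper's version. There, \eqref{E:YH-1} comes from Gronwall with a $T$-dependent constant, and \eqref{E:energy-remainder-control} holds only on a fixed interval and only in probability. Sending $T\to\infty$ therefore needs a $T$-uniform dissipative bound that is never proved.
\item The paper's route, if completed, yields the stronger $\mathcal C^\alpha$ topology. However, $H^s$-tightness already suffices for weak convergence on $V_M$.
\end{itemize}

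In Step~2 you are more careful than the paper, which glosses over random initial data. Your conditional version is legitimate. A Dirac initial law has relative entropy $-\log\mu_{N,\gamma,\beta}(\sigma)=\beta H_\gamma(\sigma)+\log Z^M_{N,\gamma,\beta}\le CN$, so Assumption~$(\mathbf C)$ holds uniformly in the configuration. The Skorokhod coupling of the fields lifts to configurations by sampling from the finite conditional law.

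In Step~3 both proofs rest on uniqueness of the invariant measure on $V_M$, which the paper merely asserts. Your plan supplies this in the standard way: strong Feller plus irreducibility, together with the gradient-structure check that \eqref{phi41law} is invariant.
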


\begin{proof}
Since the Kawasaki dynamics is reversible, and invariant, with respect to the canonical Gibbs measure
\(\mu_{N,\gamma,\beta}\), if the initial law  \(\mathrm{Law}\bigl(\sigma^N(0)\bigr) \stackrel{\mathrm{d}}{=} \mu_{N,\gamma,\beta}\), then
\[
\mathrm{Law}\bigl(\sigma^N(t)\bigr) \stackrel{\mathrm{d}}{=} \mu_{N,\gamma,\beta}
\qquad\text{for every }t\ge0.
\]
Applying the measurable map \(\sigma^N\mapsto \widetilde X_\gamma(\sigma)\), we obtain the stationary measure of \(\widetilde X_\gamma\), which is given by
\[
\mathrm{Law}\bigl(\widetilde X_\gamma(t)\bigr)
=
(\widetilde X_\gamma)_\#\mathrm{Law}\bigl(\sigma_\gamma(t)\bigr)
=
(\widetilde X_\gamma)_\#\mu_{N,\gamma,\beta}
= \mu_\gamma.
\]
Now we show the tightness of \(\{\mu_\gamma\}\).
By the uniform bound of $\widetilde Z_\gamma$ in $C^\alpha$, and the convergence of $\widetilde Z_{\gamma}$, for every $\alpha \in (0,1/2)$ and $\gamma>0$, we have
\begin{align}\label{tight-1}
\frac1T \int_0^T \mathbb{E}[\|\widetilde Z_{\gamma}(t)\|_{C^{\alpha}}] dt
\le C_1
\end{align}
for some $C_1>0$ uniform in $\gamma$ and $T$. 
Then using the uniform $H^{-1}$-energy estimate \eqref{E:YH-1} and Sobolev embedding $H^{1}(\mathbb T) \hookrightarrow C^{\frac{1}{2}}(\mathbb T)$, we have 
\begin{align}\label{tight-2}
 \frac{1}{T}\int_{0}^{T} \mathbb{E}\left[ \|\widetilde Y_{\gamma}(t)\|_{C^{\frac12}}^2\right] dt\le \frac{C}{T} \int_{0}^{T} \mathbb{E}[\|\widetilde Y_{\gamma}(t)\|_{H^1}^2]dt
\le C_2 + \frac{C_2}{T} \mathbb{E}[\|\widetilde X_{\gamma}(0)\|_{H^{-1}}^{2}],
\end{align}
for some $C_2>0$ uniform in $\gamma$ and $T$. 

Combining with \eqref{tight-1} and \eqref{tight-2}, the stochastic field $\widetilde X_{\gamma}(t)=\widetilde Y_{\gamma}(t)+ \widetilde Z_{\gamma}(t)$ satisfies
\begin{align*}
\frac{1}{T} \int_{0}^{T} \mathbb{E}[\|\widetilde X_{\gamma}(t)\|_{C^{\alpha}}^2] dt
\le C_3 + \frac{C_3}{T} \mathbb{E}[\|\widetilde X_{\gamma}(0)\|_{H^{-1}}^2],
\end{align*}
for every $\alpha \in (0,\frac{1}{2})$, and some $C_3>0$ which is independent with $\gamma$ and $T$. 
Since $\mathrm{Law}\bigl(\widetilde X_\gamma(t)\bigr) \stackrel{\mathrm{d}}{=} \mu_{\gamma}$, we can let $T\to\infty$ to obtain that \( \mathbb{E}^{\mu_{\gamma}}[\| \widetilde X_{\gamma}\|_{C^{\alpha}}^2] \le C_4\),
which is uniform in $\gamma$.
Note that the embedding $C^{\alpha} \hookrightarrow C^{\alpha'}$
is compact if $\alpha > \alpha'$.
Therefore, $\{\mu_{\gamma}\}_\gamma$ is tight on $C^{\alpha}$
for every $\alpha \in (0,1/2)$.

Now we fix a subsequence \(\gamma_n\downarrow0\) such that
\(
\mu_{\gamma_n}\rightarrow \mu_*.
\) in distribution.
By Skorokhod's representation theorem, there are $C^{\alpha}$ value random variables $\{\widetilde X_{\gamma_n}(0)\}_{n \geq 1}$ and $X(0)$, so that $\mathrm{Law}(\widetilde X_{\gamma_{n}}(0)) = \mu_{\gamma_{n}}$, $\mathrm{Law}(\widetilde X(0)) = \mu_*$, and $\widetilde X_{\gamma_{n}}(0) \to X(0)$ almost surely in $C^{\alpha}$ as $\gamma_{n} \to 0$.

Since each \(\widetilde X_{\gamma_n}\) is stationary and $\widetilde X_{\gamma} \to X$ in distribution, for every bounded continuous \(F: L^2(\mathbb T)\to\mathbb R\) and every \(t\in[0,T]\), we pass to the limit as $\gamma_n \to 0$ and obtain
\[
 \lim_{\gamma_n \to 0}\mathbb E\bigl[F(\widetilde X_{\gamma_n}(t))\bigr] = \lim_{\gamma_n \to 0}\int_{C^{\alpha}(\mathbb T)} F(\phi)\,\mu_{\gamma_n}(d\phi) =
\int_{C^{\alpha}(\mathbb T)} F(\phi)\,\mu_*(d\phi) = \mathbb E\bigl[F(X(t))\bigr]
\]
for every $t\in[0,T]$.
Thus \(\mathrm{Law}(X(t)) = \mu_*\) for every \(t\), i.e. the limit process \(X\) is stationary.
Therefore \(\mu_*\) is an invariant measure for the limiting stochastic Cahn--Hilliard equation.
By the conservation-law, the limit measure \(\mu_*\) is supported on the conserved-mass hyperplane \( V_M := \{ \phi \in L^2: \langle 1,\phi \rangle = M \}\).
Since the limiting conservative stochastic Cahn--Hilliard equation admits a unique $\phi^4_1$ Gibbs measure $\mu$ on the conserved-mass hyperplane \(V_M\), every subsequence of \(\{\mu_\gamma\}\) has the same limit measure \(\mu\). Therefore the full sequence \(\mu_\gamma\) converges weakly to the conserved-mass \(\phi^4_1\) measure.
This completes the proof.
\end{proof}

\bibliographystyle{abbrv}
\bibliography{references}

\end{document}